\newcolumntype{C}[1]{>{\centering\arraybackslash$}p{#1}<{$}}
\newtheorem{theorem}[equation]{Theorem}
\newtheorem{proposition}[equation]{Proposition}
\newtheorem{lemma}[equation]{Lemma}
\newtheorem{corollary}[equation]{Corollary}
\theoremstyle{definition}
\newtheorem{definition}[equation]{Definition}
\newtheorem{example}[equation]{Example}
\newtheorem{assumption}[equation]{Assumption}
\newtheorem{construction}[equation]{Construction}
\theoremstyle{remark}
\newtheorem*{ack}{Acknowledgments}
\newtheorem{remark}[equation]{Remark}
\DeclareMathAlphabet{\mathscr}{LS1}{stixscr}{m}{n}
\numberwithin{equation}{section}
\def\mpar#1{}
\def\dc{d_{\bullet}}
\def\ec{e_{\bullet}}
\def\xt{\ol{x}}
\def\yt{\ol{y}}
\def\newx{\mathscr{x}}
\def\xp{\newx_+}
\def\xm{\newx_-}
\def\xpm{\newx_\pm}
\def\cD{\mathcal{D}}
\def\cF{\mathcal{F}}
\def\cFC{\cF_{\Cube}}
\def\cG{\mathcal{G}}
\def\cT{\mathcal{T}}
\def\cTKh{\cT_{\kh}}
\def\cGs{\cG_{\sigma}}
\def\cGsk{\cG_{\sigma^k}}
\def\Z{\mathbb{Z}}
\def\R{\mathbb{R}}
\def\F{\mathbb{F}}
\def\X{\mathcal{X}}
\def\cC{\mathcal{C}}
\def\cX{\mathcal{X}}
\def\cA{\mathcal{A}}
\def\cM{\mathcal{M}}
\def\cS{\mathcal{S}}
\def\cP{\mathcal{P}}
\def\tA{\mathrm{A}}
\def\tZ{\mathrm{Z}}
\def\ff{\mathfrak{f}}
\def\fs{\mathfrak{s}}
\def\fp{\mathfrak{p}}
\def\fz{\mathbf{z}}
\def\rH{\mathcal{R}^H}
\def\bA{\mathbb{C}^*}
\def\Cbl{\Cob^{3}_{\bullet/l}}
\def\Sn{\Perm_n}
\DeclareMathOperator{\CA}{CA}
\DeclareMathOperator{\coker}{coker}
\DeclareMathOperator{\Perm}{Perm}
\DeclareMathOperator{\ET}{E\theta}
\DeclareMathOperator{\ckh}{CKh}
\DeclareMathOperator{\kh}{Kh}
\DeclareMathOperator{\ekh}{EKh}
\DeclareMathOperator{\akh}{AKh}
\DeclareMathOperator{\cakh}{CAKh}
\DeclareMathOperator{\eakh}{EAKh}
\DeclareMathOperator{\ep}{EX}
\DeclareMathOperator{\ob}{Ob}
\DeclareMathOperator{\Ext}{Ext}
\DeclareMathOperator{\gr}{gr}
\DeclareMathOperator{\ind}{ind}
\DeclareMathOperator{\Hom}{Hom}
\DeclareMathOperator{\codim}{codim}
\DeclareMathOperator{\colim}{colim}
\DeclareMathOperator{\im}{Im}
\DeclareMathOperator{\Cob}{Cob}
\DeclareMathOperator{\Ann}{Ann}
\DeclareMathOperator{\Sq}{Sq}
\DeclareMathOperator{\Un}{Un}
\DeclareMathOperator{\Cube}{Cube}
\DeclareMathOperator{\Cubes}{Cube_\sigma}
\DeclareMathOperator{\Ob}{Ob}
\DeclareMathOperator{\Diff}{Diff}
\newcommand{\bnbracket}[1]{[\kern-1.5pt [ #1 ]\kern-1.5pt]_{\operatorname{BN}}}
\newcommand{\ol}[1]{\overline{#1}}
\newcommand{\wt}[1]{\widetilde{#1}}
\title{Khovanov homotopy type, periodic links and localizations}
\author{Maciej Borodzik}
\address{Institute of Mathematics, University of Warsaw, ul. Banacha 2, 02-097 Warsaw, Poland.}
\email{mcboro@mimuw.edu.pl}
\author{Wojciech Politarczyk}
\address{Department of Mathematics and Computer Science, Adam Mickiewicz University in Poznań,
  ul.~Umultowska 87, 61-614 Poznań, Poland \& Institute of Mathematics, University of Warsaw, ul. Banacha 2,
02-097 Warsaw, Poland.}
\email{politarw@amu.edu.pl, wpolitarczyk@mimuw.edu.pl}
\author{Marithania Silvero}
\address{Institute of Mathematics, Polish Academy of Sciences, ul. \'Sniadeckich 8,
00-656 Warsaw, Poland~\& Barcelona Graduate School of Mathematics at Universitat de Barcelona, Gran Vía de las Cortes Catalanas 585, 08007 Barcelona, Spain.}
\email{marithania@us.es}
\date{\today}
\subjclass[2010]{primary: 57M25. } 
\keywords{periodic links, Khovanov homology, Khovanov homotopy type.}
\begin{document}
\begin{abstract}
  Given an $m$-periodic link $L\subset S^3$, we show that the Khovanov spectrum $\X_L$ constructed by Lipshitz and Sarkar admits a group action.
  We relate the Borel cohomology of $\X_L$ to the equivariant Khovanov homology of $L$ constructed by the second author.
  The action of Steenrod algebra on the cohomology of $\X_L$ gives an extra structure of the periodic link. Another consequence of our
  construction is an alternative proof of the localization formula for Khovanov homology, obtained first by Stoffregen and Zhang.
  By applying the Dwyer-Wilkerson theorem we express Khovanov homology of the quotient link in terms of equivariant Khovanov homology of the original link.

\end{abstract}
\maketitle

\section{Introduction}
\subsection{Overview}
\emph{Khovanov homology}~\cite{khovanov_categorification_2000} is a link invariant that assigns to any diagram $D \subset \R^{2}$ of a link $L\subset S^3$ a bigraded cochain complex $\ckh^{\ast,\ast}(D)$, whose homology groups, $\kh^{\ast,\ast}(D)$, are a link invariant.
Inspired by the construction of Cohen, Jones, and Segal~\cite{CJS}, Lipshitz, and Sarkar~\cite{LS_stable} constructed a~\emph{spatial refinement} of Khovanov homology.

\begin{theorem}\cite[Theorem 1.1]{LS_stable}\label{thm:lipsar}
  Let $D$ be a diagram representing an oriented link $L \subset S^3$.\mpar{(1)}
  For any $q \in \mathbb{Z}$ there exists a CW-complex $\mathcal{X}_{\kh}^{q}(D)$ such that the reduced cellular cochain complex $\wt{C}^*(\mathcal{X}_{\kh}^{q}(D);\Z)$ is a copy of the Khovanov complex $\ckh^{*,q}(D;\Z)$.
  In particular $\wt{H}^i(\mathcal{X}_{\kh}^{q}(D);\Z)$ is equal to $\kh^{i,q}(D;\Z)$.
  Moreover, the stable homotopy type of $\mathcal{X}_{\kh}^{q}(D)$ is an invariant of the link $L$.
\end{theorem}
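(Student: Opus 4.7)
The plan is to follow the Cohen--Jones--Segal framework for realizing a cochain complex as the cellular cochain complex of a CW spectrum, applied to the cube of resolutions that computes Khovanov homology. Given $D$ with $n$ crossings, the standard cube of resolutions assigns to each vertex $v\in\{0,1\}^n$ a collection of circles, and generators of $\ckh^{\ast,q}(D;\Z)$ are labelings of these circles by $\xp,\xm$ with total quantum grading $q$. These generators will serve as objects of a \emph{framed flow category} $\cF^{q}_{\kh}(D)$, graded by the homological degree $|v|$ shifted appropriately by the number of negative crossings.

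First I would construct the morphism data: for each pair of generators $x$ at $u$ and $y$ at $v$ with $u<v$ in the cube and matching quantum grading, a compact $\langle |v|-|u|-1\rangle$-manifold with corners $\cM(x,y)$, such that the codimension-one boundary decomposes as the appropriate union of products $\cM(x,z)\times\cM(z,y)$. In cube-length $1$ (edges) the $\cM(x,y)$ are finite signed sets reproducing the Khovanov differential via the standard edge maps (birth/death and merge/split). In cube-length $2$ one has to make a global coherent choice in the so-called \emph{ladybug} configurations, where two merge--split faces admit two nonequivalent natural matchings between the four generators; this is the ladybug matching of Lipshitz--Sarkar and is the crucial geometric input. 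Higher-length cube intervals are then filled in by iterated coning, using that the Khovanov cube is a commutative hypercube up to the chosen signs.

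Second I would promote this flow category to a framed flow category by prescribing neat embeddings of the $\cM(x,y)$ into products of Euclidean half-spaces, together with compatible normal framings, arranged so that face degenerations match the product structure on boundaries. With this data in hand, the Cohen--Jones--Segal realization (via Pontryagin--Thom collapse along the embedded moduli spaces) outputs a CW spectrum, or, after sufficient suspension, an honest pointed CW-complex $\X^{q}_{\kh}(D)$ whose cells are indexed by the generators of $\ckh^{\ast,q}(D;\Z)$ and whose cellular attaching maps have degrees given by signed counts of $0$-dimensional moduli spaces. By construction this reproduces the Khovanov differential, so $\wt{C}^{\ast}(\X^{q}_{\kh}(D);\Z)\cong \ckh^{\ast,q}(D;\Z)$ and hence $\wt{H}^{i}(\X^{q}_{\kh}(D);\Z)=\kh^{i,q}(D;\Z)$.

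Finally, stable homotopy invariance is proved by showing that each of the three Reidemeister moves induces a stable equivalence of the corresponding spectra. For each move one isolates the local subcategory of $\cF^{q}_{\kh}$ affected by the move and exhibits a framed-flow-category equivalence between the two local pictures, extending by identity outside the move region; this mirrors the algebraic invariance proof for Khovanov homology but carried out at the level of moduli spaces and framings. The main obstacle is the first step: one must simultaneously organize the ladybug matching, the higher moduli spaces, and the framings into a single coherent structure whose ambiguities are absorbed by stable equivalence and are natural enough for the Reidemeister arguments to succeed; once this package is assembled, the CJS machine and the move-by-move check are essentially formal.
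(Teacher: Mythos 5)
Your outline is a correct sketch of the proof, and it is essentially the original Lipshitz--Sarkar argument that the paper cites for this theorem: a framed flow category built from the cube of resolutions, the ladybug matching at index two, inductive construction of the higher moduli spaces, neat embeddings with coherent framings, Cohen--Jones--Segal realization, and a move-by-move Reidemeister check. The paper itself, however, reviews and then builds on a different packaging of the same construction, namely the \emph{cubical flow category} framework of Lawson--Lipshitz--Sarkar: instead of treating the moduli spaces as abstract $\langle n\rangle$-manifolds filled in by hand, one fixes a grading-preserving functor $\ff$ to the cube flow category $\Cube(n)$, whose morphism spaces are permutohedra, and requires each $\ff_{x,y}\colon\cM(x,y)\to\cM_{\Cube(n)}(\ff(x),\ff(y))$ to be a covering map; the realization is then assembled cell by cell from products of cubes with (thickened) permutohedra, rather than by a Pontryagin--Thom collapse. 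What the cubical route buys is explicit combinatorial control of the moduli spaces --- they are disjoint unions of permutohedra, their components are counted by maximal chains in posets (Proposition~\ref{prop:chains_moduli}) and by the genus of Bar-Natan cobordisms (Lemma~\ref{lemma:conn-components-moduli-space}) --- which is exactly what the present paper needs to define the group action and identify fixed points. Your more general CJS formulation proves the same theorem but would make the equivariant refinement considerably harder to carry out. The only places where your sketch compresses real work are the existence of the higher moduli spaces (``iterated coning'' hides the verification that the prescribed corner structure actually bounds coherently) and the existence of compatible framings, which requires the sign-assignment argument (cf.\ Lemma~\ref{lemma:sign-assignment-framed-embedding}); both are supplied in the cited source.
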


Define $\mathcal{X}_{\kh}(D) = \bigvee_{q} \mathcal{X}_{\kh}^{q}(D)$.
We will often write $\mathcal{X}_{\kh}(L)$ instead of $\mathcal{X}_{\kh}(D)$ noting that $\mathcal{X}_{\kh}(L)$ is defined up to stable homotopy.
The space $\mathcal{X}_{\kh}(L)$ is called the \emph{Khovanov homotopy type} of a link $L$.\mpar{(2)}
There are various constructions of the Khovanov homotopy type, see~\cite{HKK,LS_stable,LLS_long,Everitt-Turner}, we refer to~\cite{lipshitz_spatial_2017} for a survey.

Given a link in $S^1\times D^2$, in~\cite{AsaedaPrzytyckiSikora} Asaeda, Przytycki, and Sikora (see also~\cite{Roberts}) showed that
the wrapping number around the $S^{1}$ factor induces a filtration of the Khovanov complex
\begin{equation}\label{eq:annular-filtration}
  0 \subset \ldots \subset \CA_{k-1} \subset \CA_{k} \subset \CA_{k+1} \subset \ldots \subset \ckh^{\ast,\ast}(L).
\end{equation}
The \emph{annular Khovanov homology} of $L$, denoted $\akh^{\ast,\ast,\ast}(L)$, is the homology of the associated graded cochain complex $\cakh^{\ast,\ast,\ast}(L)$.
Lifting the annular grading to the Khovanov flow category, leads to a construction of the \emph{annular Khovanov homotopy type} $\mathcal{X}_{\akh}(L) = \bigvee_{j,k \in \Z} \mathcal{X}_{\akh}^{j,k}(L)$, i.e. a spatial refinement of the annular Khovanov homology.

A link $L$ in $S^3$ is said to be \textit{$m$-periodic} if there exists an orientation-preserving action of a cyclic group $\Z_m$ on $S^3$ such that $L$ is an invariant subset of $S^3$ and the fixed point set is an unknot disjoint from $L$.
A diagram $D$ of a link $L$ is called \emph{$m$-periodic} if $0 \not\in D$ and $D$ is invariant under rotation $\rho_{m}$ of order $m$ of the plane about the point $0 \in \R^{2}$.
The Khovanov complex of an $m$-periodic link admits an induced action of $\Z_{m}$~\cite{Politarczyk_Khovanov,Chbili}.

Removing a tubular neighborhood of the fixed point axis $F$ of the rotation of $S^{3}$ produces an annular link $L \subset S^{1} \times D^{2}$ invariant under a fixed point-free rotation of $S^{1} \times D^{2}$.
Such links in $S^{1} \times D^{2}$ are also called $m$-periodic.
The $\Z_{m}$-action on the Khovanov complex preserves the filtration~\eqref{eq:annular-filtration}, hence it descends to a $\Z_{m}$-action on the annular Khovanov chain complex.

The primary purpose of this paper is to study the Khovanov homotopy type and the annular Khovanov homotopy type of periodic links.
The following two theorems constitute the central geometric part of the present article.

\begin{theorem}\label{thm:main-theorem}\ 
  \begin{itemize}
  \item[(a)]
    Let $D_{m}$ be an $m$-periodic diagram of an annular link $L$. 
    \begin{enumerate}
    \item For any $q,k \in \Z$, $\mathcal{X}_{\akh}^{q,k}(D_{m})$ admits an action of the finite cyclic group of order $m$ which is compatible with the action of $\Z_{m}$ on the annular Khovanov complex of $D_{m}$.
    \item The \emph{equivariant annular stable homotopy type} of $\mathcal{X}_{\akh}^{q,k}(D_{m})$ is an invariant of the associated annular $m$-periodic link.
    \end{enumerate}
  \item[(b)] 
    Let $D_{m}$ be a $m$-periodic link diagram.
    \begin{enumerate}
    \item For any $q \in \Z$, $\mathcal{X}_{\kh}^q(D_{m})$ admits an action of the finite cyclic group of order $m$ which is compatible with the action of $\Z_{m}$ on the Khovanov complex of $D_{m}$.
    \item The \emph{equivariant stable homotopy type} of $\mathcal{X}_{\kh}^q(D_{m})$ is an invariant of the associated $m$-periodic link.
    \end{enumerate}
  \end{itemize}
\end{theorem}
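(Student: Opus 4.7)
The plan is to use one of the existing constructions of the Khovanov homotopy type (for concreteness the Lipshitz--Sarkar framed flow category, although the Lawson--Lipshitz--Sarkar Burnside functor formulation is equally convenient) and promote every step to be $\Z_m$-equivariant. First, the rotation $\rho_m$ permutes the crossings of $D_m$ in orbits of size $m$ or $1$ (depending on whether the crossing meets the fixed axis), hence permutes the vertices of the cube of resolutions, sends Kauffman states to Kauffman states, and preserves the Frobenius algebra structure on each collection of circles. This gives a strict $\Z_m$-action by automorphisms of the Khovanov chain complex (recovering the action of~\cite{Politarczyk_Khovanov,Chbili}) and, through the Khovanov cube, of the associated framed flow category, preserving all grading and framing data used in Theorem~\ref{thm:lipsar}.

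Next, I would realise this equivariantly. The Cohen--Jones--Segal / Lipshitz--Sarkar geometric realisation is natural with respect to strict automorphisms of framed flow categories, provided the auxiliary data (neat embeddings, framings and handle sizes) are chosen in a $\Z_m$-equivariant fashion. Because $\Z_m$ is finite, such equivariant choices exist by averaging, and any two are related by an equivariant isotopy after sufficient stabilisation. This yields a $\Z_m$-CW structure on $\mathcal{X}_{\kh}^q(D_m)$ whose reduced cellular cochain complex, as a $\Z_m$-module, is $\ckh^{*,q}(D_m)$ with the action above. Part~(a) follows by running the same argument bigraded-ly: the annular filtration~\eqref{eq:annular-filtration} is defined by the wrapping number around the $S^{1}$-factor, which is preserved by $\rho_m$ since $\rho_m$ fixes the axis, so the filtration and its associated graded are $\Z_m$-stable, and the construction carries over verbatim to produce the equivariant $\mathcal{X}_{\akh}^{q,k}(D_m)$.

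Invariance under the choice of periodic diagram requires the equivariant Reidemeister theorem for periodic links, which provides a finite sequence of $\Z_m$-equivariant moves relating any two periodic diagrams of the same periodic link. These come in two flavours: \emph{free} moves applied simultaneously on all $m$ symmetric copies, and \emph{axial} moves performed once near the fixed axis. For a free move, the $m$ copies of the standard Lipshitz--Sarkar invariance equivalence (a cancellation of acyclic subcomplexes coming from Bar-Natan delooping) are permuted by $\Z_m$, so their product is automatically equivariant; for an axial move the cancellation is already $\Z_m$-stable because it takes place in a $\rho_m$-invariant neighbourhood. The hard part is to organise these pieces into a \emph{stable equivariant} equivalence: one needs an equivariant refinement of the notion of equivalence of framed flow categories under which the local cancellations assemble into a global map, and one needs to verify that equivariant choices at successive Reidemeister moves can be coherently stabilised. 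Once this framework is set up, the invariance statements in (a)(2) and (b)(2) follow formally from the non-equivariant ones by the equivariant Whitehead theorem applied to the fixed-point and singular strata of $\mathcal{X}_{\kh}^{q}$ and $\mathcal{X}_{\akh}^{q,k}$.
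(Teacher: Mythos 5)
Your overall strategy coincides with the one used in the paper: make the flow category, the neat embeddings and the realization equivariant, then prove invariance move by move. Two points, however, need attention, and the second is a genuine gap.

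First, a technical but structural point: ``averaging'' does not produce equivariant neat embeddings (an average of embeddings need not be an embedding), and more importantly the cells of the realization cannot remain cubes $[-R,R]^{d_i}$. An orbit of objects of the flow category must be realized by a single $G$-cell $G\times_H B_R(V)$, so the grading has to be promoted to an $RO(G)$-valued grading and the embeddings must land in balls of an orthogonal representation $V$; existence of such embeddings is supplied by the Mostow--Palais theorem, not by averaging. One also needs a sign correction $x\mapsto(-1)^{c(\ff(x))}\sigma(x)$ because the sign assignment on the cube is not $\sigma$-invariant, and one must check that the group action on the index-two moduli spaces is compatible with the ladybug matching. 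These are all fixable, and the paper handles them in Sections~\ref{sec:equivariant} and~\ref{sec:review}.

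The genuine gap is your final claim that invariance ``follows formally from the non-equivariant statements by the equivariant Whitehead theorem.'' It does not. The equivariant Whitehead theorem requires the candidate equivalence to induce a stable equivalence on $X^{H}$ for \emph{every} subgroup $H\subset\Z_m$, and the $H$-fixed points of the Khovanov space are not controlled by any non-equivariant statement about $\mathcal{X}_{\kh}$ --- indeed, identifying those fixed points is the content of the separate (and harder) Geometric Fixed Point Theorem~\ref{thm:geom_local}. Concretely, in the Lipshitz--Sarkar proof of R2-invariance one collapses an upward-closed subcategory $\cC_1$ whose cochain complex is acyclic; in the equivariant setting one must additionally prove that $C^{*}(\cC_1^{H})$ is acyclic for every $H$ (Lemma~\ref{lem:hardertocheck}). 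This is a new computation: in the paper it is done in Lemma~\ref{lem:c1acyclic} by identifying $C^{*}(\cC_1^{H})$ with a tensor product $C^{*}(1,\dots,1)^{H}\otimes C^{*}(\Cube(m/k))$ built from orbit representatives of the new crossings, and then applying K\"unneth. Your proposal acknowledges that ``the hard part is to organise these pieces,'' but the missing idea is precisely this fixed-point acyclicity verification; without it the equivariant Whitehead theorem cannot be invoked. (A minor related slip: since the link is disjoint from the fixed axis, every equivariant Reidemeister move is an orbit of $m$ ordinary moves; there are no ``axial'' moves performed once near the axis.)
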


\noindent If we set $m=p$ a prime number, we have the following result. 

\begin{theorem}[Geometric Fixed Point Theorem]\label{thm:geom_local}\  
  \begin{itemize}
  \item[(a)]
    Let $D_{p}$ be a $p$-periodic diagram of an annular link $L$ and let $D$ be the associated quotient diagram.  For any $q,k \in \Z$
    \[\mathcal{X}^{q',k}_{\akh}(D_{p})^{\Z_{p}} = \mathcal{X}^{q,k}_{\akh}(D),\]
    where $q' = pq - (p-1)k$.
  \item[(b)] If $D_p$ is a $p$-periodic link diagram and $D$ is the associated quotient diagram. Then for any $q\in\Z$
    \[\mathcal{X}_{\kh}^{q}(D_{p})^{\Z_{p}} = \bigvee_{\substack{q',k' \in \Z \\ pq'-(p-1)k'=q}} \mathcal{X}_{\akh}^{q',k'}(D).\]
  \end{itemize}
\end{theorem}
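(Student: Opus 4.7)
The plan is to reduce the theorem to a combinatorial identification of the $\Z_p$-fixed equivariant Khovanov flow category, followed by a grading computation. The key input is the general principle of equivariant stable homotopy theory that, under the equivariant Lipshitz--Sarkar realization, the geometric fixed points of the realization agree with the realization of the fixed subcategory: $|\cF|^{\Z_p}\simeq |\cF^{\Z_p}|$. This reduction, which we expect to have been set up carefully in the preceding sections with attention to equivariant neat embeddings, framings, and transversality, lets us argue entirely at the level of cells and cubes.

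First I would identify the $\Z_p$-fixed resolutions. Since $0\notin D_p$, every $\Z_p$-orbit of crossings has size $p$, so $\Z_p$-invariant vertices of $\Cube(D_p)$ are in canonical bijection with vertices of $\Cube(D)$. Next I would identify the fixed generators. For an invariant vertex $v$ projecting to $\bar v$, each trivial (disk-bounding) circle of the quotient resolution $D^{\bar v}$ lifts to a free $\Z_p$-orbit of $p$ trivial circles in $D_p^v$, whereas each non-trivial (wrapping) circle lifts to a single non-trivial circle on which $\Z_p$ acts by rotation. Hence a Khovanov generator at $v$ is $\Z_p$-fixed precisely when the circles in each free orbit share the same label, and such fixed generators are in natural bijection with Khovanov generators at $\bar v$.

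The heart of the argument is the grading computation. Since no crossing of $D_p$ is fixed, $n_\pm(D_p)=p\,n_\pm(D)$ and the cube heights satisfy $|v|_1=p|\bar v|_1$. Writing $S$ for the total label sum and $k$ for the annular grading of $(\bar v,\bar x)$, the trivial circles of $D^{\bar v}$ contribute $S-k$ to $S$; under the lift, trivial circles are duplicated $p$-fold while fixed non-trivial circles are not, yielding a total label sum of $p(S-k)+k$ on $D_p^v$. Substituting into the standard formula gives
\begin{align*}
q(v,x) &= n_+(D_p)-2n_-(D_p)+|v|_1+\sum_c \ell_x(c)\\
&= p\bigl(n_+(D)-2n_-(D)+|\bar v|_1+S-k\bigr)+k \;=\; p\,q(\bar v,\bar x)-(p-1)k,
\end{align*}
and analogously $k(v,x)=k(\bar v,\bar x)$. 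The matching of differentials and higher moduli is automatic because $\Z_p$-orbits of edges of $\Cube(D_p)$ correspond to edges of $\Cube(D)$ and the saddle cobordisms assemble equivariantly. This establishes (a). For (b), one sums over the annular grading on the quotient side: the annular filtration on $\ckh^{\ast,\ast}(D_p)$ restricts on fixed generators to a splitting by $k(\bar v,\bar x)$, producing the wedge decomposition indexed by $\{(q',k'):pq'-(p-1)k'=q\}$.

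The main obstacle is technical rather than combinatorial: verifying that $|\cF|^{\Z_p}\simeq |\cF^{\Z_p}|$ for the Khovanov flow category. This demands $\Z_p$-equivariant neat embeddings of the moduli spaces together with compatible framings and smoothings, chosen so that passing to fixed sets recovers precisely the moduli spaces of the quotient flow category. Such equivariant transversality, together with a local-model comparison at the fixed cells, is standard in principle but delicate to execute; we expect it to be supplied by the equivariant refinement of the Lipshitz--Sarkar construction developed in the earlier part of the paper that proves Theorem~\ref{thm:main-theorem}.
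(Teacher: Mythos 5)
Your overall architecture matches the paper's: reduce to the identity $||\cC||^{\Z_p}\cong||\cC^{\Z_p}||$ (this is Proposition~\ref{prop:fixedcategory}, and you are right that it rests on the equivariant neat embeddings and the permutohedron fixed-point analysis set up earlier), then identify the fixed subcategory with the quotient's annular flow category. Your bijection on invariant vertices and fixed generators, and the quantum-grading computation $q(v,x)=p\,q(\bar v,\bar x)-(p-1)k$, are exactly Proposition~\ref{prop:bijection-objects}. Up to that point the proposal is sound.

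The genuine gap is the sentence claiming that ``the matching of differentials and higher moduli is automatic.'' It is not, and this is where the paper spends Sections~\ref{sec:cobcat}--\ref{sec:proof_more_technic}. Two separate things must be proved. First, the fixed subcategory of $\cC_{\kh}(D_p)$ is \emph{a priori} larger than what you want: one must show that whenever $\cM_{\kh}(\cD_p,\xt_p,\yt_p)^{\Z_p}\neq\emptyset$ the annular gradings of the two ends agree (Proposition~\ref{prop:periodic_link_diagram}), i.e.\ that the fixed category of the Khovanov category is the \emph{annular} category. This requires decomposing the lifted cobordism into $\Z_p$-simple pieces and running Riemann--Hurwitz plus genus bounds from the Bar-Natan relations; it fails to be a formal consequence of the cube combinatorics. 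Second, the moduli spaces $\cM_{\kh}$ are disjoint unions of $2^{c_1}$ permutohedra, where $c_1$ counts genus-one components of the associated cobordism (Counting Moduli Lemma~\ref{lemma:conn-components-moduli-space}); when a quotient moduli space is disconnected (ladybug configurations) one must determine which components upstairs are $\Z_p$-invariant and match them with the components downstairs compatibly with the ladybug matching (Lemmas~\ref{lem:ladybug_lifting} and~\ref{lem:disconnected_lemma}). Even at index one your picture of ``duplicate every circle $p$ times'' is wrong for the nontrivial circles: a single merge of two nontrivial circles downstairs lifts to a \emph{cycle} of $p$ merges upstairs, and checking that nonemptiness of $\cM_{\akh}$ is preserved under lifting (Lemma~\ref{lem:zhangs_lemma}) is a nontrivial case analysis. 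Without these steps the isomorphism of flow categories, and hence the theorem, is not established.
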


From Theorem~\ref{thm:main-theorem} and Theorem~\ref{thm:geom_local}, we can obtain nontrivial relations between the (annular) Khovanov homology of a periodic link and the annular Khovanov homology of the quotient thereof.
The simplest forms of the relation are the following versions of the \emph{Smith inequality}.

\begin{theorem}\label{cor:smith-inequality-annular-kh}
  Let $p$ be a prime and let $L_{p}$ be a $p$-periodic link with associated quotient link $L$. Then, for every $q,k \in \Z$ the following holds
  \[\sum_{i} \dim_{\F_{p}} \akh^{i,pq-(p-1)k,k}(L_{p};\F_{p}) \geq \sum_{j} \dim_{\F_{p}} \akh^{j,q,k}(L;\F_{p}).\]
\end{theorem}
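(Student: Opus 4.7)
The plan is to deduce the Smith inequality as a direct consequence of Theorems~\ref{thm:main-theorem} and~\ref{thm:geom_local} together with the classical Smith inequality for $\Z_p$-CW-complexes (or $\Z_p$-equivariant spectra) with coefficients in $\F_p$. The ingredients are already set up: Theorem~\ref{thm:main-theorem}(a) equips $\mathcal{X}_{\akh}^{q',k}(D_p)$ with a genuine action of $\Z_p$, and Theorem~\ref{thm:geom_local}(a) identifies its fixed point set, with $q'=pq-(p-1)k$, as the annular Khovanov spectrum $\mathcal{X}_{\akh}^{q,k}(D)$ of the quotient.

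First I would fix a $p$-periodic diagram $D_p$ of $L_p$ and let $D$ be the associated quotient diagram. For fixed $q,k\in\Z$, set $q'=pq-(p-1)k$ and consider the $\Z_p$-CW complex (or $\Z_p$-spectrum) $X:=\mathcal{X}_{\akh}^{q',k}(D_p)$. By Theorem~\ref{thm:main-theorem}(a), the action of $\Z_p$ on $X$ is compatible with the action on the annular Khovanov chain complex, and by Theorem~\ref{thm:lipsar}-style identification recalled in the introduction, $\wt{H}^{*}(X;\F_p)\cong\akh^{*,q',k}(L_p;\F_p)$. By Theorem~\ref{thm:geom_local}(a), $X^{\Z_p}=\mathcal{X}_{\akh}^{q,k}(D)$, so its reduced cohomology with $\F_p$-coefficients is $\akh^{*,q,k}(L;\F_p)$.

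Next, I would invoke the classical Smith inequality: for any finite $\Z_p$-CW complex (or, equivalently, $\Z_p$-equivariant suspension spectrum) $X$ one has
\[\sum_i \dim_{\F_p}\wt{H}^i(X^{\Z_p};\F_p)\;\leq\;\sum_i \dim_{\F_p}\wt{H}^i(X;\F_p).\]
Substituting the identifications above yields exactly
\[\sum_{j}\dim_{\F_p}\akh^{j,q,k}(L;\F_p)\;\leq\;\sum_{i}\dim_{\F_p}\akh^{i,pq-(p-1)k,k}(L_p;\F_p),\]
which is the claim.

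The only genuine points to verify are that the $\Z_p$-action produced by Theorem~\ref{thm:main-theorem} is a bona fide action on a CW complex (or on a $\Z_p$-spectrum in an equivariant model category) to which the classical Smith inequality applies, and that taking the fixed points commutes with the $\F_p$-cohomology identification in the form used above. Both are formal once the equivariant Lipshitz--Sarkar framework is in place; no further calculation is needed. The rest of the paper, in particular the versions of Theorem~\ref{thm:main-theorem} and Theorem~\ref{thm:geom_local} for the non-annular Khovanov spectrum, yields the analogous Smith inequality for $\kh$ by exactly the same argument.
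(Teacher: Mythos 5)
Your argument is correct, but it is not the route the paper actually takes in its written proof. You apply the classical Smith inequality $\sum_i\dim_{\F_p}\wt H^i(X^{\Z_p};\F_p)\le\sum_i\dim_{\F_p}\wt H^i(X;\F_p)$ directly to $X=\mathcal{X}_{\akh}^{pq-(p-1)k,k}(D_p)$, using Theorem~\ref{thm:main-theorem}(a) for the action and Theorem~\ref{thm:geom_local}(a) to identify $X^{\Z_p}$ with $\mathcal{X}_{\akh}^{q,k}(D)$. The paper instead proves the inequality as a corollary of Theorem~\ref{thm:khovanov-homology-quotient_link}: it passes through the identification of equivariant annular Khovanov homology with Borel cohomology (Theorem~\ref{thm:borel}), compares $\dim_{\F_p}\akh$ with the rank of $\eakh$ over $H^{\ast}(B\Z_p;\F_p)$, localizes, restricts to unstable elements, and invokes the Dwyer--Wilkerson theorem to identify the result with $\akh$ of the quotient. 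Both arguments ultimately rest on the Geometric Fixed Point Theorem (the Dwyer--Wilkerson route needs it to know that the fixed-point space is the quotient's annular spectrum), so neither is cheaper in terms of the hard geometric input. Your route is more elementary on the homological-algebra side and matches the remark in the introduction that the Smith inequalities are ``the simplest form'' of the fixed-point relations; the paper's route is longer but yields the much stronger Theorem~\ref{thm:khovanov-homology-quotient_link} (full functorial recovery of the quotient's homology, not just a rank bound) with the inequality falling out as a byproduct. The caveats you flag are real but benign: the classical Smith inequality applies to the finite $\Z_p$-CW complex $||\cC||$ before formal desuspension, and desuspending by a representation $V$ replaces the fixed-point set by its desuspension by $V^{\Z_p}$, which does not change total reduced cohomology dimension; so your substitution goes through.
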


\begin{theorem}\label{cor:smith-inequality-kh}
  For any $p$-periodic link $L_{p} \subset S^{3}$ and any $q \in \Z$ we have
  \[\sum_{i} \dim_{\F_{p}}\kh^{i,q}(L_{p};\F_{p}) \geq \sum_{\substack{j,q',k' \\ pq'+(p-1)k'=q}} \dim_{\F_{p}} \akh^{j,q',k'}(L_{p};\F_{p}) \geq \sum_{j} \dim_{\F_{p}} \kh^{j,q}(L;\F_{p}).\]
\end{theorem}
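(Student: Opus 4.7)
I would chain two inequalities. The first comes from applying the classical Smith inequality over $\F_{p}$ to the finite $\Z_{p}$-CW complex $\mathcal{X}_{\kh}^{q}(D_{p})$, whose $\Z_{p}$-action is provided by Theorem~\ref{thm:main-theorem}(b). By Theorem~\ref{thm:lipsar} its reduced $\F_{p}$-cohomology is $\kh^{\ast,q}(L_{p};\F_{p})$, so the left-hand side of the Smith inequality is exactly the leftmost sum of the statement. By Theorem~\ref{thm:geom_local}(b), the $\Z_{p}$-fixed point set decomposes as a wedge of annular Khovanov spectra indexed by the pairs $(q',k')$ lying on the ``Smith diagonal'' $pq'-(p-1)k'=q$, so the right-hand side of the Smith inequality matches the middle sum in the statement.

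For the second inequality, I would apply Theorem~\ref{cor:smith-inequality-annular-kh} termwise along that same diagonal to relate annular Khovanov of $L_{p}$ to annular Khovanov of the quotient $L$ at each point of the diagonal, and then invoke the spectral sequence coming from the wrapping-number filtration~\eqref{eq:annular-filtration} on $L$: its $E_{1}$-page totals to $\sum_{j,k}\dim_{\F_{p}}\akh^{j,q,k}(L;\F_{p})$ and its $E_{\infty}$-page computes $\kh^{\ast,q}(L;\F_{p})$, giving the rank estimate $\sum_{j,k}\dim_{\F_{p}}\akh^{j,q,k}(L;\F_{p})\geq \sum_{j}\dim_{\F_{p}}\kh^{j,q}(L;\F_{p})$. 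Summing along the diagonal and combining these two estimates chains the inequality down to $\kh^{\ast,q}(L;\F_{p})$.

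The main obstacle is not topological but combinatorial: one has to keep track carefully of quantum and annular gradings, checking that the diagonal of pairs $(q',k')$ appearing in the Smith fixed-point formula coincides, after applying Theorem~\ref{cor:smith-inequality-annular-kh}, with a sum that dominates (via the annular spectral sequence on the quotient) the Khovanov homology of $L$ in quantum grading $q$. The underlying topological input --- the classical Smith inequality for finite cyclic $p$-group actions on finite CW-complexes --- is standard and applies directly to $\mathcal{X}_{\kh}^{q}(D_{p})$ thanks to Theorems~\ref{thm:main-theorem}(b) and~\ref{thm:geom_local}(b), which supply both the equivariant CW structure and the identification of the fixed set.
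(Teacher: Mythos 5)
Your first inequality is sound but follows a different route from the paper's. You invoke the classical Smith inequality for $\Z_{p}$-actions on finite CW-complexes as a black box and combine it with Theorem~\ref{thm:geom_local}(b); the paper instead runs the chain
$\sum_i\dim_{\F_p}\kh^{i,q}(L_p;\F_p)\geq\operatorname{rank}_{H^{\ast}(B\Z_{p};\F_{p})}\ekh^{\ast,q}(L_p;\F_p)\geq\operatorname{rank}_{H^{\ast}(B\Z_{p};\F_{p})}\Un\bigl(S_{p}^{-1}\ekh^{\ast,q}(L_p;\F_p)\bigr)^{*}$,
with the last term identified with the fixed-point cohomology by the Dwyer--Wilkerson Theorem~\ref{thm:khovanov-homology-quotient_link} (this is the proof given for Theorem~\ref{cor:smith-inequality-annular-kh} in Subsection~\ref{sub:localization}, declared ``analogous'' for the present statement). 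The two are morally equivalent --- the classical Smith inequality is proved by exactly this localization argument --- but the paper's version is a one-line corollary of the stronger module-level statement it has already established.

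The second half of your argument has a genuine gap, and an internal inconsistency. In step one you match the cohomology of the fixed-point set with the middle sum; but Theorem~\ref{thm:geom_local}(b) gives $\mathcal{X}_{\kh}^{q}(D_{p})^{\Z_{p}}=\bigvee\mathcal{X}_{\akh}^{q',k'}(D)$ for the \emph{quotient} diagram, so the Smith inequality lands on $\bigoplus\akh^{j,q',k'}(L)$, the annular homology of the quotient $L$, not of $L_{p}$. In step two you then treat the middle term as annular homology of $L_{p}$ and apply Theorem~\ref{cor:smith-inequality-annular-kh} termwise; but the left-hand side of that theorem is $\akh^{i,pq'-(p-1)k',k'}(L_{p})=\akh^{i,q,k'}(L_{p})$, in quantum grading $q$ rather than $q'$, so it does not apply to the terms $\akh^{j,q',k'}(L_{p})$ as written. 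Most seriously, the final appeal to the wrapping-number spectral sequence does not close: for $L$ in a fixed quantum grading $q'$ the $E_{1}$-page is $\bigoplus_{k}\akh^{j,q',k}(L)$ summed over \emph{all} annular gradings $k$, whereas the diagonal $pq'-(p-1)k'=q$ selects exactly one annular grading $k'$ for each quantum grading $q'$ of $L$. Hence the diagonal sum is not the $E_{1}$-page of the filtration of $L$ in any single quantum grading, and the estimate against $\sum_{j}\dim_{\F_{p}}\kh^{j,q}(L;\F_{p})$ does not follow from it. (Summing the first inequality over all $q$ does recover the total annular homology of $L$ and hence, via the spectral sequence, the total rank of $\kh(L;\F_{p})$, but that loses the quantum-grading refinement the statement asserts.) The paper's own argument only carries the chain down to the fixed-point/annular term and is silent on this last step, so this is precisely the point where a genuine argument is still required.
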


Lipshitz and Sarkar~\cite{LS_refinement} showed that the action of stable cohomology operations on the Khovanov homology might lead to substantially stronger link invariants.
Similarly, in our case, stable cohomology operations can be used to strengthen Theorem~\ref{cor:smith-inequality-annular-kh} and Theorem~\ref{cor:smith-inequality-kh}.
As a corollary of our construction, we obtain Theorem~\ref{thm:khovanov-homology-quotient_link}, which gives a functorial way to determine the annular Khovanov homology of the quotient link from the equivariant (annular) Khovanov homology of a periodic link.
To be more precise, we show in Theorem~\ref{thm:borel} that equivariant (annular) Khovanov homology is isomorphic to the Borel equivariant cohomology of \(\mathcal{X}_{\kh}\) (respectively \(\mathcal{X}_{\akh}\)).
Careful analysis of the action of the Steenrod algebra on appropriately localized Borel cohomology, see~\cite{dwyer-wilkerson}, recovers the cohomology of the fixed point set \(\mathcal{X}_{\kh}^{\Z_{p}}\) (\(\mathcal{X}_{\akh}^{\Z_{p}}\), respectively).
Finally, by Theorem~\ref{thm:geom_local} we know that \(\mathcal{X}_{\kh}^{\Z_{p}}\) (and \(\mathcal{X}_{\akh}^{\Z_{p}}\)) are determined by \(\mathcal{X}_{\kh}\) (and \(\mathcal{X}_{\akh}\)) of the corresponding quotient link. This gives a passage from the Khovanov homology of a $p$-periodic link to the Khovanov homology of the quotient (with coefficients in $\Z_p$).
For details we refer to Section~\ref{sub:localization}.
Furthermore, a careful study of stable cohomology operations leads to a refinement of the periodicity criterion of~\cite{BP}, for which we refer the reader to a separate paper~\cite{Pol_Sil}.

\subsection{General context}
Since the advent of various homological invariants for three-manifolds or knots in three-manifolds, there has been a question on the behavior of these invariants under passing to the quotient by a group action.
One direction of the research was in the Floer theory.
Early results 
in knot Floer homology 
include Levine's paper~\cite{Levine_branched}, which was later used by Hendricks to obtain a rank inequality for knots in double branched covers (see~\cite{Hendricks_rank}).
More recent advances in this direction include another paper of Hendricks~\cite{Hendricks_local}, and finally, a paper by Lidman and Manolescu~\cite{LidMan}, where Smith-type inequalities are obtained for monopole Floer and Heegaard Floer homologies.

For Khovanov homology theory, the first localization results were obtained by Seidel and Smith~\cite{SeidelSmith_equiv}, where the authors used their own
definition of Khovanov homology based on the Lagrangian Floer theory~\cite{SeidelSmith_base}.
Note that the equivalence of the Seidel-Smith Khovanov homology with the original one is still conjectural in positive characteristic.
Motivated by their results, Hendricks, Lipshitz, and Sarkar~\cite{HLS} constructed equivariant Lagrangian Floer theory for more general groups.

The classical combinatorial definition of Khovanov homology enables an equivariant version,~\cite{Politarczyk_Khovanov}, generalizing earlier constructions of Chbili~\cite{Chbili}.

In order to study Khovanov homology using techniques from algebraic topology, it is convenient to realize Khovanov homology of a link as the singular homology of a topological space.
In a series of papers, Lipshitz and Sarkar, later also with Lawson, defined and studied the so-called \emph{Khovanov homotopy type}~\cite{LS_stable,LS_refinement,LS_Steenrod,LLS_long} (see also~\cite{Cohen} for a review in a language of algebraic topology) with the property that cohomology of the space is the Khovanov homology of a link.
A question remained whether Khovanov homotopy type (sometimes called `Khomotopy type') that they constructed admits a group action if the underlying link is periodic.

The affirmative answer was given in the first version of this paper, and, independently, by Stoffregen and Zhang \cite{StoffregenZhang}. The updated version of this paper
contains proofs of fixed point results, which were not present in the first version.
In particular, Theorems~\ref{thm:main-theorem} and~\ref{thm:geom_local} were proved first by Stoffregen and Zhang.
Note that the two constructions, even though they lead to the same result, are of
substantially different nature. Stoffregen and Zhang use the approach to Khovanov homotopy type via the Burnside category~\cite{LLS_long}.
Conceptually, this approach seems to require more case-by-case analysis.\mpar{(4)}
On the other hand, Burnside rings have deep connections with ordinary homology theory and Mackey functors; see the book of Costenoble and Waner~\cite{CostenobleWaner}.
Therefore, the construction of equivariant Khovanov homology via Burnside rings has the potential of revealing deeper structure in the equivariant Khovanov homology. 

On the contrary, our approach is very concrete and down-to-earth. Most of the arguments reduce either to the Riemann-Hurwitz formula or
to Counting Moduli Lemma~\ref{lemma:conn-components-moduli-space}, which is a direct application of relations in $\Cbl$.
Moreover, we give a specific and conceptual reason why the fixed point category of the Khovanov flow category is the annular Khovanov flow category and not just the Khovanov flow category; see Section~\ref{sec:fixed-points}, especially Lemma~\ref{lem:close_to_zhang}.\mpar{(5)}

Even more important is that we get an explicit cell decomposition of geometric realizations. Consequently, without much effort
we obtain an identification of the chain complex $C_*(\cX_{\kh})$
with the Khovanov chain complex $\ckh$ as $R[\Z_m]$ modules (for some ring $R$); see Proposition~\ref{prop:identification-kh-hom}. It follows that
Borel homology of the geometric realization is the equivariant Khovanov homology defined by Politarczyk. Theorem~\ref{thm:borel} might seem to have complicated proof, but
this is because we have rather general assumptions on the coefficient module.
Finally, methods of algebraic topology, like the Dwyer-Wilkerson theorem, allow us to recover the annular homology of the quotient link in terms of the equivariant Khovanov homology of the original link; see Theorem~\ref{thm:khovanov-homology-quotient_link}. The latter result is not present in the Stoffregen-Zhang paper. Furthermore, to the best of our understanding, passing from the results of Stoffregen and Zhang
to Theorem~\ref{thm:khovanov-homology-quotient_link} might require a few steps.

We expect that the equivariant homotopy type of Stoffregen-Zhang is equivariantly homotopy equivalent to our construction. We do not have proof of that fact.

The special case ($p=2$) of Theorem~\ref{cor:smith-inequality-annular-kh} was proved by Zhang~\cite{Zhang}.
She also proved Corollary~\ref{cor:smith-inequality-kh} for $p=2$ and certain classes of periodic links.

\subsection{Outline of the paper}

Our construction of the equivariant Khovanov homotopy type is based on the construction of the Khovanow homotopy type via cubical flow categories~\cite{LLS_long}, which is a simplification
of the original construction~\cite{LS_stable}. We consider an equivariant version of cubical flow category, called equivariant cubical flow category (see Subsection~\ref{sub:group_action}).
A remarkable difference from the non-equivariant definition is that the grading function $\gr$ is replaced by an equivariant grading function $\gr_G$ taking values in the representation ring $RO(G)$. Consequently, the moduli spaces are expected to be of dimension $\gr_G(x)-\gr_G(y)$ (refer to Definition~\ref{def:eq_dimension} 
for the definition of ``dimension'' in this setting). This approach is motivated by the construction of ordinary (Bredon) homology theory~\cite{CostenobleWaner}, and it makes the construction
of equivariant Khovanov homotopy type significantly simpler.

After defining equivariant cubical flow categories and a suitable generalization of the notion of a neat embedding to the equivariant case, we construct the equivariant Khovanov homotopy type.
Thanks to the choice of the grading function, this part of the construction is straightforward.

To show invariance under the choice of link diagram, we need to do substantially more work. The key tool is, as in~\cite{LS_stable}, the Whitehead theorem, but in
the equivariant case, the assumptions of the Whitehead theorem are much harder to verify. In particular, before proving invariance, we have to study fixed points of the
equivariant cubical flow category; Subsection~\ref{sub:fixed} is devoted to this study. Apart from that, the invariance of the group action on the choice of the diagram is
proved analogously as in the non-equivariant case.

The fixed point theorem requires even more work.
From Subsection~\ref{sub:fixed}, we know that the fixed point category is a cubical flow category, but we need to show that this category is the (annular) Khovanov flow category of the associated quotient link.
This is the statement of Theorem~\ref{thm:fixed-points}.
The proof requires a more in-depth understanding of topological and combinatorial properties of the morphism spaces $\cM(x,y)$.
The general idea is to use Bar-Natan's formulation of the Khovanov theory in terms of dotted cobordisms.\mpar{(6)}
A moduli space $\cM(x,y)$ is nontrivial if there exists a suitable cobordism $\Sigma$ between resolution configurations.
Counting Moduli Lemma~\ref{lemma:conn-components-moduli-space} expresses the number of connected components of the moduli spaces
in terms of the genera of the components of $\Sigma$.
If we pass to a cover, we can use the Riemann-Hurwitz Theorem to study the genus of the cover of the cobordism.
Then, Bar-Natan's formalism allows us to relate the moduli spaces of the periodic link and the moduli space of its quotient link.

Next, we pass to homological statements.
Our primary tool is the BQAS (Borel-Quillen-Atiyah-Segal) Localization Theorem~\cite{Borel,Quillen} and a Smith-type inequality~\cite{Smith} which relates the rank of the homology groups of a periodic knot with the rank of the homology group of the quotient knot.
As an immediate corollary of Theorem~\ref{thm:main-theorem} we obtain Smith inequalities for (annular) Khovanov homology.

While analogs of the BQAS Localization Theorem recover only the rank of the homology of the quotient knot, by applying more refined tools from algebraic topology 
we obtain a significantly stronger result.
Indeed, using the result of Dwyer and Wilkerson~\cite{dwyer-wilkerson}, it is possible to give a complete description of the Khovanov homology of the quotient knot in terms of the \emph{equivariant Khovanov homology} of a $p$-periodic knot, for a prime $p$.
By Theorem~\ref{thm:borel}, the Borel cohomology of $\mathcal{X}_{\kh}(D)$ can be identified with the \emph{equivariant Khovanov homology} $\ekh^{\ast,\ast}(L;\F_{p})$ introduced by the second author~\cite{Politarczyk_Khovanov}.
Repeating the construction of~\cite{Politarczyk_Khovanov} one can obtain the \emph{equivariant annular Khovanov homology} $\eakh^{\ast,\ast,\ast}(L;\F_{p})$, which, by an analog of Theorem~\ref{thm:borel}, is isomorphic to the Borel cohomology of $\mathcal{X}_{\akh}(L)$.
Therefore, $\ekh^{\ast,\ast}(L;\F_{p})$ and $\eakh^{\ast,\ast,\ast}(L;\F_{p})$ admit an action of the cohomology algebra $H^{\ast}(B\Z_{p};\F_{p})$, of the classifying space of $\Z_{p}$ and the action of the mod~$p$ \emph{Steenrod algebra}~$\cA_{p}$.
These two algebraic structures are sufficient to recover the annular Khovanov homology of the quotient knot from equivariant annular Khovanov homology of the periodic knot, as shown in Theorem~\ref{thm:khovanov-homology-quotient_link}.

\smallskip
The structure of the paper is as follows. Section~\ref{sec:flow_cat} recalls the construction of Lipshitz and Sarkar. The reader familiar with the construction
can skim through this section, maybe except Subsection~\ref{sub:posets_as}, where the degree of the cover map $\ff$ is expressed in terms of maximal chains in suitably defined posets.
Section~\ref{sec:equivariant} generalizes the construction of a geometric realization of a cubical flow category to the construction of a geometric realization of an equivariant
cubical flow category.  The results in this section are stated for general equivariant flow categories and general finite groups.

Section~\ref{sec:review} deals with Khovanov homotopy type. We construct the equivariant Khovanov flow category as well as its annular analog.
We show that passing to geometric realization yields a space that is independent of various choices up to equivariant stable homotopy equivalence.
This independence is proved in Section~\ref{sec:main1}.
In Section~\ref{sec:cobcat}, we make preparatory steps to prove the fixed-point theorems.
We recall Bar-Natan's construction of Khovanov homology via $\Cbl$-category and use this construction to establish Counting Moduli Lemma~\ref{lemma:conn-components-moduli-space}, which computes the number of connected components of the moduli space in terms of the genus of the cobordism in Bar-Natan's setting.
Section~\ref{sec:fixed-points} proves Categorical Fixed Point Theorem (Theorem~\ref{thm:fixed-points}).

In Section~\ref{sec:equivhomo}, we change the setting and deal with homologies of geometric realizations.
We show that Borel homology of the equivariant geometric realization of the Khovanov category coincides with Politarczyk's equivariant Khovanov homology of a periodic link (Theorem~\ref{thm:borel}).
The Dwyer-Wilkerson theory allows us to calculate the Khovanov homology of a quotient link in terms of the equivariant Khovanov homology of the associated periodic link, see Theorem~\ref{thm:khovanov-homology-quotient_link}.

Some technical results are moved to the Appendix.
In Appendix~\ref{sec:n_manifolds}, we review the definitions of manifolds with corners, while in Appendix~\ref{sec:permutohedra}, we review the definition and basic properties of permutohedra.
We also establish a technical result, Proposition~\ref{prop:permutinter}, which
essentially says that the intersection of a permutohedron with a hyperplane is a permutohedron of lower dimension. To the best of our knowledge, it is a result not known in the literature.
A consequence of this technical fact is Proposition~\ref{prop:fixed_permut}. It states that if a group acts on $\R^n$ by permuting coordinates, a fixed point set of a permutohedron is again a permutohedron. 

Finally, we note that we present detailed examples of computations in a forthcoming paper~\cite{BJP}.
\begin{ack}
  The authors wish to thank Stefan Jackowski, Kristen Hendricks, Robert Lipshitz, Sucharit Sarkar, and Krzysztof Ziemia\'nski for stimulating discussions.
  They express their gratitude to the referees for their comments on the paper, especially for suggestions of streamlining the proof of the Categorical Fixed Point Theorem.

  The first author is supported by the National Science Center grant 2016/22/E/ST1/00040.
  The second author is supported by the National Science Center grant 2016/20/S/ST1/00369.
  The third author is partially supported by the Spanish research project MTM2016-76453-C2-1-P, ERC grant PCG-336983, and by Basque Government grant IT974-16. 

  Part of the project was done during the stay of the third author in Warsaw. We are grateful to the Institute of Mathematics, Polish Academy of Science, for hospitality.
\end{ack}

\section{Flow categories and their geometric realizations}\label{sec:flow_cat}
\subsection{Flow categories}\label{sub:flow_and_covers}
In this section we use the notion of an $\langle n\rangle$-manifold introduced in the Appendix~\ref{sec:n_manifolds}.
The necessary background on permutohedra is given in Appendix~\ref{sec:permutohedra}.
\begin{definition}\label{def:flow}
  A \emph{flow category} is a topological category $\cC$ such that the set of objects is finite, discrete, and is equipped with a \emph{grading function} $\gr_{\cC} \colon \ob(\cC) \to \Z$.\mpar{(7)}
  Morphism spaces satisfy the following three conditions:
  \begin{enumerate}[label=(FC-\arabic*)]
  \item\label{item:FC-1} For any $x \in \ob(\cC)$, $\hom_{\cC}(x,x) = \{id\}$.
  \item\label{item:FC-2} For any $x, y \in \ob(\cC)$ with $\gr(x)-\gr(y) = d$, $\hom_{\cC}(x,y)$ is a (possibly empty) $(d-1)$-dimensional $\langle d-1 \rangle$-manifold.
  \item\label{item:FC-3} If $\gr(x)-\gr(y) = d$, then the composition maps induce diffeomorphisms of $\langle d-2 \rangle$-manifolds\mpar{(8): the product structure is defined in Appendix~A.}
    \[\bigsqcup_{\stackrel{z \in \ob(\cC) \setminus \{x,y\}}{\gr(z) - \gr(y)=i}} \hom_{\cC}(z,y) \times \hom_{\cC}(x,z) \cong \partial_i \hom_{\cC}(x,y).\]
    Moreover, for any $x,y \in \ob(\cC)$ we define the \emph{moduli space from $x$ to $y$} as
    \[
      \cM_{\cC}(x,y) =
      \begin{cases}
        \hom_{\cC}(x,y), & \text{ if } x \neq y, \\
        \emptyset,             & \text{ otherwise}.
      \end{cases}
    \]
  \end{enumerate}
\end{definition}

If $\tau \in \Z$, we define the \emph{$\tau$-th suspension of $\cC$}, $\Sigma^\tau(\cC)$, to be the flow category with the same objects and morphisms and associated grading function
\[\gr_{\Sigma^\tau(\cC)}(x) = \gr_{\cC}(x)+\tau.\]

\begin{definition}[see~{\cite[Section 3.1]{LLS_long}}]\label{def:cube-flow-category}
  The \emph{cube flow category} $\Cube(n)$, for $n \in \Z_{+}$, is the flow category such that:\mpar{(9)}
  \begin{enumerate}
  \item $\ob(\Cube(n)) = \{0,1\}^n$ with grading defined by
    \[\gr(u) = |u|=\sum_i u_i,\]
    where $u = (u_1,u_2,\ldots,u_n)$.
    The set of objects of $\Cube(n)$ can be partially ordered:
    \[u \geq v, \quad \text{ if } u_{i} \geq v_{i} \text{ for all } 1 \leq i \leq n.\]
    For two objects $u > v$ of the flow category with $\gr(u) - \gr(v)=d$ we define
    \[\cM_{\Cube(n)}(u,v) = \Pi_{d-1} \subset \prod_{i \colon u_i > v_i} \R,\]
    where $\Pi_{d-1}$ is a $(d-1)$-dimensional permutohedron as in Definition~\ref{def:permuto}.
  \item Composition of morphisms
    \begin{equation}\label{eq:composition_of_morphisms}
      \cM_{\Cube(n)}(w,v) \times \cM_{\Cube(n)}(u,w) \to \cM_{\Cube(n)}(u,v)
    \end{equation}
    is defined with the aid of identification from Lemma~\ref{lem:codimk}.
    Namely, for a triple of objects $u > w > v$ such that $\gr(u) - \gr(w) = k$, $\gr(w) - \gr(v) = l$, there exists $a_{1}, a_{2},\ldots, a_{k+l} \in \{1,2\ldots,n\}$ with $a_{1} < a_{2} < \ldots < a_{k+l}$, such that
    \[u_{a_1}=u_{a_2}=\cdots = u_{a_{k+l}}=1, \quad v_{a_1}=v_{a_2}=\cdots = v_{a_{k+l}}=0, \]
    and $u_{j} = v_{j}$, for $j \neq a_{1}, a_{2}, \ldots, a_{k+l}$.
    Let $\cP$ be the subset of $\{1,2,\ldots,k+l\}$ consisting of indices $s$ such that $w_{a_s}=1$.
    By Lemma~\ref{lem:codimk} the facet (see Subsection~\ref{sub:permutstart} for terminology) $\Pi_{\cP}$ of $\Pi_{k+l-1}$ can be identified with
    \[\Pi_{l-1} \times \Pi_{k-1} = \cM_{\Cube(n)}(w,v) \times \cM_{\Cube(n)}(u,w).\]
    The composition~\eqref{eq:composition_of_morphisms} is given by the embedding map
    \begin{equation}\label{eq:embedding_cube}
      \cM_{\Cube(n)}(w,v) \times \cM_{\Cube(n)}(u,w) = \Pi_{l-1} \times \Pi_{k-1} = \Pi_{\cP} \hookrightarrow \partial \cM_{\Cube(n)}(u,v).
    \end{equation}
  \end{enumerate}
  We use the notation $0_n=(0,\dots,0)\in\ob(\Cube(n))$ and $1_n=(1,\dots,1)\in\ob(\Cube(n))$.
\end{definition}

\begin{example}\label{ex:flow_on_Rn}
  In~\cite[Definition 3.14]{LS_stable} there is described
  a method to assign a flow category $\cC_{f}$ to every Morse-Smale function $f \colon M \to \R$, where $M$ is a smooth compact manifold.
  Objects of $\cC_{f}$ are critical points of $f$, the grading of an object is the index of the associated critical point, and the morphism spaces are moduli spaces of non-parametrized gradient flow lines of $f$.\mpar{(10): we removed the last two sentences.}
\end{example}
The $n$-dimensional cube $[0,1]^{n}$ can be equipped with the structure of a CW-complex with cells
\[X_{u,v} = \{w = (w_{1},\ldots,w_{n}) \in [0,1]^{n} \colon \forall_{1 \leq i \leq n} \quad v_{i} \leq w_{i} \leq u_{i}\},\]
where $u = (u_{1},\ldots,u_{n}) \in \{0,1\}^{n}$, $v = (v_{1},\ldots,v_{n}) \in \{0,1\}^{n}$ and $v_{i} \leq u_{i}$ for every $1 \leq i \leq n$.
Let $C^{\ast}([0,1]^{n};\F_{2})$ denote the cellular cochain complex of the cube associated to the CW-structure described above.
\begin{definition}
  A \emph{sign assignment} $\nu$ is a cochain $\nu \in C^{1}([0,1]^{n};\F_{2})$ such that $\partial^{\ast}\nu = 1_{2}$, the cochain in $C^{2}([0,1]^{n};\F_{2})$ with constant value~$1$.
\end{definition}
Since $H^{1}([0,1]^{n};\F_{2})$ is trivial, it is always possible to find a sign assignment.
Moreover, for any two sign assignments $\nu_{1}$ and $\nu_{2}$ we have $\nu_{1}-\nu_{2}=\partial^{\ast} t$ for some $t \in C^{0}([0,1]^{n},\F_{2})$.
The \emph{standard sign assignment} is given by the following formula
\begin{equation}\label{eq:sign-assignment}
  \nu_{st}((\epsilon_{1},\ldots,\epsilon_{j-1},1,\epsilon_{j+1},\ldots,\epsilon_{n}),(\epsilon_{1},\ldots,\epsilon_{j-1},0,\epsilon_{j+1},\ldots,\epsilon_{n})) = \epsilon_{1} + \cdots + \epsilon_{j-1},
\end{equation}
where we use the shortened notation $\nu(X_{u,v}) = \nu(u,v)$.

\begin{definition}[see \expandafter{\cite[Section 3.5]{LLS_long}}]\label{def:cubical-flow-category}
  A \emph{cubical flow category} is a flow category $\cC$ equipped with a grading-preserving functor $\ff \colon \Sigma^\tau\cC \to \Cube(n)$, for some $\tau \in \Z$ and $n \in \mathbb{N}$, such that for any pair of objects $x,y$ of $\cC$ the map $\ff_{x,y} \colon \cM_{\cC}(x,y) \to \cM_{\Cube(n)}(\ff(x),\ff(y))$ is a covering map.\mpar{(11)}
\end{definition}

To conclude this subsection we recall a definition of~\cite[Section 3.4.2]{LS_stable}.

\begin{definition}\label{def:downward}
  Let $\cC$ be a flow category and let $\mathcal{C}'$ be a subcategory of $\mathcal{C}$.
  We say that $\cC'$ is \emph{downward closed} (respectively \emph{upward closed}) if, for any $x,y \in \ob(\cC)$ such that $\cM_{\cC}(x,y)\neq \emptyset$, $x \in \Ob(\cC')$ implies that $y \in \ob(\cC')$ (respectively, $y\in \Ob(\cC')$ implies that $x\in \ob(\cC')$).

  Given a downward closed subcategory $\cC'$ of $\cC$, we consider a full subcategory $\cC''$ of $\cC$ whose objects are objects not in $\cC'$.
  The category $\mathcal{C}'$ is upward closed. We call it the \emph{complementary upward closed category} of $\cC'$.
  A complementary downward closed category of an upward closed category is constructed similarly.
\end{definition}

\subsection{Posets associated to cubical flow categories}\label{sub:posets_as}
The goal of this subsection is to calculate combinatorially the degree of the map $\ff_{x,y}\colon\cM_{\cC}(x,y)\to\cM_{\Cube}(\ff(x),\ff(y))$.
Proposition~\ref{prop:chains_moduli} is a step in establishing
Counting Moduli Lemma~\ref{lemma:conn-components-moduli-space} below,
which is needed to prove the Categorical Fixed Point Theorem (Theorem~\ref{thm:fixed-points}).

Let $P$ be a finite poset.
A \emph{chain} in $P$ is a linearly ordered subset of $P$.
A chain is called \emph{maximal} if it is maximal with respect to the inclusion relation.
We denote by $\max(P)$ the set of maximal chains of $P$.
\begin{example}
  Let $u,v\in\ob(\Cube(n))$.
  Define the poset
  \[P(u,v)=\{w \in \ob(\Cube(n)) \colon u \ge w \ge v\}.\]
  If $c = \{w_{1} > w_{2} > \ldots > w_{k}\}$ is a chain in $P(u,v)$, then we say that $c$ is a \emph{full chain} if $w_{1} = u$ and $w_{k}=v$.
  Every maximal chain is necessarily full.
\end{example}
We write $P(\Cube(n))$ for the poset of all objects of $\Cube(n)$.
While $P(\Cube(n))=\ob(\Cube(n))$, we use the notation $P(\Cube(n))$ whenever we want to emphasize the partial order on the objects of the cube category.

Choose $u>v$ in $\Cube(n)$. Set $s=\gr(u)-\gr(v)$.
An element $w\in P(u,v)$ determines a facet of $\cM_{\Cube(n)}(u,v)=\Pi_{s-1}$, which is the image of
\[\cM_{\Cube(n)}(w,v)\times\cM_{\Cube(n)}(u,w)\to \cM_{\Cube(n)}(u,v).\]
More generally, every full chain in $P(u,v)$ determines a face of $\Pi_{s-1}$.
Namely, to a full chain $u > w_1 > \ldots > w_k > v$, we associate the face which is the image of the composition map
\begin{equation}\label{eq:face-of-the-permutahedron}
  \cM_{\Cube(n)}(w_k,v)\times\cM_{\Cube(n)}(w_{k-1},w_k)\times\dots\times\cM_{\Cube(n)}(u,w_1) \xrightarrow{\circ} \cM_{\Cube(n)}(u,v).
\end{equation}
A maximal chain in $P(u,v)$ corresponds to a vertex of $\cM_{\Cube(n)}(u,v)$.
Conversely, to a vertex $\fz=(z_1,\dots,z_s)$ of $\Pi_{n-1}$ we associate a maximal chain $u=w_1>w_2>\dots>w_s=v$ such that $w_{i+1}$ differs from $w_i$ at the $z_i$-th coordinate.
Denote this maximal chain by $P_{\fz}(u,v)$.

\smallskip
Suppose now $\cC$ is a cubical flow category and $\ff\colon\cC\to\Cube(n)$ is the cubical functor.
Until the end of this subsection, we will make the following assumption. 
\begin{assumption}\label{ass:ass}
  For any $x,y\in\ob(\cC)$ such that $\gr(x)-\gr(y)=1$, the moduli space $\cM_{\cC}(x,y)$ is either empty or it is a single point.
\end{assumption}
Note that this assumption is trivially satisfied in the case of the Khovanov, respectively the annular Khovanov
flow category, defined in Subsections~\ref{sub:khomotopy_type} and~\ref{sub:annul-khov-homot-1}.

Under Assumption~\ref{ass:ass} we can define the following relation on objects: we say that $x\succ y$ if $\gr(x)-\gr(y)=1$ and $\cM_{\cC}(x,y)$ is non-empty.
In general $\succ$ is the transitive closure of this relation.
\begin{lemma}\label{lem:non_empty}
  Given $x,y \in \ob(\cC)$, $x \succ y$ if and only if $\cM_{\cC}(x,y)$ is non-empty.
\end{lemma}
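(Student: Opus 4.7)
The plan is to prove the two implications separately. The forward direction is a direct application of axiom~\ref{item:FC-3}, while the backward direction requires induction on $d=\gr(x)-\gr(y)$, with the cubical cover map providing the essential geometric input.

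For the forward direction, assume $x\succ y$, so by definition of the transitive closure there exists a chain $x=x_0,x_1,\ldots,x_k=y$ with $\gr(x_{i-1})-\gr(x_i)=1$ and $\cM_{\cC}(x_{i-1},x_i)\neq\emptyset$ for every $i$. I would induct on $k$. The case $k=1$ is tautological. For $k\ge 2$, the inductive hypothesis applied to the subchain $x_0,\ldots,x_{k-1}$ produces a point in $\cM_{\cC}(x,x_{k-1})$; combining it with any point of $\cM_{\cC}(x_{k-1},y)$ yields an element of
\[\cM_{\cC}(x_{k-1},y)\times\cM_{\cC}(x,x_{k-1}),\]
which by~\ref{item:FC-3} embeds diffeomorphically as a boundary face of $\cM_{\cC}(x,y)$. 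In particular, $\cM_{\cC}(x,y)\neq\emptyset$.

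For the backward direction, suppose $\cM_{\cC}(x,y)\neq\emptyset$ and set $d=\gr(x)-\gr(y)\ge 1$. I would induct on $d$. The base case $d=1$ is immediate from Assumption~\ref{ass:ass} together with the definition of $\succ$. For $d\ge 2$, the cubical structure provides a covering map
\[\ff_{x,y}\colon\cM_{\cC}(x,y)\longrightarrow\cM_{\Cube(n)}(\ff(x),\ff(y))=\Pi_{d-1}\]
onto a connected base with non-empty boundary (since $\Pi_{d-1}$ has positive dimension). Because covering maps of $\langle d-1\rangle$-manifolds preserve the stratification by codimension and the total space is non-empty, $\partial\cM_{\cC}(x,y)$ must also be non-empty. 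Consequently $\partial_i\cM_{\cC}(x,y)\neq\emptyset$ for some $i\in\{1,\ldots,d-1\}$, and~\ref{item:FC-3} then produces an intermediate object $z$ with $\gr(z)-\gr(y)=i$, in particular with $\gr(x)>\gr(z)>\gr(y)$, such that both $\cM_{\cC}(x,z)$ and $\cM_{\cC}(z,y)$ are non-empty. The inductive hypothesis applied to each of these smaller grading differences yields $x\succ z$ and $z\succ y$, and transitivity of $\succ$ closes the argument.

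The main obstacle is the claim that a non-empty covering of $\Pi_{d-1}$ (viewed as an $\langle d-1\rangle$-manifold) must itself have non-empty boundary. This reduces to the general fact that a covering map in the category of $\langle n\rangle$-manifolds is a local diffeomorphism compatible with the corner stratification, so it sends interior points to interior points and boundary strata to boundary strata; coupled with surjectivity of the cover over the connected base, any boundary point of $\Pi_{d-1}$ lifts and forces $\partial\cM_{\cC}(x,y)$ to be non-empty. Everything else is a formal induction on either dimension or chain length.
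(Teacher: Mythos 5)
Your proof is correct, and the forward direction is identical to the paper's. The backward direction reaches the same conclusion by a slightly different organization: the paper observes that, via the cover $\ff_{x,y}$, the non-empty space $\cM_{\cC}(x,y)$ is a disjoint union of copies of the permutohedron $\Pi_{d-1}$, and then simply picks a \emph{vertex}, which by~\ref{item:FC-3} is a product $\cM_{\cC}(x_{n-1},x_n)\times\cdots\times\cM_{\cC}(x_0,x_1)$ of zero-dimensional moduli spaces and hence yields a full chain of index-one relations in one step. You instead induct on $d=\gr(x)-\gr(y)$, using the cover to see that the boundary is non-empty, extracting a single intermediate object $z$ from~\ref{item:FC-3}, and recursing. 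Both arguments rest on exactly the same geometric input (each component of $\cM_{\cC}(x,y)$ maps onto $\Pi_{d-1}$, whose boundary stratification matches the composition maps); the paper's version is shorter because the deepest stratum is reached at once, while yours trades that for a formal induction. Either is acceptable.
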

\begin{proof}
  If $x\succ y$, there exists a chain $x=x_0\succ x_1\succ\ldots\succ x_s=y$ and therefore $\cM_{\cC}(x,y)$ contains $\cM_{\cC}(x_{s-1},x_s)\times\cM_{\cC}(x_{s-2},x_{s-1})\times\ldots\times \cM_{\cC}(x_0,x_1)$, so it is non-empty.
  Conversely, if $\cM_{\cC}(x,y)$ is non-empty, then it is a union of permutohedra $\Pi_{s-1}$.
  Choose a vertex of one of these permutohedra, which corresponds to $\cM_{\cC}(x_{s-1},x_{s})\times\cM_{\cC}(x_{s-2},x_{s-1})\times\ldots\times\cM_{\cC}(x_0,x_1)$ for some sequence $x=x_0,x_1,\ldots,x_s=y$ of objects in $\cC$.
  Then, $\gr(x_{i-1})-\gr(x_{i})=1$ and $\cM_{\cC}(x_{i-1},x_{i})$ is non-empty.
  Hence $x_{i-1}\succ x_i$ and therefore $x\succ y$.
\end{proof}
\begin{remark}
  The map $\ff\colon\ob(\cC)\to\ob(\Cube(n))$ is order-preserving.
\end{remark}

Let $x,y\in\ob(\cC)$ and $s=\gr(x)-\gr(y)$.
Assume that $\cM_{\cC}(x,y)$ is non-empty.
The poset $P_{\cC}(x,y)$ is the poset of all $x'\in\ob(\cC)$ such that $x \succcurlyeq x'\succcurlyeq y$.
Any full chain $x=x_0\succ x_1\succ x_2\succ\ldots\succ x_s=y$ in $P_{\cC}(x,y)$ corresponds to a face of $\cM_{\cC}(x,y)$ defined via the composition map $\cM_{\cC}(x_{s-1},x_{s})\times\ldots \times\cM_{\cC}(x_0,x_1)\subset \cM_{\cC}(x,y)$.\mpar{(12): A \emph{facet} is a codimension one face. We recall this distinction (present in LLS papers) in the appendix and we've checked all the instances of face/facet used in the paper.}
A maximal chain in $P_{\cC}(x,y)$ corresponds to a single vertex in $\cM_{\cC}(x,y)$, because if the chain is maximal, all the moduli spaces $\cM_{\cC}(x_i,x_{i+1})$ consist of a single element by Assumption~\ref{ass:ass}.
\begin{definition}\label{def:assoc_vertex}
  For a maximal chain $m\in P_{\cC}(x,y)$, the \emph{associated vertex} $v_m\in\cM_{\cC}(x,y)$ is the vertex associated to $m$ by the above construction.
\end{definition}
The correspondence can be reversed. Each face of $\cM_{\cC}(x,y)$ determines a chain in $P_{\cC}(x,y)$ precisely as in the case of the cube flow category.
The following result is a special case.
\begin{lemma}\label{lem:assoc_reverse}
  For every vertex $v\in\cM_{\cC}(x,y)$ there exists a maximal chain $m\in P_{\cC}(x,y)$ such that $v=v_m$.
\end{lemma}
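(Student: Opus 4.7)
The plan is to proceed by induction on $d = \gr(x) - \gr(y) \geq 1$, using the covering functor $\ff$ to transport the problem to the permutohedron $\Pi_{d-1}$, where the bijection between vertices and maximal chains is already transparent, and then to peel off one grading level at a time. The base case $d = 1$ is immediate: by Assumption~\ref{ass:ass}, $\cM_{\cC}(x,y)$ is a single point, $P_{\cC}(x,y)$ has a unique maximal chain $\{x \succ y\}$, and its associated vertex is that point.

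For the inductive step I would first push $v$ down via the covering map $\ff_{x,y}$ to obtain a vertex $\bar v \in \Pi_{d-1} = \cM_{\Cube(n)}(\ff(x), \ff(y))$. Under the bijection between vertices of $\Pi_{d-1}$ and maximal chains of $P(\ff(x), \ff(y))$, $\bar v$ corresponds to a chain $\ff(x) = u_0 > u_1 > \cdots > u_d = \ff(y)$; in particular $\bar v$ lies on the facet associated to $u_1$. Axiom~\ref{item:FC-3} identifies the codimension-one portion of the boundary of $\cM_{\cC}(x,y)$ at grading level $\gr(x) - 1$ with $\bigsqcup_{x_1} \cM_{\cC}(x_1, y) \times \cM_{\cC}(x, x_1)$, and only summands with $\ff(x_1) = u_1$ cover the chosen facet of $\Pi_{d-1}$. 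Since $v$ lies in precisely one of these, it selects an object $x_1$ with $\gr(x) - \gr(x_1) = 1$ and $x \succ x_1 \succ y$ (the latter via Lemma~\ref{lem:non_empty}). Assumption~\ref{ass:ass} forces $\cM_{\cC}(x, x_1)$ to be a single point $p$, so $v = (v', p)$ for a vertex $v' \in \cM_{\cC}(x_1, y)$. Applying the inductive hypothesis to $v'$ yields a maximal chain $x_1 \succ x_2 \succ \cdots \succ y$ in $P_{\cC}(x_1, y)$, and prepending $x$ produces the required maximal chain $m$ in $P_{\cC}(x,y)$; the identification $v_m = v$ is then an unpacking of Definition~\ref{def:assoc_vertex} via the iterated composition~\eqref{eq:face-of-the-permutahedron}.

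The hard part will be the opening claim that $\ff_{x,y}$ sends vertices to vertices, which amounts to showing that the covering map is compatible with the stratifications induced by axiom~\ref{item:FC-3} on both the source and target $\langle d-1\rangle$-manifolds-with-corners. Intuitively this is forced by local triviality, but a clean justification should proceed by characterizing a vertex as a point lying in codimension exactly $d-1$ in the face stratification, together with the observation that $\ff_{x,y}$ restricts on each codimension-$k$ stratum of $\cM_{\cC}(x,y)$ to a covering of the corresponding stratum of $\Pi_{d-1}$. Once this compatibility is in place, the rest of the argument is bookkeeping parallel to the cube-category bijection between vertices of $\Pi_{n-1}$ and maximal chains of $P(u,v)$ established in Lemma~\ref{lem:ischain} and the subsequent remark.
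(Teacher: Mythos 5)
Your proof is correct and is essentially the paper's argument in expanded form: the paper simply observes that a vertex is, by the face–chain correspondence already set up for $\cM_{\cC}(x,y)$, the image of a product $\cM_{\cC}(x_r,y)\times\cdots\times\cM_{\cC}(x,x_1)$ of zero-dimensional moduli spaces, each a single point by Assumption~\ref{ass:ass}, which immediately yields the maximal chain. Your induction merely re-derives that correspondence one grading level at a time, and the "hard part" you flag (compatibility of $\ff_{x,y}$ with the corner stratifications) is exactly what the paper takes as given from the functoriality of $\ff$ and the fact that each component of $\cM_{\cC}(x,y)$ maps diffeomorphically onto $\Pi_{d-1}$.
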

\begin{proof}
  By definition, a vertex $v$ in $\cM_{\cC}(x,y)$ is an image of $\cM_{\cC}(x_r,y)\times\ldots\times \cM_{\cC}(x,x_1)$, where all moduli spaces are zero-dimensional.
  In particular, with $x_0=x$ and $x_{r+1}=y$, we have $\gr(x_i)-\gr(x_{i-1})=1$, which implies that the chain $x_0\succ\ldots\succ x_{r+1}$ is maximal.
  Clearly, the vertex associated to this chain is $v$.
\end{proof}
Given $u=\ff(x)$, $v=\ff(y)$ two objects in $\Cube(n)$, the map $\ff_{x,y}\colon \cM_{\cC}(x,y)\to \cM_{\Cube(n)}(u,v)$ induces a map of posets
\[\ff^P_{x,y}\colon P_{\cC}(x,y)\to P_{\Cube(n)}(u,v).\]
For any vertex $z \in \Pi_{n-1}$ we define $P_{\fz}(x,y) \subset P_{\mathcal{C}}(x,y)$ to be the preimage of $P_{\fz}(u,v)$ under $\ff^P_{x,y}$.

\begin{proposition}\label{prop:chains_moduli}
  For any vertex $\fz$ of $\Pi_{n-1}$,
  \[\# \max P_{\fz}(x,y) = \# \pi_{0}(\cM_{\cC}(x,y)).\]
\end{proposition}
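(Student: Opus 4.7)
The plan is to connect both sides of the equality through the intermediate set of vertices of $\cM_{\cC}(x,y)$ lying in the fiber $\ff_{x,y}^{-1}(\fz)$. Concretely, I would prove
\[\# \max P_{\fz}(x,y) \;=\; \#\bigl(\operatorname{Vert}(\cM_{\cC}(x,y)) \cap \ff_{x,y}^{-1}(\fz)\bigr) \;=\; \# \pi_{0}(\cM_{\cC}(x,y)),\]
where $u=\ff(x)$, $v=\ff(y)$, and $\operatorname{Vert}$ denotes the set of vertices of the manifold with corners.

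\textbf{Step 1 (right-hand equality).} Since $\cC$ is cubical, $\ff_{x,y}\colon \cM_{\cC}(x,y)\to \cM_{\Cube(n)}(u,v)=\Pi_{n-1}$ is a covering map. The permutohedron $\Pi_{n-1}$ is a convex polytope, hence contractible and simply connected, so $\ff_{x,y}$ restricted to any connected component $M$ of $\cM_{\cC}(x,y)$ is a homeomorphism onto $\Pi_{n-1}$. In particular, $M$ has exactly one vertex mapping to the vertex $\fz$. Summing over components yields the right-hand equality.

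\textbf{Step 2 (left-hand equality, vertex from chain).} Let $m=(x=x_0\succ x_1\succ\cdots\succ x_n=y)$ be a maximal chain in $P_{\cC}(x,y)$. By Definition~\ref{def:assoc_vertex}, $m$ determines a vertex $v_m\in\cM_{\cC}(x,y)$, namely the image of the unique point of $\cM_{\cC}(x_{n-1},y)\times\cdots\times\cM_{\cC}(x_0,x_1)$ under the iterated composition. Applying $\ff$ to the chain yields a maximal chain $\ff^P_{x,y}(m)$ in $P(u,v)$, and, by compatibility of the composition maps with $\ff_{x,y}$, the vertex $v_m$ maps to the vertex of $\Pi_{n-1}$ associated to $\ff^P_{x,y}(m)$. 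Since $P_{\fz}(u,v)$ is precisely the unique maximal chain in $P(u,v)$ corresponding to the vertex $\fz$ (by Lemma~\ref{lem:ischain} together with the remark identifying maximal chains with vertices), a maximal chain of $P_{\cC}(x,y)$ lies inside $P_{\fz}(x,y)$ if and only if $v_m\in\ff_{x,y}^{-1}(\fz)$. Hence $m\mapsto v_m$ gives a well-defined map from $\max P_{\fz}(x,y)$ into the set of vertices of $\cM_{\cC}(x,y)$ over $\fz$.

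\textbf{Step 3 (bijectivity).} Lemma~\ref{lem:assoc_reverse} shows the map $m\mapsto v_m$ is surjective onto all vertices, in particular onto those over $\fz$. For injectivity, suppose $m_1\neq m_2$ are two maximal chains in $P_{\fz}(x,y)$ with $v_{m_1}=v_{m_2}=:v$, and let $M$ be the connected component of $\cM_{\cC}(x,y)$ containing $v$. Pick an index $i$ at which the chains first differ, so $m_1$ and $m_2$ produce distinct elements $x'_i\neq x''_i$ of $P_{\cC}(x,y)$ both mapping under $\ff$ to the $i$-th entry of $P_{\fz}(u,v)$. The associated facets $\cM_{\cC}(x'_i,y)\times\cM_{\cC}(x,x'_i)$ and $\cM_{\cC}(x''_i,y)\times\cM_{\cC}(x,x''_i)$ of $\cM_{\cC}(x,y)$ are disjoint codimension-one faces (by axiom~\ref{item:FC-3}), both containing $v$ and both mapping under $\ff_{x,y}$ to the same facet of $\Pi_{n-1}$. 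However, since $M\to \Pi_{n-1}$ is a homeomorphism, $M$ contains a unique facet lying over that prescribed facet of $\Pi_{n-1}$, a contradiction. Thus the map is injective, which, combined with Step 1 and Step 2, establishes the proposition.

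\textbf{Main obstacle.} The technically delicate point is the uniqueness argument in Step 3: one must exploit axiom~\ref{item:FC-3} together with the covering property to ensure that a given component $M$ of $\cM_{\cC}(x,y)$ meets at most one lift of each element of $P_{\fz}(u,v)$. This uniqueness is the analogue of Lemma~\ref{lem:exists_vertex} and is really the place where ``cubical'' (rather than just ``flow category'') is used.
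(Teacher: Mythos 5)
Your proof is correct and follows essentially the same route as the paper's: identify $\#\pi_0(\cM_{\cC}(x,y))$ with the number of vertices in the fiber $\ff_{x,y}^{-1}(\fz)$ (each component maps homeomorphically onto the permutohedron), and then exhibit the chain--vertex correspondence of Definition~\ref{def:assoc_vertex} and Lemma~\ref{lem:assoc_reverse} as a bijection between $\max P_{\fz}(x,y)$ and that fiber. Your Step~3 injectivity argument (via the disjointness of facets in axiom~\ref{item:FC-3}) is a detail the paper leaves implicit, but it is a correct filling-in rather than a different approach.
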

\begin{proof}
  Fix a vertex $\fz\in\Pi_{n-1}$.
  As the map $\ff$ is a cover, we infer that 
  $\#\pi_{0}(\cM_{\cC}(x,y))=\#\ff^{-1}_{x,y}(\fz)$.
  To show that $\#\max P_{\fz}(x,y)=\#\ff^{-1}_{x,y}(\fz)$, let first $v \in \ff^{-1}_{x,y}(\fz)$, and denote by $m$ the maximal chain $m$ in $P(x,y)$ associated to $v$.
  Clearly $m \in \max P_{\fz}(x,y)$.
  On the other hand, every maximal chain $m \in \max P_{\fz}(x,y)$ has an associated vertex $v_m\in\cM_{\cC}(x,y)$ such that $\ff(v_m)=\fz$.
  This shows that there is a bijection between $\max P_{\fz}(x,y)$ and $\ff^{-1}_{x,y}(\fz)$.
\end{proof}
\subsection{Neat embeddings}\label{sub:neat-embeddings}
Recall that Lawson, Lipshitz and Sarkar described in~\cite[Section 3]{LLS_long} a construction that turns a cubical flow category into a CW-complex.
The construction is a simplification of the construction of Lipshitz and Sarkar in~\cite{LS_stable}
In Subsections~\ref{sub:neat-embeddings},~\ref{sub:framed} and~\ref{sub:geometric} we give a brief review.

Let ($\cC$, $\ff)$ be a cubical flow category, and fix $\dc = (d_{0},d_{1},\ldots,d_{n-1}) \in \mathbb{N}^{n}$ and $R>0$.
For any $u>v$ in $\Ob(\Cube(n))$ define
\[E_{u,v} = \left[\prod_{i=|v|}^{|u|-1} [-R,R]^{d_{i}}\right] \times \cM_{\Cube(n)}(u,v).\]
For any triple of objects $u>v>w$ there is a map $\Upsilon \colon E_{v,w} \times E_{u,v} \to E_{u,w}$ defined as the composition:
\begin{align}\label{eq:eee}
  E_{v,w} \times E_{u,v} &\cong \prod_{i=|w|}^{|v|-1} [-R,R]^{d_{i}} \times \cM_{\Cube(n)}(v,w) \times \left[\prod_{j=|v|}^{|u|-1} [-R,R]^{d_{j}}\right] \times \cM_{\Cube(n)}(u,v) \nonumber\\
                         &\cong \prod_{i=|w|}^{|u|-1} [-R,R]^{d_{i}} \times \cM_{\Cube(n)}(v,w) \times \cM_{\Cube(n)}(u,v) \\
                         &\hookrightarrow \prod_{i=|w|}^{|u|-1} [-R,R]^{d_{i}} \times \cM_{\Cube(n)}(u,w)=E_{u,w}.\nonumber
\end{align}

A \emph{cubical neat embedding} $\iota$ of a cubical flow category $(\cC,\ff)$ relative to $\dc = (d_{0},d_{1},\ldots,d_{n-1}) \in \mathbb{N}^{n}$ is a collection of neat embeddings
\[\iota_{x,y} \colon \cM_{\cC}(x,y) \hookrightarrow E_{\ff(x),\ff(y)}\]
such that
\begin{enumerate}[label=(CNE-\arabic*)]
\item\label{item:CNE-1}For each $x,y \in \Ob(\cC)$ the following diagram commutes
  \begin{center}
    \begin{tikzpicture}
      \matrix(m)[matrix of math nodes, column sep=1.5 cm, row sep=1cm]{
        \cM_\cC(x,y) & E_{\ff(x),\ff(y)} \\
        & \cM_{\Cube(n)}(\ff(x),\ff(y)).\\
      };
      \path[->]
      (m-1-1) edge node[above] {$\iota_{x,y}$} (m-1-2)
      (m-1-2) edge node[right] {projection} (m-2-2)
      (m-1-1) edge node [above] {$\ff_{x,y}$} (m-2-2);
    \end{tikzpicture}
  \end{center}
\item\label{item:CNE-2} For any $u,v \in \Ob(\Cube(n))$ the map
  \[\bigsqcup_{\ff(x)=u, \ff(y)=v} \iota_{x,y} \colon \bigsqcup_{\ff(x)=u, \ff(y)=v} \cM_{\cC}(x,y) \hookrightarrow E_{u,v}\]
  is a neat embedding (see Definition~\ref{def:neat_embedding}).
\item\label{item:CNE-3} For any triple $x>y>z \in \Ob(\cC)$ the following diagram commutes
  \begin{center}
    \begin{tikzpicture}
      \matrix(m)[matrix of math nodes, row sep=1cm, column sep=1.5cm] {
        \cM_{\cC}(y,z) \times \cM_\cC(x,y) & \cM_{\cC}(x,z) \\
        E_{\ff(y),\ff(z)} \times E_{\ff(x),\ff(y)} & E_{\ff(x),\ff(z)}. \\
      };
      \path[->]
      (m-1-1) edge[above] node {$\circ$} (m-1-2)
      (m-1-1) edge[left] node {$\iota_{y,z}\times\iota_{x,y}$} (m-2-1)
      (m-1-2) edge[right] node {$\iota_{x,z}$} (m-2-2)
      (m-2-1) edge[above] node {$\Upsilon$} (m-2-2);
    \end{tikzpicture}
  \end{center}
  Here the vertical maps are given by $\iota$, the top horizontal map is the composition of morphisms and the bottom horizontal map is as defined in~\eqref{eq:eee}.
\end{enumerate}
\subsection{Framed cubical neat embeddings}\label{sub:framed}
To perform the construction  of Lawson, Lipshitz and Sarkar, we need to construct an extension of $\iota$ to a \emph{framed cubical neat embedding} $\bar{\iota}$, i.e\ a collection of embeddings
\[\bar{\iota}_{x,y} \colon \prod_{i=|\ff(y)|}^{|\ff(x)|-1} [-\epsilon,\epsilon]^{d_{i}} \times \cM_{\cC}(x,y) \to E_{\ff(x),\ff(y)},\]
for some $\epsilon > 0$, in such a way that the commutativity from~\ref{item:CNE-3} is preserved with $\bar{\iota}$ replacing~$\iota$.
In general $\bar{\iota}$ can be constructed as follows:
\begin{align}
  \bar{\iota}_{x,y} \colon \prod_{i=|\ff(y)|}^{|\ff(x)|-1} [-\epsilon,\epsilon]^{d_{i}} \times \cM_{\cC}(x,y) &\to E_{\ff(x),\ff(y)}  = \prod_{i=|\ff(y)|}^{|\ff(x)|-1} [-R,R]^{d_{i}} \times \cM_{\Cube(n)}(\ff(x),\ff(y)) \nonumber\\
  (t, \gamma) &\mapsto (t + \pi_{u,v}^{R} \iota_{x,y}(\gamma), \, \pi_{u,v}^{M} \iota_{x,y}(\gamma)), \label{eq:framed-extension-iota}
\end{align}
where
\begin{align}
  \pi_{u,v}^{R} \colon \prod_{i=|v|}^{|u|-1} [-R,R]^{d_{i}} \times \cM_{\Cube(n)}(u,v) &\to \prod_{i=|v|}^{|u|-1} [-R,R]^{d_{i}}, \label{eq:projection-R}\\
  \pi_{u,v}^{M} \colon \prod_{i=|v|}^{|u|-1} [-R,R]^{d_{i}} \times \cM_{\Cube(n)}(u,v) &\to \cM_{\Cube(n)}(u,v) \label{eq:projection-M}
\end{align}
are projections, with $\ff(x) = u$, $\ff(y)=v$.

If $\bar{\iota}$ is a framed neat embedding of the cube flow category, then $\bar{\iota}$ determines a sign assignment.\mpar{(13)}
Namely, for $u,v \in \{0,1\}^{n}$ such that $\gr(u)-\gr(v)=1$, we set $\nu(u,v)=0$ if $\iota_{u,v}(\cM_{\Cube(n)}(u,v))$ is framed positively with respect to the standard framing of $[-R,R]^{d_{|v|}}$, and $\nu(u,v)=1$ otherwise.
In this case, we say that $\bar{\iota}$ \emph{refines} $\nu$.

\begin{lemma}\label{lemma:sign-assignment-framed-embedding}
  Any sign assignment $\nu$ determines a framed cubical neat embedding of the cube flow category which refines $\nu$.
\end{lemma}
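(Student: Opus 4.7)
The plan is to construct $\bar{\iota}$ inductively on $k = |u|-|v|$. To set the stage, I would first fix a natural unframed cubical neat embedding by setting $\iota_{u,v}(\gamma) = (0, \gamma) \in E_{u,v}$. Conditions~\ref{item:CNE-1} and~\ref{item:CNE-2} are immediate, and~\ref{item:CNE-3} follows by a direct computation: the map $\Upsilon$ from~\eqref{eq:eee} leaves the $[-R,R]$-factors untouched when applied to $(0,\gamma_1,0,\gamma_2)$ and restricts on the permutohedral factors to the face inclusion~\eqref{eq:embedding_cube}, so both sides of the diagram in~\ref{item:CNE-3} evaluate to $(0, \gamma_1\circ\gamma_2) \in E_{u,w}$.

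Next, I would promote $\iota$ to a framed embedding $\bar{\iota}$ inductively on $k$, allowing framings more general than the canonical one of~\eqref{eq:framed-extension-iota}. When $k=1$, $\cM_{\Cube(n)}(u,v)$ is a single point and the framing of its image in $[-R,R]^{d_{|v|}}$ amounts to a choice of element of $O(d_{|v|})$; I take one of determinant $(-1)^{\nu(u,v)}$, e.g.~the identity if $\nu(u,v)=0$ and a single-coordinate reflection otherwise. By construction, $\bar{\iota}$ refines $\nu$ on edges. For $k \geq 2$, the boundary of $\cM_{\Cube(n)}(u,v) \cong \Pi_{k-1}$ is tiled by pieces $\cM_{\Cube(n)}(w,v)\times\cM_{\Cube(n)}(u,w)$, on each of which the framed embedding into $E_{u,v}$ is determined by the inductive data via $\Upsilon$. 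I then need to verify that these pieces glue consistently on codimension-two strata and that the resulting framed embedding of $\partial\Pi_{k-1}$ extends over the interior.

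The main obstacle is the codimension-two consistency, which amounts to the $2$-faces of the cube. For a square $u > w_1, w_2 > v$ with $|u|-|v|=2$, the moduli space $\cM_{\Cube(n)}(u,v) \cong \Pi_1$ is an interval; the framing sign at the endpoint associated with $w_i$ equals $\nu(u,w_i) + \nu(w_i,v) \pmod 2$, computed via $\Upsilon$ from $\bar{\iota}_{u,w_i}$ and $\bar{\iota}_{w_i,v}$. Because the two endpoints of $\Pi_1$ are oppositely oriented as boundary, they carry an inherent relative sign flip, so the boundary framings lie in the same component of the space of framings of $\Pi_1$ if and only if
\[\nu(u,w_1) + \nu(w_1,v) + \nu(u,w_2) + \nu(w_2,v) \equiv 1 \pmod 2,\]
which is precisely the condition $\partial^{\ast}\nu = 1$ evaluated on this square. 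Once this holds, the framed boundary embedding extends over the interior of $\Pi_{k-1}$ without further obstruction, provided the $d_i$ are chosen large enough so that the relevant spaces of framed embeddings are highly connected. Condition~\ref{item:CNE-3} for $\bar{\iota}$ then holds automatically, since on each boundary facet the framing was defined via $\Upsilon$ by construction.
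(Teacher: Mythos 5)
Your base cases are fine: taking the zero section as the unframed neat embedding, choosing the edge framings with sign $(-1)^{\nu(u,v)}$, and identifying the codimension-two matching condition on each square of the cube with the equation $\partial^{\ast}\nu=1_{2}$ is exactly the standard bookkeeping. Be aware, though, that the paper does not actually carry out this construction: its proof of the lemma is a one-line appeal to \cite[Proposition 4.12]{LS_stable}, so what you are writing is a reconstruction of the proof of the cited result, and it is the last step of that reconstruction that does not close.

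The gap is the extension over the interior of $\Pi_{k-1}$ for $k\ge 3$. Because of condition~\ref{item:CNE-1} the embedding must cover the identity on the permutohedral factor of $E_{u,v}$, so a framed embedding over $\Pi_{k-1}$ is, up to homotopy, a map from $\Pi_{k-1}$ to the space of embeddings of $[-\epsilon,\epsilon]^{N}$ into $[-R,R]^{N}$ with $N=\sum_{i=|v|}^{|u|-1}d_{i}$, and that space is homotopy equivalent to $O(N)$. Enlarging the $d_{i}$ only replaces $O(N)$ by a larger orthogonal group; it does not make the space of framings highly connected, since $\pi_{j}(O(N))$ stabilizes to $\pi_{j}(O)$, which is nonzero for $j\equiv 0,1,3,7 \pmod 8$. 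Hence the obstruction to extending your coherent boundary framing from $\partial\Pi_{k-1}\cong S^{k-2}$ over the disk lives in $\pi_{k-2}(O(N))$ and is a priori nontrivial already for the hexagon ($k=3$, a potential $\Z_{2}$ obstruction) and again for $k=5$, etc. These obstructions do vanish for the particular boundary framings produced by the lower strata, but that vanishing is precisely the nontrivial content of \cite[Proposition 4.12]{LS_stable} (where it is ultimately tied to the cube flow category being the flow category of a Morse function on the framed manifold $[0,1]^{n}$, cf.\ Example~\ref{ex:flow_on_Rn}); it cannot be obtained by a general-position or connectivity argument, so as written your induction does not go through past $k=2$.
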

\begin{proof}
  The lemma follows directly from~\cite[Proposition 4.12]{LS_stable}.
\end{proof}
Any sign assignment for the cube flow category induces a sign assignment for its cover in an obvious way. In particular, a framed neat embedding of a cubical flow category
induces a framed neat embedding of the underlying cube flow category, hence a sign assignment on the cube flow category; see \cite[Section 3.5]{LLS_long} for
more details.

\subsection{Cubical realizations}\label{sub:geometric}

Let us fix a cubical flow category $(\cC,\ff)$, a cubical neat embedding $\iota$ of $\cC$ relative to a tuple $\dc = (d_{0},d_{1},\ldots,d_{n-1})$ and fix $\epsilon>0$ in such a way that the map~\eqref{eq:framed-extension-iota} is an embedding.\mpar{(14)}
As in \cite[Definition 3.29]{LLS_long} we construct a based CW-complex $\left(||\cC||, x_{0}\right)$ in the following way:
\begin{enumerate}
\item For any $x \in \Ob(\cC)$, if $u=\ff(x)$, we define the cell associated to $x$ as
  \begin{equation}\label{eq:Pdef}
    X(x) = \prod_{i=0}^{|u|-1} [-R,R]^{d_{i}} \times \prod_{i=|u|}^{n-1} [-\epsilon,\epsilon]^{d_{i}} \times \widetilde{\cM}_{\Cube(n)}(u,0_n),
  \end{equation}
  where $\widetilde{\cM}_{\Cube(n)}(u,0_n)$ is defined to be $\{0\}$ if $u=0_n$ and $[0,1] \times \cM_{\Cube(n)}(u,0_n)$ otherwise.
\item The cells $X(x)$ are glued together inductively.
  First we start with a disjoint union of cells $X(y)$ for $\{y\colon\ff(y)=0_{n}\in\{0,1\}^n\}$.
  For arbitrary $x\in\Ob(\cC)$, the cell $X(x)$ is glued to the union $\bigcup_{y\colon \ff(x)>\ff(y)} X(y)$.
  The gluing map is described below.
\item For any $x,y \in \Ob(\cC)$ with $\ff(x) = u > v = \ff(y)$ the cubical embedding provides an embedding $\theta_{y,x}\colon X(y)\times\cM_{\cC}(x,y)\to X(x)$
  given by
  \begin{align}\label{eq:thetadef}
    &X(y) \times \cM_{\cC}(x,y) = \nonumber\\
    &= \prod_{i=0}^{|v|-1} [-R,R]^{d_{i}} \times \prod_{i=|v|}^{n-1} [-\epsilon, \epsilon]^{d_{i}} \times \widetilde{\cM}_{\Cube(n)}(v,0) \times \cM_{\cC}(x,y) \nonumber\\
    &\cong \prod_{i=0}^{|v|-1} [-R,R]^{d_{i}} \times \prod_{i=|u|}^{n-1} [-\epsilon,\epsilon]^{d_{i}} \times \widetilde{\cM}_{\Cube(n)}(v,0) \times \left(\prod_{i=|v|}^{|u|-1} [-\epsilon,\epsilon]^{d_{i}} \times \cM_{\cC}(x,y) \right) \nonumber\\
    &\hookrightarrow \prod_{i=0}^{|v|-1} [-R,R]^{d_{i}} \times \prod_{i=|u|}^{n-1} [-\epsilon,\epsilon]^{d_{i}} \times \widetilde{\cM}_{\Cube(n)}(v,0) \times \left(\prod_{i=|v|}^{|u|-1} [-R,R]^{d_{i}} \times \cM_{\Cube(n)}(u,v)\right) \\
    &\cong \prod_{i=0}^{|u|-1} [-R,R]^{d_{i}} \times \prod_{i=|u|}^{n-1} [-\epsilon,\epsilon]^{d_{i}} \times \widetilde{\cM}_{\Cube(n)}(v,0) \times \cM_{\Cube(n)}(u,v) \nonumber\\
    &\hookrightarrow \prod_{i=0}^{|u|-1} [-R,R]^{d_{i}} \times \prod_{i=|u|}^{n-1} [-\epsilon,\epsilon]^{d_{i}} \times \partial(\widetilde{\cM}_{\Cube(n)}(u,0)) \subset X(x) .\nonumber
  \end{align}
  The first inclusion is given by the map $\ol{\iota}_{x,y}$.
  The last inclusion comes from the composition map if $v \neq 0$, or the inclusion $\{0\} \hookrightarrow [0,1]$ if $v=0$.
  Denote by $X_{y}(x)\subset X(x)$ the image of the above map.
\item The attaching map for $X(x)$ sends $X_{y}(x) \cong X(y) \times \cM_{\cC}(x,y)$ to $X(y)$ via the projection onto the first factor.
  The complement of $\cup_{y} X_{y}(x)$ in $\partial X(x)$ is mapped to the base point.
\end{enumerate}
\begin{remark}
  It is proved in~\cite[Lemma 3.30]{LLS_long} that the attaching maps are well-defined.\mpar{(15)}
  This boils down to showing that if $x,y,z\in\Ob(\cC)$ are such that $\ff(x)>\ff(y)>\ff(z)$, then there exists a map $\kappa_{x,y,z}$ that makes the following diagram commute.
  \begin{equation}\label{eq:commute}
    \xymatrix{%
      X_z(x)\cap X_y(x)\ar@{^{(}->}[r]\ar[rrdd]^{\kappa_{x,y,z}}\ar@{^{(}->}[d] & X_y(x)\ar[r] & X(y)\\
      X_z(x)\ar[d] &&\partial X(y)\ar@{^{(}->}[u]\\
      X(z)&&X_z(y)\ar[ll]\ar@{^{(}->}[u].}
  \end{equation}
\end{remark}
\begin{definition}\label{def:cubical_realization}
  The CW-complex $||\cC||$ is called the \emph{cubical realization} of the cubical category $\cC$. The formal desuspension:\mpar{(16): we now use the terminology: cubical realization instead of geometric realization. The former is precisely defined here. This distinction also clarifies point (36) and, partially also (43).}
  \[\mathcal{X}(\cC) = \Sigma^{-\tau-d_{0}-d_{1}-\cdots-d_{n-1}}||\cC||,\]
  where $\tau$ is as in Definition~\ref{def:cubical-flow-category},
  is called the $\cC$-homotopy type.
\end{definition}
We note that we deviate slightly from \cite{LLS_long}. We want the cubical realization to be a CW-complex, i.e. a topological space. After desuspension
we obtain an object in the Spanier--Whitehead category, for which we use different notation $\cX(\cC)$. 
Thanks to this distinction, many statements become more transparent, like the statement of Proposition~\ref{prop:fixedcategory}.

\begin{remark}\label{rem:direct_sum_wedge_sum}
  It follows directly from the construction that if $\cC$ is a union of categories $\cC_1,\dots,\cC_s$ (in the sense that objects are set sums of objects, and there are no
  morphisms between objects in different summands), then $\cX(\cC)$ is the wedge sum of $\cX(\cC_1),\dots,\cX(\cC_s)$.
\end{remark}
\subsection{Chain complex associated with a cubical flow category}\label{sub:chain_complex}
For completeness of the exposition, we recall how to compute the singular cohomology of the cubical realization.
A detailed account is given in~\cite[Section 3]{LS_stable} and~\cite[Section 3.2]{LLS_long}.

Let $(\cC,\ff)$ be a cubical flow category with $\ff\colon\Sigma^\tau\cC\to\Cube(n)$. Choose a sign assignment $\nu$ for $\Cube(n)$.
Define a cochain complex $C^*(\cC, \ff)$ in the following way:
\begin{itemize}
\item The group $C^k(\cC)$ is freely generated over $\Z$ by the objects of $\cC$ whose grading is equal to $k$;
\item If $x\in\Ob(\cC)$ has grading $k$, then we define
  \begin{equation}\label{eq:chain_complex}
    \partial\langle x\rangle =\sum_{\substack{y\in\Ob(\cC)\\ \gr(y)=k+1}} n_{x,y}\langle y\rangle,
  \end{equation}
  where $n_{x,y}$ is the signed count of points in $\cM_{\cC}(x,y)$.
  In particular, if we choose a framed cubical neat embedding which refines a sign assignment $\nu$, then
  \begin{equation}\label{eq:chain2}
    n_{x,y} = (-1)^{\nu(\ff(x),\ff(y))}\# \cM_{\cC}(x,y).
  \end{equation}
\end{itemize}

The following result follows immediately from the construction of $||\cC||$.
\begin{lemma}\label{lem:cochain_complex}
  $C^*(\cC, \ff)$ is a cochain complex, that is, $\partial^2=0$, and its associated cohomology is equal to the cohomology of $\cX(\cC)$, the $\cC$-homotopy type..
\end{lemma}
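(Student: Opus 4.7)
The plan is to split the proof into two independent verifications: the cochain identity $\partial^{2}=0$, and the identification of $C^{\ast}(\cC,\ff)$ with the reduced cellular cochain complex of the CW-realization $||\cC||$ constructed in Subsection~\ref{sub:geometric}. The first is a boundary-counting argument on $1$-dimensional moduli spaces; the second is bookkeeping of cells, attaching maps, and signs.

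For $\partial^{2}=0$ I would fix $x,z\in\Ob(\cC)$ with $\gr(z)-\gr(x)=2$ and compute
\[\langle \partial^{2}\langle x\rangle,\langle z\rangle\rangle=\sum_{y}n_{x,y}\,n_{y,z},\]
summed over intermediate $y$. By~\ref{item:FC-2} the space $\cM_{\cC}(x,z)$ is a compact $1$-dimensional $\langle 1\rangle$-manifold, and by~\ref{item:FC-3} its boundary is $\bigsqcup_{y}\cM_{\cC}(y,z)\times\cM_{\cC}(x,y)$. Each connected component is an interval whose two endpoints project, under $\ff$, to the two intermediate vertices $w_{1},w_{2}$ of the $2$-face joining $\ff(x)$ and $\ff(z)$. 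The signs attached to these endpoints by~\eqref{eq:chain2} are $(-1)^{\nu(\ff(x),w_{i})+\nu(w_{i},\ff(z))}$; the cocycle condition $\partial^{\ast}\nu=1$ forces the two signs to be opposite, so each component contributes $0$ to the sum.

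For the cellular identification I would unwind the CW-structure of $||\cC||$ given by~\eqref{eq:Pdef}--\eqref{eq:thetadef}. The cell $X(x)$ has dimension $\sum_{i=0}^{n-1}d_{i}+|\ff(x)|$, so after the formal desuspension $\Sigma^{-\tau-\sum d_{i}}$ it sits in cellular degree $\gr_{\cC}(x)$, matching the generator $\langle x\rangle$ in $C^{\gr(x)}(\cC,\ff)$. The attaching map restricted to the sub-face $X_{y}(x)\cong X(y)\times\cM_{\cC}(x,y)$ is the projection onto the first factor, with the complement of $\bigcup_{y}X_{y}(x)$ in $\partial X(x)$ crushed to the basepoint. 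Hence the cellular coboundary from $X(x)$ to $X(y)$ has degree equal to the signed count of points in $\cM_{\cC}(x,y)$, the sign on each component being $(-1)^{\nu(\ff(x),\ff(y))}$ by Lemma~\ref{lemma:sign-assignment-framed-embedding} applied to the framing provided by $\bar{\iota}$. This agrees verbatim with~\eqref{eq:chain_complex}--\eqref{eq:chain2}; consistency of the gluings~\eqref{eq:commute} guarantees that the cellular complex of $||\cC||$ is well defined, and hence isomorphic to $C^{\ast}(\cC,\ff)$.

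The only delicate step is the sign bookkeeping: verifying that the \emph{same} framed cubical neat embedding $\bar{\iota}$ simultaneously produces (i) attaching-map degrees with sign $(-1)^{\nu(\ff(x),\ff(y))}$, and (ii) boundary orientations on $\cM_{\cC}(x,z)$ for which the sign cancellation in the first paragraph actually takes the form dictated by $\partial^{\ast}\nu=1$. Both reduce, via Lemma~\ref{lemma:sign-assignment-framed-embedding}, to the defining property of a sign assignment, after which the remainder is a direct unwinding of the definitions.
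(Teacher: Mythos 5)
Your proof is correct and is essentially the standard argument: the paper itself gives no proof of Lemma~\ref{lem:cochain_complex}, deferring to \cite[Section 3]{LS_stable} and \cite[Section 3.2]{LLS_long}, and what you write is precisely that argument — $\partial^{2}=0$ via the boundary of the $1$-dimensional moduli spaces (each component an interval, since $\ff_{x,z}$ covers $\Pi_{1}$, with endpoint signs cancelling by $\partial^{\ast}\nu=1$), together with the identification of $C^{\ast}(\cC,\ff)$ with the reduced cellular cochain complex of $||\cC||$ through the cells $X(x)$ and the attaching maps $\theta_{y,x}$. The only point worth making explicit is the one you already flag: the covering property of $\ff_{x,z}$ is what rules out circle components and forces the two endpoints of each interval to lie over the two \emph{distinct} intermediate vertices, which is exactly what the cocycle condition needs.
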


\section{Equivariant flow categories}\label{sec:equivariant}
In this section we adapt the construction from Section~\ref{sec:flow_cat} to the equivariant setting.
First, we will introduce some terminology from equivariant differential topology.
General references include~\cite{dieck_transformation_1987,May,wasserman_equivariant_1969}.

\subsection{Terminology}\label{sub:terminology}

Let $G$ be a finite group.
An \emph{orthogonal representation} of $G$ is a homomorphism $\rho \colon G \to O(V)$, where $O(V)$ denotes the group of orthogonal automorphisms of some inner product space $V$.
In particular, $V$ is implicitly equipped with an inner product which is preserved by $G$.
In the present article, we consider only finite-dimensional representations.

If it does not lead to confusion, we will refer to a representation $\rho\colon G\to O(V)$ as $V$.
In particular, for a subgroup $H\subset G$, the notation $V|_H$ means the representation $\rho|_H\colon H\to O(V)$.
For two representations, $V, W$ we denote by $\hom_G(V, W)$ the space of $G$-equivariant linear maps from $V$ to $W$.

If $W \subset V$ are two $G$-representations, then by $V-W$, we denote the orthogonal complement of $W$ in $V$. This notation is extended to the case when $W$ is not necessarily a subrepresentation of $V$ by introducing a Grothendieck group (actually a ring)
of representations. More specifically,
the representation ring $RO(G)$ is the ring whose elements are formal differences $V-W$ of orthogonal $G$-representations, where $V_{1}-W_{1} = V_{2} - W_{2}$ in $RO(G)$
if $V_{1} \oplus W_{2}$ is equivalent to $V_{2} \oplus W_{1}$.
Notice that if $W \subset V$, then $V-W$ is isomorphic in $RO(G)$ to the orthogonal complement of $W$ in $V$.
The direct sum induces the addition, and the tensor product over~$\R$ induces the multiplication.

We pass to the definition and basic properties
of $G$-manifolds. Some more technical results are deferred to the Appendix.
General references for group actions on manifolds include~\cite{dieck_transformation_1987,wasserman_equivariant_1969,Kanka}.

We say that $M$ is a \emph{$G$-manifold}, if it is a manifold (possibly with boundary) equipped with a smooth action of $G$.
Observe that for any \(x \in M\), the isotropy group $G_x$ acts on the tangent space $T_x M$.
By abuse of notation, we will denote by $T_x M$ the tangent representation of $G_x$.
For any subgroup $H \subset G$ define
\[M^{H} = \{x \in M \colon \forall_{h \in H} h \cdot x = x\} = \{x \in M \colon H \subset G_{x}\}.\]

We say that $M$ is of \textit{dimension $V-W \in RO(G)$}, if for any $x \in M$ there exists an isomorphism of $G_x$-representations $T_x M \oplus W|_{G_{x}} \cong V|_{G_x}$.

Let $M$ be a compact $G$-manifold and let $p \colon E \to M$ be a vector bundle over $M$.
We say that $E$ is a \emph{$G$-vector bundle} if there exists an action of $G$ on $E$ by vector bundle morphisms such that $p$ commutes with the action of $G$ on $E$ and $M$.
If $V$ is a $G$-representation, then a \emph{$V$-bundle} is a $G$-vector bundle $p \colon E \to M$ such that for any $x \in M$ there exists an isomorphism of $G_{x}$ representations between $V|_{G_{x}}$ and $p^{-1}(x)$.
We denote by $\underline{V}_{M}$ the trivial $V$-bundle over $M$, i.e.\ $\underline{V}_{M} = V \times M$.

\begin{example}
  Let $M$ be a $G$-manifold and let $V$ be a $G$-representation.
  The tangent bundle $TM$ is a $V$-bundle if and only if $M$ is of dimension $V$.
\end{example}

A \emph{framing} of a $V$-bundle is a choice of an isomorphism of $V$-bundles $\phi \colon E \to \underline{V}_{M}$.
A \emph{stable framing} is a choice of an isomorphism of $V$-bundles $\phi \colon E \oplus \underline{W}_{M} \to \underline{V}_{M} \oplus \underline{W}_{M}$
for some trivial bundle $\underline{W}_M$.
Any framing of a $V$-bundle determines an orientation of the bundle.

\subsection{Equivariant cell complexes}\label{sub:equiv_cell}
In order to fix the terminology, we recall the notion of a $G$-cell and a $G$-cell complex.\mpar{This is a new subsection. It's purpose is to deal with remark (28)}
\begin{definition}\label{def:g-cell}
  Let $H \subset G$ be a subgroup and let $V$ be an $H$-representation.
  A \emph{$G$-cell} of type $(H,V)$, denoted by $E(H,V)$, is $G \times_{H} B_{R}(V)$, where $B_{R}(V)$ denotes the closed ball in $V$ centered at \(0\) and of radius $R>0$.\mpar{(18)}
  Notice that if $V$ is a $G$-representation, then $E(H,V|_{H}) \cong G \times_{H} B_{R}(V)$.
  A \emph{$G$-cell complex} is a topological space $X$ with a filtration
  \[X_{0} \subset X_{1} \subset \ldots \subset X_{n} \subset \ldots\]
  such that
  \begin{itemize}
  \item $X_{0}$ is a disjoint union of orbits,
  \item for any $n>0$, $X_{n} = X_{n-1} \cup_{f} E(H_n,V_n)$, where
    \[f \colon \partial E(H_n,V_n) \to X_{n-1}\]
    is an equivariant map,
  \item $X = \colim_{n} X_{n}$.
  \end{itemize}
  \end{definition}
  The $G$-cell complex is called a $\operatorname{Rep}(G)$-complex in~\cite[Section 1.13]{ferland}.
  If we restrict the class of cells allowed in the construction, we obtain the following special cases.
  \begin{itemize}
  \item If we assume that all representations $V_n$ are of the form a $V \oplus \R^{a_n}$, for some fixed representation $V$ and some integers $a_n$,
    we obtain a $G$-CW($V$) complex in the sense of e.g.~\cite[Section 1.1.2]{CostenobleWaner} or~\cite[Section X.2]{May}.
  \item If, on the other hand, all $V$ are trivial representations, we obtain a $G$-CW complex as in~\cite[Section 1.1.3]{CostenobleWaner} or~\cite[Section 1.3]{May}.
  \end{itemize}
  Topological spaces we construct are usually $G$-cell complexes, while in Section~\ref{sec:equivhomo}, we apply theorems for $G$-CW complexes. Therefore
  we need to translate from one object to another. The following result is well-known to experts.

  \begin{proposition}\label{prop:repG_to_unrep}
    Any $G$-cell complex has a \(G\)-homotopy type of a $G$-CW complex.
  \end{proposition}
  \begin{proof}
    There are essentially two ways of approaching this result. In \cite[Proposition X.2.8]{May} it is proved that a $G$-CW$(V)$ complex is $G$-homotopy equivalent
    to a $G$-CW complex, and the proof can be adapted to the case of general $G$-cell complexes.

    Another way is to refine the cell structure, namely to find a triangulation of $B_R(V)$ by cells such that $G$ acts on $B_R(V)$ by permuting cells.
    This can be done using the results of Illman \cite{illman2} (if $G$ is a finite group, \cite{illman1} suffices).
  \end{proof}
  \subsection{Equivariant Spanier--Whitehead category}\label{sub:spanier_whitehead}
  For completeness of exposition we recall the definition of equivariant Spanier-Whitehead category.\mpar{This is a new subsection. It should clarify the problems raised by the
  referee in remark (43)}
Suppose $X$ and $Y$ are finite $G$-CW complexes. A (\(G\)-)equivariant homotopy of \(G\)-maps \(f,g \colon X \to Y\) is an equivariant map
\[H \colon X \times [0,1] \to Y,\]
where \(G\) acts trivially on \([0,1]\).
We denote by \([X,Y]_{G}\) the set of homotopy classes of maps \(f \colon X \to Y\).
A \(G\)-map is called a (\(G\)-)\emph{equivariant homotopy equivalence} if it admits an equivariant homotopy inverse.
A \(G\)-map \(f \colon X \to Y\) is an \emph{equivariant stable homotopy equivalence} if there exists a \(G\)-representation \(V\) such that the map
\[f \wedge id_{S^{V}} \colon X \wedge S^{V} \to Y \wedge S^{V}\]
is an equivariant homotopy equivalence.
Here, $S^V$ is the one-point compactification \(S^{V}\) of \(V\).

\begin{definition}
The \emph{equivariant Spanier-Whitehead category} \(SW_{G}\) is the category whose objects are the pairs \((X, V)\), where \(X\) is a finite \(G\)-CW complex  and \(V\) is a virtual \(G\)-representation.
Morphisms are defined by
\[\Hom_{SW_{G}}\left((X,V), (Y,W)\right) = \colim_{Z}\left[ X \wedge S^{V \oplus Z}, Y \wedge S^{W \oplus Z}  \right]_{G},\]
where \(Z\) runs through the family of finite-dimensional \(G\)-representations such that both \(V \oplus Z\) and \(W \oplus Z\) are \(G\)-representations.
\end{definition}
The equivariant Spanier-Whitehead category is a full subcategory of the equivariant stable homotopy category, see~\cite[Proposition XII.7.3]{May} and the preceding discussion.

  \subsection{Group actions on flow categories}\label{sub:group_action}
  We introduce now the definition of a group action on a flow category.
  To understand the details, it might be helpful the reader to keep in mind that the construction
  is modeled on the flow category associated with an equivariant Morse function.

  \begin{definition}
    Let $G$ be a finite group and let $\cC$ be a flow category.
    We say that $\cC$ is a \emph{$G$-equivariant flow category} (as usual, we will omit $G$ when it is clear from the context) if it is equipped with the following data:
    \begin{enumerate}
    \item for any $g \in G$ there exists a grading preserving functor
      \[\cG_g \colon \cC \to \cC,\]
    \item there is an \emph{equivariant grading function}
      \[\gr_{G} \colon \ob(\cC) \to \bigsqcup_{H \subset G} RO(H).\]
    \end{enumerate}
    Moreover, these data must satisfy the following conditions:
    \begin{enumerate}[label=(EFC-\arabic*)]
    \item\label{item:EFC-1} $\cG_{e}$ is the identity functor.
    \item\label{item:EFC-2} For any $g_1, g_2 \in G$ we have $\cG_{g_1} \circ \cG_{g_2} = \cG_{g_1 \cdot g_2}$.
    \item\label{item:EFC-3} $(\cG_g)_{x,y} \colon \cM_{\cC}(x,y) \to \cM_{\cC}(\cG_g(x), \cG_g(y))$ is a diffeomorphism of $\langle \gr(x)-\gr(y)-1 \rangle$-manifolds, which satisfies the following property
      \[(\cG_{g})_{x,y}|_{\cM_{\cC}(z,y) \times \cM_{\cC}(x,z)} = (\cG_{g})_{z,y} \times (\cG_g)_{x,z},\]
      for all $z\in\ob(\cC)$ such that $\gr(y) < \gr(z) < \gr(x)$. Here we identify $\cM_{\cC}(z,y) \times \cM_{\cC}(x,z)$ with the respective facet of $\partial \cM_{\cC}(x,y)$.
    \item\label{item:EFC-4} $\gr_G(x) \in RO(G_x)$, where $G_x = \{g \in G \colon \cG_g(x) = x\}$.
    \item\label{item:EFC-5} $\dim_{\R} \gr_G(x) = \gr(x)$.
    \item\label{item:EFC-6} If there exists $g \in G$ such that $\cG_g(x_1) = x_2$, for some $x_1,x_2 \in \ob(\cC)$, then $\gr_G(x_2) = {\upsilon_{g}}(\gr_G(x_1))$, where ${\upsilon_{g}} \colon RO(G_x) \to RO(G_{g\cdot x})$ is induced by the map\mpar{(19)}
      \[G_x \owns h \mapsto ghg^{-1} \in g G_x g^{-1} = G_{g \cdot x}.\]
      In particular, for any $g_{1},g_{2} \in G$, $\upsilon_{g_{1}} \circ \upsilon_{g_{2}} = \upsilon_{g_{1} \cdot g_{2}}$.
    \item\label{item:EFC-7} Let $x,y \in \ob(\cC)$ and define $G_{x,y} = \{g \in G \colon \cG_g(\cM_{\cC}(x,y)) \subset \cM_{\cC}(x,y)\} = G_{x} \cap G_{y}$.
      The moduli space $\cM_{\cC}(x,y)$ is a compact $G_{x,y}$-manifold of dimension
      \[\gr_{G}(x)|_{G_{x,y}} - \gr_{G}(y)|_{G_{x,y}} - \R.\]
    \end{enumerate}
  \end{definition}

  In the non-equivariant setting, it is possible to define the suspension $\Sigma^{k}\cC$ of a flow category $\cC$ by shifting the grading function by $k \in \Z$.
  In the equivariant setting, we define the \emph{suspension of a flow category $\cC$} by any virtual representation $V-W \in RO(G)$.
  The category $\Sigma^{V-W}\cC$ has the same objects and morphisms as $\cC$ but different grading function given by
  \[(\gr_G)_{\Sigma^{V-W}\cC}(x) = (\gr_G)_{\cC}(x) + (V - W)|_{G_x} \in RO(G_x).\]

  \begin{definition}\label{def:equi_functor}
    Given two $G$-equivariant flow categories $\cC_{1}$ and $\cC_{2}$, a functor $\ff \colon \cC_{1} \to \cC_{2}$ is said to be an \emph{$G$-equivariant functor} if
    \begin{itemize}
    \item $\ff$ commutes with group actions on $\cC_{1}$ and $\cC_{2}$,
    \item for any object $x$ in $\cC_{1}$ there is a $G_x$-equivariant map
      \begin{equation}\label{eq:grading_def}\ff_{\gr_{G}(x)} \colon \gr_{G}(x) \to \gr_{G}(\ff(x)),\end{equation}
      such that for any $g \in G$, we have
      \[\upsilon_{g} \circ \ff_{\gr_{G(x)}} = \ff_{\gr_{G}(\cG_{g}(x))} \circ \upsilon_{g}.\]
    \end{itemize}
  \end{definition}

  \begin{definition}
    A $G$-equivariant functor $\ff\colon \cC_1\to\cC_2$ is called a \emph{(trivial) $G$-cover} if for any $x,y\in\Ob(\cC_1)$ the map $\ff_{x,y}\colon\cM_{\cC_1}(x,y)\to \cM_{\cC_2}(\ff(x),\ff(y))$ is topologically a (trivial) covering map and for any object $x$, $\ff_{\gr_G(x)}$ is an isomorphism of $G_x$-representations.
  \end{definition}

  The notion of a cover will allow us to check easily some of the conditions~\ref{item:EFC-1}-\ref{item:EFC-7} for $\cC_1$ if they are satisfied
  for $\cC_2$.
  More precisely, we have the following result.\mpar{This part takes care of (26). We show that in most cases in the paper it is enough to focus on EFC-1 through EFC-3. We decided to
    keep the full definition of the equivariant flow category, including $\gr_G$, for possible further use. Using $\gr_G$ only via cover would also obscure the equivariant grading,
  which -- as we believe -- is an important conceptual method of this paper.}

  \begin{lemma}\label{lem:lift_to_cover}
    Suppose $\cC_2$ is a $G$-equivariant flow category, $\cC_1$ is flow category and $\ff\colon\cC_1\to\cC_2$ is a trivial cover.
    Assume there is an action of \(G\) on $\cC_1$ satisfying conditions~\ref{item:EFC-1}, \ref{item:EFC-2} and~\ref{item:EFC-3}, such that $\ff$
    commutes with the action.
    Then, there is a unique structure of a $G$-equivariant flow category on $\cC_1$ such that $\ff$ is a trivial $G$-cover.
  \end{lemma}
  \begin{proof}
    For an element $x\in\ob(\cC_1)$ we set $\gr_G(x)=gr_G(\ff(x))$.
    Then~\ref{item:EFC-4}-\ref{item:EFC-6} are satisfied.
    Condition~\ref{item:EFC-7} follows from the fact that the \(G\)-dimension is preserved under
    maps that are local \(G\)-diffeomorphisms.
  \end{proof}

  \subsection{Equivariant cube flow category}\label{sub:group-actions-cube-flow-cat}
  Recall that objects of the cube flow category are elements of $\{0,1\}^{n}$.
  If $\sigma \in \Sn$ is a permutation of an $n$-element set such that $\sigma^{m}=id$, then $\sigma$ induces an action of $\Z_{m}$ on $\{0,1\}^{n}$.
  As in Appendix~\ref{sub:combiposets} consider the action of \(\Z_{m}\) on \(\R^{n}\) defined by formula~\eqref{eq:sigma-action}.
  We will denote this representation by \(V_{\sigma}\).
  For \(x \in \ob(\Cube(n))\) denote by \((\Z_{m})_{x}\) the isotropy group of \(x\) and consider the following \((\Z_{m})_{x}\)-representation
  \[V_{x} = \prod_{i \colon x_{i}=1} \R \subset V_{\sigma}.\]

  \begin{proposition}\label{prop:equiv_cube_flow_cat}
    Let $\sigma \in \Sn$ satisfy $\sigma^m=id$.
    The cube flow category $\Cube(n)$ can be equipped with the structure of a $\Z_m$-equivariant flow category such that the action on the set of objects is generated by $\sigma$.
    Moreover, for any object $x$ we have $\gr_{\Z_{m}}(x) = V_{x}$.
  \end{proposition}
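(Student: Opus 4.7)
The plan is to define the $\Z_m$-action concretely via permutation of coordinates and then verify each of the axioms \ref{item:EFC-1}--\ref{item:EFC-7}. On objects, set $\cG_\sigma(u_1,\ldots,u_n)=(u_{\sigma^{-1}(1)},\ldots,u_{\sigma^{-1}(n)})$ and extend by $\cG_{\sigma^k}=\cG_\sigma^k$. On morphisms, $\sigma$ permutes the coordinates of the ambient space $\prod_{i:u_i>v_i}\R$ of $\cM_{\Cube(n)}(u,v)$ onto those of the ambient space of $\cM_{\Cube(n)}(\cG_\sigma u,\cG_\sigma v)$; since the permutohedron $\Pi_{d-1}$ is cut out by conditions symmetric in its coordinates (Definition~\ref{def:permuto} and Corollary~\ref{lem:codimcorol}), this restricts to a diffeomorphism between the two moduli. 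Axioms \ref{item:EFC-1} and \ref{item:EFC-2} are immediate from $\sigma^m=\mathrm{id}$, while \ref{item:EFC-3} reduces to observing that the index set $\cP$ of Lemma~\ref{lem:codimk} transforms equivariantly, so the composition~\eqref{eq:composition_of_morphisms} commutes with the action.

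For the grading, set $I_u=\{i:u_i=1\}$, so that $G_u=\{g\in\Z_m:g(I_u)=I_u\}$ is cyclic as a subgroup of $\Z_m$, and define $\gr_G(u)$ to be the permutation representation of $G_u$ on $\R^{I_u}$. Then \ref{item:EFC-5} is the identity $\dim_\R\gr_G(u)=|I_u|=\gr(u)$; \ref{item:EFC-4} holds by construction; and \ref{item:EFC-6} follows from the natural bijection $I_u\to g(I_u)=I_{g\cdot u}$ induced by $g$. To identify this representation with $\R[G_u]^{\gr(u)/|G_u|}$, I would argue that the $G_u$-action on $I_u$ is free, so that $I_u$ decomposes into $\gr(u)/|G_u|$ orbits, each contributing one copy of the regular representation. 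For \ref{item:EFC-7}, the moduli space $\cM_{\Cube(n)}(u,v)=\Pi_{d-1}\subset\R^{I_u\setminus I_v}$ is the intersection of the ambient $G_{u,v}$-representation with the hyperplane $\sum_i z_i=\mathrm{const}$, which is $G_{u,v}$-invariant because the defining sum is symmetric in the coordinates. Combining this with Proposition~\ref{prop:fixed_permut} and a local computation at a smooth interior point identifies the tangent representation with $(\gr_G(u)-\gr_G(v))|_{G_{u,v}}-\R$, as required.

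The main obstacle is the freeness of the $G_u$-action on $I_u$, which underpins the explicit formula in the proposition. The cleanest approach is to work one $\sigma$-cycle at a time: if $g=\sigma^{m/k}$ generates $G_u$ and has order $k$, then the condition $g(I_u)=I_u$ forces each $\sigma$-cycle meeting $I_u$ to contribute a union of full $g$-orbits, from which one reads off that $\langle g\rangle$ acts without nontrivial stabilizer on $I_u$. In the setting of periodic link diagrams, where $\sigma$ acts freely on the index set (all $\sigma$-cycles having length exactly $m$), this verification is transparent and delivers the desired identification $\gr_G(u)=\R[G_u]^{\gr(u)/|G_u|}$.
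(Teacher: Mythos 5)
Your proof follows the same route as the paper's: the action is defined by permuting coordinates of the ambient $\R^{n}$ and restricting to the permutohedra, axioms \ref{item:EFC-1}--\ref{item:EFC-3} reduce to the equivariance of the facet identifications of Lemma~\ref{lem:codimk}, and the grading is the permutation representation of $G_{x}$ on $I_{x}=\{i:x_{i}=1\}$, identified with $\R[G_{x}]^{\gr(x)/|G_{x}|}$ via freeness of the $G_{x}$-action (the paper packages the same computation into Lemma~\ref{lemma:equiv-dim-permutohedron}). One caveat: your general deduction of freeness --- that $I_{u}$ being a union of full $g$-orbits forces trivial stabilizers --- is not valid when $\sigma$ has cycles of length less than $m$ (a $g$-orbit inside a short cycle can have fewer than $|G_{u}|$ elements, and then $|G_{x}|$ need not even divide $\gr(x)$, so the stated formula fails); you correctly retreat to the case where every $\sigma$-cycle has length exactly $m$, which is precisely the standing restriction the paper itself imposes via Lemma~\ref{lemma:equiv-dim-permutohedron} and the remark following the definition of $\Cubes(n)$.
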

  \begin{proof}
    For $x \in \ob(\Cube(n))$ and $1 \leq k \leq m$ we define $\cGsk(x) = \sigma^k(x)$.
    In order to define $\cGsk$ on morphisms (it is enough to define $\cGs$ only), recall that we regard $\cM_{\Cube(n)}(x,y)$ as a  subset of $\prod_{i \colon x_i > y_i} \R \subset \R^n$.
    Now, $\sigma$ yields a linear isomorphism
    \begin{center}
      \begin{tikzpicture}
        \matrix(m)[matrix of math nodes, row sep = 1cm, column sep = 2cm]
        {
          \cM_{\Cube(n)}(x,y) & \cM_{\Cube(n)}(\sigma(x),\sigma(y)) \\
          \prod_{i \colon x_i > y_i} \R & \prod_{i \colon \sigma(x)_i > \sigma(y)_i} \R. \\
        };
        \path[->,font=\scriptsize]
        (m-1-1) edge node[above] {$\bar{\sigma}$} (m-1-2)
        (m-2-1) edge node[above] {$\bar{\sigma}$} (m-2-2);
        \path[right hook->]
        (m-1-1) edge node {} (m-2-1)
        (m-1-2) edge node {} (m-2-2);
      \end{tikzpicture}
    \end{center}
    Therefore, we can define $(\cGs)_{x,y} = \bar{\sigma}|_{\cM_{\Cube(n)}(x,y)}$.
    Lemma~\ref{lemma:equiv-dim-permutohedron} implies that conditions~\ref{item:EFC-1},~\ref{item:EFC-2} and~\ref{item:EFC-3} are satisfied.

    In order to define the grading function
    \[\gr_{\Z_{m}} \colon \ob(\Cube(n)) \to \bigsqcup_{H \subset \Z_m} RO(H),\]
    observe that Lemma~\ref{lemma:equiv-dim-permutohedron} implies that for any $x \in \ob(\Cube(n))$, the space $\cM_{\Cube(n)}(x,0_n)$ is a $(V_{x} - \R)$-dimensional manifold.
    Therefore, in order to satisfy condition~\ref{item:EFC-7}, the only choice for $\gr_{\Z_{m}}(x)$ is $\gr_{\Z_{m}}(x) = V_{x}$.\mpar{(20): we refer to Lemma~\ref{lemma:equiv-dim-permutohedron} to argue that EFC-7 holds.}
    Conditions~\ref{item:EFC-4},~\ref{item:EFC-5} are satisfied automatically.
  Condition~\ref{item:EFC-6} is satisfied, indeed, if $g = \sigma^k \in \Z_m$ and $y = g \cdot x$,\mpar{(21)} then the map
    \[\sigma^k \colon V_{x} \to V_{y}\]
    gives the required identification of the $(\Z_{m})_{x}$-representation $\gr_{\Z_{m}}(x)$ and the ($g (\Z_{m})_{x} g^{-1}$)-representation $\gr_{\Z_{m}}(y)$.
  \end{proof}

  \begin{corollary}
    Using the notation from Proposition~\ref{prop:equiv_cube_flow_cat}, suppose that \(\sigma\) is a product of \(n/m\) disjoint cycles of lenght \(m\). Then, for any \(x \in \ob(\Cube(n))\) we have \(\gr_{\Z_{m}}(x) = \R[(\Z_{m})_{x}]^{\gr(x)/|(\Z_{m})_x|}\).
  \end{corollary}
  \begin{proof}
    If \(\sigma\) is a product of \(n/m\) disjoint cycles of length \(m\), then \(V_{\sigma} \cong \R[\Z_{m}]^{n/m}\).
    It is easy to verify that \(V_{x} \cong \R[(\Z_{m})_{x}]^{\gr(x)/|(\Z_{m})_{x}|}\).
  \end{proof}

  \begin{definition}
    Given $\sigma \in \Sn$ such that $\sigma^m=id$, we denote by $\Cubes(n)$ the $\Z_{m}$-equivariant cube flow category for which the action on objects is generated by $\sigma$.

    Let $\cC$ be a $\Z_m$-equivariant flow category.
    We say that $\cC$ is a \emph{$\Z_{m}$-equivariant cubical flow category} if it is a cubical flow category and, for some $\Z_{m}$-virtual representation $V-W$ and some $\sigma \in \Sn$ satisfying $\sigma^m=id$, the functor $\ff \colon \Sigma^{V-W}\cC \to \Cubes(n)$ is a $\Z_m$-equivariant cover.

  \end{definition}
  \begin{remark}
    In the construction of the equivariant Khovanov homotopy type it is enough to restrict to categories $\Cubes(n)$, where $\sigma$ is a product of $n/m$ distinct cycles of length $m$.
  \end{remark}

  \begin{remark}
    Note that the constructions in this section work equally well with any fixed subgroup \(G \subset \Sn\).
    We restrict our attention to cyclic groups because this is the only relevant case for us.
  \end{remark}

  \subsection{Equivariant neat embedding}\label{sub:equiv-geom-real}

  Let $G=\Z_m$  and let $(\cC,\ff)$ be a $G$-equivariant cubical flow category.
  Fix a sequence $\ec = (e_{0},e_{1},\ldots,e_{n-1})$ of positive integers.
  For an orthogonal $G$-representation $V$ and any $u>v \in \Ob(\Cubes(n))$ define
  \begin{equation}\label{eq:EVuv}E(V)_{u,v} = \prod_{i=|v|}^{|u|-1} B_{R}(V_{u,v})^{e_{i}} \times \cM_{\Cubes(n)}(u,v),
  \end{equation}
  where $B_{R}(V)$ denotes the closed ball in \(V\) centered at \(0\) and of radius $R$. 
  We abbreviate $V_{u,v} = V|_{G_{u,v}}$,
  recalling that the symbol $V|_H$ denotes the restriction of the representation to the subgroup $H$, the underlying linear
  space is the same.

  \begin{definition}\label{def:ECNE}
    An \emph{equivariant cubical neat embedding} of a cubical flow category $(\cC,\ff)$ relative to $\ec$ and relative to the representation $V$, is a
    cubical neat embedding (i.e. satisfying axioms \ref{item:CNE-1},~\ref{item:CNE-2} and~\ref{item:CNE-3}). The maps $\iota_{x,y}$ are required
    to be $G_{x,y}:=G_x\cap G_y$-equivariant.  Furthermore, any $x,y \in \Ob(\cC)$ and any $g \in G$ the following diagram is commutative\mpar{(23): We significantly changed the logic
      of the definition of ECNE. The old definition was built as an analogy of a CNE. Now an ECNE is a CNE satisfying an extra condition. This makes the paper more concise and
    takes care of (23).}
    \begin{center}
      \begin{tikzpicture}
        \matrix(m)[matrix of math nodes,row sep=1cm,column sep=2cm]{
          \cM_{\cC}(x,y)                 & E(V)_{\ff(x),\ff(y)} \\
          \cM_{\cC}(gx,gy) & E(V)_{\ff(gx),\ff(gy)}. \\
        };
        \path[->]
        (m-1-1) edge node[above] {$\iota_{x,y}$} (m-1-2)
        (m-1-1) edge node[left] {$(\cG_{g})_{x,y}$} (m-2-1)
        (m-1-2) edge node[right] {$g \cdot (-)$} (m-2-2)
        (m-2-1) edge node[above] {$\iota_{gx,gy}$} (m-2-2);
      \end{tikzpicture}
    \end{center}
    The right vertical arrow is labeled by $g\cdot(-)$, which should be read that the map is induced by the group action. More specifically,
    $G$ acts on $V$ and $g\in G$ takes $V_{u,v}$ to $V_{gu,gv}$.
    Moreover, $g$ takes $\cM_{\Cubes(n)}(u,v)$ to $\cM_{\Cubes(n)}(gu,gv)$.
    Combining these actions we have the map
    that takes $E(V)_{\ff(x),\ff(y)}$ to $E(V)_{\ff(gx),\ff(gy)}$.
    This is the right vertical map in the above diagram.\mpar{(24) We gave an explanation of the right vertical map, along the lines sketched by the referee.}

  \end{definition}
  \begin{remark}
    From Definition~\ref{def:ECNE} it follows that the diagrams of maps
    in~\ref{item:CNE-1} and~\ref{item:CNE-3} are diagrams of $G_{x,y}$
    maps (in case of \ref{item:CNE-1}), respectively $G_{x,y,z}=G_x\cap G_y\cap G_z$ (in case of \ref{item:CNE-3}).
  \end{remark}
  \begin{proposition}\label{prop:equivariant_neat}
    Any equivariant cubical flow category admits an equivariant cubical neat embedding.
  \end{proposition}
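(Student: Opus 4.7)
The plan is to construct the embeddings $\iota_{x,y}$ by induction on $d = \gr(x)-\gr(y)$, choosing them compatibly with the $G$-action on orbits of object-pairs, and using equivariant extension theorems for neat embeddings to handle the inductive step. Throughout, the free parameters $e_{i}$ and $V$ may be enlarged as needed, since taking direct sums preserves all three axioms \ref{item:ECNE-1}--\ref{item:ECNE-3}.

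First I would organize the work by $G$-orbits. The group $G$ acts on the set of ordered pairs $(x,y)$ with $x>y$ via $g\cdot(x,y)=(\cG_g(x),\cG_g(y))$, and the stabilizer of $(x,y)$ is exactly $G_{x,y}$. It suffices to construct $\iota_{x,y}$ for one representative $(x,y)$ per orbit as a $G_{x,y}$-equivariant neat embedding satisfying \ref{item:ECNE-1} and \ref{item:ECNE-3}; axiom \ref{item:ECNE-2} is then enforced by defining $\iota_{g\cdot x,g\cdot y}=g\cdot\iota_{x,y}\cdot g^{-1}$, and a routine check (using axiom \ref{item:EFC-3} of equivariant flow categories, the equivariance of the composition map on $\cM_{\Cubes(n)}$ from Proposition~\ref{prop:equiv_cube_flow_cat}, and the equivariance of $\Upsilon$ under $G_{u,v,w}$) shows the resulting family is well-defined and still satisfies \ref{item:ECNE-1}, \ref{item:ECNE-3}.

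The induction is on $d=\gr(x)-\gr(y)$. In the base case $d=1$, the moduli space $\cM_{\cC}(x,y)$ is a finite $G_{x,y}$-set over the point $\cM_{\Cubes(n)}(\ff(x),\ff(y))$, and it embeds $G_{x,y}$-equivariantly into a sufficiently large $V|_{G_{x,y}}^{e_{|\ff(y)|}}$ by the standard fact that finite $G_{x,y}$-sets embed into (powers of) any faithful $G$-representation. For the inductive step, assume $\iota_{x',y'}$ have been chosen for all pairs with smaller grading difference. Axiom \ref{item:ECNE-3} combined with axiom~\ref{item:FC-3} of Definition~\ref{def:flow} forces $\iota_{x,y}$ on the full boundary $\partial\cM_{\cC}(x,y)$: on each facet $\cM_{\cC}(z,y)\times\cM_{\cC}(x,z)$ it must equal the composite $\Upsilon\circ(\iota_{z,y}\times\iota_{x,z})$. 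The consistency of this prescribed boundary data across intersections of facets is exactly the cocycle-type condition that \ref{item:ECNE-3} imposes on triples $x>z_{1}>z_{2}>y$, and it is automatic by the inductive hypothesis together with the axioms of a flow category.

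What remains is to extend the prescribed $G_{x,y}$-equivariant neat embedding of $\partial\cM_{\cC}(x,y)$ into $E(V)_{\ff(x),\ff(y)}$ to a $G_{x,y}$-equivariant neat embedding of all of $\cM_{\cC}(x,y)$, with the additional constraint that the projection to $\cM_{\Cubes(n)}(\ff(x),\ff(y))$ equals the covering map $\ff_{x,y}$ (axiom \ref{item:ECNE-1}). This is the main obstacle. The unconstrained projection part of the target, $\prod_{i=|\ff(y)|}^{|\ff(x)|-1}B_{R}(V_{\ff(x),\ff(y)})^{e_{i}}$, is a $G_{x,y}$-equivariant Euclidean fiber, so the problem reduces to extending a $G_{x,y}$-equivariant map defined on $\partial\cM_{\cC}(x,y)$ into a $G_{x,y}$-representation, and then perturbing to a neat embedding. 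The extension exists by equivariant Tietze (since the target is a convex $G_{x,y}$-invariant subset of a representation), and the perturbation to a neat embedding uses the equivariant relative neat-embedding theorem for $\langle d-1\rangle$-manifolds, which holds provided $V^{e_{i}}$ is large enough; since there are only finitely many orbits, one can choose $V$ and $\ec$ uniformly large at the start so that the inductive construction closes up. Enlarging $V$ and $\ec$ does not disturb previously constructed embeddings because we can always post-compose with the canonical inclusion of one ball-product into a larger one. This completes the construction.
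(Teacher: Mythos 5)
Your proof is correct and follows essentially the same route as the paper's: induction on the grading difference, with the embedding split as (cover map) $\times$ (map to the representation factor) so that \ref{item:ECNE-1} is automatic, the boundary value forced by \ref{item:ECNE-3}, and the equivariant relative extension supplied by a Mostow--Palais-type lemma (the paper's Lemma~\ref{lem:equivversion}), enlarging $V$ and $\ec$ as needed. Your explicit orbit-representative bookkeeping for \ref{item:ECNE-2} is a minor presentational refinement of what the paper leaves implicit.
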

  \begin{proof}
    Consider $x,y\in\Ob(\cC)$.
    The space $\cM_{\cC}(x,y)$ is, by~\ref{item:EFC-7} a compact $G_{x,y}$-manifold of dimension $\gr_G(x)|_{G_{x,y}}-\gr_G(y)|_{G_{x,y}}-\R$.
    In particular, by the Mostow-Palais Theorem (see Theorem~\ref{thm_mostowpalais}) there exists a representation $W_{x,y}$ such that $\cM_{\cC}(x,y)$ embeds in $W_{x,y}$.
    Define $V$ to be the direct sum of $W_{x,y}$ over all pairs $x,y\in\Ob(\cC)$.

    We want to construct embeddings $\iota_{x,y}\colon\cM_{\cC}(x,y)\to E(V)_{\ff(x),\ff(y)}$.
    Recall we have $E(V)_{u,v}=\prod_{i=|v|}^{|u|-1} B_R(V_{u,v})^{e_i}\times\cM_{\Cubes(n)}(u,v)$.
    The map $\iota_{x,y}$ will be a product $j_{x,y}\times\ff$, where
    \[j_{x,y}\colon \cM_{\cC}(x,y)\to\prod_{i=|\ff(y)|}^{|\ff(x)|-1}B_R(V_{\ff(x),\ff(y)})^{e_i}\]
    and $\ff\colon\cM_{\cC}(x,y)\to\cM_{\Cubes(n)}(\ff(x),\ff(y))$ is given by the definition of the cubical flow category (see Definition~\ref{def:cubical-flow-category} above).

    Our task is therefore to construct the map $j_{x,y}$. We shall proceed by induction on $\delta=|\ff(x)|-|\ff(y)|$.
    For $\delta=1$, the space $\cM_{\cC}(x,y)$ is a finite set of points. The construction of $j_{x,y}$ in this case is obvious.
    Conditions~\ref{item:CNE-1},~\ref{item:CNE-2} are satisfied, while~\ref{item:CNE-3} is empty. The diagram in Definition~\ref{def:ECNE} commutes.

    Suppose the embedding has been constructed for all $x,y$ with $\delta<k$ and we aim to construct a map $j_{x,y}$ for $|\ff(x)|-|\ff(y)|=k$.
    By the induction assumption, the map $j_{x,y}$ is defined already on the boundary of $\cM_{\cC}(x,y)$.
    We extend this map to a $G$-equivariant map on the whole of $\cM_{\cC}(x,y)$ by Lemma~\ref{lem:equivversion}, maybe increasing the values of some of the $e_i$.
    Conditions~\ref{item:CNE-1} and~\ref{item:CNE-2} for $j_{x,y} \times\ff$ is trivially satisfied. 
    Condition~\ref{item:CNE-3} follows from the construction, because $j_{x,z}$ on the interior of $\cM_{\cC}(x,z)$ is an extension of $j_{x,z}$ on the boundary. Commutativity of the diagram in Definition~\ref{def:ECNE}
    follows from equivariance of $j_{x,y}$.
  \end{proof}

  The next step in the construction of Lawson, Lipshitz, and Sarkar is the construction of a framed cubical neat embedding.
  The notion of an equivariant framed cubical neat embedding is a direct generalization of the notion of a framed cubical neat embedding.
  Namely, given the set of maps $\iota_{x,y}\colon \cM_{\cC}(x,y)\to E(V)_{\ff(x),\ff(y)}$ constituting an equivariant cubical neat embedding (see Definition~\ref{def:ECNE}), an equivariant \emph{framed} cubical neat embedding is an extension of $\iota_{x,y}$ to equivariant maps
  \[\ol{\iota}_{x,y}\colon \prod_{i=|\ff(y)|}^{|\ff(x)|-1}B_{\varepsilon}(V_{\ff(x),\ff(y)})^{e_i}\times\cM_{\cC}(x,y)\to E(V)_{\ff(x),\ff(y)}.\]
  We require that
  that~\ref{item:CNE-3} holds for $\iota_{x,y}$ replaced by $\ol{\iota}_{x,y}$ and $\cM_{\cC}(x,y)$ replaced by the product $\prod_{i=|\ff(y)|}^{|\ff(x)|-1}B_{\varepsilon}(V_{\ff(x),\ff(y)})^{e_i}\times\cM_{\cC}(x,y)$.

  In the non-equivariant setting, passing from a cubical neat embedding to a framed cubical neat embedding is described in Subsection~\ref{sub:framed}.
  In the equivariant setting, no adjustments are needed, because the projections $\pi_{u,v}^{R}$ and $\pi_{u,v}^{M}$ considered in Subsection~\ref{sub:framed} are already equivariant by construction.

  \subsection{Equivariant cubical realization}\label{sub:equivariant_geometric}
  We consider now an analog of the construction of a CW-complex $||\cC||$
  given in Subsection~\ref{sub:geometric}.
  For each $x\in\Ob(\cC)$ such that $u=\ff(x)$ we define:\mpar{(25) We removed the incorrect statement that $\ep$ is a $G$-cell.}
  \begin{equation}\label{eq:EP_def}
    \ep(x)=\prod_{i=0}^{|u|-1}B_R(V)^{e_i}\times\prod_{i=|u|}^{n-1}B_{\epsilon}(V)^{e_i}\times\wt{\cM}_{\Cubes(n)}(u,0),
  \end{equation}
  where $\wt{\cM}(u,0)=[0,1]\times\cM(u,0)$ if $u\neq0$ and $\wt{\cM}(0,0)=\{0\}$.
  The group action on the interval $[0,1]$ is assumed to be trivial.
  Note that $\ep(x)$ is homeomorphic to the cell $X(x)$ constructed in Subsection~\ref{sub:geometric} (we need to set $d_i=e_i\dim V$), the point is that the present construction is equivariant.

  For $x,y\in\Ob(\cC)$ such that $\ff(x)=u > v = \ff(y)$ we construct a map $\ET_{y,x}\colon \ep(y)\times\cM_{\cC}(x,y)\hookrightarrow \ep(x)$ by
  an analogous formula as~\eqref{eq:thetadef} in Subsection~\ref{sub:geometric}, namely

  \begingroup
  \allowdisplaybreaks
  \begin{align}
    &\ep(y) \times \cM_{\cC}(x,y) = \label{eq:ethetadef}\\
    &= \prod_{i=0}^{|v|-1} B_R(V)^{e_{i}} \times \prod_{i=|v|}^{n-1} B_{\epsilon}(V)^{e_{i}} \times \widetilde{\cM}_{\Cubes(n)}(v,0) \times \cM_{\cC}(x,y)  \nonumber\\
    &\cong \prod_{i=0}^{|v|-1} B_R(V)^{e_{i}} \times \prod_{i=|u|}^{n-1} B_\epsilon(V)^{e_{i}} \times \widetilde{\cM}_{\Cubes(n)}(v,0) \times \left(\prod_{i=|v|}^{|u|-1} B_\epsilon(V)^{e_{i}} \times \cM_{\cC}(x,y)\right) \nonumber\\
    &\hookrightarrow \prod_{i=0}^{|v|-1} B_R(V)^{e_{i}} \times \prod_{i=|u|}^{n-1} B_\epsilon(V)^{e_{i}} \times \widetilde{\cM}_{\Cubes(n)}(v,0) \times\nonumber\\
    &\times \left(\prod_{i=|v|}^{|u|-1} B_R(V)^{e_{i}} \times \cM_{\Cubes(n)}(u,v)\right) \nonumber\\
    &\cong \prod_{i=0}^{|u|-1} B_R(V)^{e_{i}} \times \prod_{i=|u|}^{n-1} B_\epsilon(V)^{e_{i}} \times \widetilde{\cM}_{\Cubes(n)}(v,0) \times \cM_{\Cubes(n)}(u,v)\hookrightarrow \nonumber\\
    &\hookrightarrow \prod_{i=0}^{|u|-1} B_R(V)^{e_{i}} \times \prod_{i=|u|}^{n-1} B_\epsilon(V)^{e_{i}} \times \partial(\widetilde{\cM}_{\Cubes(n)}(u,0)) \subset \ep(x).\nonumber
  \end{align}
  \endgroup
  In fact, with the choice of $d_i=e_i\dim V$ and an identification $\ep(x)\cong X(x)$, $\ET$ is exactly the same map as $\theta$.
  Again the key point is that $\ET_{x,y}$ is $G_{x,y}$-equivariant.
  Write $\ep_y(x)\subset\ep(x)$ for the image of $\ET(y)$.

  Analogously to the non-equivariant case, the complex $||\cC||$ is constructed inductively by taking the cells $\ep(x)$ and the attaching map taking $\ep_y(x)$ to $\ep(y)$ via the projection $\ep_y(x)\cong \ep(y)\times\cM_{\cC}(x,y) \hookrightarrow\ep(y)$.
  As in the non-equivariant case, the remaining part $\partial\ep(x)\setminus\bigcup_y\ep_y(x)$ is mapped to the base point.

  \begin{remark}
    The map $E\theta_{y,x}$ gives a well-defined attaching map, see item (4) in Subsection~\ref{sub:geometric} and equation~\eqref{eq:commute}.
    This is because, as we mentioned above, $E\theta$ is essentially the map $\theta$ from Subsection~\ref{sub:geometric}.
    Another possibility is to observe that the map $\kappa_{x,y,z}$ constructed in the proof of~\cite[Lemma 3.16]{LLS_long} is equivariant because of the axioms~\ref{item:EFC-1}–\ref{item:EFC-3}.
    We omit the details.
  \end{remark}

  \begin{proposition}
    The space $||\cC||$ has the structure of a $G$-cell complex of Definition~\ref{def:g-cell}. 
  \end{proposition}
  \begin{proof}
    If $x_{1}, x_{2},\ldots,x_{k}$ is an orbit of $x_1\in\Ob(\cC)$, then there exists an equivariant homeomorphism\mpar{(27): we replaced $\gr_G$ with a closed ball in the representation
    space (not a one point compactification)}
    \begin{equation}\label{eq:G-cell-homeo}
      \ep(x_{1}) \sqcup \ep(x_{2}) \sqcup \ldots \sqcup \ep(x_{k}) \cong G_{x_{1}} \times_{G_{x}} \left(\prod_{i=0}^{|u|-1}B_R(V)^{e_i}\times\prod_{i=|u|}^{n-1}B_{\epsilon}(V)^{e_i} \times B_{R}(\gr_{G}(x_{1}))\right),
    \end{equation}
    i.e.\ we obtain a $G$-cell of type $(G_{x},V^{e_{1}+\cdots+e_{n-1}} \oplus \gr_{G}(x_{1}))$.
    It is easy to verify that the gluing maps are compatible with the homeomorphism from~\eqref{eq:G-cell-homeo}.
  \end{proof}

  \begin{definition}\label{def:equivariant_cubical}
    The \emph{equivariant cubical realization} of $\cC$ is defined to be the $G$-cell complex $||\cC||$.
    The formal desuspension $\cX(\cC):=\Sigma^{-W-V^{e_{0}+\cdots+e_{n-1}}}||\cC||$, where $W$ denotes a representation of $G$ such that $\ff \colon \Sigma^{W}\cC \to \Cubes(n)$ is the cubical functor, is called the \emph{equivariant $\cC$ homotopy type}.
  \end{definition}
  \begin{remark}
    By a formal desuspension \(\Sigma^{-W-V^{e_{0}+\cdots+e_{n-1}}}||\cC||\) of \(||\cC||\) we mean the object of the Spanier-Whitehead category \(SW_{G}\) given by \((||\cC||, -W-V^{e_{0}+\cdots+e_{n-1}})\); compare Subsection~\ref{sub:spanier_whitehead}.
    Note that Stoffregen-Zhang's equivariant Khovanov homotopy type belongs to the same category~\cite[discussion below Proposition 4.17]{StoffregenZhang}.
  \end{remark}
  The following result is a direct consequence of the construction:\mpar{(28) we rephrased the next proposition. We also added Subsection~\ref{sub:equiv_cell} to explain
  the notion of a $G$-CW-complex in more detail.}
  \begin{proposition}\label{prop:conclude}
    Let $(\cC, \ff \colon \Sigma^{W} C \to \Cubes(n))$ be a $G$-equivariant cubical flow category.
    Let $\iota$ be an equivariant cubical neat embedding relative to $\ec = (e_{0},e_{1},\ldots,e_{n-1}) \in \mathbb{N}^{n}$ and relative to an orthogonal $G$-representation $V$.
    There exists a $G$-cell complex $||\cC||$,
    such that every object $x \in \Ob(\cC)$ corresponds to a single cell of $||\cC||$ of dimension $\gr_{G}(x)$.
    Moreover, the forgetful functor \textup{(}i.e.\ the one which forgets the action of $G$\textup{)} maps $||\cC||$ to the stable homotopy type constructed by Lawson, Lipshitz and Sarkar in~\cite{LS_stable}.
  \end{proposition}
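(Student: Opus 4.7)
The plan is to verify that the construction carried out in Subsection~\ref{sub:equivariant_geometric} has all the properties claimed, and that it reduces to the Lawson--Lipshitz--Sarkar construction after forgetting the group action. First, I would unwind equation~\eqref{eq:EP_def} to check the cell dimension. The factor $\prod_{i=0}^{|u|-1}B_R(V)^{e_i}\times\prod_{i=|u|}^{n-1}B_\epsilon(V)^{e_i}$ contributes a disk of $G$-representation $V^{e_0+\cdots+e_{n-1}}$, while $\wt{\cM}_{\Cubes(n)}(u,0)$ is a disk of dimension $\gr_{\Cubes(n)}(u) = \R[G_u]^{|u|/|G_u|}$ as a $G_u$-representation, by Proposition~\ref{prop:equiv_cube_flow_cat}. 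Since $\ff$ is an equivariant cover, $\ff_{\gr_G(x)}$ identifies $\gr_G(x) + W|_{G_x}$ with $\gr_G(\ff(x))|_{G_x}$, so after formally desuspending by $\Sigma^{-W-V^{e_0+\cdots+e_{n-1}}}$ each cell $\ep(x)$ has $G_x$-equivariant dimension exactly $\gr_G(x)$, as required.

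Next I would check cellularity of the $G$-action. By construction $\ep(x)$ is built entirely out of $G_x$-equivariant factors: the balls $B_R(V)^{e_i}$, $B_\epsilon(V)^{e_i}$ are $G_x$-representations, and $\wt{\cM}_{\Cubes(n)}(\ff(x),0)$ carries the natural $G_{\ff(x)} \supset G_x$ action coming from~\ref{item:EFC-3}. For $g \in G$, the equivariant cubical neat embedding property~\ref{item:ECNE-2} together with equivariance of the composition in $\Cubes(n)$ ensure that the map $\ET_{y,x}$ intertwines with the action of $g$, so that $g \cdot \ep_y(x) = \ep_{g\cdot y}(g\cdot x)$. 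Grouping the cells over an orbit of $x$ then yields an equivariant homeomorphism to the standard $G$-cell $G\times_{G_x}\bigl(\prod B_R(V)^{e_i}\times \prod B_\epsilon(V)^{e_i}\times \gr_G(x)\bigr)$, exhibiting $||\cC||$ as a $G$-cell complex in the sense of Definition~\ref{def:g-cell}.

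The third task is to verify that the attaching maps are well-defined in the equivariant setting. For objects $x>y>z$ with $\ff(x)>\ff(y)>\ff(z)$, one needs a map $\kappa_{x,y,z}$ making~\eqref{eq:commute} commute, now $G_{x,y,z}$-equivariantly. Condition~\ref{item:ECNE-3} for the framed extension $\ol\iota$ gives exactly the required compatibility between composition in $\cC$ and the map $\Upsilon$ on the ambient spaces; combined with the equivariance of $\Upsilon$ (which holds because composition in $\Cubes(n)$ is $G_{x,y,z}$-equivariant by~\ref{item:EFC-3}), the construction of $\kappa_{x,y,z}$ in~\cite[Lemma 3.16]{LLS_long} runs equivariantly. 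This is the step I expect to be the most delicate, since one has to carry the equivariance through several iterated identifications in~\eqref{eq:ethetadef}; however, since each factor in the product decomposition is manifestly $G_{x,y,z}$-stable and each inclusion is induced either by $\ol\iota_{x,y}$ (equivariant by~\ref{item:ECNE-2}) or by the composition map (equivariant by~\ref{item:EFC-3}), the verification reduces to a factor-by-factor check.

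Finally, the statement about the forgetful functor is automatic from the construction. Setting $d_i = e_i\dim V$, the $G$-cell $\ep(x)$ of~\eqref{eq:EP_def} becomes literally the cell $X(x)$ of~\eqref{eq:Pdef}, and the map $\ET_{y,x}$ of~\eqref{eq:ethetadef} becomes the map $\theta_{y,x}$ of~\eqref{eq:thetadef}. Hence, ignoring the $G$-action, the CW-complex $||\cC||$ together with its attaching maps is identical to the geometric realization of Subsection~\ref{sub:geometric}, and the formal desuspensions agree: $\Sigma^{-W-V^{e_0+\cdots+e_{n-1}}}||\cC||$ restricts, after forgetting $G$, to $\Sigma^{-\tau-d_0-\cdots-d_{n-1}}||\cC||$ with $\tau=\dim W$. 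This identifies the underlying stable homotopy type with the one defined by Lawson, Lipshitz and Sarkar in~\cite{LS_stable}, completing the proof.
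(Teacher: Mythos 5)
Your proposal is correct and follows essentially the same route the paper takes: the paper states Proposition~\ref{prop:conclude} as ``a direct consequence of the construction'' of Subsections~\ref{sub:equiv-geom-real} and~\ref{sub:equivariant_geometric}, and your four verification steps (dimension count via~\eqref{eq:EP_def}, identification of orbits of cells with $G$-cells, equivariance of $\kappa_{x,y,z}$ from the axioms, and recovery of the Lawson--Lipshitz--Sarkar complex by setting $d_i=e_i\dim V$) are exactly the remarks the paper makes there. Nothing further is needed.
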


  \subsection{Fixed points of the cubical realization}\label{sub:fixed}
  The purpose of this subsection is to study the fixed point sets (with respect to a subgroup $H$) of the group action on the
  cubical realization.\mpar{(29)}
  The results  will play an essential role in the proof of the invariance of the equivariant Khovanov homotopy type under Reidemeister moves.
  Recall that $X^H$ denotes the set of fixed points of $H$, that is, $X^H=\{x \in X \, | \, x \cdot h = x, \forall h \in H\}$.

  Let $\cC$ be an equivariant cubical flow category.
  For any $H \subset G$ define the \emph{$H$-fixed subcategory} $\cC^H$ in the following way.
  \begin{itemize}
  \item The objects of $\cC^H$ are those objects of $\cC$ that are fixed under the action of $H$, that is $\Ob(\cC^H)=\Ob(\cC)^H$;
  \item The  morphisms between objects are given by fixed point submanifolds, that is,
    \[\cM_{\cC^H}(x,y)=\begin{cases} \cM_{\cC}(x,y)^H &x\neq y\\ \{id\} & x=y;\end{cases}\]
  \item The grading of $x\in\Ob(\cC)^H$ is $\dim \gr_{G}(x)^{H}$.
  \end{itemize}
  \begin{remark}\label{rem:subgroup}
    If $H$ is a normal subgroup of $G$ (in the paper we work with $G$ cyclic, so any subgroup of $G$ is normal), it is possible to endow $\cC^H$ with the structure of a $G/H$-equivariant flow category.
  \end{remark}
  We will now give an instance of an $H$-fixed subcategory that is the most important in our approach.
  \begin{proposition}\label{prop:fixed-cube-flow-cat}
    Let $H$ be a subgroup of $\Z_m$ and consider $\Cubes(n)$ for $\sigma \in \Sn$ such that $\sigma^m=id$.
    Then there is a functor ${\rH}\colon \Cubes(n)^H\to\Cube(n')$ that induces an isomorphism of categories.

    The integer $n'$ is calculated as follows.
    If $\sigma$ is a product of $p$ disjoint cycles $(a_{i1},\ldots,a_{in_i})$ with $n_i|m$ and $\sum_{i=1}^{p} n_i=n$,
    then we set $\ell_i=\gcd(n_i,m/|H|)$ and $n'_i=n_i/\ell_i$.
    We have $n'=\sum n'_i$.
  \end{proposition}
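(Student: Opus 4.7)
The plan is to construct $\rH$ explicitly: on objects as a bijection with $\{0,1\}^{n'}$, and on morphisms via the identification of fixed permutohedra with smaller permutohedra provided by Proposition~\ref{prop:fixed_permut}. For the object level, an element $u\in\{0,1\}^n$ is $H$-fixed iff it is constant on each orbit of $\sigma^{m/|H|}$ on $\{1,\ldots,n\}$. Since $\sigma^{m/|H|}$ decomposes each $n_i$-cycle of $\sigma$ into $\ell_i=\gcd(n_i,m/|H|)$ sub-cycles of length $n_i/\ell_i$, recording the common value of $u$ on each sub-cycle yields a canonical bijection $\Ob(\Cubes(n)^H)\cong \{0,1\}^{n'}$, which I take as the object map of $\rH$.

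Grading preservation follows from an explicit identification of $\gr_G(u)^H$: since $\gr_G(u)=\R[G_u]^{|u|/|G_u|}$ is modelled on $\prod_{j\colon u_j=1}\R$ with $\sigma^{m/|H|}$ acting by coordinate permutation, its $H$-fixed subspace has dimension equal to the number of $\sigma^{m/|H|}$-orbits contained in $\{j\colon u_j=1\}$, which coincides with $|\rH(u)|$. For morphisms, fix $H$-invariant objects $u>v$. The index set $I(u,v)=\{i\colon u_i>v_i\}$ is $\sigma^{m/|H|}$-invariant, and
\[
\cM_{\Cubes(n)}(u,v)=\Pi_{|I(u,v)|-1}\subset \prod_{i\in I(u,v)}\R
\]
carries the coordinate-permutation action of $\sigma^{m/|H|}$. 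By Proposition~\ref{prop:fixed_permut}, the $H$-fixed subset is itself a permutohedron of dimension $d'-1$, where $d'$ is the number of $\sigma^{m/|H|}$-orbits in $I(u,v)$; since $d'=|\rH(u)|-|\rH(v)|$ by construction, this provides the required isomorphism with $\cM_{\Cube(n')}(\rH(u),\rH(v))$ and defines $\rH$ on morphism spaces.

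The remaining step, and the main technical obstacle, is to verify that these identifications are compatible with composition. Composition in $\Cubes(n)$ is encoded, via Lemma~\ref{lem:codimk}, by the inclusion of facets of $\Pi_{|I(u,v)|-1}$ indexed by intermediate objects $w$ with $u>w>v$; passing to $H$-fixed points selects exactly those facets labelled by $H$-fixed $w$, which under $\rH$ correspond bijectively to the intermediate objects of $\Cube(n')$ between $\rH(u)$ and $\rH(v)$. The subtle point is that the identification of an $H$-fixed permutohedron with a smaller permutohedron from Proposition~\ref{prop:fixed_permut} must respect these facet inclusions, i.e.\ that taking fixed points commutes with the product decomposition of a facet as $\cM_{\Cubes(n)}(w,v)\times\cM_{\Cubes(n)}(u,w)$. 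This compatibility reduces to Proposition~\ref{prop:permutinter}, which identifies the intersection of a permutohedron with a coordinate hyperplane as a permutohedron of lower dimension in a way compatible with the product structure of its facets. Combined with the bijection on objects, the isomorphism on morphism spaces, and the verified grading compatibility, this yields the desired isomorphism of categories $\rH\colon \Cubes(n)^H\to\Cube(n')$.
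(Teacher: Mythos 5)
Your proposal is correct and follows essentially the same route as the paper: the object bijection, the grading count, the appeal to Proposition~\ref{prop:fixed_permut} for the morphism spaces, and the reduction of the composition-compatibility to the face-tracking in Proposition~\ref{prop:permutinter} (in the paper this is packaged as Lemma~\ref{lem:consistency} on refinements commuting with reductions). The only cosmetic difference is that the paper defines $\rH$ on $\cM_{\Cubes(n)}(u,v)^{H}$ by embedding it as a facet of $\cM_{\Cubes(n)}(1_n,0_n)^{H}$ and restricting one global identification $\psi$, whereas you apply the fixed-permutohedron identification to each morphism space directly; both choices are justified by the same appendix results.
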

  \begin{proof}
    The key idea is to use Theorem~\ref{thm:generalpermut}. There is a technical difficulty namely Theorem~\ref{thm:generalpermut}
    does not give us a canonical diffeomorphism. Therefore we first fix a concrete diffeomorphism between $\cM_{\Cube(n)}(1_n,0_n)^{H}$ and 
    $\cM_{\Cube(n')}(1_{n'},0_{n'})$, next we show that it can be used to define a map between all moduli spaces of the $\Cube(n)^H$ category
    and corresponding moduli spaces of the $\Cube(n')$ category.\mpar{This paragraph partially solves the issue raised in (32).}

    \smallskip
    To begin with, if $(v_1,\ldots,v_n)$ is an object in $\Ob(\Cubes(n)^H)$, then by definition it is an object in $\Cubes(n)$ fixed by the action of $H$.
    This amounts to saying that, for $i=1,\ldots,p$ and $j=1,\ldots,n'_i$, we have
    \[v_{i,j}=v_{i,j+n'_i}=\cdots=v_{i,j+(\ell_i-1)n'_i},\]
    where to simplify the notation we write $v_{i,j}$ instead of $v_{a_{i,j}}$.
    The functor ${\rH}$ on objects is defined as
    \begin{equation}\label{eq:rhob}
      {\rH}(v_1,\ldots,v_n)=(v_{1,1},\ldots,v_{1,n'_1},v_{2,1},\ldots,v_{p,n'_p}).
    \end{equation}
    We now define ${\rH}$ on morphisms.

    Consider first $0_n,1_n\in\Cubes(n)$.
    They are fixed under the action of any subgroup $H \subset G$.
    The space $\cM_{\Cubes(n)}(1_n,0_n)$ is, by definition, the permutohedron $\Pi_{n-1}\subset\R^n$.
    The set $\cM_{\Cubes(n)}(1_n,0_n)^H$ of fixed points under $H$ is given by $\Pi_{n-1}\cap L$, where $L$ is a linear subspace of $\R^n$ given by
    \[L=\bigcap_{i=1}^{p}\bigcap_{j=1}^{n'_i}\{x_{i,j}=x_{i,j+n'_i}=\cdots=x_{i,j+(\ell_i-1)n'_i}\},\]
    where we also used the notation $x_{i,j}$ as a shorthand for $x_{a_{i,j}}$.
    We note that the dimension of $L$ is precisely $\sum n'_i=n'$.

    By Theorem~\ref{thm:generalpermut} there is an identification $\psi$ of $\Pi_{n-1}\cap L$ with $\Pi_{n'-1}$.
    Choose one such $\psi$.
    The map $\psi$ identifies $\cM_{\Cubes(n)}(1_n,0_n)^H$ with $\cM_{\Cube(n')}(1_{n'},0_{n'})$.

    Take now general $u,v\in\Ob(\Cubes(n))^H$ with $u>v$. We assume that $u\neq 1_n$, $v\neq 0_n$. The case where precisely
    one inequality holds is analogous and it is left to the reader.\mpar{(31) we added the assumption. The other case is similar.}
    Consider the product
    \[\Pi_{u,v}=\cM_{\Cubes(n)}(v,0_n)\times\cM_{\Cubes(n)}(u,v)\times\cM_{\Cubes(n)}(1_n,u).\]
    By the axioms of the cube category $\Pi_{u,v}$ embeds as a codimension~$2$ face in the moduli space $\cM_{\Cubes(n)}(1_n,0_n)=\Pi_{n-1}$.

    In fact, consider the partition
    \begin{equation}\label{eq:partition_def}
      \fp=\mathcal{P}_{1_n,u}\cup\mathcal{P}_{u,v}\cup\mathcal{P}_{v,0_n},
    \end{equation}
    and $\mathcal{P}_{u',v'}$ (with $(u',v')=(1_n,u)$, $(u',v')=(u,v)$ and $(u',v')=(v,0_n)$) is the set of indices $i$ such that $u'_i\neq v'_i$.
    Then $\Pi_{u,v}$ corresponds to the face $\Pi_{\fp}$.

    We define now the map ${\rH}\colon \cM_{\Cubes(n)}(u,v)^H\to\cM_{\Cube(n')}({\rH}u,{\rH}v)$ as the composition:
    \begin{align*}
      \cM_{\Cubes(n)}(u,v)^H&\to \cM_{\Cubes(n)}(v,0_n)^H \times\cM_{\Cubes(n)}(u,v)^H \times\cM_{\Cubes(n)}(1_n,u)^H\to\\
                            &\xrightarrow{\psi} \cM_{\Cube(n')}({\rH}v,0_{n'}) \times \cM_{\Cube(n')}({\rH}u,{\rH}v) \times \cM_{\Cube(n')}(1_{n'},{\rH}v)\to\\
                            &\to \cM_{\Cube(n')}({\rH}u,{\rH}v).
    \end{align*}
    The first map is an embedding to a fiber $\{pt\}\times \cM_{\Cubes(n)}(u,v)^H\times\{pt\}$ for two chosen points in $\cM_{\Cubes(n)}(v,0_n)^H$ and $\cM_{\Cubes(n)}(1_n,u)^H$, respectively.
    ${\rH}$ does not depend on the choice.
    The last map is the projection onto the second factor.
    The map $\psi$ was defined above as a map from $\Pi_{n-1}\cap L$ to $\Pi_{n'-1}$.
    It takes the face $\Pi_{\fp}$ to the face $\Pi'_{\fp^B}$, where $\Pi'=\Pi_{n'-1}$ and $\fp^B$ is a reduction of $\fp$
    with $B$ determined from the orbits of $\sigma$; see Proposition~\ref{prop:fixed_permut}.
    A straightforward calculation using~\eqref{eq:rhob} reveal that $\fp^B$ is a partition into three subsets $\mathcal{P}'_{{\rH} v,0_{n'}}$, $\mathcal{P}'_{{\rH} u,{\rH} v}$ and $\mathcal{P}'_{1_{n'},{\rH} u}$, where $\mathcal{P}'_{\cdot,\cdot}$ denotes the subset of indices at which the vectors in
    the subscripts differ.
    This means that  $\Pi'_{\fp^B}$ is exactly $\cM_{\Cube(n')}({\rH}v,0_{n'}) \times \cM_{\Cube(n')}({\rH}u,{\rH}v) \times \cM_{\Cube(n')}(1_{n'},{\rH}u)$; we omit the details.

    \smallskip
    We sketch the proof of the fact that ${\rH}$ respects the compositions.
    Suppose that $u,w,v\in\Ob(\Cubes(n))^H$ with $u>w>v$. 
    Let ${\rH}u,{\rH}w,{\rH}v$ be the corresponding objects in $\Cube(n')$.
    We need to show that the following diagram commutes.
    \begin{equation}\label{eq:commutes_H}
      \xymatrix{%
        \cM_{\Cubes(n)^H}(w,v)\times \cM_{\Cubes(n)^H}(u,w)\ar[r]\ar[d]^{{\rH}} &
        \cM_{\Cubes(n)^H}(u,v)\ar[d]^{{\rH}}\\
        \cM_{\Cube(n')}({\rH}w,{\rH}v)\times \cM_{\Cube(n')}({\rH}u,{\rH}w)\ar[r] &
        \cM_{\Cube(n')}({\rH}u,{\rH}v).}
    \end{equation}
    This commutativity is true if $\psi$ takes
    \[M_1=\cM_{\Cubes(n)}(1_n,u)^H
      \times\cM_{\Cubes(n)}(u,w)^H
      \times\cM_{\Cubes(n)}(w,v)^H
      \times\cM_{\Cubes(n)}(v,0_n)^H\]
    to
    \[
      M_2=\cM_{\Cube(n')}(1_{n'},{\rH}u) \times \cM_{\Cube(n')}({\rH}u,{\rH}w) \times \cM_{\Cube(n')}({\rH}w,{\rH}v) \times \cM_{\Cube(n')}({\rH}v,0_{n'}).\]
    Consider the refinement $\fp_w$ of the partition $\fp$ defined in \eqref{eq:partition_def} given as
    \[\fp_w=\mathcal{P}_{1_n,u}\cup\mathcal{P}_{u,w}\cup\mathcal{P}_{w,v}\cup\mathcal{P}_{v,0_n},\]
    where the subsets $\mathcal{P}_{\cdot,\cdot}$ are as above (below \eqref{eq:partition_def}). Let $\Pi_{u,w,v}$ be the face corresponding
    to this partition. Then $M_1=\Pi_{u,w,v}\cap L$.
    By construction of $\psi$, it takes $A$ to a face $\Pi'_{(\fp_w)^B}$ of $\Pi'$, where $(\fp_w)^B$ is the reduction of $\fp_w$.

    On the other hand, the reduction $(\fp_w)^B$ is easily seen to be the partition
    \[(\fp^B)_w=\mathcal{P}'_{1_{n'},{\rH} u}\cup \mathcal{P}'_{{\rH} u,{\rH} w}\cup \mathcal{P}'_{{\rH} w,{\rH} v}\cup \mathcal{P}'_{{\rH} v,0_{n'}}.\]
    But then the corresponding face is $\Pi'_{(\fp_w)^B}=M_2$.\mpar{(32) The proof has been streamlined rather than extended. Many small changes should improve on readability
      of the argument that \eqref{eq:commutes_H}, which essentially boils down to the statement that refinements commute with reductions. We can still add some more
      details, but the proof might eventually become less readable. Making the morphism $\psi$ in 
    Proposition~\ref{prop:permutinter} might be possible, but it would definitely required a \emph{much longer} proof.}

    Finally, the equivariant grading on $\Cubes(n)$ described in Proposition~\ref{prop:equiv_cube_flow_cat} has the property that if $x\in\Ob(\Cubes(n))^H$, then $\gr_G(x)^H$ is equal to the grading of ${{\rH}(x)}$.
    This is a straightforward verification.
  \end{proof}

  \begin{lemma}\label{lem:isflow}
    The pair $(\cC^H, \ff^{H})$, where $\ff^{H} = {\rH} \circ \ff|_{\cC^{H}}$ and ${\rH}$ is as in Proposition~\ref{prop:fixed-cube-flow-cat}, is a cubical flow category.
  \end{lemma}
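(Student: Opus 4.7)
The plan is to verify in order the three requirements for $(\cC^{H},\ff^{H})$ to be a cubical flow category: (i) $\cC^{H}$ is a flow category, (ii) $\ff^{H}$ is a grading-preserving functor to $\Cube(n')$ after a suitable suspension, and (iii) for every $x,y\in\Ob(\cC^{H})$, the induced map $\ff^{H}_{x,y}$ on moduli spaces is a covering map.

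For (i), axiom \ref{item:FC-1} is immediate from the definition of $\cC^{H}$. For axioms \ref{item:FC-2} and \ref{item:FC-3}, we use the standard fact that the $H$-fixed set of a smooth $H$-action on a manifold with corners is again a manifold with corners, whose dimension at a point is the dimension of the $H$-fixed subspace of the tangent representation. Combining this with axiom \ref{item:EFC-7}, the $H$-fixed subset of $\cM_{\cC}(x,y)$ has dimension $\dim\gr_{G}(x)^{H}-\dim\gr_{G}(y)^{H}-1$, i.e.\ exactly $\gr_{\cC^{H}}(x)-\gr_{\cC^{H}}(y)-1$, as required. The $\langle d-1\rangle$-structure is inherited from $\cM_{\cC}(x,y)$ because the codimension-$k$ strata of the corner structure are cut out by $H$-equivariant boundary defining functions, so their $H$-fixed sets give a corner structure on $\cM_{\cC}(x,y)^{H}$. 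Axiom \ref{item:FC-3} for $\cC^{H}$ then follows by restricting the diffeomorphisms of axiom \ref{item:FC-3} for $\cC$ to $H$-fixed points and using that taking $H$-fixed points commutes with products.

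For (ii), the $G$-equivariance of $\ff$ guarantees that $\ff|_{\cC^{H}}$ lands in the $H$-fixed subcategory $\Cubes(n)^{H}$, and that it is a functor because composition is $G$-equivariant by \ref{item:EFC-3}. Proposition~\ref{prop:fixed-cube-flow-cat} provides an isomorphism of categories $\rH\colon\Cubes(n)^{H}\to\Cube(n')$, so $\ff^{H}=\rH\circ\ff|_{\cC^{H}}$ is a functor $\cC^{H}\to\Cube(n')$. If $\ff\colon\Sigma^{W}\cC\to\Cubes(n)$ is the original cubical functor, one checks grading preservation of $\ff^{H}$ with $\tau=\dim W^{H}$ by comparing the two sides of \eqref{eq:grading_def}: the left-hand side gives $\dim\gr_{G}(x)^{H}+\dim W^{H}$, the right-hand side gives $\dim\gr_{G}(\ff(x))^{H}$, and these agree because $\ff_{\gr_{G}(x)}$ is a $G_{x}$-equivariant isomorphism (as $\ff$ is a $G$-cover), hence restricts to an $\R$-linear isomorphism on $H$-fixed parts. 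Finally, the last sentence of the proof of Proposition~\ref{prop:fixed-cube-flow-cat} identifies the grading of $\rH(\ff(x))$ in $\Cube(n')$ with $\dim\gr_{G}(\ff(x))^{H}$, closing the loop.

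For (iii), the map $\ff_{x,y}\colon\cM_{\cC}(x,y)\to\cM_{\Cubes(n)}(\ff(x),\ff(y))$ is an $H$-equivariant covering map (it is a covering by hypothesis and $H$-equivariant as a restriction of a $G$-equivariant functor). For finite groups acting smoothly on manifolds with corners, equivariant covering maps restrict to covering maps between $H$-fixed sets — the local triviality descends because on a small $H$-invariant neighborhood of $x\in\cM_{\cC}(x,y)^{H}$ the cover splits equivariantly as a disjoint union of copies of the base, each of which contributes a connected component of the fibre over $\ff_{x,y}(x)$ inside the $H$-fixed set. Composing with the isomorphism on moduli spaces provided by $\rH$ (Proposition~\ref{prop:fixed-cube-flow-cat}), we conclude that $\ff^{H}_{x,y}$ is a covering map onto $\cM_{\Cube(n')}(\ff^{H}(x),\ff^{H}(y))$.

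The main obstacle is the dimension bookkeeping in step (i)–(ii): one must verify that the naive definition $\gr_{\cC^{H}}(x)=\dim\gr_{G}(x)^{H}$ simultaneously matches the dimension of the $H$-fixed moduli spaces (via axiom \ref{item:EFC-7}) and, up to the correct global shift $\tau=\dim W^{H}$, the cube-grading of $\rH(\ff(x))$ in $\Cube(n')$. Both identifications ultimately rest on the fact that $\ff$ is a $G$-cover, so that the representation-level map $\ff_{\gr_{G}(x)}$ is an isomorphism and dimension of $H$-fixed subspaces is preserved; once this is in hand, the remaining verifications are essentially formal consequences of equivariance and of the fixed-point category construction.
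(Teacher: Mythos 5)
Your proof is correct and follows essentially the same route as the paper's: verify \ref{item:FC-1}--\ref{item:FC-3} via \ref{item:EFC-7} and \ref{item:EFC-3}, then use equivariance of $\ff$ together with Proposition~\ref{prop:fixed-cube-flow-cat} for the cubical structure. The only notable difference is that where you appeal to general facts about $H$-fixed sets of smooth actions on manifolds with corners and about equivariant coverings restricting to covering maps on fixed sets, the paper grounds both concretely in the permutohedron structure of the moduli spaces (Proposition~\ref{prop:permutinter} for the corner/dimension statement, and the fact that $\ff_{x,y}$ is a diffeomorphism on each connected component for the covering statement), which avoids having to justify those general claims; conversely, your explicit bookkeeping of the shift $\tau=\dim W^{H}$ is a point the paper leaves implicit.
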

  \begin{proof}
    In order to prove that $\cC^{H}$ is a flow category, we need to verify the axioms~\ref{item:FC-1},~\ref{item:FC-2} and~\ref{item:FC-3}.
    The axiom~\ref{item:FC-1} is obvious.
    The axiom~\ref{item:FC-2} follows from the axiom~\ref{item:EFC-7} and Proposition~\ref{prop:permutinter}.
    The axiom~\ref{item:FC-3} follows from the axiom~\ref{item:EFC-3}.  This shows that $\cC^H$ is a flow category.  It remains to prove that the functor $\ff^H$ makes $\cC^H$ a cubical flow category.

    Since $\ff$ commutes with the group action, it takes objects in $\cC$ that are fixed under $H$ to objects of $\Cubes(n)$ that
    are fixed under $H$.
    In particular, $\ff^H$ is well-defined on objects.

    To show that it is well-defined on morphisms, observe that for any $x,y \in \ob(\cC)^{H}$, the map
    \[\ff_{x,y} \colon \cM_{\cC}(x,y)^{H} \to \cM_{\Cubes(n)}(\ff(x),\ff(y))^{H}\]
    is a diffeomorphism when restricted to any connected component of $\cM_{\cC}(x,y)^{H}$.
    In particular ${\rH}\circ\ff_{x,y}$ is a covering map.
    Therefore, $\ff^H$ turns $\cC^H$ into a cubical flow category.
  \end{proof}

  \begin{lemma}
    Let $\cC$ be a framed cubical flow category and $\iota$ a neat embedding of $\cC$ relative to $\ec=(e_{1},e_{2},\ldots,e_{n-1})$ and relative to a representation $V$.\mpar{(33)}
    Then, for any $H \subset G$, $\iota$ yields a neat embedding of $\cC^{H}$, denoted by $\iota^{H}$, relative to
    \[\ec^{H} = (e_{1}+\cdots+e_{k-1},e_{k}+e_{k+1}+\cdots+e_{2k-1},\ldots,e_{n-k}+e_{n-k+1}+\cdots+e_{n-1}) \text{ and } V^{H},\]
    where \(k\) denotes the order of \(H\).\mpar{(34)}
  \end{lemma}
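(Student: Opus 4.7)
The plan is to take $\iota^H$ to be the restriction of $\iota$ to the $H$-fixed submanifolds, followed by the natural identification of the ambient space $E(V)^H_{\ff(x),\ff(y)}$ with the standard target built from $\Cube(n')$, the data $\ec^H$, and the representation $V^H$, so that the result becomes a cubical neat embedding for the cubical flow category $(\cC^H, \ff^H)$ supplied by Lemma~\ref{lem:isflow}.

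First I would pass to fixed points. Because each $\iota_{x,y}$ is $G_{x,y}$-equivariant by axiom~\ref{item:ECNE-2}, it restricts to a smooth embedding
\[\iota_{x,y}|_{H}\colon \cM_{\cC^H}(x,y) \hookrightarrow E(V)^H_{\ff(x),\ff(y)} \cong \prod_{i=|v|}^{|u|-1} B_R(V^H)^{e_i} \times \cM_{\Cubes(n)}(u,v)^H,\]
where $u=\ff(x)$, $v=\ff(y)$, using $B_R(V)^H = B_R(V^H)$ and the diagonal action of $H$ on the product. Neatness of the restriction follows from Proposition~\ref{prop:permutinter}, since the facet stratification on the fixed locus $\cM_{\Cubes(n)}(u,v)^H$ is induced from that of the ambient permutohedron, so the codimension of every face is preserved.

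Next I would rewrite the target. The isomorphism ${\rH}$ of Proposition~\ref{prop:fixed-cube-flow-cat} identifies $\cM_{\Cubes(n)}(u,v)^H$ with $\cM_{\Cube(n')}({\rH}u,{\rH}v)$, and the grading ratio $k = (|u|-|v|)/(|{\rH}u|-|{\rH}v|)$ is a constant determined by the cycle structure of $\sigma$ and the subgroup $H$. Collecting the ball factors in consecutive blocks of length $k$ and setting $e^H_j := e_{jk}+e_{jk+1}+\cdots+e_{(j+1)k-1}$ yields a natural diffeomorphism of $E(V)^H_{\ff(x),\ff(y)}$ with the $\Cube(n')$-style ambient space built from $\ec^H$ and $V^H$. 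Composing this identification with $\iota_{x,y}|_H$ defines the candidate $\iota^H_{x,y}$.

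Finally I would verify the three axioms of a cubical neat embedding. Axioms~\ref{item:CNE-1} and~\ref{item:CNE-2} follow immediately from~\ref{item:ECNE-1} and~\ref{item:ECNE-2} for $\iota$, since passing to fixed points preserves both the commutativity of the projection diagram and the disjointness of embeddings over a common fiber. The main obstacle is~\ref{item:CNE-3}: one must check that the identifications of the preceding paragraph intertwine the map $\Upsilon$ that glues the ambient spaces for $(u,v)$ and $(v,w)$ with its counterpart built from $\Cube(n')$. This reduces, via the commutative diagram~\eqref{eq:commutes_H} established inside the proof of Proposition~\ref{prop:fixed-cube-flow-cat}, to the facts that ${\rH}$ respects compositions of morphisms and that the regrouping of ball factors in blocks of length $k$ commutes with the splittings used in~\eqref{eq:eee}. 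Once this bookkeeping is settled, $\iota^H$ satisfies the full definition of a cubical neat embedding of $(\cC^H, \ff^H)$ relative to $\ec^H$ and $V^H$.
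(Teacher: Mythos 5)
Your proposal is correct and follows essentially the same route as the paper: restrict $\iota_{x,y}$ to $H$-fixed points, identify $E(V)_{\ff(x),\ff(y)}^{H}$ with the standard target for $\Cube(n')$ by regrouping the ball factors in blocks of length $k$ and applying the map $\rH$ of Proposition~\ref{prop:fixed-cube-flow-cat}, and deduce \ref{item:CNE-1}--\ref{item:CNE-3} from \ref{item:ECNE-1}--\ref{item:ECNE-3}. The only cosmetic difference is that the paper packages the regrouping as a linear embedding $\eta^{H}$ into balls of a possibly larger radius $R'$ rather than asserting a diffeomorphism, which does not affect the argument.
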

  \begin{remark}[Remark~\ref{rem:subgroup} continued]
    One can construct $\iota^H$ in such a way that it is a $G/H$-equivariant neat embedding.
  \end{remark}
  \begin{proof}
    An equivariant neat embedding of $\cC$ is given by a collection of 
    equivariant maps $\iota_{x,y}\colon\cM_{\cC}(x,y)\to E(V)_{\ff(x),\ff(y)}$
    satisfying axioms~\ref{item:CNE-1},~\ref{item:CNE-2} and~\ref{item:CNE-3},
    see Definition~\ref{def:ECNE}.
    An equivariant neat embedding
    \[\iota_{x,y} \colon \cM_{\cC}(x,y) \to E(V)_{\ff(x),\ff(y)},\]
    where $x,y \in \Ob(\cC)^{H}$, yields an embedding
    \[\iota_{x,y}|_{H} \colon \cM_{\cC}(x,y)^{H} \to E(V)_{\ff(x),\ff(y)}^{H}.\]
    Observe that
    \[E(V)_{\ff(x),\ff(y)}^{H} = \prod_{i=|\ff(y)|}^{|\ff(x)|-1} \left(B_{R}(V_{\ff(x), \ff(y)}^{H})\right)^{e_{i}} \times \cM_{\Cubes(n)}(\ff(x),\ff(y))^{H},\]
    because $H \subset G_{\ff(x),\ff(y)}$.
    Since $|\ff(x)| = k \cdot |\ff^{H}(x)|$ and $|\ff(y)|= k \cdot |\ff(y)|^{H}$, there exists an equivariant linear embedding
    \[\eta^H \colon \prod_{i=|\ff(y)|}^{|\ff(x)|-1} B_{R}(V^{H})^{e_{i}} \hookrightarrow \prod_{i=|\ff^{H}(y)|}^{|\ff^{H}(x)|-1} B_{R’}(V^{H})^{e_{k\cdot i}+e_{k\cdot i+1}+\cdots+e_{k \cdot (i+1)-1}},\]
    for some $R’>R$.
    Using the map ${\rH}$ from Proposition~\ref{prop:fixed-cube-flow-cat} we obtain a neat embedding
    \[\eta^H \times {\rH} \colon E(V)_{\ff(x),\ff(y)}^{H} \hookrightarrow E(V^{H})_{\ff^{H}(x),\ff^{H}(y)}.\]
    We define $\iota^{H}_{x,y} = (\eta^H \times \rH) \circ (\iota_{x,y}|_{H})$.

    Properties~\ref{item:CNE-1},~\ref{item:CNE-2} and~\ref{item:CNE-3} 
    for $\iota^H_{x,y}$ follow immediately from analogous properties
    of the maps $\iota_{x,y}$.\mpar{(36): as explained in (23) we changed the definition of an ECNE. Now we verify conditions CNE (which is easy) and equivariance in
    Definition~\ref{def:ECNE}.}
  \end{proof}

  \begin{proposition}\label{prop:fixedcategory}
    Suppose $||\cC||$ is an equivariant cubical realization of an equivariant cubical flow category $\cC$.
    Then the fixed point set $||\cC||^H$ is homeomorphic to the cubical realization of the fixed point flow category $\cC^H$.\mpar{(36) This is clarified by introducing
    a formal distinction between $||\cC||$ and $\cX(\cC)$ in Definition~\ref{def:cubical_realization}.}
  \end{proposition}
  \begin{proof}
    We need to show essentially two facts: the equality of cells, and the equality of attaching maps.
    First, if $x\in\Ob(\cC)^H$, we can construct a cell $X_H(x)$ using the construction of Subsection~\ref{sub:geometric} taking $\cC^H$ as the starting category.
    This corresponds to a cell used for constructing $||\cC^H||$.
    Alternatively we can take $\ep(x)^H$ to be the set of $H$-fixed points of the cell $\ep(x)$ constructed in Subsection~\ref{sub:equivariant_geometric}.
    We claim that $X_H(x)\cong\ep(x)^H$ once we have set $d_i=e_i\dim V^H$. 

    To see this recall that by~\eqref{eq:EP_def} we have
    \begin{align*}
      \ep(x)^H&=\prod_{i=0}^{|\ff(x)|-1}B_R(V^H)^{e_i}\times\prod_{i=|\ff(x)|}^{n-1}B_{\epsilon}(V^H)^{e_i}\times\wt{\cM}_{\Cubes(n)}(\ff(x),0_n)^H, \\
      X_{H}(x)&=\prod_{i=0}^{|\ff^{H}(x)|-1}B_{R}(V^{H})^{e_{i}^{H}} \times \prod_{i=|\ff^{H}(x)|}^{n'-1}B_{\epsilon}(V^{H})^{e_{i}^{H}}\times\wt{\cM}_{\Cube(n')}(\ff^{H}(x),0_n),
    \end{align*}
    where $e_{i}^{H} = e_{k\cdot i}+e_{k\cdot i+1}+\cdots+e_{k\cdot(i+1)-1}$ and \(k\) denotes the order of \(H\).
    Discussion in Proposition~\ref{prop:fixed-cube-flow-cat} implies that $\ep(x)^{H} \cong X_{H}(x)$, for any $x \in \ob(\cC^{H})$.

    In order to complete the proof of Proposition~\ref{prop:fixedcategory}, we need to show that the attaching maps coincide.
    This holds, provided that $\theta_H(y,x)=\ET(y,x)^H$, where $\theta_H(y,x)$ is the map $\theta$ of Subsection~\ref{sub:geometric} constructed for $\cC^H$, and $\ET(y,x)^H$ is the restriction of $\ET$ to the set of fixed points.
    Choose $x,y \in\Ob(\cC)^H$.
    Going through the construction of $\theta$ and $\ET$ (given in~\eqref{eq:thetadef} and~\eqref{eq:ethetadef}) we see that the equality $\ET(y,x)^H=\theta_H(y,x)$ follows from the commutativity of the diagram
    \begin{center}
      \begin{tikzpicture}
        \matrix(m)[matrix of math nodes, row sep=1cm, column sep=1cm] {
          \cM_{\Cubes(n)}(\ff(y),0_n)^H\times\cM_{\Cubes(n)}(\ff(x),\ff(y))^H & \cM_{\Cubes(n)}(\ff(x),0_n)^H \\
          \cM_{\Cube(n)^H}(\ff^{H}(y),0_n)\times\cM_{\Cube(n)^H}(\ff^{H}(x),\ff^{H}(y)) & \cM_{\Cube(n)^H}(\ff^{H}(x),0_n), \\
        };
        \path[->,font=\scriptsize]
        (m-1-1) edge node[above] {$\circ^{H}$} (m-1-2)
        (m-1-1) edge node[left] {${\rH} \times {\rH}$} (m-2-1)
        (m-1-2) edge node[right] {${\rH}$} (m-2-2)
        (m-2-1) edge node[above] {$\circ$} (m-2-2);
      \end{tikzpicture}
    \end{center}
    where ${\rH}$ is a map from the fixed point set of a permutohedron  to a permutohedron of lower dimensions, as described in detail in the proof of Proposition~\ref{prop:fixed-cube-flow-cat}.
    Commutativity of the diagram follows from the construction of this map (see Proposition~\ref{prop:permutinter} and Proposition~\ref{prop:fixed_permut}).
  \end{proof}
  \subsection{Equivariant chain complexes}\label{sub:equivchain}
  In Subsection~\ref{sub:chain_complex} we constructed a cochain complex  $C^*(\cC,\ff)$, whose cohomology was equal to the cohomology of the cubical realization $||\cC||$.
  Suppose now that the underlying cubical flow category admits an action of the group \(G = \Z_{m}\).
  In order to describe the induced action of \(G\) on the chain complex, notice that, for any \(g \in G\), we obtain a homomorphism of abelian groups
  \[\mathcal{G}_{g} \colon C^{\ast}(\cC,\ff) \to C^{\ast}(\cC,\ff),\]
  yielding an action of \(G\).
  This action, however, does not, in general, commute with the differential on \(C^{\ast}(\cC,\ff)\).

  The differential of the chain complex~\eqref{eq:chain_complex} depends on the sign assignment $\nu$ on the cube flow category \(\Cube(n)_{\sigma}\), see~\eqref{eq:chain2}.\mpar{(37)}
  We will denote, abusing the notation, a generator of \(G\) by \(\sigma\).
  The symmetry group acts on sign assignments via
  \[\sigma(\nu)(x,y) = \nu(\sigma(x),\sigma(y)).\]
  However, in general, $\sigma(\nu) \neq \nu$, that is, the sign assignment $\nu$ is not necessarily $\sigma$-invariant.

  To remedy this, we recall that the sign assignments form a $1$-chain in $[0,1]^n$ with values in $\F_2$ (see Subsection~\ref{sub:review_khovanov}).
  The difference between any two sign assignments satisfies a cocycle condition.
  Therefore, there exists a $0$-cochain $c\in C^0([0,1]^n;\F_2)$ such that $\sigma(\nu)-\nu=\partial^*c$.
  That is,
  \begin{equation}\label{eq:iscochain}
    \nu(\sigma(\ff(x)),\sigma(\ff(y)))-\nu(\ff(x),\ff(y))= c(\ff(x))-c(\ff(y)).
  \end{equation}

  \begin{lemma}\label{lem:action}
    The map $t_{\sigma}\colon C^*(\cC)\to C^*(\cC)$ given by $x\mapsto (-1)^{c(\ff(x))}\mathcal{G}_{\sigma}(x)$ commutes with the differential and therefore it generates the $G$-action on the chain complex $C^*(\cC)$.\mpar{(38) the purpose of Lemma~\ref{lem:action} is to define the group action on $C^*(\cC)$.}
  \end{lemma}
  \begin{proof}
    We need to check that the coefficient in $\partial{t_{\sigma}(y)}$ at $t_{\sigma}(x)$ is equal to the coefficient in $\partial y$ at $x$.
    The latter is equal to
    \begin{equation}\label{eq:coef1}
      (-1)^{\nu(\ff(x),\ff(y))}\#\cM_{\cC}(x,y),
    \end{equation}
    compare to~\eqref{eq:chain2}.
    We want to compute now the former.
    Write $y'=\mathcal{G}_{\sigma}(y)$, $x'=\mathcal{G}_{\sigma(x)}$.
    By~\eqref{eq:chain2} the coefficient in $\partial y'$ at $x'$ is equal to
    \begin{equation}\label{eq:coef2}
      (-1)^{\nu(\ff(x'),\ff(y'))}\#\cM_{\cC}(x',y')=(-1)^{\nu(\ff(x'),\ff(y'))}\#\cM_{\cC}(x,y),
    \end{equation}
    Given the definition of $t_{\sigma}$, we have $t_{\sigma}(x)=(-1)^{c(\ff(x))}x'$ and $t_{\sigma}(y)=(-1)^{c(\ff(y))}y'$.
    Thus, in light of~\eqref{eq:coef2}, the coefficient in $\partial t_{\sigma}{G}(y)$ at $t_{\sigma}(x)$ is given by
    \begin{equation}\label{eq:coef3}
      (-1)^{c(\ff(x))+c(\ff(y))+\nu(\ff(\sigma(x)),\ff(\sigma(y)))}\#\cM_{\cC}(x,y).
    \end{equation}
    Finally, to show the equality of~\eqref{eq:coef1} and~\eqref{eq:coef3} we need to guarantee that
    \[c(\ff(x))+c(\ff(y))+\nu(\ff(\sigma(x)),\ff(\sigma(y)))=\nu(\ff(x),\ff(y))\bmod 2,\]
    but this follows immediately from~\eqref{eq:iscochain}.
  \end{proof}
  \begin{remark}
    This sign problem is not uncommon. It appears in the construction of the equivariant Khovanov homology~\cite[Section 2]{Politarczyk_Khovanov}.
    The approach in~\cite{Politarczyk_Khovanov} is essentially the same as the one we use here, but it is expressed in a different language.
  \end{remark}

  \subsection{Equivariant subcategories}

  Suppose that  $\cC'$ is an equivariant downward closed subcategory of $\cC$.
  Let $\cC''$ be the complementary upward closed subcategory.
  As $\cC'$ is invariant under the group action, the subcategory $\cC''$ is also an invariant subcategory.

  The following result is a direct generalization of~\cite[Lemma 3.32]{LS_stable}.
  \begin{proposition}\label{prop:puppewpuppe}
    If $\cC$, $\cC'$ and $\cC''$ are as above, then there exist three equivariant maps, an inclusion $\iota\colon||\cC'||\to||\cC||$, a collapse $\kappa\colon||\cC||\to||\cC''||$ and the Puppe map $\rho\colon||\cC''||\to\Sigma||\cC'||$, that induce the following cohomology long exact sequence
    \begin{equation}\label{eq:LSexact}
      \ldots\to \wt{H}^i(||\cC||)\stackrel{\iota^*}{\to} \wt{H}^i(||\cC'||)\stackrel{\rho^*}{\to}\wt{H}^{i+1}(||\cC''||)\stackrel{\kappa^*}{\to}\ldots.
    \end{equation}
  \end{proposition}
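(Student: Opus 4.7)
My plan is to follow the non-equivariant argument of \cite[Lemma 3.32]{LS_stable} essentially verbatim, using the explicit cell-by-cell construction of Subsection~\ref{sub:equivariant_geometric} and only tracking the additional datum that every cell and every attaching map is already $G$-equivariant by construction. The goal reduces to: (a) $||\cC'||$ embeds as an equivariant subcomplex of $||\cC||$; (b) the equivariant quotient $||\cC||/||\cC'||$ is equivariantly homeomorphic to $||\cC''||$; (c) the resulting cofibration yields the long exact sequence \eqref{eq:LSexact}.

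First, I would verify (a). Since $\cC'$ is downward closed, for any $x \in \Ob(\cC')$ and any $y$ with $\cM_{\cC}(x,y) \neq \emptyset$ we have $y \in \Ob(\cC')$. In the inductive construction of $||\cC||$ of Subsection~\ref{sub:equivariant_geometric}, the attaching map for the cell $\ep(x)$ uses only the maps $\ET_{y,x}\colon \ep(y) \times \cM_{\cC}(x,y) \to \ep(x)$ for $y$ with $\ff(y) < \ff(x)$; in particular, all such $y$ with nonempty moduli lie in $\Ob(\cC')$. Hence $\bigcup_{x \in \Ob(\cC')} \ep(x)$ is a closed subcomplex of $||\cC||$, which is exactly the independently constructed realization $||\cC'||$. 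Since $\Ob(\cC')$ is $G$-invariant by assumption and each $\ep(x)$ together with the attaching maps $\ET$ is $G$-equivariant, this subcomplex is $G$-invariant and the inclusion $\iota\colon||\cC'||\hookrightarrow||\cC||$ is equivariant.

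Next, for (b), I would define $\kappa$ as the equivariant quotient map $||\cC|| \to ||\cC||/||\cC'||$. On cells, $||\cC||/||\cC'||$ has one cell $\ep(x)$ for each $x \in \Ob(\cC'')$, with attaching map obtained from the attaching map in $||\cC||$ by collapsing every piece $\ep_{y}(x)$ with $y \in \Ob(\cC')$ to the basepoint and retaining every piece $\ep_{y}(x)$ with $y \in \Ob(\cC'')$. This is precisely the data that defines the attaching map of $||\cC''||$ built from scratch: indeed, since $\cC''$ is upward closed, whenever $x \in \Ob(\cC'')$ and $y \in \Ob(\cC'')$ with $\cM_{\cC}(x,y) \neq \emptyset$, the morphism space $\cM_{\cC''}(x,y)$ equals $\cM_{\cC}(x,y)$, so no moduli are lost. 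Equivariance of this identification is automatic from equivariance of $\ET$ and the $G$-invariance of $\Ob(\cC'')$. This realizes the collapse $\kappa\colon||\cC||\to||\cC''||$.

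Finally, for (c), I would simply invoke the equivariant Puppe sequence for the cofibration $||\cC'||\xhookrightarrow{\iota}||\cC||$: the cofiber is equivariantly $||\cC''||$ by (b), the connecting map $\rho\colon||\cC''||\to\Sigma||\cC'||$ is the usual equivariant Puppe map, and applying reduced cohomology yields \eqref{eq:LSexact}. The only mildly delicate point, and the one I expect to take a bit of care, is the explicit identification of the quotient with $||\cC''||$ as CW-complexes with the correct equivariant attaching maps; but because the maps $\ET$ in~\eqref{eq:ethetadef} reduce under the identification $d_i = e_i \dim V$ to the classical maps $\theta$ of~\eqref{eq:thetadef} (already used in \cite[Lemma 3.32]{LS_stable}), the combinatorial check is exactly the non-equivariant one, with equivariance tracked cell by cell.
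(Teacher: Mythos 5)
Your proof is correct and follows exactly the route the paper intends: the paper states this proposition without proof as a ``direct generalization of~\cite[Lemma 3.32]{LS_stable}'', and your argument is precisely that generalization, using that $||\cC'||$ is a $G$-invariant subcomplex (downward closedness), that the quotient is $||\cC''||$ (upward closedness plus fullness of $\cC''$), and the equivariant Puppe sequence. No gaps.
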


  Suppose $\cC$ is an equivariant cubical flow category, $\cC'$ is a downward closed subcategory, and $\cC''$ is the complementary upward closed category, and let the maps $\iota,\kappa$ and $\rho$ be as in Proposition~\ref{prop:puppewpuppe}.
  We ask under which conditions one of these maps is an equivariant homotopy equivalence.
  This holds under some extra assumptions that we spell in Lemma~\ref{lem:hardertocheck}.
  Although these assumptions are harder to verify, the methods developed in Subsection~\ref{sub:fixed} simplify the process.
  \begin{lemma}\label{lem:hardertocheck}
    Let $\iota,\kappa$ and $\rho$ be as described in Proposition~\ref{prop:puppewpuppe}.
    \begin{itemize}
    \item[(a)] If for any subgroup $H\subset G$ the reduced homology $\wt{H}^*(||\cC''||^H)$ is trivial, then the map $\iota$ is an equivariant stable homotopy equivalence.
    \item[(b)] If for any subgroup $H\subset G$ the reduced homology $\wt{H}^*(||\cC'||^H)$ is trivial, then the map $\kappa$ is an equivariant stable homotopy equivalence.
    \item[(c)] If for any subgroup $H\subset G$ the reduced homology $\wt{H}^*(||\cC||^H)$ is trivial, then the map $\rho$ is an equivariant stable homotopy equivalence.
    \end{itemize}
  \end{lemma}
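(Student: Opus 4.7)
The plan is to deduce all three statements from the equivariant Whitehead theorem, which asserts that a map of $G$-CW spectra is a $G$-equivariant stable homotopy equivalence if and only if for every subgroup $H \subset G$ the induced map on $H$-fixed subspectra is a (non-equivariant) stable homotopy equivalence. Since the three maps $\iota,\kappa,\rho$ live between realizations of cubical flow categories, Proposition~\ref{prop:conclude} guarantees that, after the appropriate formal desuspension, they fit into the category of $G$-CW spectra to which the Whitehead theorem applies. Thus it is enough to check, for each subgroup $H \subset G$ and under the hypothesis of each part, that the corresponding fixed-point map is a stable equivalence.

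The first step is to identify the fixed-point maps. By Proposition~\ref{prop:fixedcategory}, $||\cC||^H$ is homeomorphic to the realization of the (non-equivariant) flow category $\cC^H$, and likewise $||\cC'||^H = ||(\cC')^H||$ and $||\cC''||^H = ||(\cC'')^H||$. Since $\cC'$ is a $G$-invariant downward closed subcategory of $\cC$, its set of $H$-fixed objects is a downward closed subcategory of $\cC^H$, and $(\cC'')^H$ is precisely the complementary upward closed subcategory. Furthermore, the inclusion, collapse and Puppe maps are constructed cellwise from the attaching data, and by the compatibility of the attaching maps with the fixed-point functor established in the proof of Proposition~\ref{prop:fixedcategory} (in particular the commutativity of $\ET(y,x)^H = \theta_H(y,x)$), the restrictions $\iota^H,\kappa^H,\rho^H$ coincide with the Puppe maps of the triple $((\cC')^H, \cC^H, (\cC'')^H)$. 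Applying Proposition~\ref{prop:puppewpuppe} non-equivariantly to this triple therefore yields a cofiber sequence
\[
||(\cC')^H|| \xrightarrow{\iota^H} ||\cC^H|| \xrightarrow{\kappa^H} ||(\cC'')^H|| \xrightarrow{\rho^H} \Sigma ||(\cC')^H||,
\]
together with the long exact cohomology sequence~\eqref{eq:LSexact} (with arbitrary coefficients).

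The second step is purely formal. In case (a), the hypothesis $\wt{H}^*(||\cC''||^H;\Z) = 0$ combined with the long exact sequence forces $\iota^H$ to be an isomorphism on reduced integral cohomology, equivalently on reduced integral homology. Since both sides are formal desuspensions of based CW complexes, in particular bounded-below CW spectra, the stable Whitehead theorem shows that $\iota^H$ is a stable homotopy equivalence. Cases (b) and (c) are strictly analogous: the vanishing of the cohomology of one of the three terms forces the map between the other two to be a cohomology isomorphism, hence a stable equivalence by the same argument (for (c) one uses that $\rho^H$ is a map between suspension spectra of CW complexes, and the cofiber sequence identifies its cofiber with $||\cC||^H$, whose vanishing cohomology is the assumption). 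Applying the equivariant Whitehead theorem then concludes that $\iota$ (respectively $\kappa$, $\rho$) is a $G$-equivariant stable homotopy equivalence.

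The main technical obstacle is the identification in the first step: one must verify that taking $H$-fixed points of the equivariant Puppe construction of Subsection~\ref{sub:equivariant_geometric} genuinely reproduces the non-equivariant Puppe construction applied to $\cC^H, (\cC')^H, (\cC'')^H$. This is conceptually clear from Proposition~\ref{prop:fixedcategory} — cells correspond to cells and attaching maps correspond to attaching maps — but requires a careful bookkeeping of the formal desuspensions (the equivariant realization is desuspended by $W \oplus V^{\oplus(e_0+\cdots+e_{n-1})}$, while the realization of $\cC^H$ is desuspended by the $H$-fixed parts of these representations), and a routine check that the basepoints collapse in a compatible manner on fixed-point sets. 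Once this bookkeeping is complete, the remainder of the argument is an application of the long exact sequence and Whitehead-type theorems.
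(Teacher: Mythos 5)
Your proposal is correct and follows essentially the same route as the paper: deduce from the vanishing hypothesis (via the fixed-point identification of Proposition~\ref{prop:fixedcategory} and the long exact sequence of Proposition~\ref{prop:puppewpuppe}) that each fixed-point map is a non-equivariant stable equivalence, then conclude with the equivariant Whitehead theorem. The paper's proof is terser — it asserts the fixed-point statement directly — whereas you spell out the cofiber-sequence bookkeeping, but the argument is the same.
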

  \begin{proof}
    We prove only part (a) since the proofs for the other two statements are analogous.
    Our assumptions imply that for any $H \subset G$,
    \[\iota^{H} \colon ||\cC'||^{H} \to ||\cC||^{H}\]
    is a stable homotopy equivalence.
    Since $||\cC'||$ and $||\cC||$ are equivariantly homotopy equivalent to $G$-CW-complexes by Proposition~\ref{prop:repG_to_unrep}, the equivariant version of the Whitehead Theorem (see e.g.\ \cite[Section VI.3]{May}) implies that $\iota$ is an equivariant stable homotopy equivalence.
  \end{proof}

  \section{Khovanov homotopy type}\label{sec:review}
  In this section we introduce the equivariant Khovanov homotopy type.
  We start with a short recollection of the construction of the Khovanov chain complex and the annular Khovanov chain complex (Subsections~\ref{sub:review_khovanov} and~\ref{sub:annul-khov-chain}). 
  Next, we give a rather  brief review of the construction of the Khovanov homotopy type and the annular Khovanov homotopy type (Subsection~\ref{sub:khomotopy_type} and~\ref{sub:annul-khov-homot-1}).
  Finally, in Subsection~\ref{sub:equiv_flow_cat} we construct the equivariant Khovanov flow category.
  The results from Section~\ref{sec:equivariant} lead immediately to the construction of the equivariant Khovanov homotopy type and the equivariant annular Khovanov homotopy type.
  Invariance of the equivariant homotopy types is proved in Section~\ref{sec:main1}.

  \subsection{Khovanov chain complex}\label{sub:review_khovanov}

  In this subsection we rely on \cite[Section 2]{LS_stable}.
  Let $V$ be a two-dimensional vector space over a field $\F$ with $\xp$ and $\xm$ as generators.
  We make it a graded space by assigning a grading $q(\xp)=1$, $q(\xm)=-1$, called the \emph{quantum grading}.

  A \emph{resolution configuration} $\cD$ is a pair $(\tZ(\cD),\tA(\cD))$ where $\tZ(\cD)$ is a set of pairwise disjoint embedded circles in $S^{2}$
  and $\tA(\cD)$ is a totally-ordered set consisting of disjoint embedded arcs in $S^{2}$ such that the boundary of every arc lies in $\tZ(\cD)$.
  The \emph{index} of the resolution configuration $\cD$, denoted $\operatorname{ind}(\cD)$, is the cardinality of $\tA(\cD)$.
  A \emph{labeled resolution configuration} is a pair $(\cD,\xt)$ consisting of a resolution configuration $\cD$ and a map $\xt$ assigning a label, $\xp$ or $\xm$, to each element of $\tZ(\cD)$.

  Given two resolution configurations $\cD_{1}$ and $\cD_{2}$, we define the resolution configuration $\cD_{1} \setminus \cD_{2}$ by declaring, see~\cite{LS_stable}:\mpar{(39)}
  \[\tZ(\cD_{1} \setminus \cD_{2}) = \tZ(\cD_{1}) \setminus \tZ(\cD_{2}), \quad
    \tA(\cD_{1}\setminus\cD_{2}) = \{A \in \tA(\cD_{1}) \colon \forall_{Z \in \tZ(\cD_{2})} \partial A \cap Z = \emptyset\}.\]
  For a resolution configuration $\cD$ we can choose a subset $B \subset \tA(\cD)$ and obtain a new resolution configuration $\fs_{B}(\cD)$, called the \emph{surgery of} $\cD$ \emph{along B}, by performing a surgery along the arcs in $B$. We use a shortened notation $s(\cD)$ for the surgery $s_{\tA(\cD)}(\cD)$.
  Another operation that we can perform on a resolution configuration $\cD$ is taking the \emph{dual resolution configuration} $\cD^{\ast}$: $\tZ(\cD^{\ast}) = \tZ(\fs(\cD))$ and $A(\cD^{\ast})$ consists of arcs dual to arcs from $A(\cD)$, as explained in Figure~\ref{fig:dual-arcs}.

  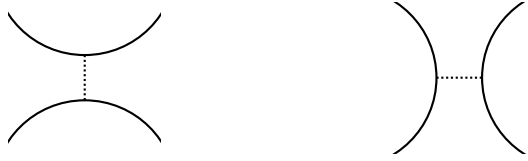
\begin{figure}
    \centering
    \begin{subfigure}[b]{.3 \linewidth}
      \centering
      \begin{tikzpicture}
        \begin{scope}
          \clip (-3,-1) rectangle (-5,1);
          \draw[thick] (-4,1.5) circle (1.2);
          \draw[thick] (-4,-1.5) circle (1.2);
          \draw[thick,densely dotted] (-4,0.3) -- (-4,-0.3);
        \end{scope}
      \end{tikzpicture}
    \end{subfigure}
    \begin{subfigure}[b]{.3 \linewidth}
      \centering
      \begin{tikzpicture}
        \begin{scope}
          \clip (-1,-1) rectangle (1,1);
          \draw[thick] (-1.5,0) circle (1.2);
          \draw[thick] (1.5,0) circle (1.2);
          \draw[thick,densely dotted] (-0.3,0) -- (0.3,0);
        \end{scope}
      \end{tikzpicture}
    \end{subfigure}
    \caption{Dual arcs.}
    \label{fig:dual-arcs}
  \end{figure}

  We can define a partial ordering on the set of labeled resolution configurations.
  Let $(\cD,\xt)$ and $(\cD',\yt)$ be two resolution configurations such that $\operatorname{ind}(\cD) - \operatorname{ind}(\cD')=1$.
  We say that $(\cD,\xt) \prec (\cD',\yt)$ if $\cD'$ can be obtained from $\cD$ by surgery along a single arc in $A \in \tA(\cD)$ and one of the following conditions holds:
  \begin{enumerate}
  \item If $\partial A$ is on a single circle $Z$ which splits during the surgery into two circles $Z_{1}$ and $Z_{2}$, then
    \begin{itemize}
    \item If $\xt(Z) = \xp$ then either $\yt(Z_{1})=\xp$ and $\yt(Z_{2})=\xm$ or $\yt(Z_{1})=\xm$ and $\yt(Z_{2})=\xp$.
    \item If $\xt(Z) = \xm$ then $\yt(Z_{1})=\xm$ and $\yt(Z_{2})=\xm$.
    \end{itemize}
  \item If $\partial A$ lies on two circles $Z_{1}$ and $Z_{2}$ which are merged during the surgery into a single circle $Z$, then
    \begin{itemize}
    \item If $\xt(Z_{1})=\xt(Z_{2})=\xp$, then $\yt(Z)=\xp$.
    \item If $\xt(Z_{1})=\xp$ and $\xt(Z_{2})=\xm$ or $\xt(Z_{1})=\xm$ and $\xt(Z_{2})=\xp$, then $\yt(Z)=\xm$.
    \end{itemize}
  \end{enumerate}
  For general labeled resolution configurations the partial order is defined as the transitive closure of the above relation.

  \begin{definition}\label{def:decorated}
    A \emph{decorated resolution configuration} is a triple $(\cD,\xt,\yt)$ where $(\cD,\yt)$ and $(\fs(\cD),\xt)$ are labeled resolution configurations such that $(\cD,\yt) \prec (\fs(\cD),\xt)$.

    Define $P(\cD,\xt,\yt)$ to be the poset consisting of all labeled resolution configurations $(\fs_{\tA}(\cD),\yt')$, where $\tA \subset \tA(\cD)$, such that
    \[(\cD,\yt) \preceq (\fs_{\tA}(\cD),\yt') \preceq (\fs(\cD),\xt).\]
  \end{definition}

  \begin{figure}

    \begin{tikzpicture}
      \draw[thick] (-1,-1) -- (1,1);
      \fill[draw=none,white] (0,0) circle (0.1);
      \draw[thick] (1,-1) -- (-1,1);
      \begin{scope}
        \clip (3,-1) rectangle (5,1);
        \draw[thick] (2.5,0) circle (1.2);
        \draw[thick] (5.5,0) circle (1.2);
      \end{scope}

      \begin{scope}
        \clip (-3,-1) rectangle (-5,1);
        \draw[thick] (-4,1.5) circle (1.2);
        \draw[thick] (-4,-1.5) circle (1.2);
        \draw[thick,densely dotted] (-4,0.3) -- (-4,-0.3);
      \end{scope}
      \draw[->,thick](1.5,0) -- node [scale=0.7,midway,above] {1-resolution} (3,0);
      \draw[->,thick](-1.5,0) -- node [scale=0.7,midway,above] {0-resolution} (-3,0);
    \end{tikzpicture}
    \caption{\small{Resolutions of a crossing.}}\label{fig:resolutions}
  \end{figure}
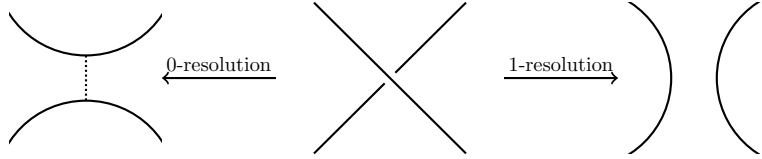

  Fix a field $\F$.
  Let $D$ be an oriented link diagram with $n = n_{+}+n_{-}$ ordered crossings, where $n_+$ and $n_-$ denote the number of positive, respectively negative, crossings.
  For every $v \in \{0,1\}^{n}$ we can define a resolution configuration $\cD_D(v) = (\tZ(\cD_D(v)), \tA(\cD_D(v)))$ obtained by smoothing the $i^{th}$ crossing of $D$ according to the $i^{th}$ coordinate of $v$, as depicted in Figure~\ref{fig:resolutions}. The arcs correspond to 0-resolutions; in Figure~\ref{fig:resolutions} the dotted line represents
  the arc associated to that particular resolution.
  To avoid cumbersome notation, we will drop the subscript $D$ when it is clear from the context.

  Let $V(\cD(v))$ be the vector space over $\F$ generated by all possible labeled resolution configurations $(\cD(v), x)$.
  Define the \emph{Khovanov complex} of $D$ in homological grading $i=|v|-n_-$ as
  \[\ckh^i(D;\F)=\bigoplus_{\substack{v\in\{0,1\}^n\\ |v|=i+n_-}} V(\cD(v)).\]
  The vector space $\ckh^i$ inherits the quantum grading from $V$. 
  To be more precise, to a homogeneous element $x=\newx_{\epsilon_1}\otimes \ldots \otimes \newx_{\epsilon_t}\in V(\cD(v))$, $\epsilon_i\in\{+,-\}$,
  we associate the grading $q(x)=\sum q(\newx_{\epsilon_i})+n_+-2n_-+|v|$.
  Then $\ckh^i$ splits as a direct sum of spaces $\ckh^{i,q}$, where the second index denotes the quantum grading.

  In order to make $\ckh^*$ into a cochain complex, we need to choose a sign assignment $\nu$.
  This done, we define the differential of an element $(\cD(v),\xt)$ with homological grading $i$ as
  \[\partial_{i}(\cD(v),\xt) = \sum_{\stackrel{|u|=|v|+1}{(\cD(v),\xt)\prec (\cD(u),\yt)}}(-1)^{\nu(u,v)} (\cD(u),\yt).\]
  The cohomology groups of the complex $\ckh$, that is $\kh^{i,q}(D) = \ker(\partial_i)/\im(\partial_{i-1})$, are link invariants~\cite{khovanov_categorification_2000}.
  They are the \textit{Khovanov homology groups} of the link represented by $D$.

  \subsection{Annular Khovanov chain complex}\label{sub:annul-khov-chain}

  Asaeda, Przytycki, and Sikora~\cite{AsaedaPrzytyckiSikora} gave a construction of a variant of Khovanov homology for a link in the solid torus.
  This construction was later refined by Roberts~\cite{Roberts}.
  Given $L \subset S^{1} \times \R^{2}$, we fix a diagram $D$ of $L$ so that $D$ can be drawn on an annulus $S^{1} \times \R^{1}$.
  The starting point of the construction of Asaeda, Przytycki, and Sikora is to assign an extra annular grading to each of the generators of the Khovanov complex of $D$.
  For any $v \in \{0,1\}^{n}$ and any labeled resolution configuration $\cD = (\cD(v),\xt)$, the annular grading of $\cD$, denoted $\Ann(\cD)$,  is defined in the following way.
  Let $\tZ(\cD(v)) = \{Z_{1},Z_{2},\ldots,Z_{k}\}$.
  We say that the circle $Z_{i}$, for $1 \leq i \leq k$ is \emph{trivial}, if it is null-homotopic in $S^{1} \times D^{1}$, and \emph{nontrivial} otherwise.
  For any $1 \leq i \leq k$ we define
  \[\Ann(Z_{i},\xt) =
    \begin{cases}
      1,  & \text{if } Z_{i} \text{ is nontrivial and } \xt(Z_{i}) = \xp, \\
      -1, & \text{if } Z_{i} \text{ is nontrivial and } \xt(Z_{i}) = \xm, \\
      0,  & \text{if } Z_{i} \text{ is trivial}, \\
    \end{cases}
  \]
  and
  \[\Ann(\cD,\xt) = \sum_{i=1}^{k} \Ann(Z_{i},\xt).\]
  It is easy to check that for any decorated resolution configuration $(\cD,\xt,\yt)$ we have
  \[\Ann(\cD,\yt) \geq \Ann(\fs(\cD),\xt).\]
  Indeed, the above property is trivial for decorated resolution configurations of index one.
  The general case follows from the transitivity of the relation $\succ$.
  Consequently, there exists a filtration of the Khovanov complex
  \[0 \subset \ldots \subset \CA_{k-1}(D) \subset \CA_{k}(D) \subset \CA_{k+1}(D) \subset \ldots \subset \ckh^{\ast,\ast}(D),\]
  where $\CA_{k}(D)$ is the subcomplex of $\ckh^{\ast,\ast}(D)$ generated by those labeled resolution configurations $(\cD(v),\xt)$ such that $\Ann(\cD(x),\xt) \leq k$.
  The \emph{annular Khovanov complex} of $D$ is the triply-graded cochain complex defined as
  \[\cakh^{i,j,k}(D) = \left(\ckh^{i,j}(D) \cap \CA_{k}(D)\right) / \left(\ckh^{i,j}(D) \cap \CA_{k-1}(D)\right).\]
  In this setting, the \emph{annular Khovanov homology} of $D$, denoted by $\akh^{\ast,\ast,\ast}(D)$, is defined as the homology of $\cakh^{\ast,\ast,\ast}(D)$. Annular
  Khovanov homology is an invariant of an annular link.

  \subsection{Khovanov homotopy type}\label{sub:khomotopy_type}

  In this subsection we apply constructions described in Sections~\ref{sec:flow_cat} and~\ref{sec:equivariant} to a specific flow category, the \emph{Khovanov flow category}, which is at the heart of the Lipshitz-Sarkar construction.
  Let $D$ be an oriented link diagram with $n = n_{+} + n_{-}$ ordered crossings.
  The starting point of the construction is to assign to every decorated resolution configuration $(\cD(v),\xt,\yt)$ the moduli space $\cM_{\kh}(\cD(v),\xt,\yt)$, which is a disjoint union of permutohedra $\Pi_{{m-1}}$, with $m = \operatorname{ind}(\cD(v))$.
  If $m=1$, $\cM_{\kh}(\cD(v),\xt,\yt)$ consists of a single point.
  If $m=2$, the moduli space $\cM_{\kh}(\cD(v),\xt,\yt)$ can be defined once we choose another piece of data called the \emph{ladybug matching} (for details refer to~\cite[Section~5.1]{LS_stable}).
  For $m>2$ the moduli spaces $\cM_{\kh}(\cD(v),\xt,\yt)$ can be constructed inductively.

  \begin{definition}[\cite{LS_stable}]\label{def:LS_flow}
    The \emph{Khovanov flow category}, $\cC_{\kh}(D)$, is a cubical flow category such that:
    \begin{itemize}
    \item $\Ob(\cC_{\kh}(D))$ consists of all labeled resolution configurations $(\cD(v),\xt)$, where $v \in \{0,1\}^{n}$.
      The grading of an object is equal to its homological grading $i(\cD(v),\xt) = |v|-n_{-}$ (recall that each object has an additional quantum grading, as explained in Subsection~\ref{sub:review_khovanov}).
    \item The morphism space is defined in the following way
      \[
        \cM_{\cC_{\kh}}((\cD(v), \xt), (\cD(u), \yt)) =
        \begin{cases}
          \cM_{\kh}(\cD(u)\setminus \cD(v),\xt',\yt'), & \text{if } (\cD(u), \yt) \prec (\cD(v), \xt), \\
          \emptyset, & \text{otherwise},
        \end{cases}
      \]
      where $\xt’$ and $\yt’$ are the restrictions of $\xt$ and $\yt$, respectively, to $\cD(u)\setminus \cD(v)$ and $\fs(\cD(u)\setminus \cD(v))$.
    \item The functor $\ff \colon \Sigma^{n_{-}}\cC_{\kh}(D) \to \Cube(n)$ maps a labeled resolution configuration $(\cD(v),\xt)$ to $v$.
    \end{itemize}
  \end{definition}
  \begin{remark}
    It is worth to stress that while $\cM_{\kh}$ denotes the moduli space associated with a pair of configurations $(\cD(v),\xt,\yt)$, the morphism space for the Khovanov flow category $\cC_{\kh}$ is denoted by $\cM_{\cC_{\kh}}$.
  \end{remark}

  By definition $\mathcal{M}_{\kh}(\cD(v),\xt,\yt) = \emptyset$ unless the $q$-gradings of $(\cD,\yt)$ and $(\fs(\cD),\xt)$ are equal.
  Consequently, for any $j \in \Z$, we can distinguish the full subcategory $\mathcal{C}_{\kh}^{j}(D)$ of $\mathcal{C}_{\kh}(D)$ consisting of objects whose $q$-grading is equal to $j$.
  It is easy to see that
  \begin{equation}\label{eq:q-grading-decomposition-flow-cat}
    \mathcal{C}_{\kh}(D) = \bigsqcup_{j \in \Z} \mathcal{C}_{\kh}^{j}(D).
  \end{equation}

  In this setting, and after making some choices (such as a framing and a neat embedding of $\cC_{\kh}(D)$), we obtain the cubical realization of the Khovanov flow category $||\cC_{\kh}(D)||$.
  This CW-complex is called the~\textit{Khovanov space}.
  The Khovanov homology of $D$, as constructed in Subsection~\ref{sub:review_khovanov}, is canonically isomorphic with the reduced cohomology of $||\cC_{\kh}(D)||$, up to grading shift.
  Finally, the stable homotopy type of the formal desuspension of the Khovanov space $\Sigma^{-n_-}||\cC_{\kh}(D)||$ is the \emph{Khovanov homotopy type} $\mathcal{X}_{\kh}(D)$ constructed in~\cite{LS_stable,LLS_long}, where it was proven to be a link invariant.

  Notice that the decomposition~\eqref{eq:q-grading-decomposition-flow-cat} induces a decomposition
  \[\mathcal{X}_{\kh}(D) = \bigvee_{j\in\Z} \mathcal{X}_{\kh}^{j}(D),\]
  where $\mathcal{X}_{\kh}^{j}(D)$ denotes the cubical realization of $\mathcal{C}_{\kh}^{j}(D)$.

  \subsection{Annular Khovanov homotopy type}\label{sub:annul-khov-homot-1}

  Recall that any labeled resolution configuration $\cD$ of a link $L \subset S^{1} \times D^{2}$, has an associated annular grading $\Ann(\cD)$.
  Define the \emph{annular Khovanov flow category} $\cC_{\akh}(D)$ to be the subcategory of $\cC_{\kh}(D)$ with the same set of objects but with morphisms preserving the annular grading.

  For $k \in \Z$, define the subcategories $\cC_{\kh}^{k}(D)$, $\cC_{\kh}^{\geq k}(D)$ and $\cC_{\kh}^{\leq k}(D)$ of the flow category $\cC_{\kh}(D)$ as the categories consisting of all labeled resolution configurations $\cD$ such that $\Ann(\cD) = k$, $\Ann(\cD) \geq k$ and $\Ann(\cD) \leq k$, respectively.
  Then, $\cC_{\akh}(D) = \bigsqcup_{k\in\Z} \cC_{\kh}^{k}(D)$.

  For $x,y\in\ob(\cC_{\kh})$ we have $\cM_{\cC_{\akh}}(x,y)=\emptyset$ unless $\Ann(x)=\Ann(y)$. In the latter case $\cM_{\cC_{\akh}}(x,y)=\cM_{\cC_{\kh}}(x,y)$. For a labeled resolution
  configuration $(\cD,\xt,\yt)$ we also denote by $\cM_{\akh}(\cD,\xt,\yt)$ the moduli space $\cM_{\cC_{\akh}}((\fs(\cD),\xt),(\cD,\yt))$.

  \begin{lemma}\label{lem:annular-flow-cat}
    The categories $\cC_{\akh}(D)$, $\cC_{\kh}^{\geq k}(D)$, $\cC_{\kh}^{\leq k}$ and $\cC_{\kh}^{k}$ are cubical flow categories.
  \end{lemma}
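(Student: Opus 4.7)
The plan is to verify axioms~\ref{item:FC-1}--\ref{item:FC-3} and the cubical cover property for each of the four subcategories, the only real input being the monotonicity of the annular grading along flow-category morphisms. Concretely, the inequality $\Ann(\cD,\yt) \geq \Ann(\fs_{\tA(\cD)}(\cD),\xt)$ recalled in Subsection~\ref{sub:annul-khov-chain}, iterated across chains of surgeries, yields: if $\cM_{\cC_{\kh}}(x,y) \neq \emptyset$, then $\Ann(x) \leq \Ann(y)$.

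First I would dispatch $\cC_{\kh}^{\geq k}(D)$ and $\cC_{\kh}^{\leq k}(D)$. The monotonicity immediately shows that $\cC_{\kh}^{\geq k}(D)$ is a downward closed subcategory of $\cC_{\kh}(D)$ and $\cC_{\kh}^{\leq k}(D)$ is an upward closed subcategory, in the sense of Definition~\ref{def:downward}. Axioms~\ref{item:FC-1} and~\ref{item:FC-2} are inherited verbatim, since morphism spaces in a full subcategory coincide with those in $\cC_{\kh}(D)$. For~\ref{item:FC-3}, the boundary decomposition of $\cM(x,y)$ in $\cC_{\kh}(D)$ is indexed by intermediate objects $z$ for which $\cM(x,z)$ and $\cM(z,y)$ are both non-empty, and downward (respectively, upward) closedness ensures that every such $z$ already lies in the subcategory; hence the diffeomorphism of~\ref{item:FC-3} carries over unchanged. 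The cubical structure is inherited by restricting $\ff\colon \Sigma^{n_-}\cC_{\kh}(D) \to \Cube(n)$ from Definition~\ref{def:LS_flow}, and the cover map $\ff_{x,y}$ on each non-empty morphism space is unaffected.

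For $\cC_{\kh}^{k}(D)$ and $\cC_{\akh}(D)$ the same scheme works with a sandwiching twist: if $\Ann(x) = \Ann(y) = k$ and $\cM(x,z), \cM(z,y)$ are both non-empty, then monotonicity forces $k = \Ann(x) \leq \Ann(z) \leq \Ann(y) = k$, so $\Ann(z) = k$. Every intermediate $z$ contributing to the boundary decomposition of $\cM(x,y)$ thus lies in $\cC_{\kh}^{k}(D)$, and axiom~\ref{item:FC-3} is preserved. Since $\cC_{\akh}(D) = \bigsqcup_{k} \cC_{\kh}^{k}(D)$ as a disjoint union of flow categories (there are no morphisms across distinct annular gradings in $\cC_{\akh}(D)$ by construction), the result for $\cC_{\akh}(D)$ follows simultaneously; compare Remark~\ref{rem:direct_sum_wedge_sum}. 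Again the cubical functor is the restriction of the one on $\cC_{\kh}(D)$. There is no substantial obstacle here; the only content is the annular monotonicity together with the sandwiching observation for the level sets.
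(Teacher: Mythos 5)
Your argument is correct and follows the same route as the paper, whose proof simply restricts the cubical functor $\ff$ and declares the axiom-checking straightforward; you have filled in that check using exactly the ingredient the paper itself deploys in the very next lemma, namely monotonicity of $\Ann$ along non-empty moduli spaces (giving closedness for $\cC_{\kh}^{\geq k}$ and $\cC_{\kh}^{\leq k}$, and the sandwiching for $\cC_{\kh}^{k}$ and $\cC_{\akh}$). No gaps.
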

  \begin{proof}
    The cubical functor $\ff\colon \cC_{\kh}(D)\to\Cube(n)$ restricts to a cubical functor on each of these categories. Verifying the axioms of the cubical flow category is straightforward.
  \end{proof}
  Given Lemma~\ref{lem:annular-flow-cat} we can define $||\cC_{\akh}||$, $||\cC_{\kh}^{\geq k}||$, $||\cC_{\kh}^{\leq k}||$ and $||\cC_{\kh}^{k}||$ as a cubical realization of the
  suitable categories and then the corresponding desuspensions
  $\cX(\cC_{\akh})$, $\cX(\cC_{\kh}^{\geq k})$, $\cX(\cC_{\kh}^{\leq k})$ and $\cX(\cC_{\kh}^{k})$.
  Notice that the decomposition of the annular flow category $\mathcal{C}_{\akh}(D)$ according to the quantum and annular gradings induces a decomposition
  \[\mathcal{X}_{\akh}(D) = \bigvee_{j,k \in \Z} \mathcal{X}_{\akh}^{q,k}(D),\]
  where $\mathcal{X}_{\akh}^{q,k}(D)$ is the cubical realization of $\mathcal{C}_{\kh}^{q,k}(D)$.
  Repeating the proof of the invariance of the stable homotopy type of $\mathcal{X}_{\kh}(D)$ under Reidemeister moves, we obtain the invariance of the stable homotopy type of $\mathcal{X}_{\akh}(D)$ under Reidemeister moves in the solid torus.
  Therefore, the stable homotopy type $\mathcal{X}_{\akh}(D)$ is an invariant of an underlying link $L$.
  The following result relates the cohomology of the cubical realization $\cC_{\akh}$ with the annular Khovanov homology.
  \begin{lemma}
    For any $k \in \Z$, and any quantum grading $q\in\Z$, there exists an isomorphism
    \[H^{\ast}\left(\cX(\cC_{\akh}^{q,k}(D))\right) \cong \akh^{\ast,q,k}(D).\]
  \end{lemma}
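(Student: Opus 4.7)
The plan is to identify the cellular cochain complex of $\cX(\cC_{\kh}^{q,k}(D))$ with $\cakh^{*,q,k}(D)$ and then invoke Lemma~\ref{lem:cochain_complex}. By Lemma~\ref{lem:annular-flow-cat}, $\cC_{\kh}^{q,k}(D)$ is a cubical flow category, so Lemma~\ref{lem:cochain_complex} applies and gives
\[
H^{*}\bigl(\cX(\cC_{\kh}^{q,k}(D))\bigr) \;=\; H^{*}\bigl(C^{*}(\cC_{\kh}^{q,k}(D),\ff)\bigr),
\]
where the cochain complex on the right is generated over $\Z$ by the objects of the category, with differential given by the signed counts \eqref{eq:chain_complex} and \eqref{eq:chain2}.

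Next I would match generators: by definition the objects of $\cC_{\kh}^{q,k}(D)$ are exactly the labeled resolution configurations of quantum grading $q$ and annular grading $k$, which are in turn exactly the generators of $\cakh^{*,q,k}(D)$ described in Subsection~\ref{sub:annul-khov-chain}.

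Then I would match differentials. For objects $x,y$ of $\cC_{\kh}^{q,k}(D)$ with $\gr(y)-\gr(x)=1$, Definition~\ref{def:LS_flow} forces $\cM_{\cC_{\kh}}(x,y)$ to be a single point whenever $(\cD(\ff(x)),\xt) \prec (\cD(\ff(y)),\yt)$ and to be empty otherwise. Formula \eqref{eq:chain2} therefore reproduces precisely the Khovanov differential coefficient $(-1)^{\nu(\ff(x),\ff(y))}$. Restricting to morphisms inside $\cC_{\kh}^{q,k}(D)$ keeps only those for which both endpoints have $\Ann=k$, which is precisely the portion of the Khovanov differential surviving in the associated graded at filtration level $k$, i.e.\ the annular Khovanov differential. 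Hence $C^{*}(\cC_{\kh}^{q,k}(D),\ff) \cong \cakh^{*,q,k}(D)$ as cochain complexes and the isomorphism on cohomology follows.

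The only mildly subtle point is the last identification: one must check that intersecting with $\cC_{\kh}^{q,k}(D)$ does indeed produce the associated graded rather than a subcomplex or quotient of the filtered complex. This follows from the fact that the Khovanov differential weakly decreases $\Ann$ (Subsection~\ref{sub:annul-khov-chain}), so the $\Ann$\nobreakdash-preserving terms form exactly the associated graded piece of the filtration from Lemma~\ref{cor:annular-filtration-kh-flow-cat}. Alternatively, and perhaps more cleanly, one can argue via Lemma~\ref{cor:annular-filtration-kh-flow-cat} directly: the quotient $\cX(\cC_{\kh}^{\geq k,q}(D))/\cX(\cC_{\kh}^{\geq k+1,q}(D))\simeq \cX(\cC_{\kh}^{q,k}(D))$ induces on cellular cochains the associated graded of the annular filtration on $\ckh^{*,q}(D)$, whose cohomology is by definition $\akh^{*,q,k}(D)$.
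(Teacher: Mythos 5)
Your proof is correct and essentially matches the paper's: the paper identifies $C^{\ast}(\cC_{\kh}^{\geq k}(D))\cong \ckh(D)/\CA_{k-1}(D)$ and then uses Lemma~\ref{cor:annular-filtration-kh-flow-cat} to get $C^{\ast}(\cC_{\kh}^{k}(D))\cong \CA_{k}/\CA_{k-1}$, which is exactly the "alternative, more cleanly" route you sketch at the end. Your direct generator-and-differential matching is the same computation unwound, and your handling of the subtle point (the $\Ann$-preserving terms form the associated graded because the differential weakly decreases $\Ann$) is the correct justification.
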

  \begin{proof}
    By construction of the category $\cC_{\akh}$, the cochain complex associated with $\akh^{\ast,q,k}$ is precisely the cochain complex for $||\cC_{\akh}^{q,k}||$ up to a shift.
    We conclude by Lemma~\ref{lem:cochain_complex}.
  \end{proof}

  \subsection{Equivariant Khovanov flow category}\label{sub:equiv_flow_cat}
  Our goal is to construct a group action on the Khovanov flow category of a periodic link.
  Let $m$ be an integer.
  Let $D_m$ be a diagram of an $m$-periodic link and consider $G=\Z_m$, which acts effectively\mpar{(40)} on $\R^2$ by rotations, preserving the diagram $D_m$.
  The action of $G$ permutes the crossings of $D_m$.
  Let $\sigma$ be a permutation corresponding to a generator of $G$.
  We have $\sigma^m=id$.
  The following proposition shows how to extend the action of $G$ on crossings of $D_m$ to the action on the Khovanov flow category.

  \begin{figure}
    \centering
    \includegraphics[width = 12cm]{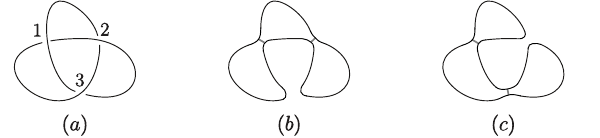}
    \caption{\small{A diagram $D$ of the trefoil knot with numbered crossings (a), the resolution $D(1,1,0)$, (b), and its image under the rotation of the group $\Z_3$, leading to the resolution $D(1,0,1)$, (c).}}\label{fig:rotation}
  \end{figure}

  \begin{proposition}\label{prop:extend}

    The action of $\Z_m$ on $D_m$ induces a group action on the Khovanov flow category $\cC_{\kh}(D_m)$.
    The assignment $\ff \colon (\cD(v),\xt)\mapsto |v|$ can be extended to an equivariant cubical functor $\ff \colon \Sigma^{\R[\Z_m]^{n_-/m}}\cC_{\kh}(D_m) \to \Cubes(n)$; in particular $\cC_{\kh}(D_m)$ is an equivariant cubical flow category.
  \end{proposition}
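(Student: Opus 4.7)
The plan is to promote the $\sigma$-action on crossings to a strict functorial $\Z_m$-action on $\cC_{\kh}(D_m)$ and then read off the equivariant grading from the compatibility with the cube functor. First, on objects, let $\rho_g$ denote the orientation-preserving planar rotation realizing $g=\sigma^k$ and fixing $D_m$ setwise. Its restriction sends the resolution configuration $\cD(v)$ bijectively to $\cD(\sigma^k(v))$, mapping circles to circles and arcs to arcs, so it transports any labeling: set $\cG_g(\cD(v),\xt):=(\cD(\sigma^k(v)),(\rho_g)_*\xt)$. Functoriality, i.e.\ axioms EFC-1 and EFC-2 on objects, is immediate from $\rho_{g_1}\circ\rho_{g_2}=\rho_{g_1g_2}$.

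Next, I would extend $\cG_g$ to morphisms. The Lipshitz--Sarkar moduli space $\cM_{\kh}(\cD,\xt,\yt)$ is defined inductively on $\ind(\cD)$: in index $1$ it is a single point; in index $2$ it is a finite union of permutohedra determined by the ladybug matching; and in higher index it is glued together from lower-index pieces along boundary composition faces. Since $\rho_g$ is an orientation-preserving planar diffeomorphism, it permutes all of the combinatorial data (surgery disks, arc configurations, ladybug matchings) and hence induces diffeomorphisms $(\cG_g)_{x,y}\colon \cM_{\cC_{\kh}}(x,y)\to \cM_{\cC_{\kh}}(\cG_g(x),\cG_g(y))$ strictly compatible with composition, verifying EFC-3. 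The nonequivariant cube functor $\ff\colon(\cD(v),\xt)\mapsto v$ intertwines these actions tautologically on objects, and $\ff_{x,y}$ is a $G_{x,y}$-equivariant covering map onto $\cM_{\Cubes(n)}(\ff(x),\ff(y))$.

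For the equivariant grading, observe that since the fixed axis of the $\Z_m$-action is disjoint from $D_m$, the induced action on the set of $n_-$ negative crossings is free; hence the corresponding permutation representation is isomorphic to $\R[\Z_m]^{n_-/m}$, of $\R$-dimension $n_-$. Define
\[\gr_G((\cD(v),\xt)) := \gr_G(v)\bigl|_{G_{(\cD(v),\xt)}} - \R[\Z_m]^{n_-/m}\bigl|_{G_{(\cD(v),\xt)}} \in RO(G_{(\cD(v),\xt)}),\]
where $\gr_G(v)=\R[G_v]^{|v|/|G_v|}$ is the cube grading from Proposition~\ref{prop:equiv_cube_flow_cat}. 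With this choice, the shift $\Sigma^{\R[\Z_m]^{n_-/m}}$ makes $\ff$ equivariant-grading-preserving in the sense of Definition~\ref{def:equi_functor}; axioms EFC-4 through EFC-6 are inherited from the analogous properties of $\Cubes(n)$, and EFC-7 follows because $\ff_{x,y}$ is a $G_{x,y}$-equivariant covering onto a $G_{x,y}$-manifold whose representation-theoretic dimension is computed in Proposition~\ref{prop:equiv_cube_flow_cat}.

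The main obstacle is the index-$2$ verification: one must check that the ladybug matching admits a $\Z_m$-equivariant realization simultaneously coherent with the inductive gluings at higher index, so that EFC-3 holds strictly rather than merely up to isotopy. This reduces to the observation that the matching depends only on the cyclic ordering of the four endpoints of a pair of arcs around the circles involved, and this ordering is preserved by any orientation-preserving planar homeomorphism. Once equivariance is established at index $2$, the higher-index cases follow automatically because every piece of the inductive construction is assembled from surgery and composition operations that $\rho_g$ visibly intertwines.
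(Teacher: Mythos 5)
Your proposal follows essentially the same route as the paper: the action on objects is transported from the planar rotation, the action on morphisms is built inductively on the index with the ladybug matching as the only delicate base case (the paper invokes \cite[Lemma 5.8]{LS_stable} for exactly the cyclic-ordering observation you make), and the equivariant grading is pulled back from $\Cubes(n)$ with the shift by $\R[\Z_m]^{n_-/m}$. The one step you gloss over, and which the paper makes explicit, is that on the interior of a higher-index moduli space the action must be \emph{defined}, not merely observed, by conjugating the cube action through the covering $\ff$ on each connected component (i.e.\ $\cGs|_{Y_i}=\ff^{-1}\circ\cGs^{\Cube}\circ\ff|_{Y_i}$), since these moduli spaces are abstract manifolds with corners that the rotation does not literally act on, and the matching of components $Y_i\mapsto Y_i'$ is pinned down by the inductively constructed boundary action rather than "following automatically".
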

  \begin{proof}  The permutation $\sigma$ induces an action of $\Z_{m}$ on $\{0,1\}^{n}$ (we will denote this action by $\sigma$ as well).
    In order to define the action of $\Z_{m}$ on the set of objects of the Khovanov flow category, consider $(\cD(v),\xt)$, with $v \in \{0,1\}^{n}$, a labeled resolution configuration, and define $\cGs(\cD(v),\xt) = (\cD(\sigma(v)),x \circ \sigma^{-1})$; see Figure~\ref{fig:rotation}.
    Clearly, $\cGs$ induces an action of $\Z_m$ on the set of objects of $\cC_{\kh}(D_m)$.
    We need to describe the action on morphisms, that is, on the moduli spaces $\cM_{\kh}(\cD(v),\xt,\yt)$ (see Subsection~\ref{sub:khomotopy_type} for
    the definition of $\cM_{\kh}$ and $\cM_{\cC_{\kh}}$ and the relation between the two).

    Recall that the action of $\Z_{m}$ on the cube flow category was linear when restricted to any moduli space $\cM_{\Cube}(\ff(x),\ff(y))$.
    Therefore, the group action on morphisms in the cube category is completely determined by its restriction to the set of vertices of the respective permutohedra.
    This indicates that the group action on the moduli spaces $\cM_{\cC_{\kh}}(x,y)$ should be built inductively with respect to the dimension
    of the moduli spaces.

    We shall take care of axioms~\ref{item:EFC-1}--\ref{item:EFC-3}; 
    Items \ref{item:EFC-4}--\ref{item:EFC-7} are taken care by Lemma~\ref{lem:lift_to_cover}.\mpar{This is related to (26)}

    The construction of $\cGs$ is straightforward for index $1$ decorated configurations.
    Namely, $\cM_{\kh}(\cD(v),\xt,\yt)$  is a single point by construction (see the construction of $\cM_{\kh}(\cD(v),\xt,\yt)$ in~\cite[Section 5]{LS_stable}).
    This means that if $x,y\in\Ob(\cC_{\kh})$ have $\ind(y)=\ind(x)-1$ and $\cM_{\cC_{\kh}}(x,y)$ is non-empty, the functor $\ff$ induces a diffeomorphism between the moduli spaces $\cM_{\cC_{\kh}}(x,y)$ and $\cM_{\Cube(n)}(\ff(x),\ff(y))$, because each of them consists of a single point.
    Therefore $\cGs$ on zero-dimensional moduli space is uniquely determined by the action of $\cGs$ on the cube flow category $\Cube(n)$.

    We now pass to the construction of $\cGs$ for\mpar{(42) We reformulated slightly the construction, this also takes care of (41).}
    moduli spaces corresponding to index $k+1$ decorated configurations and $k\ge 1$. The construction is inductive. That is,
    in the construction we suppose $\cGs$ has already been constructed for all moduli spaces corresponding to resolution configurations of index $k$ or less.
    
    Consider an index $k+1$ decorated configuration $(\cD(u)\setminus \cD(v),\xt,\yt)$.
    Take the moduli space $\cM_{\mathcal{C}_{\kh}}(x,y)$ with $x=(\cD(v),\xt)$, $y=(\cD(u),\yt)$.

    Assume that $(\cD(u)\setminus\cD(v),\xt,\yt)$ is not a ladybug resolution configuration.
    By the inductive assumption the map $\cGs$ is already defined on the boundary of each connected component of $\cM_{\cC_{\kh}}(x,y)=\cM_{\kh}(\cD(v)\setminus\cD(u),\xt,\yt)$ and maps it onto the boundary of $\cM_{\cC_{\kh}}(\cGs x,\cGs y)=\cM_{\kh}(\cD(\sigma(v))\setminus \cD(\sigma(u)),x \circ \sigma^{-1},y \circ \sigma^{-1})$.
    Moreover, the following diagram is commutative
    \begin{center}
      \begin{tikzpicture}
        \matrix(m)[matrix of math nodes, row sep=1cm,column sep=1.5cm] {
          \partial\cM_{\cC_{\kh}}(x,y)                          & \partial \cM_{\cC_{\kh}}(\cGs x,\cGs y) \\
          \partial\cM_{\Cube(n)}(\ff(x),\ff(y)) & \partial\cM_{\Cube(n)}(\ff(\cGs x),\ff(\cGs y)). \\
        };
        \path[->,font=\scriptsize]
        (m-1-1) edge node[above] {$\cGs$} (m-1-2)
        (m-1-1) edge node[left] {$\ff_{x,y}$} (m-2-1)
        (m-1-2) edge node[right] {$\ff_{\cGs x,\cGs y}$} (m-2-2)
        (m-2-1) edge node[above] {$\cGs^{\Cube}$} (m-2-2);
      \end{tikzpicture}
    \end{center}
    The extension of $\cGs$ can be defined on $\cM_{\cC_{\kh}}(\cD(v),x,y)$ as follows.
    Let
    \begin{align*}
      \cM_{\cC_{\kh}}(x,y) &= Y_{1} \cup Y_{2} \cup \ldots \cup Y_{k}, \\
      \cM_{\cC_{\kh}}(\cGs x,\cGs y) &= Y_{1}' \cup Y_{2}' \cup \ldots \cup Y_{k}'
    \end{align*}
    denote the respective connected components.
    Without loss of generality, we may and will assume that $\cGs$ maps $\partial Y_{i}$ onto $\partial Y_{i}'$.
    For any $1 \leq i \leq k$, we define
    \begin{equation}\label{eq:extension-cubical-functor}
      \cGs|_{Y_{i}} = (\ff_{\sigma(\cD(v),\xt,\yt)}|_{Y_{i}'})^{-1} \circ \cGs^{\Cube} \circ \ff_{(\cD(v),\xt,\yt)}|_{Y_{i}}.
    \end{equation}
    The axioms~\ref{item:EFC-1} and~\ref{item:EFC-2} are trivially satisfied and~\ref{item:EFC-3} is guaranteed by the fact that the construction is performed inductively.

    \smallskip
    To complete the proof we need to consider the case, when $(\cD(u)\setminus\cD(v),\xt,\yt)$ is a ladybug configuration.
    The action of $\Z_{m}$ preserves the ladybug matching by~\cite[Lemma 5.8]{LS_stable}.
    Therefore, we again obtain a well-defined extension of $\ff$ to the whole $\cM_{\kh}(\cD(v),\xt,\yt)$ and the extension of $\cGs$ is given again by~\eqref{eq:extension-cubical-functor}.
      This completes the construction of the group action on the flow category $\cC_{\kh}(D_m)$.
      Conditions~\ref{item:EFC-1}--\ref{item:EFC-3} are trivially satisfied.

      \smallskip
    We define the grading via Lemma~\ref{lem:lift_to_cover}.
    As this is an important step of the construction, we unfold
    the definition of the grading. Namely,
    for an element $y=(\cD(v),\xt)\in\Ob(\cC_{\kh}(D_m))$ we define
    \begin{align}\label{eq:def_grading}
      \gr_G(y)&=\gr_{G_{\ff(y)}}(\ff(y))|_{G_y}-(\R[\Z_{m}]|_{G_{y}})^{n_-/m}= \nonumber\\
              &=(\R[G_{\ff(y)}]^{\gr(y)/|G_{\ff(y)}|})|_{G_y}-\R[G_y]^{n_-/|G_{y}|}=\\
              &=\R[G_y]^{(\gr(y)-n_-)/|G_y|}.\nonumber
    \end{align}
    In the last equality in~\eqref{eq:def_grading} we have used the fact that $\R[G]|_H=\R[H]^{|G|/|H|}$.

    With this definition, the functor $\ff\colon \Sigma^{\R[\Z_m]^{n_-/m}}\cC_{\kh}(D_m)\to\Cubes(m)$ preserves the grading.
  \end{proof}
  \begin{remark}
    We remark that shifting by $\R[\Z_m]^{n_-/m}$ is an overall shift corresponding to the grading shift by $n_-$ in the non-equivariant setting.
  \end{remark}

\begin{corollary}
  For a periodic link diagram $D_m$ in a solid torus, the category $\cC_{\akh}(D_m)$
  is a $\Z_m$-equivariant cubical flow category.
\end{corollary}  
\begin{proof}
    The action of $\Z_{m}$ on the Khovanov flow category $\cC_{\kh}(D_{m})$ constructed in Proposition~\ref{prop:extend} preserves the annular flow subcategory $\cC_{\akh}(D_{m})$
    inducing the desired structure.
\end{proof} 

  \section{Proof of Theorem~\ref{thm:main-theorem}}\label{sec:main1} 
  We will prove only part (b) of the theorem, namely that $\cX(\cC_{\kh})$
  is a well-defined object in the equivariant Spanier--Whitehead category. The case of annular Khovanov homology is completely analogous.

  Suppose that $D_m$ is an $m$-periodic diagram representing an $m$-periodic link $L$.
  By Proposition~\ref{prop:extend}, the Khovanov flow category $\cC_{\kh}(D_m)$ admits a group action.
  Proposition~\ref{prop:conclude} shows that the cubical realization $||\Sigma^{V}\cC_{\kh}(D_m)||$ admits a $\Z_m$-action, for an appropriate representation \(V\).
  In particular, it ensures the existence of the Khovanov homotopy type as an object in the equivariant Spanier-Whitehead category, see~Section~\ref{sub:terminology}.

  To conclude the proof of Theorem~\ref{thm:main-theorem}, we need to show that the equivariant stable homotopy type  $\cX(D_m)$ does not depend on the choices made.
  We prove invariance step by step.

  \begin{itemize}
  \item \emph{Independence of $R$ and $\epsilon$.}\
    Arguing as in~\cite[Lemma 3.25]{LS_stable}, we see that different choice of parameters $R$ and $\epsilon$ yields equivariantly homeomorphic spaces.
  \item \emph{Independence of $\ec$.}
    Any cubical neat embedding $\iota$ of $\cC$ relative to $\ec=(e_{1},\ldots,e_{n-1})$ induces a cubical neat embedding $\iota'$ relative to $\ec'=(e_{1},\ldots,e_{i}+1,\ldots,e_{n-1})$.
    Arguing as in the proof of~\cite[Lemma 3.26]{LS_stable} we conclude that
    \[\Sigma^{V}||\cC||_{\ec} \simeq ||\cC||_{\ec'}.\]
  \item \emph{Independence of $V$.}
    Let us introduce the following notation.
    Suppose $\iota_V$ is a cubical neat embedding relative to $\ec$ and relative to a representation $V$.
    Let $V\hookrightarrow W$ be an equivariant embedding. Composing this embedding with $\iota_V$ we obtain
    a neat embedding relative to $\ec$ and $W$, which we denote by $\iota_V^W$.
    We observe that if $W=V \oplus V'$, then by construction
    \[\Sigma^{V'}||\cC||_{\iota_V}\cong||\cC||_{\iota_V^W}.\]
    Suppose $\iota_V$ and $\iota_{V'}$ are two cubical neat embeddings relative to ${\ec}_V$ and $V$ and to ${\ec}_{V'}$ and $V'$, respectively.
    By increasing the entries of ${\ec}_V$ and ${\ec}_{V'}$ and using independence on $\ec$ discussed above, we may and will assume that ${\ec}_V={\ec}_{V'}=\ec$.
    We will also assume that the entries of $\ec$ are sufficiently large.

    Under the latter assumption, with $W=V\oplus V'$, the two embeddings $\iota_V^{W}$ and $\iota_{V'}^{W}$ a
    re equivariantly isotopic by the Mostow-Palais Theorem (Theorem~\ref{thm_mostowpalais}).
    By this we mean that for any $x,y\in\Ob(\cC)$, there exists an equivariant isotopy $\iota^t_{x,y}$ ($t\in[0,1]$) such that $\iota^0_{x,y}=(\iota_V^{W})_{x,y}$ and $\iota^1_{x,y}=(\iota_{V'}^{W})_{x,y}$ satisfying compatibility relations~\ref{item:EFC-1}--\ref{item:EFC-3} for all $t\in[0,1]$.
    Such isotopy is constructed by defining $j^t_{x,y}$, once $j^0_{x,y}$ and $j^1_{x,y}$ have been defined (see proof of Proposition~\ref{prop:equivariant_neat}).
    The construction of $j^t_{x,y}$ is inductive as in Proposition~\ref{prop:equivariant_neat}, using Mostow-Palais Theorem at each stage.
    We omit straightforward details.

    Given the isotopy, we obtain that $||\cC||_{\iota_V^{W}}$ and $||\cC||_{\iota_{V'}^{W}}$ are equivariantly homotopy equivalent, and therefore $||\cC||_{\iota_V}$ and $||\cC||_{\iota_{V'}}$ are equivariantly stably homotopy equivalent, as desired.
  \end{itemize} 
  Proving the independence on the choice of the diagram and on the ladybug matching is more complicated; we prove these results in Subsections~\ref{sub:ind1} and~\ref{sub:ind2}, respectively.

  \subsection{Independence under equivariant Reidemeister moves}\label{sub:ind1}

  Let $D_m^1$ and $D_m^2$ be periodic link diagrams representing the same periodic link $L$. Then $\cC(D_m^1)$ and $\cC(D_m^2)$ can be connected
  by a sequence of equivariant isotopies and equivariant Reidemeister moves. Here by an equivariant Reidemeister move we understand a $\Z_m$ orbit of a single Reidemeister move that is performed in a ball disjoint from the rotation axis. 
  See~\cite[Section 2.6, especially Figure~2.2]{StoffregenZhang}
  and \cite[Proposition 2.6]{Politarczyk_Khovanov} for a more detailed discussion. \mpar{(44)}

  \begin{proposition}\label{prop:isinvariant}
    The equivariant stable homotopy type of $\mathcal{X}_{\kh}(D_m)$ is invariant under the equivariant Reidemeister moves.
  \end{proposition}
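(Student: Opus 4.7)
The plan is to mimic the non-equivariant proof of invariance given by Lipshitz and Sarkar in \cite{LS_stable}, but at each step to verify that the constructions can be done equivariantly and that the hypotheses of Lemma~\ref{lem:hardertocheck} hold on all fixed-point strata. Recall that in the non-equivariant setting, invariance under each of the moves R1, R2, R3 is proved by exhibiting a downward (or upward) closed subcategory $\cC' \subset \cC_{\kh}(D_m^{\text{after}})$ whose geometric realization is stably homotopy equivalent, possibly after suspension, to $\cX_{\kh}(D_m^{\text{before}})$, while the complementary piece has a contractible geometric realization. Our task is to promote each of these acyclicity statements to an equivariant statement.

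First I would observe that an equivariant Reidemeister move produces an orbit of $m/d$ copies of the corresponding local move, where $d$ divides $m$ (for our setting of $m$-periodic diagrams with fixed point axis disjoint from $L$, each orbit is free and $d = 1$). The new crossings therefore come in a single free $\Z_m$-orbit (for R1, R2) or in free orbits (for R3). The standard downward closed subcategories of \cite{LS_stable} (for instance, for R1, the subcategory of labeled resolutions which $1$-resolve the new crossing) are naturally $\Z_m$-invariant, because the group $\Z_m$ permutes the new crossings among themselves and preserves the local structure. Thus we obtain an equivariant filtration of $\cC_{\kh}(D_m^{\text{after}})$ and equivariant maps $\iota$, $\kappa$, $\rho$ from Proposition~\ref{prop:puppewpuppe}. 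The Khovanov flow categories of $D_m^{\text{before}}$ and the appropriate subcategory of $\cC_{\kh}(D_m^{\text{after}})$ are equivariantly isomorphic up to the shift $\Sigma^{\R[\Z_m]^{\text{(writhe change)}/m}}$ dictated by the change of $n_-$.

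To verify the hypothesis of Lemma~\ref{lem:hardertocheck}(a) or (b), I would fix a subgroup $H \subset \Z_m$ and analyze the $H$-fixed point set of the complementary flow category. By Proposition~\ref{prop:fixedcategory}, the $H$-fixed set of the geometric realization is the geometric realization of the fixed-point flow category, and by Lemma~\ref{lem:isflow} this is again a cubical flow category. The key observation is that the $H$-fixed subcategory of the complementary piece is, up to suspension, isomorphic to the Khovanov flow subcategory of the \emph{quotient diagram} $D_m / H$ corresponding to the same local move (this is an instance of the Categorical Fixed Point Theorem~\ref{thm:fixed-points}, whose proof is flagged as the central technical result of the paper, but for the special local configurations of a Reidemeister move it can be verified directly from the inductive construction of $\cGs$ in Proposition~\ref{prop:extend}, using that the ladybug matching is preserved by $\Z_m$, cf.~\cite[Lemma 5.8]{LS_stable}). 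Hence the reduced cohomology of the fixed point set coincides with the reduced cohomology of the complementary piece for the corresponding Reidemeister move on the quotient diagram, which is trivial by the non-equivariant Lipshitz--Sarkar argument \cite[Section 6]{LS_stable}. Applying Lemma~\ref{lem:hardertocheck} for every $H \subset \Z_m$ then gives the required equivariant stable homotopy equivalence.

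The main obstacle will be R3 and the ladybug configurations: the R3 argument in \cite{LS_stable} requires identifying two subquotients of the cube decomposed along several crossings and matching them via an interpolation that depends on the ladybug matching. For an equivariant R3 move the three new triple-point crossings come in one $\Z_m$-orbit, and one must check that the interpolation is performed simultaneously and consistently on every orbit, so that all intermediate subcategories remain $\Z_m$-invariant and the identification of fixed-point subcategories with Khovanov flow subcategories of the quotient is clean. Once this bookkeeping is set up, the acyclicity of the complementary pieces on each fixed-point stratum reduces, via the identification above, to the already established non-equivariant invariance proof, and Lemma~\ref{lem:hardertocheck} concludes the argument for all three moves.
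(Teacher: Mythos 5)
Your overall skeleton matches the paper's: decompose $\cC_{\kh}(D_m^{\text{after}})$ into $\Z_m$-invariant upward/downward closed subcategories modeled on Lipshitz--Sarkar's, observe that the complementary piece of the final stage is isomorphic to $\cC_{\kh}(D_m^{\text{before}})$, and conclude via the equivariant Whitehead criterion of Lemma~\ref{lem:hardertocheck} once the reduced cohomology of $\|\cC'\|^H$ is shown to vanish for every $H\subset\Z_m$. The divergence, and the gap, is in how you verify that vanishing. The paper does \emph{not} identify $(\cC')^H$ with anything attached to the quotient diagram: in Lemma~\ref{lem:c1acyclic} it computes $C^{\ast}(\cC_1^H)$ directly, pairing generators via components of the differential and exhibiting the complex as $C^{\ast}(1,\dots,1)^H\otimes C^{\ast}(\Cube(m/k))$, which is acyclic by the K\"unneth theorem because the cube complex is. This needs nothing beyond the fixed objects and the index-one moduli spaces (single points), i.e.\ nothing about fixed points of positive-dimensional moduli spaces.

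Your route instead invokes (a special case of) the Categorical Fixed Point Theorem~\ref{thm:fixed-points} to replace $(\cC_1)^H$ by the complementary category of the Reidemeister move on the quotient diagram, and then cites the non-equivariant acyclicity. Three problems. First, that identification is not something that ``can be verified directly from the inductive construction of $\cGs$'': determining which components of the moduli spaces are fixed is exactly the hard content of Sections~6--8 (the Bar-Natan/Riemann--Hurwitz analysis and the ladybug lifting lemmas), and it is logically downstream of the present proposition in the paper's architecture. Second, even granting the identification, by Proposition~\ref{prop:periodic_link_diagram} the fixed-point category is the \emph{annular} flow category of the quotient, not the Khovanov one, so the acyclicity you need is that of an annular complementary piece; you would still have to check that the Lipshitz--Sarkar cancellation is compatible with the annular filtration (it is, since the extra R2 circle is null-homotopic, but this must be said). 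Third, Theorem~\ref{thm:fixed-points} is stated only for prime order, whereas here every subgroup $H\subset\Z_m$ for arbitrary $m$ must be handled. What you are missing is the observation that for Lemma~\ref{lem:hardertocheck} only the cochain complex of the fixed category matters, so the whole geometric fixed-point machinery can be bypassed by the elementary tensor-product cancellation above; as written, your acyclicity step rests on a tool that is both unavailable at this point and misstated.
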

  \begin{proof}
    We prove the invariance under the equivariant R2-move, following the same idea as in~\cite[Proof of Proposition 6.2]{LS_stable}.
    The proof of the invariance under other equivariant Reidemeister moves follows the same lines as in~\cite{LS_stable}; the necessary adjustments to make these proofs work in the equivariant case are the same as the adjustments for the proof of the invariance under equivariant R2-move, which we give in detail.
    For all equivariant Reidemeister moves, the main difficulty in the proof is to verify the assumptions of Lemma~\ref{lem:hardertocheck}.

    Let $D_m$ be a diagram with crossings $c_1,\ldots,c_n$.
    Let $D_m'$ be the diagram obtained after performing an equivariant R2-move on $D_m$, and let $(c_{n+1}, c_{n+2}), (c_{n+3},c_{n+4}), \ldots, (c_{n+2m-1}, c_{n+2m})$ be the $m$ pairs of new crossings created during the process; see Figure~\ref{fig:Chaincomplexes}a.

    \begin{figure}[t]
      \centering
      \includegraphics[width = 15cm]{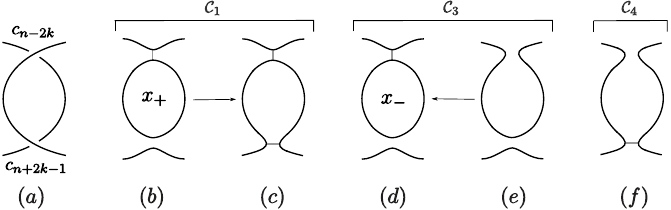}
      \caption{\small{The equivariant R2-move and the resolution configurations described in proof the independence of $\mathcal{X}_{\kh}(D_m)$ on Reidemeister moves.}}\label{fig:Chaincomplexes}
    \end{figure}

    For each $v \in \{0,1\}^{n+2m}$, write $\cD'(v) = \cD_{D_m'}(v)$, and recall that $v_i$, the $i^{th}$ coordinate of $v$, corresponds to the type of resolution of the crossing $c_i$ of $D_m'$.

    Consider now the subcategory of $\cC_{\kh}(D_m')$, denoted $\cC_1$, consisting of those labeled resolution configurations $(\cD'(v),\xt)$ such that:
    \begin{itemize}
    \item either there exists a value of $k$ with $v_{n+2k-1}= 0$ and $v_{n+2k}=1$, and $x$ assigns a label $\xp$ to the extra circle created (see Figure~\ref{fig:Chaincomplexes}b);
    \item or there exists a value of $k$ satisfying $v_{n+2k-1} = v_{n+2k} = 1$ (see Figure~\ref{fig:Chaincomplexes}c).
    \end{itemize}
    It is clear that $\cC_1$ is an upward closed category.
    It corresponds to the upward closed category $\mathscr{C}_1$ in~\cite[Proof of Proposition 6.3]{LS_stable}.

    \begin{lemma}\label{lem:c1acyclic}
      The subcategory $\cC_1$ is $G$-invariant, and for each subgroup $H \subset G$ the corresponding complex $C^*(\cC_1^H)$ is acyclic.
    \end{lemma}

    We defer the proof of Lemma~\ref{lem:c1acyclic} past the proof of Proposition~\ref{prop:isinvariant}.

    Let $\cC_2$ be the complementary downward closed subcategory of $\cC_1$, consisting of those labeled resolutions which do not satisfy any of the two previous conditions.
    Next, we consider a subcategory $\cC_3$ of $\cC_2$.
    The objects of $\cC_3$ are the labeled resolution configurations $(\cD'(v),\xt)$ such that there exists a value of $k$ satisfying:
    \begin{itemize}
    \item either $v_{n+2k-1}= 0$ and $v_{n+2k}=1$, and the extra circle is labeled by $\xm$;
    \item or $v_{n+2k-1}= 0$ and $v_{n+2k}=0$;
    \end{itemize}
    see Figure~\ref{fig:Chaincomplexes}(d) and (e).
    We observe that $\cC_3$ is an upward closed category.

    \begin{lemma}\label{lem:c3acyclic}
      The subcategory $\cC_3$ is $G$-invariant, and for any subgroup $H\subset G$ the complex $C^*(\cC_3^H)$ is acyclic.
    \end{lemma}

    We omit the proof of Lemma~\ref{lem:c3acyclic}, since it is analogous to the proof of Lemma~\ref{lem:c1acyclic}.

    Let $\cC_4$ be the complementary category of $\cC_3$ in $\cC_2$; that is to say, $\cC_4$ is the category such that there exists a value of $k$ satisfying $v_{n+2k-1}= 1$ and $v_{n+2k}=0$ (see Figure~\ref{fig:Chaincomplexes}(f)).
    Moreover, observe that $\cC_4$ is isomorphic to the category $\cC_{\kh}(D_m)$ corresponding to the original diagram $D_m$.

    In this setting, we apply Lemma~\ref{lem:hardertocheck} twice to get the desired result.
    Namely, we first state that $||\cC_{\kh}(D_m')||$ is equivariantly stably homotopy equivalent to $||\cC_2||$ and then that $||\cC_2||$ is equivariantly stably homotopy equivalent to $||\cC_4|| = ||\cC_{\kh}(D_m)||$.
    This concludes the proof of Proposition~\ref{prop:isinvariant}.
  \end{proof}

  \begin{proof}[Proof of Lemma~\ref{lem:c1acyclic}]

    \begin{figure}[t]
      \centering
      \includegraphics[width = 5.5cm]{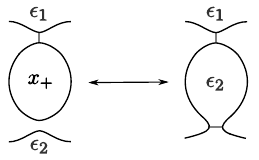}
      \caption{\small{The pairing illustrating the proof of Lemma~\ref{lem:c1acyclic} ($\epsilon_i \in \{\xp, \xm\}$, for $i=1,2$).}}\label{fig:pairing}
    \end{figure}

    For any $\mathbf{a} \in \{0,1\}^{m}$, let $C^{\ast}(\mathbf{a})$ denote the cochain complex generated by objects of $\cC_{1}$ of the form $v_{n+2k-1}=a_{i}$ and $v_{n+2k}=1$.
    Notice that $C^{\ast}(1,1,\ldots,1)$ is a subcomplex of $C^{\ast}(\cC_{1})$.  Moreover, the differential yields an isomorphism of chain complexes
    \[C^{\ast}(a_{1},\ldots,a_{i-1},0,a_{i+1},\ldots,a_{m}) \xrightarrow{\cong} C^{\ast}(a_{1},\ldots,a_{i-1},1,a_{i+1},\ldots,a_{m});\]
    see Figure~\ref{fig:pairing}.
    Therefore, there exists an isomorphism of chain complexes
    \[f \colon C^{\ast}(\cC_{1}) \cong C^{\ast}(1,\ldots,1) \otimes C^{\ast}(\Cube(m)).\]
    Since $C^{\ast}(\Cube(m))$ is acyclic, the K\"unneth Theorem implies that $C^{\ast}(\cC_{1})$ is also acyclic.

    An analogous argument works for the fixed point sets categories.
    Namely, let $H \subset G$ be a subgroup of order $k$ and consider $\cC_{1}^{H}$.
    Let $\sigma$ denote the permutation of crossings of $D_m'$.  Notice that the subset of crossings $\{c_{n+1},c_{n+2},\ldots,c_{n+2m}\}$ consists of two orbits of $G$
    \[c_{n+1},c_{n+3},c_{n+5},\ldots, c_{n+2m-1}\quad \text { and } \quad c_{n+2},c_{n+4},\ldots,c_{n+2m}.\]
    Moreover, if $\sigma(c_{n+2i-1}) = c_{n+2j-1}$ then $\sigma(c_{n+2i})=c_{n+2j}$, for any $1 \leq i \leq m$.
    Let $c_{i_{1}},c_{i_{2}},\ldots,c_{i_{m/k}}$, for $i_{1}<i_{2}< \cdots <i_{m/k}$, be representatives of orbits of the action generated by $\sigma^{m/k}$ restricted to the subset $\{c_{n+1},c_{n+3},c_{n+5},\ldots, c_{n+2m-1}\}$.
    If $(\cD'(v),\xt)$ belongs to $\cC_{1}$, then $v$ is constant on orbits of $\sigma^{m/k}$, and therefore values of $v$ on representatives of orbits determine its values on the whole subset of crossings $\{c_{n+1},\ldots,c_{n+2m}\}$.
    Let $\mathbf{a} \in \{0,1\}^{m/k}$.
    Define $C^{\ast}_{H}(\mathbf{a})$ to be the submodule of $C^{\ast}(\cC_{1}^{H})$ generated by labeled resolution configurations such that $v_{i_{j}} = a_{j}$, for $1 \leq j \leq m/k$. 
    Again, the components of the differential induce isomorphisms
    \[C^{\ast}_{H}(a_{1},a_{2},\ldots,a_{i-1},0,a_{i+1},\ldots,a_{m/k}) \xrightarrow{\cong} C^{\ast}_{H}(a_{1},a_{2},\ldots,a_{i-1},1,a_{i+1},\ldots,a_{m/k}),\]
    and therefore we obtain an isomorphism of chain complexes
    \[\psi \colon C^{\ast}(\cC_{1}^{H}) \xrightarrow{\cong} C^{\ast}(1,1,\ldots,1)^{H} \otimes C^{\ast}(\Cube(m/k)).\]
    We can apply once more the K\"unneth Theorem to conclude that $C^{\ast}(\cC_{1}^{H})$ is acyclic.
  \end{proof}

  \subsection{Independence of the ladybug matching}\label{sub:ind2}
  Before we give the proof of the independence of the ladybug matching, we introduce some notation: Given a link diagram $D_m$ together with a $\Z_m$-action by rotations, we write $\ol{D_m}$ for the link diagram with a $\Z_m$ action by rotations in the opposite direction.
  We write $\cC_{\kh}(\ol{D_m})$ for the corresponding equivariant Khovanov flow category; the underlying non-equivariant Khovanov flow category is the same, but the group action is inverted.

  \begin{proposition}\label{prop:ladybug}
    Let $D_m$ be a periodic diagram and let $\cC_{\kh}(D_m)$ \textup{(}$\cC_{\kh}^\#(D_m)$\textup{)} be its associated equivariant Khovanov flow category built using the right (respectively, left) ladybug matching.
    Then $||\cC_{\kh}(D_m)||$ and $||\cC_{\kh}^\#(D_m)||$ are stably equivariantly homotopy equivalent.
  \end{proposition}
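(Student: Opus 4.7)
The plan is to adapt the argument of~\cite[Proposition 6.1]{LS_stable}, which treats the independence of the ladybug matching in the non-equivariant setting, to the equivariant case. The key geometric observation from~\cite{LS_stable} is that an orientation-reversing reflection $r$ of $\R^2$ through a line containing the rotation axis simultaneously exchanges the two ladybug matchings and conjugates the rotation $\rho_m$ to its inverse $\rho_m^{-1}$. This is what motivates the introduction of the auxiliary notation $\ol{D_m}$ in the paragraph preceding the statement.

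First, I would construct a $\Z_m$-equivariant isomorphism of flow categories
\[\Phi\colon \cC_{\kh}^{\#}(D_m) \xrightarrow{\cong} \cC_{\kh}(\ol{D_m}).\]
Following the non-equivariant construction of~\cite[Section 6]{LS_stable}, the reflection $r$ induces an isomorphism between the two flow categories at the underlying (non-equivariant) level; on ladybug moduli spaces it is the swap of the two intervals, and this extends consistently to higher-dimensional moduli. That $\Phi$ intertwines the $\Z_m$-actions — the standard one on $\cC_{\kh}^{\#}(D_m)$ with the inverted one on $\cC_{\kh}(\ol{D_m})$ — follows from $r\rho_m r^{-1}=\rho_m^{-1}$ together with \cite[Lemma 5.8]{LS_stable}, which ensures the group action on each ladybug moduli space commutes with the matching-switch. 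Hence $\Phi$ descends to an equivariant homeomorphism $||\cC_{\kh}^{\#}(D_m)||\cong||\cC_{\kh}(\ol{D_m})||$.

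Second, I would show that $||\cC_{\kh}(\ol{D_m})||$ and $||\cC_{\kh}(D_m)||$ are stably equivariantly homotopy equivalent. The two complexes are built from identical non-equivariant cubical flow categories; only the $\Z_m$-actions differ, being related by the inversion automorphism of $\Z_m$. I would construct the required equivalence directly from the cellular description in~\eqref{eq:EP_def}, observing that each cell is a product of balls in a representation with a permutohedron, and that both pieces carry natural identifications with their counterparts under the inverted action (the inversion $g\mapsto g^{-1}$ being realized on each representation $V$ by an equivariant isometry, and on permutohedra via Proposition~\ref{prop:equiv_cube_flow_cat}).

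The main obstacle is the second step: a general $\Z_m$-space is not equivariantly homotopy equivalent to its twist by the inversion automorphism of $\Z_m$. The argument must exploit the specific structure of the Khovanov flow category — in particular the freedom in choosing the representation $V$ and the vector $\ec$, whose independence has already been established earlier in this section — to assemble the cell-wise identifications into a genuine equivariant stable homotopy equivalence, compatible with the attaching maps $E\theta_{y,x}$ of Subsection~\ref{sub:equivariant_geometric}.
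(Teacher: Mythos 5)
There is a genuine gap, and it sits exactly where you locate it: your second step. You want a stable equivariant homotopy equivalence between $||\cC_{\kh}(D_m)||$ and the same complex with the $\Z_m$-action precomposed with inversion, and you correctly observe that no general principle provides this for $m>2$; but you then offer no actual argument, only the hope that "the specific structure" of the category will save you. It will not, at least not by a cell-wise identification: the cells of $||\cC_{\kh}(\ol{D_m})||$ carry representations twisted by the inversion automorphism, and matching them with the cells of $||\cC_{\kh}(D_m)||$ compatibly with all attaching maps is precisely the problem, not a route to its solution. There is also a slip in your first step: the reflection $r$ carries the diagram $D_m$ to the reflected-and-crossing-switched diagram $D_m'$, not back to $D_m$, so the isomorphism it induces is $\cC_{\kh}^{\#}(D_m)\cong\cC_{\kh}(\ol{D_m'})$ rather than $\cC_{\kh}(\ol{D_m})$; the two targets coincide only for diagrams symmetric under $r$.

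The paper's proof sidesteps the twisting problem entirely. The $180$-degree rotation of $S^3$ about a horizontal axis is orientation preserving, carries $D_m$ to $D_m'$, and conjugates $\rho_m$ to $\rho_m^{-1}$; hence $D_m$ and $\ol{D_m'}$ present the \emph{same equivariant link}, so they are related by equivariant Reidemeister moves, and Proposition~\ref{prop:isinvariant} --- already proved at this point --- yields a stable equivariant equivalence $||\cC_{\kh}(D_m)||\simeq||\cC_{\kh}(\ol{D_m'})||$. Composing with the reflection-induced equivariant isomorphism $||\cC_{\kh}^{\#}(D_m)||\cong||\cC_{\kh}(\ol{D_m'})||$ finishes the proof. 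The missing idea, then, is to absorb the inversion of the group action into the notion of equivalence of equivariant links via an ambient orientation-preserving rotation, and let the already-established Reidemeister invariance do the work, rather than attempting a direct comparison of the two twisted actions on a fixed CW-complex.
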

  \begin{proof}
    The proof is essentially the argument of~\cite[Proposition 6.5]{LS_stable}, but there is one subtlety regarding the group action.

    Let $D_m'$ be the reflection of the diagram $D_m$ along the $y$-axis after switching all crossings (that is, $D_m'$ is the result of rotating $D_m$ by $180$ degrees).
    The diagrams $D_m$ and $D_m'$ represent the same link, but the rotation of the group action is inverted.
    In other words, $D_m$ and $\ol{D_m'}$ represent the same equivariant link.
    Therefore these diagrams can be related by a sequence of equivariant Reidemeister moves.
    By Proposition~\ref{prop:isinvariant} we have an equivariant stable homotopy equivalence between $||\cC_{\kh}(D_m)||$ and $||\cC_{\kh}(\ol{D_m'})||$.

    There is also an equivariant stable homotopy equivalence between $||\cC^\#_{\kh}(D_m)||$ and $||\cC_{\kh}(\ol{D_m'})||$.
    This is shown using the same argument as in the proof of~\cite[Proposition 6.5]{LS_stable}: the isomorphism of framed flow categories $\cC^\#_{\kh}(D_m)$ and $\cC_{\kh}(D_m)$ is equivariant, if we revert the group action on one side.

    The composition of the two equivariant stable homotopy equivalences yields the desired equivariant stable homotopy equivalence.
  \end{proof}

  \section{Moduli spaces via the $\Cbl$ category}\label{sec:cobcat}
  In order to prove the Categorical Fixed Point Theorem~\ref{thm:fixed-points} and, more generally, in order 
  to understand the fixed point set of $\cX_{\kh}(D)$ when $D$ is a periodic link diagram, we need a deeper understanding of the structure of moduli spaces $\cM_{\kh}$.
  The key tool is Bar-Natan's cobordism category $\Cbl$ reviewed in Subsection~\ref{sub:cobl}. The main result of this section, which is used in the proof of the fixed point theorem, is the Counting Moduli Lemma~\ref{lemma:conn-components-moduli-space},
  which computes the number of connected components of the moduli space $\cM_{\kh}$ in terms of the genus of the associated cobordism.

  \subsection{Dotted cobordism category of $\R^{3}$}\label{sub:cobl}
  As alluded to above, we begin with recalling Bar-Natan's formulation of Khovanov homology; see \cite{BarNatan}.
  Let $\Cob^{3}_{\bullet}$ denote the graded additive category whose objects set is generated by finite collections of disjoint simple closed curves, i.e.,\mpar{(45)}
  \begin{enumerate}
  \item If $Z \subset \R^{2}$ is a finite collection of pairwise disjoint simple closed curves, then $Z \in \Ob(\Cob^{3}_{\bullet})$,
  \item if $Z \in \Ob(\Cob^{3}_{\bullet})$, then a formal shift of $Z$, denoted $Z\{\ell\}$, for some $\ell \in \Z$, also belongs to $\Ob(\Cob^{3}_{\bullet})$,
  \item if $Z_{1}, Z_{2} \in \Ob(\Cob^{3}_{\bullet})$, then their formal direct sum $Z_{1} \oplus Z_{2}$ also belongs to $\Ob(\Cob^{3}_{\bullet})$.
  \end{enumerate}
  As $\Cob^3_{\bullet}$ is an additive category, it is enough to define morphisms on generators.\mpar{(46) It should be clear now, how are the morphisms defined on general objects, if not we can add more.}
  If $Z_{1},Z_{2}$ are two finite collections of disjoint simple closed curves in $\R^{2}$, elements of $\Hom_{\Cob^{3}_{\bullet}}(Z_{1},Z_{2})$ are represented by formal linear combinations of boundary preserving isotopy classes of dotted cobordisms
  \[\Sigma \subset \R^{2} \times [0,1], \quad \partial \Sigma = Z_{1} \sqcup -Z_{2}.\]
  For such a cobordism we define\mpar{(47)}
  \[\deg \Sigma = \chi(\Sigma) - 2\# \text{dots},\]
  where $\chi(\Sigma)$ denotes the Euler characteristic of $\Sigma$.
  Cobordisms are usually drawn from left to right or from bottom to top.
  Dots can move freely within the connected components of a given cobordism and decrease the degree of the respective map by $2$.

  The category $\Cbl$ is the quotient of $\Cob^{3}_{\bullet}$ by local relations depicted in Figure~\ref{fig:local-relations-cobordism-category}.
  Particularly useful is the \emph{neck cutting relation} depicted in Figure~\ref{fig:local-relations-cobordism-category-neck-cutting}.
  Indeed, a recursive application of the neck cutting relation quickly reduces any morphism in $\Cbl$ to a morphism given as a disjoint sum of unknotted surfaces of genus $0$ and $1$.

  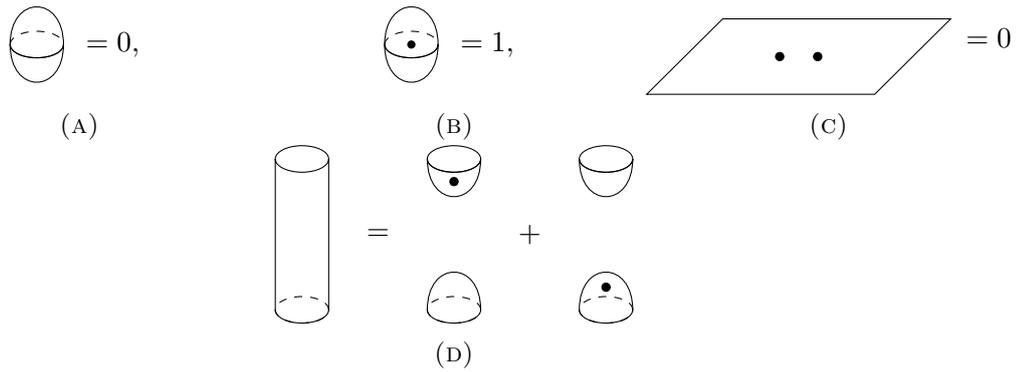
\begin{figure}
    \begin{subfigure}[t]{.3\linewidth}
      \centering
      \begin{tikzpicture}[tqft/view from=incoming]

        \pic[draw,tqft/cap,name=a,anchor=outgoing boundary, at={(0,0)}];
        \pic[draw,tqft/cup,name=b,every incoming lower boundary component/.style={draw,dashed},anchor=incoming boundary,at=(a-outgoing boundary)];
        \node[] at (1,0) {$=0,$};
      \end{tikzpicture}
      \caption{}
      \label{fig:local-relations-cobordism-category-undotted-sphere}
    \end{subfigure}
    \begin{subfigure}[t]{.3\linewidth}
      \centering
      \begin{tikzpicture}[tqft/view from=incoming]

        \pic[draw,tqft/cap,name=c,anchor=outgoing boundary, at={(0,0)}];
        \pic[draw,tqft/cup,name=d,every incoming lower boundary component/.style={draw,dashed},anchor=incoming boundary,at=(c-outgoing boundary)];
        \node[] at (1,0) {$=1,$};
        \node[circle,fill=black,at={(0,0)},scale=0.3] {};
      \end{tikzpicture}
      \caption{}
      \label{fig:local-relations-cobordism-category-dotted-sphere}
    \end{subfigure}
    \begin{subfigure}[t]{.3\linewidth}
      \centering{}
      \begin{tikzpicture}[tqft/view from=incoming]

        \draw (0,-0.75) -- (3,-0.75) -- (4,0.25) -- (1,0.25) -- cycle;
        \node[draw,at={(1.75,-0.25)},circle,fill=black,scale=0.3] {};
        \node[draw,circle,fill=black,scale=0.3,at={(2.25,-0.25)}] {};
        \node[] at (4.5,0) {$=0$};
      \end{tikzpicture}
      \caption{}
      \label{fig:local-relations-cobordism-category-double-dot}
    \end{subfigure}
    \begin{subfigure}[b]{.5\linewidth}
      \centering
      \begin{tikzpicture}[tqft/view from=incoming]

        \pic[draw,tqft/cylinder,at={(0,-1.5)}, every incoming lower boundary component/.style={draw}, every outgoing boundary component/.style={draw,dashed}];
        \node[] at (1,-2.5) {$=$};
        \pic[draw,tqft/cup,at={(2,-1.5)}, every incoming boundary component/.style={draw}];
        \node[draw,circle,fill=black,scale=0.3,at={(2,-1.8)}] {};
        \pic[draw,tqft/cap,at={(2,-1.5)}, every outgoing boundary component/.style={draw,dashed}];
        \node[] at (3,-2.5) {$+$};
        \pic[draw,tqft/cup,at={(4,-1.5)}, every incoming boundary component/.style={draw}];
        \pic[draw,tqft/cap,at={(4,-1.5)}, every outgoing boundary component/.style={draw,dashed}];
        \node[draw,circle,fill=black,scale=0.3,at={(4,-3.2)}] {};
      \end{tikzpicture}
      \caption{}
      \label{fig:local-relations-cobordism-category-neck-cutting}
    \end{subfigure}
    \caption{Local relations in $\Cbl$. The \emph{neck cutting relation} is depicted in Figure~\ref{fig:local-relations-cobordism-category-neck-cutting}.}
    \label{fig:local-relations-cobordism-category}
  \end{figure}

  \begin{lemma}[\cite{BarNatan}]\label{lemma:genus-reduction}
    Let $\Sigma$ be a dotted surface representing a morphism in $\Hom_{\Cbl}(Z_{1},Z_{2})$, for some $Z_{1},Z_{2} \in \ob(\Cbl)$.
    \begin{enumerate}
    \item If any connected component of $\Sigma$ is of genus greater than $1$, then
      \[\Sigma = 0 \in \Hom_{\Cbl}(Z_{1},Z_{2}).\]
    \item If any connected component of $\Sigma$ is a singly-dotted torus, then
      \[\Sigma = 0 \in \Hom_{\Cbl}(Z_{1},Z_{2}).\]
    \end{enumerate}
  \end{lemma}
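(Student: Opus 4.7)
The plan is to deduce both statements from the neck cutting relation (Figure~\ref{fig:local-relations-cobordism-category-neck-cutting}) combined with the double dot relation (Figure~\ref{fig:local-relations-cobordism-category-double-dot}) and the fact that dots on $\Cbl$ morphisms can be moved freely within a connected component. In both cases the strategy is the same: cut enough non-separating circles on the offending component so that two dots end up on the same connected piece of the resulting surface, at which point the double dot relation forces the whole expression to vanish.

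For part (2), I would start from a connected component of $\Sigma$ which is a singly-dotted torus $T$, choose a non-separating simple closed curve $\gamma$ on $T$, and apply the neck cutting relation to a cylindrical tubular neighborhood of $\gamma$. The neck cutting rewrites $\Sigma$ as a sum of two terms; in each, the tube around $\gamma$ has been replaced by a pair of capping disks, one of which carries an additional dot. Topologically this surgery turns $T$ into a sphere, while the original dot of $T$ remains on it. Hence each term has a sphere component carrying two dots, which vanishes by the double dot relation, and therefore $\Sigma = 0$.

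For part (1), suppose a connected component $\Sigma_0$ of $\Sigma$ has genus $g \geq 2$. I would choose two disjoint, homologically independent, non-separating simple closed curves $\gamma_1, \gamma_2$ on $\Sigma_0$; such curves exist because $g \geq 2$, and crucially after cutting along both of them $\Sigma_0$ remains connected. Applying the neck cutting relation to disjoint cylindrical neighborhoods of $\gamma_1$ and $\gamma_2$ expresses $\Sigma$ as a sum of four terms, in each of which $\Sigma_0$ has been replaced by a connected surface of genus $g-2$ carrying two extra dots, one from each application of the relation. Since these two dots sit on the same connected component, the double dot relation applies in each of the four summands, so $\Sigma = 0$.

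The main obstacle I anticipate is the verification that the two non-separating curves $\gamma_1, \gamma_2$ can be chosen so that the two dots produced by the neck cuttings end up on the same connected component; this requires choosing them to be homologically independent (so that cutting both keeps the surface connected), which is where the hypothesis $g \geq 2$ is used essentially. A secondary point worth spelling out carefully is that the neck cutting relation is local and compatible with moving dots around, so its application in the interior of $\Sigma_0$ does not interfere with other components or with dots that may already be present on $\Sigma_0$ before cutting.
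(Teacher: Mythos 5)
Your proof is correct and is essentially the paper's argument: the paper simply records the consequence of neck cutting as the identity ``genus-one handle $=2\times$ dotted surface'' and then invokes the double-dot relation, which is exactly the computation you carry out by cutting non-separating curves directly. Your care in choosing disjoint homologically independent curves (so the cut surface stays connected and the two new dots land on one component) is the right way to make the terse published proof explicit.
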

    \begin{figure}
    \begin{center}
      \begin{tikzpicture}[tqft/view from=incoming];
        \pic[draw,tqft,genus=0,incoming boundary components=0,outgoing boundary components=2,every outgoing lower boundary component/.style={draw,dashed},name=a, at={(0,0)},scale=0.5];
        \pic[draw,tqft,genus=0,incoming boundary components=2,outgoing boundary components=1,every outgoing lower boundary component/.style={draw,dashed},anchor=incoming boundary 1,at=(a-outgoing boundary 1),offset=0.5,scale=0.5];
        \node[] at (2,-1) {$= 2$};
        \pic[draw,tqft,genus=0,incoming boundary components=0,outgoing boundary components=1,at={(3,0.5)},every outgoing lower boundary component/.style={draw,dashed}];
        \node[draw,circle,fill=black,scale=0.3,at={(3,-1.2)}] {};
      \end{tikzpicture}
    \end{center}
    \caption{The relation used in the proof of Lemma~\ref{lemma:genus-reduction}.}\label{fig:genus-reduction}
  \end{figure}
  \begin{proof}
    The lemma is a consequence of the relation drawn in Figure~\ref{fig:genus-reduction},
    which follows directly from the neck cutting relation shown in Figure~\ref{fig:local-relations-cobordism-category-neck-cutting} and the fact that a double dot annihilates every morphism, as shown in Figure~\ref{fig:local-relations-cobordism-category-double-dot}.
  \end{proof}

  \begin{figure}
    \begin{subfigure}[b]{.3\linewidth}
      \centering{}
      \begin{tikzpicture}[tqft/view from=incoming]
        \node[] at (0.75,0) {$\xp = $};
        \pic[draw, tqft/cap, at={(0,0)}, rotate=90, every outgoing lower boundary component/.style={draw,dashed}];
      \end{tikzpicture}
    \end{subfigure}
    \begin{subfigure}[b]{.3\linewidth}
      \centering{}
      \begin{tikzpicture}[tqft/view from=incoming]
        \node[] at (0.75,0) {$\xm = $};
        \pic[draw, tqft/cap, every outgoing lower boundary component/.style={draw,dashed}, at={(0,0)}, rotate=90];
        \node[draw,circle,fill=black,scale=0.3,at={(1.7,0)}] {};
      \end{tikzpicture}
    \end{subfigure}
    \caption{Caps generating $\Hom_{\Cbl}(\emptyset,S^{1})$.}
    \label{fig:morphisms-in-the-cobordism-category}
  \end{figure}

  \begin{figure}
    \begin{subfigure}[b]{.3\linewidth}
      \centering
      \begin{tikzpicture}[tqft/view from=incoming]
        \pic[draw,tqft/cap,every outgoing lower boundary component/.style={draw,dashed}, rotate=90,at={(0,0)}];
        \node[] at (2.3,0.2) {${}^{\ast}$};
        \node[] at (2.7,0) {$=$};
        \pic[draw,tqft/cup,every incoming lower boundary component/.style={draw},rotate=90,at={(3.2,0)}];
        \node[draw,circle,fill=black,scale=0.3,at={(3.5,0)}] {};      
      \end{tikzpicture}
    \end{subfigure}
    \begin{subfigure}[b]{.3\linewidth}
      \centering
      \begin{tikzpicture}[tqft/view from=incoming]
        \pic[draw,tqft/cap,every outgoing lower boundary component/.style={draw,dashed}, rotate=90,at={(0,-1.5)}];
        \node[draw,circle,fill=black,scale=0.3,at={(1.7,-1.5)}] {};
        \node[] at (2.3,-1.3) {${}^{\ast}$};
        \node[] at (2.7,-1.5) {$=$};
        \pic[draw,tqft/cup,every incoming lower boundary component/.style={draw},rotate=90,at={(3.2,-1.5)}];      
      \end{tikzpicture}
    \end{subfigure}
    \caption{Caps and their duals, i.e.\ cocaps (drawn on the right-hand side of the equality sign).}
    \label{fig:caps-and-cocaps}
  \end{figure}
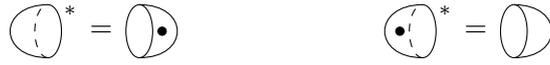

  \begin{lemma}[\cite{BarNatan}]\label{lemma:morphisms-as-disks}
    Morphisms in $\Cbl$ satisfy the following properties:
    \begin{enumerate}
    \item \label{item:morphisms-as-disks-1} A singly-dotted sphere generates
      \[\Hom_{\Cbl}(\emptyset,\emptyset) \cong \Z.\]
    \item \label{item:morphisms-as-disks-2} If $Z=S^{1}$, then
      \[\Hom_{\Cbl}(\emptyset,Z) \cong \Z^{2}\]
      is generated by dotted and undotted \emph{caps}, i.e.\ by surfaces depicted in Figure~\ref{fig:morphisms-in-the-cobordism-category}.
    \item \label{item:morphisms-as-disks-3} If $Z \in \ob(\Cbl)$ has $c$ connected components, then
      \[\Hom_{\Cbl}(\emptyset,Z) \cong \Hom_{\Cbl}(\emptyset,S^{1})^{\otimes c}\cong\Z^{2^c}\]
      is generated by surfaces consisting of $c$ disjoint dotted or undotted caps.
    \item \label{item:morphisms-as-disks-4} For any $Z \in \ob(\Cbl)$ the composition map
      \[\Hom_{\Cbl}(Z,\emptyset) \times \Hom_{\Cbl}(\emptyset,Z) \xrightarrow{\circ} \Hom_{\Cbl}(\emptyset,\emptyset)\]
      is a nonsingular bilinear pairing, hence there exists an isomorphism
      \[\Hom_{\Cbl}(Z,\emptyset) \cong \Hom_{\Cbl}(\emptyset,Z)^{\ast} = \Hom_{\Z}(\Hom_{\Cbl}(\emptyset,Z),\Z).\]
      Consequently, $\Hom_{\Cbl}(Z,\emptyset)$ is generated by $c$ disjoint surfaces dual to caps, i.e.\ \emph{cocaps} (see Figure~\ref{fig:caps-and-cocaps}).
    \item \label{item:morphisms-as-disks-5} If $Z_{1}, Z_{2} \in \ob(\Cbl)$ have $c_{1}$ and $c_{2}$ connected components, respectively, then
      \[\Hom_{\Cbl}(Z_{1},Z_{2}) \cong \Hom_{\Cbl}(\emptyset,Z_{2}) \otimes \Hom_{\Cbl}(Z_{1},\emptyset)\]
      is generated by a disjoint union of $c_{1}$ cocaps bounding $Z_{1}$ and $c_{2}$ caps bounding $Z_{2}$.
    \end{enumerate}
  \end{lemma}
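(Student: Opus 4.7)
The plan is to prove all five items by one coordinated strategy: reduce an arbitrary cobordism to a normal form using the neck-cutting relation, evaluate the resulting closed components via the sphere relations, and then deduce linear independence from a nondegenerate pairing.

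\textbf{Spanning (items (1)--(3)).} Let $\Sigma$ represent a morphism in any one of the Hom-groups listed. First I decompose $\Sigma$ into connected components and apply Lemma~\ref{lemma:genus-reduction}, which discards all components of genus $\geq 2$ and all singly-dotted tori. Next, I iterate the neck-cutting relation of Figure~\ref{fig:local-relations-cobordism-category-neck-cutting} on each remaining handle: each application reduces the genus of a component by one, at the cost of producing two terms involving an added dot on one side of the cut. After finitely many such cuts every component is either a closed genus-$0$ component (a sphere, possibly dotted) or a connected surface with boundary of genus~$0$ (a disk bounding a single circle of the target, possibly dotted). Doubly-dotted components vanish by Figure~\ref{fig:local-relations-cobordism-category-double-dot}, and undotted spheres vanish by Figure~\ref{fig:local-relations-cobordism-category-undotted-sphere}. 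The remaining closed components are singly-dotted spheres, each contributing the scalar~$1$ by Figure~\ref{fig:local-relations-cobordism-category-dotted-sphere}. This proves the spanning parts of (1), (2) and (3): $\Hom_{\Cbl}(\emptyset,\emptyset)$ is generated by the dotted sphere, $\Hom_{\Cbl}(\emptyset,S^{1})$ by the two caps of Figure~\ref{fig:morphisms-in-the-cobordism-category}, and $\Hom_{\Cbl}(\emptyset,Z)$ by disjoint unions of caps, one per component of $Z$.

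\textbf{Independence and the pairing (items (2) and (4)).} To see that the dotted and undotted cap of~$S^{1}$ are $\Z$-linearly independent, I compose them against the dotted and undotted cocap of Figure~\ref{fig:caps-and-cocaps}. The four compositions are closed surfaces that evaluate, via the sphere relations, to the matrix
\[
\bigl(\langle\, \text{cocap}_i,\, \text{cap}_j\,\rangle\bigr)_{i,j}=\begin{pmatrix} 0 & 1 \\ 1 & 0 \end{pmatrix},
\]
which is unimodular. The same computation done factor-by-factor over the components of $Z$ shows that the analogous $2^{c}\times 2^{c}$ pairing between generators of $\Hom_{\Cbl}(\emptyset,Z)$ and $\Hom_{\Cbl}(Z,\emptyset)$ is a tensor power of the above matrix, hence is nondegenerate. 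This gives both the rank computation $\Z^{2^{c}}$ of (3) and the duality statement of (4), after identifying $\Hom_{\Cbl}(Z,\emptyset)\cong\Hom_{\Cbl}(\emptyset,Z)^{\ast}$ via this pairing.

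\textbf{The bending isomorphism (item (5)).} For general $Z_{1},Z_{2}$, I argue that any cobordism $\Sigma\colon Z_{1}\to Z_{2}$ can be rewritten, using neck-cutting on a collar of every circle of~$Z_{1}\sqcup Z_{2}$, as a disjoint union of caps bounding circles of $Z_{2}$, cocaps bounding circles of $Z_{1}$, and closed components. The closed components are scalars by (1), so $\Sigma$ lies in the image of the obvious map
\[
\Hom_{\Cbl}(\emptyset,Z_{2})\otimes \Hom_{\Cbl}(Z_{1},\emptyset)\longrightarrow \Hom_{\Cbl}(Z_{1},Z_{2}).
\]
Linear independence of the resulting generating set follows by composing with dual generators in the same way as in item (4): the composition pairing
\[\Hom_{\Cbl}(Z_{2},\emptyset)\otimes \Hom_{\Cbl}(\emptyset,Z_{1})\otimes \Hom_{\Cbl}(Z_{1},Z_{2})\to \Hom_{\Cbl}(\emptyset,\emptyset)\]
separates the generators.

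\textbf{Main obstacle.} The non-routine step is the neck-cutting reduction for a connected surface with boundary, where one must verify that every handle can actually be cut by an embedded separating cylinder without introducing new relations beyond those already listed in Figure~\ref{fig:local-relations-cobordism-category}. Once this normal-form existence is accepted (it is a standard handle-decomposition argument together with the local nature of neck-cutting), all five items follow mechanically from the sphere and double-dot relations.
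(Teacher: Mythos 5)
Your proposal is correct and follows essentially the same route as the paper's (very terse) proof: spanning via Lemma~\ref{lemma:genus-reduction} and iterated neck-cutting, item~(4) via the sphere relations, and item~(5) by neck-cutting collars of the boundary circles. The only addition is that you make the linear independence in (2)--(4) explicit through the unimodular cap/cocap pairing $\left(\begin{smallmatrix}0&1\\1&0\end{smallmatrix}\right)$, which the paper leaves implicit (deferring to \cite{BarNatan}); this is a welcome clarification rather than a different argument.
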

  \begin{proof}
    Properties~(\ref{item:morphisms-as-disks-1}),~(\ref{item:morphisms-as-disks-2}) and~(\ref{item:morphisms-as-disks-3}) follow clearly from Lemma~\ref{lemma:genus-reduction} and the neck cutting relation (Figure~\ref{fig:local-relations-cobordism-category-neck-cutting}).
    Point~(\ref{item:morphisms-as-disks-4}) follows from points~(\ref{item:morphisms-as-disks-1}) and~(\ref{item:morphisms-as-disks-3}) together with the sphere and dotted sphere relations (Figures~\ref{fig:local-relations-cobordism-category-undotted-sphere} and \ref{fig:local-relations-cobordism-category-dotted-sphere}).
    The last point follows from points~(\ref{item:morphisms-as-disks-3}) and~(\ref{item:morphisms-as-disks-4}) together with the neck cutting relation.
  \end{proof}

  \begin{example}
    By Lemma~\ref{lemma:morphisms-as-disks}(\ref{item:morphisms-as-disks-4}), if $Z \in \ob(\Cbl)$ consists of $c$ connected components, then $\Hom_{\Cbl}(\emptyset,Z)$ is generated by $c$ disjoint dotted or undotted caps.
    For any field $\F$, we can identify
    \[\Hom_{\Cbl}(\emptyset,Z) \otimes \F \cong V^{\otimes c},\]
    where $V$ is the vector space used in Subsection~\ref{sub:review_khovanov}.
    In order to do that enumerate circles of $Z$ by $Z_1,\ldots,Z_c$.
    If $c=1$, the identification is given in Figure~\ref{fig:morphisms-in-the-cobordism-category}. The case $c>1$ is a simple extension.
    Indeed, if
    \[C = C_{1} \sqcup C_{2} \sqcup \ldots \sqcup C_{c}\]
    is a disjoint sum of caps such that $C_{i}$ bounds $Z_{i}$, for $1 \leq i \leq c$,
    then
    \[C \mapsto \underbrace{\xpm \otimes \xpm \otimes \ldots \otimes \xpm}_{c \text{ factors}} \in V^{\otimes c},\]
    where the sign of the $i$-th factor is $+$ if $C_{i}$ is undotted and $-$ otherwise, for $1 \leq i \leq c$.
  \end{example}

  For any $Z_{1},Z_{2} \in \ob(\Cbl)$ a \emph{distinguished generator} in $\Hom_{\Cbl}(Z_{1},Z_{2})$ is a disjoint union of cocaps and caps as described Lemma~\ref{lemma:morphisms-as-disks}(\ref{item:morphisms-as-disks-5}).
  The \emph{distinguished basis} of $\Hom_{\Cbl}(Z_{1},Z_{2})$ is the basis consisting of distinguished generators.
  If $\Sigma \in \Hom_{\Cbl}(Z_{1},Z_{2})$ is a distinguished generator, denote by $\Sigma^{\circ}$ and $\Sigma^{\bullet}$ the subsurface of $\Sigma$ consisting of undotted and dotted components, respectively.

  Given a diagram $D$ of a link $L \subset S^{3}$, Bar-Natan~\cite{BarNatan} constructed a cochain complex called the \emph{Bar-Natan} bracket of $D$, denoted $\bnbracket{D}$. 
  The Bar-Natan bracket is obtained by resolving crossings of $D$, as depicted in Figure~\ref{fig:resolutions}.
  Vogel~\cite{Vogel} later improved the construction of Bar-Natan in such a way that the chain homotopy type of $\bnbracket{D}$ does not depend on the choice of the diagram $D$ and is an invariant of $L$.

  \begin{lemma}[\cite{BarNatan}]\label{lem:barnatan}
    The functor $\cTKh \colon \Cbl \to \Z Mod$ given by
    \[\cTKh(Z) = \Hom_{\Cbl}(\emptyset,Z)\]
    is Khovanov's TQFT.
    In particular, given any field $\F$ and a link diagram $D$, there is a canonical isomorphism of cochain complexes\mpar{(48) we added the word `canonical'. The construction
    of the isomorphism is straightforward.}
    \[\mathcal{T}(\bnbracket{D})\otimes \F \cong \ckh^{\ast,\ast}(D).\]
  \end{lemma}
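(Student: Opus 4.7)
The plan is to identify the functor $\cTKh$ with Khovanov's TQFT by checking it agrees on generators, and then to match the resulting cochain complex with $\ckh^{\ast,\ast}(D)$ edge by edge.

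First, I would establish the object-level identification. By Lemma~\ref{lemma:morphisms-as-disks}(\ref{item:morphisms-as-disks-3}), for $Z\in\Ob(\Cbl)$ consisting of $c$ pairwise disjoint circles $Z_{1},\dots,Z_{c}$, the module $\cTKh(Z)$ is free of rank $2^{c}$; the example preceding the lemma gives a natural identification $\cTKh(Z)\otimes\F\cong V^{\otimes c}$ sending an undotted cap on $Z_{i}$ to $\xp$ and a dotted cap on $Z_{i}$ to $\xm$ in the $i$-th tensor slot. This matches, as a graded $\F$-vector space, the state space assigned to $c$ circles by Khovanov's TQFT.

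Next, I would verify that the elementary cobordism maps coincide with the Frobenius algebra structure on $V$. The birth cobordism $\emptyset\to S^{1}$ is, by definition of $\cTKh$, the inclusion of the undotted cap, i.e.\ the unit $1\mapsto\xp$. The death cobordism $S^{1}\to\emptyset$ is computed by composing with the two generators: composition with the undotted cap produces an undotted sphere, which vanishes by Figure~\ref{fig:local-relations-cobordism-category-undotted-sphere}, while composition with the dotted cap produces a singly-dotted sphere equal to $1$ by Figure~\ref{fig:local-relations-cobordism-category-dotted-sphere}. Hence the counit is $\xp\mapsto 0$, $\xm\mapsto 1$. For the merge cobordism (pair of pants) $S^{1}\sqcup S^{1}\to S^{1}$, I would pre-compose with each of the four pairs $\xpm\otimes\xpm$ of caps and reduce the resulting morphism in $\Hom_{\Cbl}(\emptyset,S^{1})$ to the distinguished basis via the neck-cutting relation of Figure~\ref{fig:local-relations-cobordism-category-neck-cutting} together with the sphere and double-dot relations. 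The elementary computation yields the Khovanov multiplication $\xp\otimes\xp\mapsto\xp$, $\xp\otimes\xm=\xm\otimes\xp\mapsto\xm$, $\xm\otimes\xm\mapsto 0$. The split cobordism is treated analogously by neck cutting the upside-down pair of pants, producing the Khovanov comultiplication $\xp\mapsto\xp\otimes\xm+\xm\otimes\xp$ and $\xm\mapsto\xm\otimes\xm$.

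Finally, I would pass from generators to the full cochain complex. Both $\cT(\bnbracket{D})$ and $\ckh^{\ast,\ast}(D)$ are defined on the cube of resolutions of $D$: at a vertex $v\in\{0,1\}^{n}$ the chain module is $\cTKh(\cD(v))\otimes\F\cong V^{\otimes|\tZ(\cD(v))|}$, which by the first step matches the Khovanov chain module at $v$. The edge maps in $\bnbracket{D}$ are the saddle cobordisms (merges or splits of circles), which by the second step are sent by $\cTKh$ to the corresponding merge and split maps constituting the Khovanov differential; the signs, prescribed by a fixed sign assignment $\nu$ on $[0,1]^{n}$, are introduced identically in both constructions. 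The principal obstacle is the neck-cutting reduction for the pair of pants: although routine, it is the step where one must carefully unpack the local relations to check all four cases on generators. Once this is done, the isomorphism of cochain complexes follows by naturality.
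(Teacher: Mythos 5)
The paper gives no proof of this lemma --- it is quoted from Bar-Natan --- and your argument is precisely the standard one from that reference: identify $\Hom_{\Cbl}(\emptyset,Z)$ with $V^{\otimes c}$ via dotted/undotted caps, use the sphere, dotted-sphere, double-dot and neck-cutting relations to recover the unit, counit, multiplication and comultiplication of Khovanov's Frobenius algebra, and then match the two cube-of-resolutions complexes edge by edge with a common sign assignment. Your computations of the four structure maps are all correct; the only point left implicit is the routine bookkeeping of quantum degrees (the degree $\chi(\Sigma)-\#\text{dots}$ and the formal shifts $\{\cdot\}$ at the cube vertices), which is needed for the isomorphism to respect the second grading in $\ckh^{\ast,\ast}(D)$.
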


  Lemma~\ref{lem:barnatan} gives us a translation between labeled resolution configurations $(\cD,\xt)$ and distinguished generators of $\Hom_{\Cbl}(\emptyset,\tZ(\cD))$.

  \begin{remark}
    The isomorphism in Lemma~\ref{lemma:morphisms-as-disks}(\ref{item:morphisms-as-disks-5}) can be described in the following way.
    For two objects $Z_{1},Z_{2} \in \ob(\Cbl)$, the composition in $\Cbl$ induces a trilinear map
    \[\Theta_{Z_{1},Z_{2}} \colon \cTKh(Z_{2})^{\ast} \times \Hom_{\Cbl}(Z_{1},Z_{2}) \times \cTKh(Z_{1}) \to \Hom_{\Cbl}(\emptyset,\emptyset),\]
    which yields an isomorphism
    \[\Gamma_{Z_{1},Z_{2}}\colon \Hom_{\Cbl}(Z_{1},Z_{2}) \xrightarrow{\cong} \cTKh(Z_{2}) \otimes \cTKh(Z_{1})^{\ast}.\]
    If $\Sigma \in \Hom_{\Cbl}(Z_{1},Z_{2})$, then it is easy to check that
    \[\Gamma_{Z_{1},Z_{2}}(\Sigma) = \sum_{S_{1},S_{2}} \Theta_{Z_{1},Z_{2}}(S_{2}^{\ast},\Sigma,S_{1}) S_{2} \otimes S_{1}^{\ast} \in \cTKh(Z_{2}) \otimes \cTKh(Z_{1})^{\ast},\]
    where the summation extends over distinguished generators $S_{1}$ and $S_{2}$ of $\cTKh(Z_{1})$ and $\cTKh(Z_{2})$, respectively.
  \end{remark}

  \subsection{Counting Moduli Lemma}\label{sub:res_to_cob}

  Let $(\cD,\xt,\yt)$ be a decorated resolution configuration of index $n$. Enumerate the arcs in $\tA(\cD)$ by $A_1,\ldots,A_n$. Let $\fz = (z_{1},z_{2},\ldots,z_{n}) \in \Pi_{n-1}$.
  Define the surface $\Sigma(\fz,\cD)\subset\R^2\times[0,n+1]$ as a trace of the surgery on $\tZ(\cD)$, where the surgery on the $i$-th arc occurs at the level $z_i$. As the arcs
  are pairwise non-intersecting, the surface is well defined
  even if some of the coordinates $z_i$ of $\fz$ are equal.
  Notice that the map $\cTKh(\Sigma(u,\cD))$ does not depend on the choice of $\fz \in \Pi_{n-1}$. 

  For any surface $\Sigma\subset\R^2\times[a,b]$ we define its bottom boundary $\partial_0\Sigma$ and the top boundary $\partial_1\Sigma$ via
  \[\partial_0\Sigma=\Sigma\cap(\R^2\times\{a\}),\ \partial_1\Sigma=\Sigma\cap(\R^2\times\{b\}).\]

  The result we present next is the key tool in the study of the fixed points of moduli spaces.
  \begin{lemma}[Counting Moduli Lemma]\label{lemma:conn-components-moduli-space}
    Let $(\cD,\xt,\yt)$ be a decorated resolution configuration of index $n$.\mpar{(49)}
    Let $S_{1} \in \cTKh(\cD)$  be the distinguished generator corresponding to the surgery configuration $(\cD,\yt)$
    under the isomorphism from Lemma~\ref{lem:barnatan}. 
    Likewise, let $S_{2} \in \cTKh(\tZ(\fs(\cD)))$ be the distinguished generator corresponding to the labeled resolution configuration $(\fs(\cD),\xt)$
    Then, for any $z \in \Pi_{n-1}$,
    \[\Theta_{\tZ(\cD),\tZ(\fs(\cD))}(S_{2},\Sigma(\fz,\cD),S_{1}) = \# \pi_{0}(\cM_{\kh}(\cD,\xt,\yt)).\]
    In particular, if $\cM_{\kh}(\cD,\xt,\yt) \neq \emptyset$, then
    \[\cM_{\kh}(\cD,\xt,\yt) \cong \bigsqcup_{i=1}^{2^{c_{1}}} \Pi_{n-1},\]
    where $c_{1}$ is the number of genus $1$ connected components of $\Sigma(\fz,\cD)$.
  \end{lemma}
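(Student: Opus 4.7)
The plan is to establish the stated identities in two steps. First I would prove that for any vertex $\fz$ of $\Pi_{n-1}$,
\[\#\pi_0(\cM_{\kh}(\cD,\xt,\yt)) \;=\; \Theta_{\tZ(\cD),\tZ(\fs_{\tA(\cD)}\cD)}\bigl(S_2, \Sigma(\fz,\cD), S_1\bigr),\]
and second I would evaluate the right-hand side using Bar-Natan's formalism to extract $2^{c_1}$.

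For the first step I fix $\fz$ a vertex of $\Pi_{n-1}$, which corresponds to a total order on $\tA(\cD) = \{A_{1},\dots,A_{n}\}$ and hence to a unique tower $\cD = \cD_0 \to \cD_1 \to \cdots \to \cD_n = \fs_{\tA(\cD)}\cD$, each step being a single surgery. By Proposition~\ref{prop:chains_moduli}, $\#\pi_0(\cM_{\kh}(\cD,\xt,\yt))$ equals $\#\max P_\fz(x,y)$ for $x=(\fs_{\tA(\cD)}\cD,\xt)$ and $y=(\cD,\yt)$; an element of $\max P_\fz(x,y)$ is precisely an interpolating sequence of labelings $\yt = \xt_0 \prec \xt_1 \prec \cdots \prec \xt_n = \xt$, one on each intermediate $\cD_k$. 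On the other hand, by Lemma~\ref{lem:barnatan}, $\cTKh(\Sigma(\fz,\cD))$ factors as the ordered composition of the $n$ elementary merge or split maps dictated by $\fz$, and expanding each intermediate step in the distinguished basis writes the matrix coefficient $\Theta(S_2,\Sigma(\fz,\cD),S_1)$ as a sum over all such interpolating sequences. Because the structure constants of the Frobenius algebra $\F[X]/X^2$ underlying $\cTKh$ lie in $\{0,1\}$ relative to the basis $\{\xp,\xm\}$, no cancellations occur and each admissible sequence contributes exactly $1$, matching the combinatorial count.

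For the second step I observe that $\Theta(S_2,\Sigma(\fz,\cD),S_1)$ equals the evaluation in $\Hom_{\Cbl}(\emptyset,\emptyset)\cong\Z$ of the closed dotted surface $\widehat{\Sigma}$ obtained by capping $\Sigma(\fz,\cD)$ with the cocaps and caps comprising $S_1$ and $S_2$. Since these caps and cocaps are disks, each connected component of $\widehat{\Sigma}$ inherits the genus of the component of $\Sigma(\fz,\cD)$ it contains, together with dots determined by $\xt$ and $\yt$. By Lemma~\ref{lemma:genus-reduction} and the relations of Figure~\ref{fig:local-relations-cobordism-category}, each component contributing nontrivially must be either a singly dotted sphere (evaluating to $1$) or an undotted torus (evaluating to $2$, via neck cutting). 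Thus, assuming $\cM_{\kh}(\cD,\xt,\yt)\neq\emptyset$, the pairing equals $2^{c_{1}}$, where $c_1$ is the number of genus-one components of $\Sigma(\fz,\cD)$. The ``in particular'' statement then follows, since each connected component of the Khovanov moduli space is a copy of $\Pi_{n-1}$ by construction of the Khovanov flow category in~\cite{LS_stable}.

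The main obstacle will be the bookkeeping in the first step: carefully verifying that expanding the TQFT composite in the distinguished basis at each intermediate level yields precisely the combinatorial count of maximal chains, so that the non-negativity and integrality of $\Theta$ are automatic from the $\{0,1\}$-valued structure constants, and that the assignment from basis-expansion summands to maximal chains is a bijection rather than a count with multiplicity. This reduces to a direct inspection of the Frobenius algebra relations for merge and split, but must be arranged uniformly across the inductive step.
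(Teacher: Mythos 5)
Your proposal is correct and follows essentially the same route as the paper: both reduce the first identity to counting maximal chains via Proposition~\ref{prop:chains_moduli} and then match that count against the TQFT matrix coefficient, the only difference being that the paper organizes the basis-expansion as an induction on the index (peeling off one arc at a time with the neck-cutting relation) whereas you unroll it directly using the $\{0,1\}$-valued structure constants of the Frobenius algebra — these are the same computation. Your second step, evaluating the capped-off closed surface component-by-component to get $2^{c_1}$, is likewise exactly how the paper extracts the ``in particular'' statement.
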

  \begin{remark}\label{rem:can_be_deduced}
    The lemma can be deduced from the discussion at the end of \cite[Section 2.11]{lawson_khovanov_2017}. As the precise statement is absent in that paper, and the result is widely used in the present paper, we present a sketch of the proof using $\Cbl$-categories and posets.\mpar{(50) We added this remark explaining the relation to \cite{lawson_khovanov_2017}.}
  \end{remark}
  \begin{proof}
    For a resolution configuration $(\fs_{\tA}(\cD),\xt')$, where $\tA \subset \tA(\cD)$, let $S_{\tA,\xt'} \in \cTKh(\tZ(\cD))$
    denote the distinguished generator corresponding to $(\fs_{\tA}(\cD),\xt')$.

    Without loss of generality we can pick $\fz = (1,2,3,\ldots,n) \in \Pi_{n-1}$.\mpar{(51)}
    Proposition~\ref{prop:chains_moduli} implies that $\# \pi_{0}(\cD,\xt,\yt) = \# \max P_{\fz}(\cD,\xt,\yt)$, so it is sufficient to prove that
    \begin{equation}\label{eq:maxPz}
      \# \max P_{\fz}(\cD,\xt,\yt) = \Theta_{\tZ(\cD),\tZ(\fs_{\tA}(\cD))}(S_{1},\Sigma(\fz,\cD),S_{2}).
    \end{equation}

    In order to prove \eqref{eq:maxPz}, we proceed by induction on the index of the resolution configuration.
    Let $Z_{01},\ldots,Z_{0\fs_y}$ be the circles in $\tZ(\cD)$ and $Z_{11},\ldots,Z_{1\fs_x}$ be the circles in $\tZ(\fs(\cD)))$.

    For an index $1$ resolution configuration, the poset $P_{\fz}(x,y)$ consists of a single chain $x\succ y$ and the surface $\Sigma$ has genus $0$.
    Then both sides of \eqref{eq:maxPz} are equal to $1$.

    Suppose now that \eqref{eq:maxPz} has been proved for all index $n-1$ resolution configurations, and let $(\cD,\xt,\yt)$ be a resolution configuration of index $n$.
    There are two cases. Either $A_1$ is a split, or it is a merge. We will deal only with the (harder) case, when $A_1$ is a split, leaving the other case to the reader.\mpar{(50): since
    the result can be deduced from \cite{lawson_khovanov_2017}, we give only the half of the proof.}

    Suppose $Z_{01}$ splits into two circles $Z_{011}$ and $Z_{012}$.
    If $\yt(Z_{01})=\xm$, then there is a unique $\yt_1$ such that $(\cD,\yt)\prec (\fs_{\{A_1\}}(\cD),\yt_1)$.
    We infer that $\#\max P_{\fz}(x,y)=\#\max P_{\fz^1}(x,y_1)$. The neck-cutting relation 
    shows that
    \begin{equation}\label{eq:cut_neck_and_go}
      \Theta(S_{2},\Sigma(\fz,\cD),S_{1})=\Theta(S_{2},\Sigma(\fz^1,\cD_1),S_{3}).
    \end{equation}

    Suppose finally that $\yt(Z_{01})=\xp$.
    Then there are two different assignments $\yt_{1}$ and $\yt_{2}$ such that for $y_{j}=(\fs_{\{A_1\}}(\cD),\yt_{j})$ we have $y\prec y_j$ (with $j=1,2$): one assigns $\xp$ to $Z_{011}$ and $\xm$ to $Z_{012}$, the other one does the opposite.
    In particular
    \[\max\# P_{\fz}(x,y)=\max\# P_{\fz^1}(x,y_1)+\max\# P_{\fz^1}(x,y_2).\]
    Let $S_3$, $S_4$ be the distinguished generators associated to $y_1$ and $y_2$, respectively.
    We need to prove that
    \begin{equation}\label{eq:cut_neck_and_go_for_beer}
      \Theta(S_{2},\Sigma(\fz,\cD),S_{1})=\Theta(S_{2},\Sigma(\fz^1,\cD_1),S_{3})+\Theta(S_{2},\Sigma(\fz^1,\cD_1),S_{4}),
    \end{equation}
    which follows from the neck-cutting relation (see Figure~\ref{fig:neck-cut-true}).
    \begin{figure}
      \input{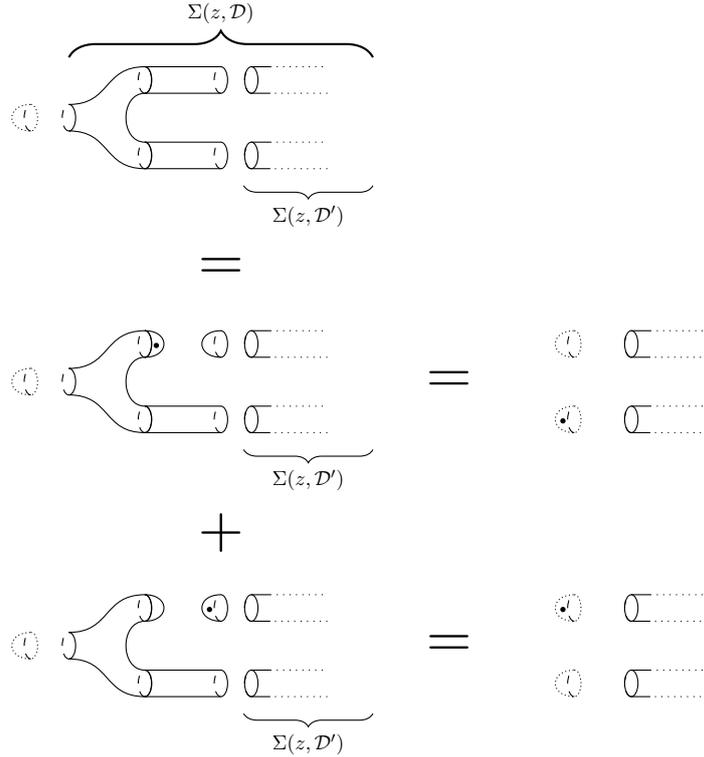}
      \caption{Proof of Lemma~\ref{lemma:conn-components-moduli-space}. The generator $S_1$ is a single circle without dots (corresponding to $\xp$). The new generators $S_3$, $S_4$ are two disks, one of them containing a dot.}\label{fig:neck-cut-true}
    \end{figure}
  \end{proof}

  \section{Categorical and Geometric Fixed Points}
  \label{sec:fixed-points}

  \subsection{Categorical Fixed Point Theorem}
  Throughout this section, $p$ denotes a fixed prime number.
  Define the map
  \[\pi_{p} \colon \bA \to \bA, \quad \pi_{p}(\zeta) = \zeta^{p},\]
  where $\zeta$ denotes the complex coordinate on $\mathbb{C} \setminus \{0\} = \bA$.
  If $\cD$ is a resolution configuration in $\bA$, then define the $p$-\emph{lift} of $\cD$ to be the $p$-periodic resolution configuration $\cD_{p}$ such that $\tZ(\cD_{p}) = \pi_{p}^{-1}(\tZ(\cD))$ and $\tA(\cD_{p}) = \pi_{p}^{-1}(\tA(\cD))$.\mpar{(55)}
  Analogously, for a labeled resolution configuration $(\cD,\xt)$ and a decorated resolution configuration $(\cD,\xt,\yt)$ we define the $p$-\emph{lift} $(\cD_{p},\xt_{p})$ and $(\cD_{p},\xt_{p},\yt_{p})$, where $\xt_{p} = \xt \circ \pi_{p}$ and $\yt_{p} = \yt \circ \pi_{p}$.
  \begin{theorem}[Categorical Fixed Point Theorem]\label{thm:fixed-points}
    Let $D_{p}$ be a $p$-periodic annular link diagram and let $D$ denote the quotient diagram.
    For any $q,k \in \Z$ there exists an isomorphism of cubical flow categories\mpar{(56)}
    \[\mathcal{C}_{\akh}^{q,k}(D) \xrightarrow{\pi_{p}} \mathcal{C}_{\akh}^{pq-(p-1)k,k}(D_{p})^{\Z_{p}},\]
    which induces the following isomorphism of cubical flow categories, for any $q \in \Z$,
    \[\bigsqcup_{\substack{q',k'\\ pq'-(p-1)k'=q}} \mathcal{C}_{\akh}^{q',k'}(D) \xrightarrow{\pi_{p}}  \mathcal{C}_{\kh}^{q}(D_{p})^{\Z_{p}}.\]
  \end{theorem}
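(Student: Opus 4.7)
My plan is to construct the isomorphism explicitly, using the $p$-lift of labeled resolution configurations as a functor from $\mathcal{C}_{\akh}^{q,k}(D)$ to $\mathcal{C}_{\akh}^{pq-(p-1)k,k}(D_p)^{\Z_p}$ whose inverse is induced by passing to the quotient $\pi_p$. The first step is the object bijection: since the fixed point of the rotation action on $\R^2$ is the origin, which lies off both diagrams by hypothesis, the map $(\cD,\xt)\mapsto (\cD_p,\xt_p)$ is a bijection between labeled resolution configurations of $D$ and $\Z_p$-invariant labeled resolution configurations of $D_p$. For the grading transformation, I would note that the annular grading is preserved because nontrivial circles in $\bA$ lift $1$-to-$1$ to nontrivial circles carrying the same label, while trivial circles contribute $0$ to $\Ann$ regardless. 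For the quantum grading, each $\pm$-labeled circle of $\cD$ contributes $p$ times as much after lifting (trivial circles lift to $p$ copies, nontrivial circles remain single but sit in a diagram whose crossing count is $p$ times larger); a direct bookkeeping using the formula $q(x) = \sum q(\newx_{\epsilon_i}) + n_+ - 2n_- + |v|$ together with the fact that nontrivial circles contribute $k$ to $\Ann$ yields the required relation $q' = pq - (p-1)k$.

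The heart of the proof is the identification of moduli spaces. Given $x, y \in \Ob(\mathcal{C}_{\akh}(D))$ with $\mathcal{M}_{\cC_{\akh}}(x,y)$ non-empty, I would use the decomposition of the trace cobordism $\Sigma(\fz,\cD)$ into connected components and analyze its $p$-fold (unramified) cover $\Sigma(\fz_p,\cD_p) \to \Sigma(\fz,\cD)$ induced by $\pi_p$. By Lemma~\ref{lemma:simplifying-cobordisms-annular-cobordism-category}, the components of $\Sigma(\fz,\cD)$ yielding non-vanishing morphisms are of two types: genus-zero pieces (possibly with two nontrivial boundary circles) and genus-one pieces with only trivial boundary. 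A Riemann--Hurwitz calculation shows that each genus-zero component with two nontrivial boundaries lifts to a single connected genus-zero component, and each genus-one component with trivial boundary lifts to $p$ disjoint genus-one components permuted cyclically by $\Z_p$. Applying the Counting Moduli Lemma~\ref{lemma:conn-components-moduli-space} on both sides: if $c_1$ is the number of genus-one components of $\Sigma(\fz,\cD)$, then $\mathcal{M}_{\kh}(\cD,\xt,\yt) = \bigsqcup_{i=1}^{2^{c_1}}\Pi_{n-1}$, while $\mathcal{M}_{\kh}(\cD_p,\xt_p,\yt_p) = \bigsqcup_{i=1}^{2^{pc_1}}\Pi_{pn-1}$, and the $\Z_p$-action on the component set is the cyclic permutation action on $(\{0,1\}^p)^{c_1}$. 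The fixed set has precisely $2^{c_1}$ components, matching the base. Each fixed component is a fixed permutohedron, which by Proposition~\ref{prop:fixed_permut} is again a permutohedron, and comparison with the unique equivariant neat embedding from Proposition~\ref{prop:equivariant_neat} identifies it with $\Pi_{n-1}$ on the nose.

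Once this identification is established for each pair of objects, functoriality follows from the fact that the composition maps in $\Cubes$ are compatible with the identifications of Proposition~\ref{prop:permutinter}: if the associated chain construction from Subsection~\ref{sub:splitting_cob} is applied to a decomposition of $\Sigma(\fz_p,\cD_p)$ that descends to a decomposition of $\Sigma(\fz,\cD)$, the corresponding faces match. The second claim of the theorem follows at once: an object of $\mathcal{C}_{\kh}^{q}(D_p)^{\Z_p}$ is by Proposition~\ref{prop:periodic_link_diagram} automatically an object of $\mathcal{C}_{\akh}$ (morphisms between invariant objects preserve annular grading), and the objects with quantum grading $q$ decompose along values of $k$ via $q = pq' - (p-1)k$. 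The induced functor preserves gradings and morphisms are unchanged, giving the claimed disjoint union.

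The main obstacle I expect is the compatibility with the ladybug matching when $\cD$ contains an index-two ladybug configuration. In that case $\mathcal{M}_{\kh}(\cD,\xt,\yt)$ and $\mathcal{M}_{\kh}(\cD_p,\xt_p,\yt_p)$ each consist of two components, and one must verify that the $\Z_p$-action on the lifted moduli space fixes both components and that the chosen matching on $D$ corresponds to the ladybug matching on $D_p$ restricted to fixed configurations. This uses \cite[Lemma~5.8]{LS_stable} (preservation of ladybug matching under group actions) combined with the fact that the two distinguished generators in the relevant morphism space of $\Cbl$ descend consistently under the quotient. Much of the bookkeeping would naturally be deferred to Section~\ref{sec:proof_more_technic} as the paper indicates.
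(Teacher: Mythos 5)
Your architecture matches the paper's: object bijection via the $p$-lift with the grading bookkeeping of Proposition~\ref{prop:bijection-objects}, reduction of $\cC_{\kh}(D_p)^{\Z_p}$ to the annular category via Proposition~\ref{prop:periodic_link_diagram}, and identification of moduli spaces by decomposing the trace cobordism and combining the Counting Moduli Lemma with Riemann--Hurwitz and the fixed-permutohedron results. But there is a genuine gap in your central dichotomy for the components of $\Sigma(\fz,\cD)$. A genus-one component with only trivial boundary circles does \emph{not} always lift to $p$ disjoint copies permuted cyclically: when the component winds around the annulus axis, i.e.\ the monodromy $\pi_1(\Sigma')\to\Z\to\Z_p$ is onto (this is what happens for the lift of the ladybug configuration of Figure~\ref{fig:ladybug-configuration-2}), the preimage is a single connected, $\Z_p$-invariant genus-one surface --- the paper's ``type two'' $\Z_p$-simple surface. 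For such a piece the upstairs moduli space contributes $2$ components rather than $2^{p}$, so your count $2^{pc_1}$ and your description of the $\Z_p$-action as cyclic permutation of $(\{0,1\}^p)^{c_1}$ both fail; worse, deciding which of the two upstairs components corresponds to which downstairs component is exactly where the ladybug matching bites. The paper's Ladybug Lifting Lemma (Lemma~\ref{lem:ladybug_lifting}) handles this by a nontrivial combinatorial induction over maximal chains $v_{\sigma_1\sigma_2}$, $w_{\sigma_1\sigma_2}$ in the poset $P(x_p,y_p)$, showing both upstairs components are preserved and that their fixed intervals match the two downstairs components compatibly with the matching. Your appeal to \cite[Lemma~5.8]{LS_stable} plus ``consistency of distinguished generators'' does not supply this: that lemma only says the matching on a fixed periodic diagram is equivariant, not that it is compatible with passing to the quotient.

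Two smaller points. First, you cite Proposition~\ref{prop:periodic_link_diagram} to get $\cC_{\kh}^{q}(D_p)^{\Z_p}=\bigsqcup_{k}\cC_{\akh}^{q,k}(D_p)^{\Z_p}$; this is legitimate as a citation, but note that its proof is itself a large part of the work (e.g.\ the exclusion of the genus-one, four-nontrivial-boundary, $p=2$ case in Lemma~\ref{lem:case_a_empty} and the labelling analysis of Lemma~\ref{lem:giszero} are not consequences of Riemann--Hurwitz alone), so it cannot be treated as a freebie. Second, your functoriality step differs mildly in organization from the paper, which constructs $\cF_p$ by induction on the index (index one via Zhang's lemma, index two via the ladybug analysis, and an essentially unique extension over the interiors of higher-dimensional disks); your global face-matching via the reductions of partitions would also work, but it still reduces to the same index-two ladybug case. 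With the type-two surfaces accounted for and the Ladybug Lifting Lemma supplied, your route coincides with the paper's.
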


  As a corollary we obtain the statement equivalent to Geometric Fixed Point Theorem~\ref{thm:geom_local}.
  \begin{corollary}\label{cor:cor_cor}
    For any annular link diagram $D$ we obtain 
    \[\mathcal{X}_{\akh}^{q,k}(D) = \mathcal{X}_{\akh}^{pq-(p-1)k,k}(D_{p})^{\Z_{p}}, \quad \quad \mathcal{X}_{\kh}^{q}(D_{p})^{\Z_{p}} = \bigvee_{\substack{q',k' \\ pq'-(p-1)k'=q}} \mathcal{X}_{\akh}^{q',k'}(D).\]
  \end{corollary}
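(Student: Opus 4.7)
The plan is to combine three ingredients already assembled in the paper: the Categorical Fixed Point Theorem~\ref{thm:fixed-points}, Proposition~\ref{prop:fixedcategory} identifying fixed points of the geometric realization with the geometric realization of the fixed subcategory, and Remark~\ref{rem:direct_sum_wedge_sum} translating a disjoint union of flow categories into a wedge of geometric realizations. Because $\Z_p$ acts on the Khovanov (respectively annular Khovanov) flow category of $D_p$ preserving the quantum (respectively quantum and annular) grading splitting, the whole argument proceeds one grading at a time.

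For part (a), I first apply Proposition~\ref{prop:fixedcategory} to the $\Z_p$-equivariant cubical flow category $\mathcal{C}_{\akh}^{pq-(p-1)k,\,k}(D_p)$, obtaining
\[
\mathcal{X}_{\akh}^{pq-(p-1)k,\,k}(D_p)^{\Z_p}\;=\;\mathcal{X}\!\bigl(\mathcal{C}_{\akh}^{pq-(p-1)k,\,k}(D_p)^{\Z_p}\bigr).
\]
Then I invoke the first isomorphism in Theorem~\ref{thm:fixed-points} to replace the fixed point category with $\mathcal{C}_{\akh}^{q,k}(D)$, and take geometric realizations to conclude $\mathcal{X}_{\akh}^{q,k}(D)=\mathcal{X}(\mathcal{C}_{\akh}^{q,k}(D))$. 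Care is needed that the formal desuspensions in Definition~\ref{def:equivariant_cubical} match on the two sides; this follows from the fact that the cubical functors in Theorem~\ref{thm:fixed-points} intertwine the gradings (in particular, $\ff^{\Z_p}=\rH\circ\ff|_{\mathcal{C}^{\Z_p}}$ in Lemma~\ref{lem:isflow} is precisely the cubical functor used for $\mathcal{C}_{\akh}(D)$ after identifying $\Cubes(n)^{\Z_p}$ with a lower-dimensional cube via Proposition~\ref{prop:fixed-cube-flow-cat}).

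For part (b), I apply Proposition~\ref{prop:fixedcategory} to the full Khovanov flow category $\mathcal{C}_{\kh}^q(D_p)$ in quantum grading $q$, yielding
\[
\mathcal{X}_{\kh}^q(D_p)^{\Z_p}\;=\;\mathcal{X}\!\bigl(\mathcal{C}_{\kh}^q(D_p)^{\Z_p}\bigr).
\]
The second isomorphism of Theorem~\ref{thm:fixed-points} identifies $\mathcal{C}_{\kh}^q(D_p)^{\Z_p}$ with the disjoint union $\bigsqcup_{pq'-(p-1)k'=q}\mathcal{C}_{\akh}^{q',k'}(D)$, and then Remark~\ref{rem:direct_sum_wedge_sum} turns this disjoint union into a wedge sum of geometric realizations, delivering the claimed formula.

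The only non-routine point is checking that Proposition~\ref{prop:fixedcategory} is genuinely applicable here, that is, that $\mathcal{C}_{\kh}^q(D_p)$ and $\mathcal{C}_{\akh}^{q,k}(D_p)$ are equivariant cubical flow categories in the precise sense required; this is exactly what Proposition~\ref{prop:extend} and its annular analogue in Section~\ref{sec:review} provide. Once that is in place, the corollary is a formal consequence of Theorem~\ref{thm:fixed-points} and the homeomorphism $\|\mathcal{C}\|^H\cong\|\mathcal{C}^H\|$. I do not expect any serious obstacle at this stage: the genuine work is contained in Theorem~\ref{thm:fixed-points}, whose proof occupies the bulk of Sections~\ref{sec:fixed-points} and~\ref{sec:proof_more_technic}.
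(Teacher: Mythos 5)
Your argument is correct and is essentially the paper's own proof: the paper deduces the corollary by combining Proposition~\ref{prop:fixedcategory} (which gives $\|\cC\|^{\Z_p}\cong\|\cC^{\Z_p}\|$) with Theorem~\ref{thm:fixed-points}, treating the Khovanov case as analogous to the annular one. Your additional remarks — invoking Remark~\ref{rem:direct_sum_wedge_sum} to convert the disjoint union of categories into a wedge, and checking that the desuspensions match via $\ff^{\Z_p}=\rH\circ\ff$ — are just a slightly more explicit rendering of the same route.
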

  \begin{proof}[Proof of Corollary~\ref{cor:cor_cor}]
    Notice that by Proposition~\ref{prop:fixedcategory} and Theorem~\ref{thm:fixed-points} we have
    \[\mathcal{X}\left(\cC_{\akh}^{q,k}(D_{p})\right)^{\Z_{p}} = \mathcal{X}\left(\cC_{\akh}^{q,k}(D_{p})^{\Z_{p}}\right) = \mathcal{X}\left(\cC^{pq-(p-1)k,k}_{\akh}(D)\right).\]
    The case of the Khovanov flow category is analogous.
  \end{proof}
  \begin{proof}[Proof of Theorem~\ref{thm:fixed-points}]
    The desired isomorphism of cubical flow categories will be first defined on objects, then on morphisms.
    \begin{lemma}\label{lem:bijection-objects_a}
      The formula $\cF_p(\cD,\xt)=(\cD_p,\xt_p)$
      induces a bijection
      \[\cF_p\colon \ob(\cC_{\akh}(D)) \xrightarrow{\cong} \ob(\cC_{\akh}(D_{p}))^{\Z_{p}}.\]
      The map $\cF_p$ preserves the annular grading.
    \end{lemma}
    \begin{proof}[Proof of Lemma~\ref{lem:bijection-objects_a}]
      The inverse map is given by taking the quotient of a respective labeled resolution configuration.
      Moreover, invariance of the annular grading under the map \(\cF_{p}\) is evident.
    \end{proof}

    We will now pass to constructing the map on morphisms.
    The key property that we will require is that for all resolution configurations $(\cD,\xt,\yt)$ the following
    diagram commutes:
    \begin{equation}\label{eq:cube_commutes}
      \begin{tikzpicture}[baseline=-0.5ex]
        \matrix(m)[matrix of nodes,row sep=1cm, column sep=1.5cm] {
          $\mathcal{M}_{\akh}(\cD,\xt,\yt)$ & $\mathcal{M}_{\akh}(\cD_{p},\xt_{p},\yt_{p})^{\Z_{p}}$ \\
          $\mathcal{M}_{\Cube(n)}(u,v)$             & $\mathcal{M}_{\Cube_{\sigma}(pn)}(u_{p},v_{p})$ \\
        };
        \path[->,font=\scriptsize]
        (m-1-1) edge node[above] {$\cF_{p}$} (m-1-2)
        (m-1-1) edge node[left] {$\ff$} (m-2-1)
        (m-1-2) edge node[right] {$\ff$} (m-2-2)
        (m-2-1) edge node[above] {$\cFC$} (m-2-2);
      \end{tikzpicture}
    \end{equation}
    where $u = \ff(\cD,\yt)$, $v = \ff(\fs(\cD),\xt)$, $u_{p} = \ff(\cD_{p},\yt_{p})$ and $v_{p}=\ff(\fs(\cD_{p}),\xt_{p})$.

    We begin with morphisms in $\cC_{\akh}(D)$ corresponding
    to index one configurations. All these configurations are depicted in Figure~\ref{fig:index-one-res-configurations}.
    \begin{figure}
      \begin{subfigure}[b]{.4\linewidth}
        \centering
        \begin{tikzpicture}
          \node[draw,circle,fill=black,scale=0.3] at (0,0) {};
          \draw[thick] (0,0) circle [radius=.5cm];
          \draw[thick] (0,0) circle [radius=1cm];
          \draw[very thick,dashed] (0,.5) -- (0,1);
        \end{tikzpicture}
        \caption{}\label{fig:index-one-configuration-merge-1}
      \end{subfigure}
      \begin{subfigure}[b]{.4\linewidth}
        \centering
        \begin{tikzpicture}
          \path (5,0) coordinate (origin);
          \node[draw,circle,fill=black,scale=0.3] at (origin) {};
          \path (origin) ++(135:.5cm) coordinate (starting-pt);
          \path (origin) ++(63:.8cm) coordinate (res-arc);
          \draw[thick] (starting-pt) arc (135:405:.5cm) arc (225:45:.25cm) arc (45:-225:1cm) arc (135:-45:.25cm);
          \draw[very thick,dashed] (res-arc) arc (63:117:.8cm);
        \end{tikzpicture}
        \caption{}\label{fig:index-one-configuration-split-1}
      \end{subfigure}

      \begin{subfigure}[b]{.4\linewidth}
        \centering
        \begin{tikzpicture}
          \node[draw,circle,fill=black,scale=0.3] at (0,-2.5) {};
          \draw[thick] (0,-2.5) circle [radius=0.5];
          \draw[thick] (-1.5,-2.5) circle [radius=0.4];
          \draw[very thick,dashed] (-1.1,-2.5) -- (-0.5,-2.5);
        \end{tikzpicture}
        \caption{}\label{fig:index-one-configuration-merge-2}
      \end{subfigure}
      \begin{subfigure}[b]{.4\linewidth}
        \centering
        \begin{tikzpicture}
          \path (5,-2.5) coordinate (origin);
          \node[draw,circle,fill=black,scale=0.3] at (origin) {};
          \path (origin) ++(-135:.5cm) coordinate (starting-pt);
          \draw[thick] (starting-pt) arc (-135:135:.5cm) arc (-45:-135:.25cm) arc (45:315:.5cm) arc (135:45:.25cm);
          \path (origin) ++(-.53,.53) ++ (0,-.25) coordinate (P0);
          \path (origin) ++(-.53,-.53) ++ (0,.25) coordinate (P1);
          \draw[very thick,dashed] (P0) -- (P1);
        \end{tikzpicture}
        \caption{}\label{fig:index-one-configuration-split-2}
      \end{subfigure}

      \begin{subfigure}[b]{.4\linewidth}
        \centering
        \begin{tikzpicture}
          \node[draw,circle,fill=black,scale=0.3] at (0,-5) {};
          \draw[thick] (-1,-4.2) circle [radius=.5cm];
          \draw[thick] (1,-4.2) circle [radius=.5cm];
          \draw[very thick,dashed] (-.5,-4.2) -- (.5,-4.2);
        \end{tikzpicture}
        \caption{}\label{fig:index-one-configuration-merge-3}
      \end{subfigure}
      \begin{subfigure}[b]{.4\linewidth}
        \centering
        \begin{tikzpicture}
          \path (5,-5) coordinate (origin);
          \node[draw,circle,fill=black,scale=0.3] at (origin) {};
          \path (origin) ++(.53,0.8) coordinate (Q0);
          \path (Q0) ++(-135:.5cm) coordinate (starting-pt);
          \draw[thick] (starting-pt) arc (-135:135:.5cm) arc (-45:-135:.25cm) arc (45:315:.5cm) arc (135:45:.25cm);
          \path (Q0) ++(-.53,.53) ++ (0,-.25) coordinate (P0);
          \path (Q0) ++(-.53,-.53) ++ (0,.25) coordinate (P1);
          \draw[very thick,dashed] (P0) -- (P1);
        \end{tikzpicture}
        \caption{}\label{fig:index-one-configuration-split-3}
      \end{subfigure}
      \caption{Index one resolution configurations in $\R^{2} \setminus \{0\}$. The black dot is the fixed point set of the rotation.}
      \label{fig:index-one-res-configurations}
    \end{figure}
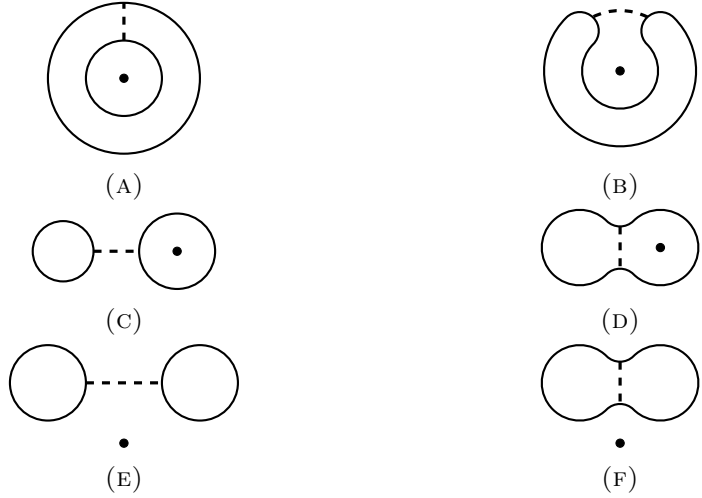
    We recall now a result of Zhang, which is proved on a detailed case-by-case analysis.
    \begin{lemma}[see \expandafter{\cite[Section 5.3]{Zhang}}]\label{lem:zhangs_lemma}
      For all configuration depicted in Figure~\ref{fig:index-one-res-configurations}
      we have
      \[\mathcal{M}_{\akh}(\cD,\xt,\yt) \neq \emptyset \iff \mathcal{M}_{\akh}(\cD_{p},\xt_{p},\yt_{p}) \neq \emptyset.\]
    \end{lemma}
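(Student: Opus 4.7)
The plan is a direct case-by-case check through the six configurations (a)--(f) displayed in Figure~\ref{fig:index-one-res-configurations}. For each, I will (i) enumerate those pairs of labelings $(\xt,\yt)$ for which $\mathcal{M}_{\akh}(\mathcal{D},\xt,\yt)\neq\emptyset$ using the annular Frobenius structure recalled in Subsection~\ref{sub:annul-khov-chain}, (ii) write out explicitly the $p$-lift $(\mathcal{D}_{p},\xt_{p},\yt_{p})$ (which is a decorated resolution configuration of index $p$, as the single arc of $\cD$ never passes through the ramification point $0$), (iii) describe the associated surface $\Sigma(\fz,\mathcal{D}_{p})\subset \bA\times[0,p+1]$, and (iv) invoke Counting Moduli Lemma~\ref{lemma:conn-components-moduli-space} (together with its evident annular analogue, obtained by working in $\Cbl(\bA)$) to determine when the annular moduli space of the lift is nonempty. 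Comparing the two lists case by case then yields the equivalence.

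First I would dispatch the two ``disjoint from the fixed point'' cases (e) and (f). Here the arc and both circles lie in an open disk $U\subset\bA$ not containing $0$, so the $p$ preimages $\pi_{p}^{-1}(U)$ are $p$ disjoint open disks permuted freely by $\Z_{p}$, and $(\mathcal{D}_{p},\xt_{p},\yt_{p})$ is simply $p$ disjoint copies of $(\mathcal{D},\xt,\yt)$. Monoidality of the annular TQFT (or equivalently Corollary~\ref{cor:connected_product}) identifies $\mathcal{M}_{\akh}(\mathcal{D}_{p},\xt_{p},\yt_{p})$ with $\mathcal{M}_{\akh}(\mathcal{D},\xt,\yt)^{\times p}$, and the conclusion is immediate.

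The substantive cases are (a)--(d), each involving either a nontrivial circle or a circle surrounding the fixed point. In case (a), two concentric nontrivial circles are merged by an arc off the origin, so the $p$-lift carries the same two concentric circles joined by $p$ radial arcs, and $\Sigma(\fz,\mathcal{D}_{p})$ is a planar cobordism in $\bA\times[0,p+1]$ turning two nontrivial circles into $p-1$ trivial ones. Placing the $\xt_{p}$-, $\yt_{p}$-dots prescribed by Lemma~\ref{lem:barnatan} and applying the neck-cutting relation, I can evaluate $\Theta(S_{2},\Sigma(\fz,\mathcal{D}_{p}),S_{1})$ directly; the count is nonzero exactly when $\yt$ assigns opposite annular labels to the two nontrivial circles, which is precisely the condition for nonemptiness of $\mathcal{M}_{\akh}(\mathcal{D},\xt,\yt)$. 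Case (c) (merge of a nontrivial circle with a trivial one off the origin) is handled analogously: the $p$-lift has one nontrivial circle together with $p$ trivial ones connected to it by arcs, and the same calculation reduces the nonemptiness criterion to the quotient condition. The split configurations (b) and (d) are the duals of (a) and (c) under the nonsingular pairing of Lemma~\ref{lemma:morphisms-as-disks}(\ref{item:morphisms-as-disks-4}), so their computations transpose.

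The main obstacle will be the bookkeeping in cases (a)--(d): I must track which connected components of $\Sigma(\fz,\mathcal{D}_{p})$ meet nontrivial circles and which do not, since only the former carry annular grading, and then check that the dot-placements prescribed by $(\xt_{p},\yt_{p})$ annihilate the resulting morphism in $\Cbl(\bA)$ exactly when the corresponding placement annihilates the quotient cobordism. Invoking Lemma~\ref{lemma:simplifying-cobordisms-annular-cobordism-category} to control the genera of components touching nontrivial circles, together with the two relations of Lemma~\ref{lemma:genus-reduction} to kill higher-genus or doubly-dotted pieces, keeps each of these computations finite and mechanical.
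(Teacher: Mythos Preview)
Your approach is correct and is essentially what the paper indicates: the paper does not give a self-contained proof but remarks that ``Proof of Lemma~\ref{lem:zhangs_lemma} relies on a case-by-case analysis'' and points to the argument in Lemma~\ref{lem:giszero} for the hardest case~(a). There the paper enumerates the four possible labelings $\yt(Z_1),\yt(Z_2)\in\{\xp,\xm\}$ on the two concentric nontrivial circles and checks directly which survive under the $p$ radial arcs (see Figure~\ref{fig:the_only_one}); your plan to cap the genus-zero surface $\Sigma(\fz,\mathcal{D}_p)$ and count dots via Lemma~\ref{lemma:conn-components-moduli-space} is the same computation packaged through $\Cbl(\bA)$, and it lands on the identical answer ($\yt\in\{(\xp,\xm),(\xm,\xp)\}$, $\xt=\xm$). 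One small bookkeeping slip: in case~(a) the top boundary $\partial_1\Sigma(\fz,\mathcal{D}_p)$ consists of $p$ trivial circles, not $p-1$ (two circles, $p$ one-handles, $\chi=-p$, genus $0$, hence $2+p$ boundary components).
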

    Next result connects $\cM_{\akh}$ with $\cM_{\kh}$ for index one configurations.
    \begin{lemma}\label{lem:close_to_zhang}
      Suppose $(\cD,\xt,\yt)$ is one of the configurations of Figure~\ref{fig:index-one-res-configurations}.
      If $\cM_{\kh}(\cD_p,\xt_p,\yt_p)^{\Z_p}$ is non-empty, $\Ann(\cD_p,\yt_p)=\Ann(\fs(\cD_p),\xt_p)$.
    \end{lemma}
    \begin{proof}[Proof of Lemma~\ref{lem:close_to_zhang}]
      The proof is done on a case-by-case analysis. Cases (e) and (f) of Figure~\ref{fig:index-one-res-configurations}
      are trivial, because no circles in $\cD_p$ or $\fs(\cD_p)$ is non-trivial (in the sense of Subsection~\ref{sub:annul-khov-chain}).
      Thus $\Ann(\cD_p,\yt_p)=\Ann(\fs(\cD_p),\xt_p)=0$.

      Case (b) is dual to (a), and case (d) is dual to (c), so it is enough to prove the lemma for cases (a) and (c) only.
      We will deal with case~(a) only, leaving case~(c) (which is easier) to the reader.
      The resolution configurations $\cD_p$ and $\fs(\cD_p)$ are depicted in Figure~\ref{fig:proof_close}.
      \begin{figure}
        \begin{tikzpicture}
          \begin{scope}[xshift=-2cm]
            \fill[black] (0,0) circle (0.08);
            \draw[thick] (0,0) circle (1);
            \draw (0,0.8) node[scale=0.7] {$Z_1$};
            \draw (0,2) node [scale=0.7] {$Z_2$};
            \draw[thick] (0,0) circle (1.8);
            \draw[thick, dashed] (0,0) ++ (0:1) --    ++(0:0.8);
            \draw[thick, dashed] (0,0) ++ (72:1) --  ++(72:0.8);
            \draw[thick, dashed] (0,0) ++ (144:1) -- ++(144:0.8);
            \draw[thick, dashed] (0,0) ++ (216:1) -- ++(216:0.8);
            \draw[thick, dashed] (0,0) ++ (288:1) -- ++(288:0.8);
            \draw(010:1.4) node [scale=0.7] {$A_1$};
            \draw(082:1.4) node [scale=0.7] {$A_2$};
            \draw(154:1.4) node [scale=0.7] {$A_3$};
            \draw(226:1.4) node [scale=0.7] {$A_4$};
            \draw(298:1.4) node [scale=0.7] {$A_5$};
          \end{scope}
          \begin{scope}[xshift=3cm]
            \fill[black] (0,0) circle (0.08);
            \draw (000:1.4) circle (0.6);
            \draw (072:1.4) circle (0.6);
            \draw (144:1.4) circle (0.6);
            \draw (216:1.4) circle (0.6);
            \draw (288:1.4) circle (0.6);
            \draw (000:2.2) node [scale=0.7] {$Z_3$} ;
            \draw (072:2.2) node [scale=0.7] {$Z_4$} ;
            \draw (144:2.2) node [scale=0.7] {$Z_5$} ;
            \draw (216:2.2) node [scale=0.7] {$Z_6$} ;
            \draw (288:2.2) node [scale=0.7] {$Z_7$} ;
          \end{scope}
        \end{tikzpicture}
        \caption{Proof of Lemma~\ref{lem:close_to_zhang}.}\label{fig:proof_close}
      \end{figure}
      Note that the annular grading of any resolution configuration on the right is zero, because there are no non-trivial circles.
      Therefore, it is enough to show that if the resolution configuration on the left has non-trivial annular grading, then $\cM_{\kh}(\cD_p,\xt_p,\yt_p)^{\Z_p}=0$.
      The configuration $(\cD_p,\yt_p)$ has non-trivial annular grading in precisely two cases: either $\yt_p$ assigns $\xm$ to both circles on the left, or it assigns $\xp$ to both circles.
      In the first case, as the surgery on any of the arcs merges two circles labeled with $\xm$, the moduli space $\cM_{\kh}(\cD_p,\xt_p,\yt_p)$ is empty.

      The other case is that $\yt_p$ assigns $\xp$ to both circles. After the surgery on one of the arcs connecting the two circles, we obtain a single circle labeled with $\xp$.
      All other $p-1$ arcs are splits. Any split of an $\xp$ labeled circle yields a circle labeled with $\xp$ and a circle labeled with $\xm$, while a split of a circle labeled with $\xm$ has two circles both labeled with $\xm$.
      It follows that $\xt_p$ assigns $\xp$ to a positive number of circles and $\xm$ also to a positive number of circles.
      Such configuration (the underlying $p$ circles are drawn in Figure~\ref{fig:proof_close} on the right) cannot be $\Z_p$-invariant. Hence $\cM_{\kh}(\cD_p,\xt_p,\yt_p)^{\Z_p}$ is empty.
      This concludes the proof of case~(a).
    \end{proof}
    \begin{corollary}\label{cor:index_one}
      For any index one configuration, there is a bijection between $\cM_{\akh}(\cD,\xt,\yt)$, $\cM_{\akh}(\cD_p,\xt_p,\yt_p)^{\Z_p}$ and
      $\cM_{\kh}(\cD_p,\xt_p,\yt_p)^{\Z_p}$.
    \end{corollary}
    \begin{proof}
      From Lemma~\ref{lem:close_to_zhang} we immediately obtain a bijection between $\cM_{\akh}(\cD_p,\xt_p,\yt_p)^{\Z_p}$ and $\cM_{\kh}(\cD_p,\xt_p,\yt_p)^{\Z_p}$.
      By Lemma~\ref{lem:zhangs_lemma}, it is enough to consider the case, when $\cM_{\akh}(\cD,\xt,\yt)$ is non-empty. Then it is a zero-dimensional
      permutohedron $\Pi_0$, that is, a single point. Call it $\fz$. Let $\Sigma(\fz,\cD)$ be the corresponding surface. It has genus zero.

      Let $\fz_p\in\Pi_{p-1}^{\Z_p}$ denote the unique fixed point of the $\Z_p$ action. The surface $\Sigma(\fz_p,\cD_p)$ is a $p$-fold
      cover of $\Sigma(\fz,\cD)$ and it is easily seen to have genus zero as well. From Counting Moduli Lemma~\ref{lemma:conn-components-moduli-space} we deduce
      that $\cM_{\akh}(\cD_p,\xt_p,\yt_p)$ is connected, hence it is diffeomorphic to $\Pi_{p-1}$. Therefore $\cM_{\akh}(\cD_p,\xt_p,\yt_p)^{\Z_p}$ is a single point.
    \end{proof}
    \begin{remark}
      Since the moduli spaces in Corollary~\ref{cor:index_one} are either empty or a single point, the bijection of Corollary~\ref{cor:index_one} is uniquely defined.
    \end{remark}
    Continuing the proof of Theorem~\ref{thm:fixed-points}, we extend $\cF_p$ from objects to morphisms corresponding to index one resolution configurations. We now discuss the index two resolution configurations.

    Assume first that 
    $(\cD,\xt,\yt)$ is not a ladybug and $\cM_{\akh}(\cD,\xt,\yt)$ is non-empty. Then $(\cD,\xt,\yt)$ is a genus zero resolution configuration.
    In particular,
    $\cM_{\akh}(\cD,\xt,\yt)=\Pi_1$ is an interval with two boundary components. By dimension counting argument, $\cM_{\akh}(\cD_p,\xt_p,\yt_p)^{\Z_p}$ is a union
    of some number of copies of one-dimensional permutohedra $\Pi_1$. By the induction assumption $\partial\cM_{\akh}(\cD_p,\xt_p,\yt_p)^{\Z_p}\stackrel{\cF_p}{\cong}\partial\cM_{\akh}(\cD,\xt,\yt)$. Hence $\cM_{\akh}(\cD_p,\xt_p,\yt_p)^{\Z_p}$ is also an interval. Then 
    $\ff$ takes it diffeomorphically to $\Pi_{2p-1}^{\Z_p}=\Pi_1$.
    We define $\cF_{p} = \ff^{-1} \circ \cFC \circ \ff$, so \eqref{eq:cube_commutes} commutes. 

    If the genus of $(\cD,\xt,\yt)$ is one, the moduli space is not connected.
    Decorated resolution configurations of index two and genus one are called \emph{ladybug configurations}, they are depicted in Figure~\ref{fig:ladybugs}.
    \begin{figure}
      \begin{subfigure}[b]{.3\linewidth}
        \centering
        \begin{tikzpicture}
          \path (0,0) coordinate (origin);
          \path (origin) ++(.25,0) coordinate (center);
          \node[draw,circle,fill=black,scale=0.3] at (origin) {};
          \draw[thick] (center) circle [radius=.5cm];
          \path (center) ++(0,.5) coordinate (P0);
          \path (center) ++(0,-.5) coordinate (P1);
          \draw[very thick, dashed] (P0) -- (P1);
          \path (center) ++(60:.5cm) coordinate (Q0);
          \path (center) ++(120:.5cm) coordinate (Q1);
          \path (Q0) ++(60:.75cm) coordinate (C0);
          \path (Q1) ++(120:.75cm) coordinate (C1);
          \draw[very thick, dashed] (Q0) ..controls (C0) and (C1).. (Q1);
        \end{tikzpicture}
        \caption{}\label{fig:ladubug-configuration-1}
      \end{subfigure}
      \begin{subfigure}[b]{.3\linewidth}
        \centering
        \begin{tikzpicture}
          \path (0.25,-1.25) coordinate (origin);
          \path (origin) ++(0,-.7) coordinate (center);
          \node[draw,circle,fill=black,scale=0.3] at (origin) {};
          \draw[thick] (center) circle [radius=.5cm];
          \path (center) ++(0,.5) coordinate (P0);
          \path (center) ++(0,-.5) coordinate (P1);
          \draw[very thick, dashed] (P0) -- (P1);
          \path (center) ++(60:.5cm) coordinate (Q0);
          \path (center) ++(120:.5cm) coordinate (Q1);
          \path (Q0) ++(60:.75cm) coordinate (C0);
          \path (Q1) ++(120:.75cm) coordinate (C1);
          \draw[very thick, dashed] (Q0) ..controls (C0) and (C1).. (Q1);
        \end{tikzpicture}
        \caption{}\label{fig:ladybug-configuration-2}
      \end{subfigure}
      \begin{subfigure}[b]{.3\linewidth}
        \centering
        \begin{tikzpicture}
          \path (0.25,-4) coordinate (origin);
          \path (origin) ++(-.75,0) coordinate (center);
          \node[draw,circle,fill=black,scale=0.3] at (origin) {};
          \draw[thick] (center) circle [radius=.5cm];
          \path (center) ++(0,.5) coordinate (P0);
          \path (center) ++(0,-.5) coordinate (P1);
          \draw[very thick, dashed] (P0) -- (P1);
          \path (center) ++(60:.5cm) coordinate (Q0);
          \path (center) ++(120:.5cm) coordinate (Q1);
          \path (Q0) ++(60:.75cm) coordinate (C0);
          \path (Q1) ++(120:.75cm) coordinate (C1);
          \draw[very thick, dashed] (Q0) ..controls (C0) and (C1).. (Q1);
        \end{tikzpicture}
        \caption{}\label{fig:ladybug-configuration-3}
      \end{subfigure}
      \caption{Ladybug configurations in $\R^{2} \setminus \{0\}$.}
      \label{fig:ladybugs}
    \end{figure}
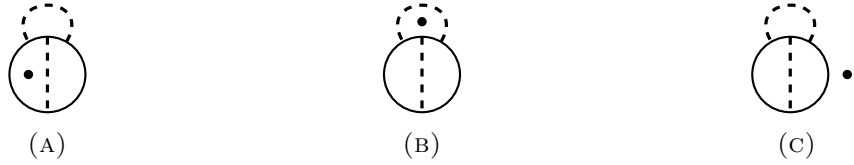

    We discuss these three cases separately.
    \begin{lemma}\label{lem:ladybug-A}
      If $(\cD,\xt,\yt)$ is as in Figure~\ref{fig:ladubug-configuration-1}, then $\cM_{\kh}(\cD_p,\xt_p,\yt_p)=\emptyset$
      and $\cM_{\akh}(\cD,\xt,\yt)=\emptyset$.
    \end{lemma}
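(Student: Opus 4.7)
\textbf{Proof plan for Lemma~\ref{lem:ladybug-A}.}

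The plan is to compute the genus of the cobordism $\Sigma_p = \Sigma(\fz_p,\cD_p)$ associated to the $p$-lift of the ladybug configuration in Figure~\ref{fig:ladubug-configuration-1}, show that $g(\Sigma_p)=p\ge 2$, and then invoke Lemma~\ref{lemma:genus-reduction} together with the Counting Moduli Lemma~\ref{lemma:conn-components-moduli-space} to conclude that the moduli space is empty.

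First I would describe $\cD_p$. In Figure~\ref{fig:ladubug-configuration-1} the single circle of $\cD$ encloses the fixed point of the rotation, hence it is nontrivial in $\bA$; under $\pi_p\colon\bA\to\bA$ its preimage is a single nontrivial circle. Neither of the two arcs passes through the origin, so each lifts to $p$ pairwise disjoint arcs. Thus $\tZ(\cD_p)$ consists of one nontrivial circle and $\tA(\cD_p)$ consists of $2p$ arcs, all of which are attached to this single circle.

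Next I would determine the topology of $\Sigma_p$. Since $\cD_p$ lies in $\bA$ and the surgery construction of $\Sigma_p$ is performed in $\bA\times[0,1]$, the $\Z_p$-action on $\Sigma_p$ is free, and $\Sigma_p/\Z_p\cong \Sigma$, the ladybug cobordism for $\cD$. One checks directly (as already done for $\Sigma$ in the proof of Proposition~\ref{prop:major_step}) that $\Sigma$ has genus $1$, Euler characteristic $-2$, and two boundary components, each a nontrivial circle. Since the cover is free, $\chi(\Sigma_p)=p\cdot\chi(\Sigma)=-2p$. The surface $\Sigma_p$ is connected because it is built from a single cylinder (over the unique circle of $\cD_p$) by attaching $2p$ bands to the same component. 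Finally, each nontrivial boundary circle of $\Sigma$ lifts under $\pi_p$ to a single nontrivial circle, so $\#\partial\Sigma_p=2$. From $\chi(\Sigma_p)=2-2g(\Sigma_p)-\#\partial\Sigma_p$ we deduce $g(\Sigma_p)=p$.

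To finish, since $p$ is prime we have $p\ge 2>1$, so by Lemma~\ref{lemma:genus-reduction}(1) the cobordism $\Sigma_p$ represents the zero morphism in $\Cbl$. For any distinguished generators $S_1\in \cTKh(\tZ(\cD_p))$, $S_2\in\cTKh(\tZ(\fs_{\tA(\cD_p)}(\cD_p)))$ we therefore have $\Theta(S_2,\Sigma_p,S_1)=0$. The Counting Moduli Lemma then gives $\#\pi_0(\cM_{\kh}(\cD_p,\xt_p,\yt_p))=0$, i.e.\ $\cM_{\kh}(\cD_p,\xt_p,\yt_p)=\emptyset$. The only delicate step is verifying the boundary and connectedness count used to get $g(\Sigma_p)=p$; both follow cleanly from the fact that the single circle of $\cD$ is nontrivial in $\bA$ and that all $2p$ arcs are attached to the lone lifted circle.
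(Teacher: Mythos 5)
Your proof is correct and takes essentially the same route as the paper: the paper's argument is precisely that the lifted configuration has genus $p>1$, so Lemma~\ref{lemma:genus-reduction}(1) (combined with the Counting Moduli Lemma) forces the moduli space to be empty. You simply supply the Euler-characteristic/free-cover computation of $g(\Sigma_p)=p$ that the paper leaves implicit.
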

    \begin{proof}
      For the first part note that the genus of $(\cD_p,\xt_p,\yt_p)$ is equal to $p>1$.
      Therefore, the moduli space is empty, by Lemma~\ref{lemma:genus-reduction}(1). 

      For the second part note that $(\cD,\yt) \prec (\fs(\cD),\xt)$ if and only if $\yt$ assigns $\xp$ to the unique circle in $\cD$ and $\xt$ assigns
      $\xm$ to the unique circle in $\fs(\cD)$.
      Then $\Ann(\cD,\yt)\neq\Ann(\fs(\cD),\xt)$, so $\cM_{\akh}(\cD,\xt,\yt)=\emptyset$.
    \end{proof}

    The two other cases are dealt with in the following lemma, whose proof is deferred to Subsection~\ref{sub:proof_extending}.
    \begin{lemma}\label{lem:ladybug_lifting}
      Suppose $(\cD,\xt,\yt)$ is a ladybug configuration depicted in Figure~\ref{fig:ladybug-configuration-2} or Figure~\ref{fig:ladybug-configuration-3}. Then
      \[\cM_{\kh}(\cD,\xt,\yt)\cong\cM_{\kh}(\cD_p,\xt_p,\yt_p)^{\Z_p}.\]
      The isomorphism 
      makes the diagram \eqref{eq:cube_commutes} commutative.
    \end{lemma}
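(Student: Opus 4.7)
The plan is to derive this lemma directly from Lemma~\ref{lem:disconnected_lemma}, since the geometry of Figure~\ref{fig:ladybug-configuration-2} places the fixed point of the rotation entirely outside the ladybug configuration. Concretely, I would first observe that both the circle and its two attaching arcs in $\cD$ are contained in a small open region $\Omega \subset \bA$ that is disjoint from all its nontrivial $\rho_p$-translates. Therefore $\pi_p^{-1}(\Omega)$ is a disjoint union of $p$ copies of $\Omega$, cyclically permuted by $\Z_p$, and accordingly $\cD_p$ is a disjoint union of $p$ copies of $\cD$, with $\xt_p$ and $\yt_p$ restricting to copies of $\xt$ and $\yt$ on each component.

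Next, I would choose $\fz \in \Pi_1$ together with a $\Z_p$-invariant lift $\fz_p \in \Pi_{2p-1}^{\Z_p}$. Since $(\cD,\xt,\yt)$ is a ladybug, $\Sigma(\fz,\cD)$ is a connected genus-one surface, and the disjointness above forces $\Sigma(\fz_p,\cD_p)$ to be precisely the disjoint union of $p$ copies of $\Sigma(\fz,\cD)$ permuted freely by $\Z_p$. This is exactly the hypothesis of Lemma~\ref{lem:disconnected_lemma}, which then supplies a diffeomorphism
\[\cF_p\colon \cM_{\kh}(\cD,\xt,\yt)\xrightarrow{\cong}\cM_{\kh}(\cD_p,\xt_p,\yt_p)^{\Z_p}\]
rendering diagram~\eqref{eq:cube_commutes} commutative.

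The only genuinely ladybug-specific point to check is compatibility with the ladybug matching; this is where I expect the sole (mild) obstacle to lie. Lemma~\ref{lem:disconnected_lemma} identifies the two $\Z_p$-invariant components of $\cM_{\kh}(\cD_p,\xt_p,\yt_p)$ as $\Pi_{1,\ldots,1}$ and $\Pi_{2,\ldots,2}$, coming from choosing the same label $\{1,2\}$ on each of the $p$ ladybug copies. I would verify that these two invariant components correspond, under the ladybug matching on $\cD_p$, to the lifts of the two components of $\cM_{\kh}(\cD,\xt,\yt)$ that the ladybug matching on $\cD$ prescribes. Since~\cite[Lemma~5.8]{LS_stable} (already invoked in Proposition~\ref{prop:extend}) guarantees that the $\Z_p$-action preserves the ladybug matching, the matching on $\cD_p$ is forced to be the $\Z_p$-equivariant lift of the matching on $\cD$, so that the restriction of $\cF_p$ to each of the two components of $\cM_{\kh}(\cD,\xt,\yt)$ lands in the correct invariant component. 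Commutativity of~\eqref{eq:cube_commutes} then pins down $\cF_p$ uniquely via Proposition~\ref{prop:fixed_permut}, just as in the proof of Lemma~\ref{lem:disconnected_lemma}, completing the argument.
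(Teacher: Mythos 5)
Your argument rests on a false geometric premise. In the configuration of Figure~\ref{fig:ladybug-configuration-2} the circle $Z$ is trivial, but the \emph{outer} arc $A_2$ together with the short arc of $Z$ joining its endpoints bounds a region containing the puncture (the fixed point of the rotation sits between the top of $Z$ and the arc $A_2$). Consequently the configuration is \emph{not} contained in a region disjoint from its $\rho_p$-translates: while $Z$ lifts to $p$ disjoint trivial circles $Z^1,\dots,Z^p$ and the inner arc lifts to $p$ chords, each lift of $A_2$ connects $Z^i$ to $Z^{i+1}$. Hence $\cD_p$ is connected, and $\Sigma(\fz_p,\cD_p)$ is a single connected genus-one surface (an unbranched $p$-fold cover of $\Sigma(\fz,\cD)$; check Riemann--Hurwitz: $\chi=-2p=p\cdot(-2)$), not a free $\Z_p$-orbit of $p$ copies. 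So the hypothesis of Lemma~\ref{lem:disconnected_lemma} fails and your reduction collapses. What you have written is essentially the proof of Lemma~\ref{lem:lemmac}, which handles Figure~\ref{fig:ladybug-configuration-3} --- the case where the configuration genuinely is disjoint from its translates; the whole reason the paper isolates Figure~\ref{fig:ladybug-configuration-2} in a separate lemma is that this disjointness argument does not apply to it.

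The missing content is the analysis of the connected cover. By the Counting Moduli Lemma, $\cM_{\kh}(\cD_p,\xt_p,\yt_p)$ consists of two copies of $\Pi_{2p-1}$, and one must (i) show that the vertices corresponding to maximal chains passing through the lifts of $\wt{x}_1$ and those passing through the lifts of $\wt{x}_3$ lie in the \emph{same} component upstairs --- this is Lemma~\ref{lem:same_component}, a nontrivial computation that traverses a grid of intermediate resolution configurations $x^{ij}_{\sigma_1\sigma_2}$ and invokes the ladybug matching on index-two subconfigurations of $\cD_p$; (ii) deduce that both components are $\Z_p$-invariant, so each contributes an interval $\Pi_{2p-1}^{\Z_p}\cong\Pi_1$ to the fixed-point set; and (iii) match these two intervals with the two components of $\cM_{\kh}(\cD,\xt,\yt)$ compatibly with the map $\cF_p$ already fixed on the boundary vertices. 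Your appeal to \cite[Lemma~5.8]{LS_stable} and to the component labels $\Pi_{1,\dots,1}$, $\Pi_{2,\dots,2}$ of Lemma~\ref{lem:disconnected_lemma} cannot substitute for step (i), since those labels only exist when the cover splits into a product of $p$ independent ladybugs, which is exactly what does not happen here.
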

    Lemma~\ref{lem:ladybug_lifting} finishes the construction of $\cF_p$ for index 2 resolution configurations.
    Suppose $\ind(\cD,\xt,\yt)>2$. The map $\cF_p\colon\cM_{\akh}(\cD,\xt,\yt)\to \cM_{\akh}(\cD_p,\xt_p,\yt_p)^{\Z_p}$ has already been constructed
    on the boundary. Now $\cM_{\akh}(\cD,\xt,\yt)$ and $\cM_{\akh}(\cD_p,\xt_p,\yt_p)^{\Z_p}$ are disjoint union of disks of dimension $\ind(\cD,\xt,\yt)-1$.
    The map $\cF_p$ is already defined on the boundaries of these moduli spaces.
    As $\ind(\cD,\xt,\yt)-1>1$, there is a unique (up to homotopy) extension of $\cF_p$ to the whole of $\cM_{\akh}(\cD,\xt,\yt)$.
  \end{proof}
  \subsection{Proof of Lemma~\ref{lem:ladybug_lifting}}\label{sub:proof_extending}
  We will prove Lemma~\ref{lem:ladybug_lifting} only for the resolution configuration depicted in Figure~\ref{fig:ladybug-configuration-2}.
  The case of Figure~\ref{fig:ladybug-configuration-3} is similar (and easier), we leave it to the reader.

  Lemma~\ref{lemma:conn-components-moduli-space} implies that $\cM_{\kh}(\cD,\xt,\yt) = \Pi_{1}^{1} \sqcup \Pi_{1}^{2}$ has two connected components.
  Let $Z$ denote the unique circle in $\cD$ and let $A_{1}$ and $A_{2}$ denote the arcs, where $A_{1}$ is the arc lying inside $Z$.
  As $x\succ y$ we must have $\yt(Z)=\xp$ and $\xt(Z') = \xm$ and in this case the poset $P(x,y)$ consists of four elements
  $x\succ \wt{x}_j\succ y$, $j=1,\ldots,4$, where:
  \begin{align*}
    \wt{x}_1=(\fs_{\{A_{1}\}}(\cD),\xt_{A_{1}}^{1}),\  \wt{x}_2=(\fs_{\{A_{1}\}}(\cD),\xt_{A_{1}}^{2}),\\
    \wt{x}_3=(\fs_{\{A_{2}\}}(\cD),\xt_{A_{2}}^{1}), \ \wt{x}_4=(\fs_{\{A_{2}\}}(\cD),\xt_{A_{2}}^{2}).
  \end{align*}
  Here $\xt_{A_{2}}^{1}$ assigns $\xp$ to the inner circle and $\xp$ to the other circle and $\xt_{A_{2}}^{2}$ the opposite way.
  The assignments $\xt_{A_{1}}^{1}$ and $\xt_{A_{1}}^{2}$ are such that $(\fs_{\{A_1\}}(\cD),\xt_{A_1}^1)$ and $(\fs_{\{A_2\}}(\cD),\xt_{A_2}^1)$ are paired under right ladybug matching, that is, the vertices corresponding to posets $x\succ \wt{x}_1\succ y$ and
  $x\succ\wt{x}_3\succ y$ belong to the same connected component of $\cM_{\kh}(\cD,x,y)$; see~\cite[Section 5.4]{LS_stable}.

  Consider now the cover.
  Denote by $Z^{1},Z^{2},\ldots,Z^{p}$ the circles in $\cD_{p}$ and $A_{1}^{1},A_{1}^{2},\ldots,A_{1}^{p}$, $A_{2}^{1},\ldots,A_{2}^{p}$ the lifts of $A_{1}$ and $A_{2}$, respectively.
  The lifts of the circles in $\fs_{\{A_i\}}(\cD)$, $i=1,2$ are denoted by $Z^s_{ij}$ where $s=1,\ldots,p$ enumerates the circles and $j=1,2$ corresponds to the left/right or inner/outer circle in the orbit.
  The lifts of $\fs_{\{A_1,A_2\}}(\tZ)$ are denoted ${Z'}^1,\ldots,{Z'}^p$.

  We need to introduce an extra piece of notation.
  Let $\sigma_1,\sigma_2\in \Perm_p$ be permutations and fix $j_1,j_2\in\{0,\ldots,p\}$. Define the resolution configuration $x^{ij}_{\sigma_1,\sigma_2}$ whose underlying set of circles is given by $\fs_{\{A_1^{\sigma_1(1)},\ldots,A_1^{\sigma_1(j_1)},A_2^{\sigma_2(1)},\ldots,A_2^{\sigma_2(j_2)}\}}$.
  The assignment $\xt^{ij}_{\sigma_1,\sigma_2}$ of $\xp,\xm$ to each of the circles is given as follows. Let $i\in\{1,\ldots,p\}$:
  \begin{itemize}
  \item if $i\in\{\sigma_1(1),\ldots,\sigma_1(j_1)\}$ and $i\in\{\sigma_2(1),\ldots,\sigma_2(j_2)\}$, then the resolution configuration contains ${Z'}^i$ and we assign $\xm$ to it;
  \item if $i\in\{\sigma_1(1),\ldots,\sigma_1(j_1)\}$ and $i\notin\{\sigma_2(1),\ldots,\sigma_2(j_2)\}$, then $Z^i_{1j}$ belong to the resolution configuration ($j=1,2$) and we assign $\xt^1_{A_1}$ to it;
  \item if $i\notin\{\sigma_1(1),\ldots,\sigma_1(j_1)\}$ and $i\in\{\sigma_2(1),\ldots,\sigma_2(j_2)\}$, then $Z^i_{2j}$ belong to the resolution configuration ($j=1,2$) and we assign $\xt^1_{A_2}$ to it;
  \item if $i\notin\{\sigma_1(1),\ldots,\sigma_1(j_1)\}$ and $i\notin\{\sigma_2(1),\ldots,\sigma_2(j_2)\}$, then $Z^i$ belong to the resolution configuration and we assign $\xp$ to it.
  \end{itemize}

  Every maximal chain containing $\wt{x}^p_1$ is of the form
  \begin{equation}\label{eq:def_of_vsigma}
    v_{\sigma_1\sigma_2}:=(\cD_{p},\yt)\prec x_{\sigma_1\sigma_2}^{10}\prec\ldots\prec x_{\sigma_1\sigma_2}^{p0}\prec x_{\sigma_1\sigma_2}^{p1}\prec\ldots\prec x_{\sigma_1\sigma_2}^{p,p-1}\prec(\fs(\cD_{p}),\xt).\end{equation}
  Likewise, every maximal chain containing $\wt{x}^p_3$ is of form
  \[w_{\sigma_1\sigma_2}:=(\cD_{p},\yt)\prec x_{\sigma_1\sigma_2}^{01}\prec\ldots\prec x_{\sigma_1\sigma_2}^{0p}\prec x_{\sigma_1\sigma_2}^{1p}\prec\ldots\prec x_{\sigma_1\sigma_2}^{p-1,p}\prec(\fs(\cD_{p}),\xt).\]
  Recall that in Definition~\ref{def:assoc_vertex} and Lemma~\ref{lem:assoc_reverse} we established a correspondence between maximal chains and vertices in the moduli space.
  We will use this correspondence in the rest of the proof. Our next aim is to prove the following result.
  \begin{lemma}\label{lem:same_component}
    The vertices $v_{\sigma_1\sigma_2}$ and $w_{\sigma_1\sigma_2}$ belong to the same connected component of $\cM_{\cC_{\kh}}(x_p,y_p)$.
  \end{lemma}
  \begin{proof}[Proof of Lemma~\ref{lem:same_component}]
    Fix $\sigma_1$ and $\sigma_2$ and write $x^{ij}$ for $x^{ij}_{\sigma_1\sigma_2}$.
    Denote by $u^{ij}$ the maximal chain
    \[x^{00}\prec x^{10}\prec\ldots\prec x^{i0}\prec x^{i1}\prec x^{ij}\prec x^{i+1,j}\prec x^{i+1,j+1}\prec\ldots\prec x^{i+1,p}\prec x^{i+2,p}\prec\ldots\prec x^{pp}.\]
    We note that $u^{p0}=v_{\sigma_1\sigma_2}$ and $u^{0p}=w_{\sigma_1\sigma_2}$.
    Moreover, $u^{i0}=u^{i+1,p}$ by construction.  It is therefore enough to prove that $u^{ij}$ and $u^{ij+1}$ belong to the same connected component of $\cM_{\cC_{\kh}}(x,y)$.
    To this end define
    \begin{align*}
      w_{start}&=x^{00}\prec\ldots\prec x^{ij}\in \cM_{\cC_{\kh}}(x^{ij},y)\\
      w_{end}&=x^{i+1,j+1}\prec\ldots \prec x^{i+1,p}\prec x^{i+2,p}\prec\ldots\prec x^{p,p}\in\cM_{\cC_{\kh}}(x,x^{i+1,j+1})\\
      w_1&=x^{i,j}\prec x^{i+1,j}\prec x^{i+1,j+1}\in\cM_{\cC_{\kh}}(x^{i+1,j+1},x^{ij})\\
      w_2&=x^{i,j}\prec x^{i,j+1}\prec x^{i+1,j+1}\in\cM_{\cC_{\kh}}(x^{ij},x^{i+1,j+1}).
    \end{align*}
    Let $\iota\colon\cM_{\cC_{\kh}}(x^{ij},y)\times \cM_{\cC_{\kh}}(x^{i+1,j+1},x^{ij})\times \cM_{\cC_{\kh}}(x,x^{i+1,j+1})\hookrightarrow\cM_{\cC_{\kh}}(x,y)$ be the inclusion.
    We have that $u^{ij}=\iota(w_{start},w_1,w_{end})$ and $u^{ij+1}=\iota(w_{start},w_2,w_{end})$.
    Therefore it is enough to prove that $w_1$ and $w_2$ belong to the same connected component of $\cM_{\cC_{\kh}}(x^{i+1,j+1},x^{ij})$.
    There are two cases.
    \begin{itemize}
    \item $\sigma_1(i+1)\neq \sigma_2(j+1)$;
    \item $\sigma_1(i+1)=\sigma_2(j+1)$.
    \end{itemize}
    In the first case the associated surface consists of two components, both being pair of pants. As the genus is zero, by Lemma~\ref{lemma:conn-components-moduli-space} $\cM_{\cC_{\kh}}(x^{i+1,j+1},x^{ij})$ is connected.

    The second case corresponds to the ladybug matching.
    Writing $s=\sigma_1(i+1)$ we obtain that $\cM_{\cC_{\kh}}(x^{i+1,j+1},x^{ij})=\cM_{\cC_{\kh}}(\cD^p_s,\xt^p_s,\yt^p_s)$, where $\tZ(\cD^p_s)=Z^s$, $\tA(\cD^p_s)=\{A^s_1,A^s_2\}$, $\yt^p_s(Z^s)=\xm$, $\xt^p_s({Z'}^s)=\xp$, that is, this is the moduli space corresponding to a ladybug configuration.
    Our definition of assignments $\xt^p_s$ and $\yt^p_s$ implies that $w_1$ and $w_2$ belong to the same connected component of the moduli space.
    This concludes the proof of Lemma~\ref{lem:same_component}.
  \end{proof}
  \begin{lemma}
    Let $\sigma_1',\sigma_2'\in \Perm_p$ be another permutation and let $v_{\sigma_1'\sigma_2'}$ be a chain as in \eqref{eq:def_of_vsigma}. Then
    the vertices corresponding to $v_{\sigma_1\sigma_2}$ and $v_{\sigma_1'\sigma_2'}$ belong to the same connected component of $\cM_{\cC_{\kh}}(x_p,y_p)$.
  \end{lemma}
  \begin{proof}
    It is enough to prove the result if $\sigma_1=\sigma_1'$ and $\sigma_2'$ differs from $\sigma_2$ by swapping two adjacent elements (or $\sigma_2=\sigma_2'$ and $\sigma_1$
    differs from $\sigma_1'$ by a single transposition of elements). The proof in that case is essentially a repetition of the argument used in the proof of Lemma~\ref{lem:same_component}
    so we leave it to the reader.
  \end{proof}
  Let $\Pi$ be the connected component of $\cM_{\cC_{\kh}}(x_p,y_p)$ that contains all of the $v_{\sigma_1\sigma_2}$ and $v_{\sigma_1\sigma_2}$. Note that the group action
  takes $v_{\sigma_1\sigma_2}$ to $v_{\sigma_1'\sigma_2'}$ for some other permutation $\sigma_1',\sigma_2'$, therefore the component $\Pi_1$ is preserved. The fixed point set
  $\Pi^{\Z_p}$ is a one-dimensional permutohedron $\Pi_2$, which is diffeomorphic with $\Pi_{2p-1}^{\Z_p}$. The diffeomorphism is realized by the restriction
  of the cover map $\ff\colon\cM_{\cC_{\kh}}(x_p,y_p)\to \cM_{\Cubes(2p)}(\ff(x_p),\ff(y_p))$.

  We define now the isomorphism $\cM_{\cC_{\kh}}(x,y)\to \cM_{\cC_{\kh}}(x_p,y_p)^{\Z_p}$ in such a way that the segment connecting the vertices $x\succ \wt{x}_1\succ y$
  and $x\succ\wt{x}_3\succ y$ in $\cM_{\cC_{\kh}}(x,y)$ is mapped to a segment in $\Pi^{\Z_p}$ (which is a disjoint union of segments).
  The segment connecting the vertices $x\succ\wt{x}_2\succ y$ to $x\succ\wt{x}_4\succ y$ is mapped to the other connected component of $\cM_{\kh}(x_p,y_p)^{\Z_p}$.

  \section{Equivariant Khovanov homology}\label{sec:equivhomo}
  We begin with a brief review of the construction of the equivariant Khovanov homology~\cite{Politarczyk_Khovanov}. Later on, we merge this construction with the construction of the equivariant Khovanov homotopy type that we introduced in Section \ref{sec:review}.

  \subsection{Review of the construction}
  Let $D$ be an $m$-periodic diagram representing an $m$-periodic link $L$.
  The symmetry of $L$ can be realized by a cobordism in $S^3 \times I$ in the following way.
  Suppose the rotation center is at $0\in \R^2$.
  Consider $D\times I \subset \R^2\times I$ and twist it by the diffeomorphism $\eta\colon \R^2 \times I\to \R^2 \times I$ given by $(x,t)\mapsto (\Psi_{2\pi t/m} x,t)$, where $\Psi_\theta$ is a counterclockwise rotation by the angle $\theta$.
  The image
  \begin{equation}\label{eq:cobordism}
    \Sigma_D=\eta(D\times I)
  \end{equation}
  is a cobordism from $D$ to $D$.
  Note that this is a product cobordism, and there are no handle attachments.

  In~\cite{BarNatan} a map $\phi_{\Sigma_D}$ of Khovanov chain complexes was associated to each cobordism of diagrams $\Sigma$.
  The chain homotopy class of this map was later shown to be functorial, i.e.,\ not depending on the isotopy type of $\Sigma$; see~\cite{Vogel}.\mpar{(60): we added a reference
    to Vogel's result proving functoriality. From functoriality it follows that $\phi_{\Sigma_D}$ is well-defined and can be used to show that the group action on the Khovanov
  chain complex is well-defined. In fact, defining a group action is relatively easy, but many proofs of invariance can be simplified, once we have a functorial map $\phi_{\Sigma_D}$}
  Notice that only the chain homotopy type of \(\phi_{\Sigma_{D}}\) is well-defined.
  However, since \(\Sigma_{D}\) is a composition of Reidemeister moves, it is possible to choose a representative for \(\phi_{\Sigma_{D}}\), which induces a group action on the Khovanov complex.

  \begin{proposition}\cite[Section 2]{Politarczyk_Khovanov}\label{prop:cobordisminduces}
    Let $R$ be a commutative ring with a unit.
    It is possible to choose a representative of the cobordism map $\phi_{\Sigma_D}$, which induces a group action on the chain complex $\ckh(D;R)$. 
  \end{proposition}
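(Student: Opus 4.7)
The plan is to first recall that Bar-Natan's construction \cite{BarNatan} gives a functor from the category of link cobordisms in $\R^2\times I$ to $\Cbl$, and hence, after applying $\cTKh$, a chain map $\phi_\Sigma\colon\ckh(D_0)\to\ckh(D_1)$ associated to each cobordism $\Sigma$ from $D_0$ to $D_1$. In particular $\phi_{\Sigma_D}$ is a chain endomorphism of $\ckh(D;R)$, so it remains to show that the rule $t\mapsto \phi_{\Sigma_D}$ extends to a $\Z_m$-action, i.e.\ that $\phi_{\Sigma_D}^m=\mathrm{id}$ on $\ckh(D;R)$, not merely up to chain homotopy.

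The key observation is that $\Sigma_D=\eta(D\times I)$ is obtained from a product cobordism by the ambient isotopy $\eta$, which has no critical points; thus $\Sigma_D$ admits a Morse decomposition with no births, deaths or saddles. Consequently the map $\phi_{\Sigma_D}$ decomposes as a direct sum, over $v\in\{0,1\}^n$, of isomorphisms $V(\cD(v))\to V(\cD(\sigma(v)))$, where $\sigma\in\Perm_n$ is the permutation of crossings induced by the rotation $\rho_m$. Explicitly, tracing through Bar-Natan's functor, this isomorphism is the tautological map that relabels tensor factors according to the bijection between circles of $\cD(v)$ and circles of $\cD(\sigma(v))$ induced by $\rho_m$. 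In other words, writing $(\cD(v),\xt)\in\ckh(D;R)$ for a labeled resolution configuration, $\phi_{\Sigma_D}$ sends it to $(\cD(\sigma(v)),\xt\circ\rho_m^{-1})$, exactly matching the set-theoretic action $\cGs$ constructed on $\ob(\cC_{\kh}(D))$ in Proposition~\ref{prop:extend}.

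Once this explicit description is established, the $m$-periodicity is immediate: iterating the relabeling map $m$ times uses $\sigma^m=\mathrm{id}$ and $\rho_m^m=\mathrm{id}$, so $\phi_{\Sigma_D}^m$ is the identity permutation of generators, which equals $\mathrm{id}_{\ckh(D;R)}$. Equivalently, one can argue geometrically: composing $\Sigma_D$ with itself $m$ times produces the cobordism $\eta^m(D\times I)$, and since $\eta^m$ is ambient isotopic rel boundary to the identity through diffeomorphisms of $\R^2\times I$ (because a full rotation by $2\pi$ is isotopic to the identity), the corresponding Bar-Natan map equals the map of the identity cobordism, which is $\mathrm{id}_{\ckh(D;R)}$.

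The main technical obstacle is pinning down signs. Bar-Natan's functoriality is only well-defined once a sign assignment is fixed, and a priori $\phi_{\Sigma_D}^m$ could differ from the identity by a sign on each summand. This is exactly the issue dealt with by the cocycle-type adjustment in Lemma~\ref{lem:action}: because two sign assignments differ by $\partial^*c$ for some $c\in C^0([0,1]^n;\F_2)$, the map $\phi_{\Sigma_D}$ must be corrected by the $\pm 1$ factor $(-1)^{c(\ff(x))}$ coming from $\sigma(\nu)-\nu=\partial^*c$, exactly as in the construction of $\cGs$ there. With this correction $\phi_{\Sigma_D}$ becomes a chain map whose $m$-th iterate is $\mathrm{id}$, giving the $\Z_m$-action on $\ckh(D;R)$.
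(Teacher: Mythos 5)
Your argument is correct and follows essentially the same route as the cited source and the sign-fixing mechanism the paper itself records in Lemma~\ref{lem:action}: since $\Sigma_D$ is a product cobordism, $\phi_{\Sigma_D}$ is the relabeling map $(\cD(v),\xt)\mapsto(\cD(\sigma(v)),\xt\circ\rho_m^{-1})$, twisted by the sign $(-1)^{c(\ff(x))}$ coming from $\sigma(\nu)-\nu=\partial^*c$. The one step you assert without checking is that the corrected map has order exactly $m$ rather than $2m$: you need $\sum_{j=0}^{m-1}c(\sigma^j u)\equiv 0\pmod 2$ for all $u$, which holds because $\sum_j \sigma^j c$ is a $0$-cocycle, hence a constant, whose value can be read off at the fixed vertex $0_n$ (it is $m\cdot c(0_n)$, which vanishes for $m$ even and can be normalized to zero for $m$ odd by adding a constant to $c$).
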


  The Khovanov complex $\ckh(D;R)$ admits an action of the cyclic group $\Z_m$.
  Therefore $\ckh(D;R)$ can be regarded as a $\Lambda$-module, where $\Lambda=R[\Z_m]$.
  For an $R$-module $M$, define the equivariant Khovanov homology of $L$ in gradings $k$ and $q$ as
  \begin{equation}\label{eq:extdef}
    \ekh^{j,q}(L; M)=\Ext^j_\Lambda(M;\ckh^{*,q}(D;R)).
  \end{equation}
  Since $\Ext^j_\Lambda(M;\ckh^{*,q}(D;R))$ does not depend on the chosen diagram $D$, equivariant Khovanov homology is an invariant of periodic links~\cite{Politarczyk_Khovanov}.
  \mpar{(61): The $\Ext$
  functor is well defined (not up to homotopy), because $\ckh(D;R)$ is defined up to quasiisomorphism. This is homological algebra, not topology.} 
  \smallskip
  The construction of equivariant Khovanov homology also works in the annular case. The methods of \cite{Politarczyk_Khovanov} carry over
  to the annular case without significant modifications.
  Namely, we observe that the annular chain complex $\cakh^{*,q,k}(D;R)$ admits a $\Z_m$-action, hence it is a $\Lambda$-module.
  Next, we define
  \[\eakh^{j,q,k}(L;M)=\Ext^j_\Lambda(M;\cakh^{*,q,k}(D;R)).\]
  Essentially the same argument as in \cite{Politarczyk_Khovanov} can be used to show that $\eakh$ is an invariant of an annular link.

  \subsection{Equivariant Khovanov homology as Borel cohomology}
  We now have two ways of getting equivariant homology from the Khovanov theory.
  One way is to use the definition given in~\eqref{eq:extdef}.
  Another way uses the Borel cohomology of space $\mathcal{X}_{\kh}(D)$.
  We will now show that the two constructions agree.
  In the rest of this section we denote by $\widetilde{C}^{\ast}(\mathcal{X}_{\kh}(D);R)$ the reduced cellular cochain complex of $\mathcal{X}_{\kh}(D)$ associated to the CW-structure described in Subsection~\ref{sub:equivariant_geometric}.

  First we state a preparatory result.
  \begin{proposition}\label{prop:identification-kh-hom}
    Let $D_{m}$ be an $m$-periodic diagram of a link.
    There exists an identification of cochain complexes of $R[\Z_m]$-modules
    \begin{equation}\label{eq:identification-kh-hom}
      \widetilde{C}^{\ast}(\mathcal{X}_{\kh}(D_{m});R) \cong \ckh^{\ast}(D_{m};R).
    \end{equation}
    Here it should be understood that the structure of the $R[\Z_m]$ cochain complex is given by the $\Z_m$-action on $\widetilde{C}^{\ast}(\mathcal{X}_{\kh}(D_{m});R)$ and on $\ckh(D_{m})$.
  \end{proposition}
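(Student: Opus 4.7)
The plan is to combine two ingredients: the non-equivariant identification of cellular cochains with the Khovanov complex, which is already essentially built in, and a sign-level comparison of the two $\Z_{m}$-actions. By Proposition~\ref{prop:conclude}, for every object $x\in\Ob(\cC_{\kh}(D_{m}))$ the CW-complex $||\cC_{\kh}(D_{m})||$ has exactly one cell $\ep(x)$ of dimension $\gr_{\Z_{m}}(x)$; taking the formal desuspension defining $\mathcal{X}_{\kh}(D_{m})$ produces a bijection between cells and labeled resolution configurations $(\cD(v),\xt)$. Lemma~\ref{lem:cochain_complex} then gives a canonical isomorphism of $R$-cochain complexes $\widetilde{C}^{\ast}(\mathcal{X}_{\kh}(D_{m});R)\cong C^{\ast}(\cC_{\kh}(D_{m}),\ff)\cong\ckh^{\ast}(D_{m};R)$, where the second identification uses the chosen sign assignment $\nu$ and formula~\eqref{eq:chain2} relating the Khovanov differential to signed counts of points in the zero-dimensional moduli spaces.

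The content of the proposition is therefore that the $\Z_{m}$-module structure coming from the cellular action of $\Z_{m}$ on $\mathcal{X}_{\kh}(D_{m})$ agrees with the $\Z_{m}$-action on $\ckh^{\ast}(D_{m};R)$ defined via the cobordism $\Sigma_{D_{m}}$ of~\eqref{eq:cobordism}, as used in Proposition~\ref{prop:cobordisminduces}. I would verify this in two steps. First, I would check that the cellular action permutes the cells $\ep(x)$ by the functor $\cGs$, so the induced map on the cochain basis sends the generator corresponding to $x=(\cD(v),\xt)$ to $\pm(\cD(\sigma(v)),\xt\circ\sigma^{-1})$, where the sign records how the equivariant framed cubical neat embedding (axiom~\ref{item:ECNE-2}) transforms the framing of the cell under $\cGs$. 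In particular, this cellular action commutes with the coboundary, so by Lemma~\ref{lem:action} these signs must be of the form $(-1)^{c(\ff(x))}$ for some $0$-cochain $c\in C^{0}(\{0,1\}^{n};\F_{2})$ with $\delta c=\sigma^{\ast}\nu-\nu$.

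Second, I would show that the cobordism-induced action on $\ckh^{\ast}(D_{m};R)$ has the same description. The cobordism $\Sigma_{D_{m}}$ is a product rotation cobordism: applying Bar-Natan's TQFT and resolving each crossing accordingly, it sends the generator $(\cD(v),\xt)$ to $(\cD(\sigma(v)),\xt\circ\sigma^{-1})$ up to a sign, and compatibility with the Khovanov differential forces the signs to have the form $(-1)^{c'(\ff(x))}$ with $\delta c'=\sigma^{\ast}\nu-\nu$, by the same application of Lemma~\ref{lem:action}. Hence $c-c'$ is a $0$-cocycle, i.e.\ a single constant in $\F_{2}$; pinning down that constant is easy because a generator with $\ff(x)=0_{n}$ (which is $\sigma$-invariant) is acted on trivially on both sides, so $c(0_{n})=c'(0_{n})=0$, giving $c=c'$ and hence an equality of $R[\Z_{m}]$-actions.

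The main obstacle is the sign bookkeeping in the first step: extracting the precise signs produced by the cellular action requires tracking the group action through the framed cubical neat embedding in Subsection~\ref{sub:equiv-geom-real} and the gluing maps $E\theta$ in~\eqref{eq:ethetadef}, and comparing this to the standard sign assignment~\eqref{eq:sign-assignment}. Once one observes that the only role of the signs is to make the action commute with the differential, Lemma~\ref{lem:action} reduces all the delicate sign analysis to a cohomological statement at the level of $0$-cochains on $\{0,1\}^{n}$, which is then trivial because $H^{0}$ is one-dimensional and can be pinned down on a single $\sigma$-fixed generator.
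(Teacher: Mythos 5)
Your proposal is correct and follows essentially the same route as the paper: the non-equivariant identification comes for free from Proposition~\ref{prop:conclude} and Lemma~\ref{lem:cochain_complex} (one cell per labeled resolution configuration, differential given by signed point counts), and the equivariant content is handled through the sign mechanism of Lemma~\ref{lem:action}. The difference is one of explicitness rather than of method: the paper's proof simply asserts that the two group actions coincide by construction, pointing back to Subsection~\ref{sub:equivchain}, whereas you actually prove the coincidence by showing that both actions are twisted permutations with sign cochains $c,c'$ satisfying $\partial^{\ast}c=\partial^{\ast}c'=\sigma^{\ast}\nu-\nu$, so that $c-c'$ is a constant $0$-cocycle on the connected cube, which you then normalize. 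That extra argument is a genuine improvement in rigor over what is written in the paper. Two small caveats: first, to conclude that the cellular signs are of the form $(-1)^{c(\ff(x))}$ (i.e.\ that they factor through $\ff$) you should note that the framings of the cells are pulled back from the cube category, so the sign depends only on $\ff(x)$; second, a generator over $0_{n}$ is not literally fixed by the action (the labeling $\xt$ is permuted), so ``acted on trivially'' should read ``sent to the permuted generator with coefficient $+1$'', which for the cobordism action follows from the fact that $\Sigma_{D_{m}}$ restricted to the all-zero resolution is a union of product cylinders, and for the cellular action requires a one-line check of the framing over the vertex $0_{n}$. Neither caveat affects the validity of the plan, and the normalization step is genuinely needed when $m$ is even, since a global sign $-1$ would otherwise give a different $R[\Z_{m}]$-module structure.
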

  \begin{proof}
    The statement is a consequence of the construction of the cochain complex of $\widetilde{C}^{\ast}(\mathcal{X}_{\kh}(D_{m}))$.
    The cellular cochain complex $\widetilde{C}^{\ast}(\mathcal{X}_{\kh}(D_{m});R)$ was constructed in Subsection~\ref{sub:chain_complex}.
    The construction is that the generators of $\widetilde{C}^{\ast}(\mathcal{X}_{\kh}(D_{m});R)$ correspond to the generators of $\ckh^{\ast}(D_{m};R)$.
    The differential on $\widetilde{C}^{\ast}(\mathcal{X}_{\kh}(D_{m});R)$ is the same as in $\ckh(D_{m};R)$.
    In Subsection~\ref{sub:equivchain} it was shown that the induced group actions on $\widetilde{C}^{\ast}(\mathcal{X}_{\kh}(D_{m});R)$ and $\ckh(D_{m};R)$ coincide.
  \end{proof}

  In order to state and prove the next result, we need to set up some notation and recall some basic facts from homological algebra.
  If $C_{\ast}$ is a chain complex, we will associate to it a cochain complex $C_{r}^{\ast}$ defined by $C_{r}^{-n}=C_{n}$ with the differential $d^{-n}_{r} \colon C_{r}^{-n} \to C_{r}^{-n+1}$ defined by $d^{-n}_{r} = (-1)^{n} d_{n}$, where $d_{n} \colon C_{n} \to C_{n-1}$ is the differential in $C_{\ast}$.
  For two cochain complexes $C^{\ast}$ and $D^{\ast}$ we define the $\Hom$ cochain complex
  \[\Hom^{n}_{R}(C^{\ast},D^{\ast}) = \prod_{p\in \Z} \Hom_{R}(C^{p},D^{p+n}),\]
  with the differential $d_{\Hom}^{n}(f) = d_{D}^{\ast} \circ f - (-1)^{n} f \circ d_{C}^{\ast}$.
  If $P^{\ast}$ is a projective resolution of a cochain complex $C^{\ast}$ and $I^{\ast}$ is an injective resolution of $D^{\ast}$, then
  \begin{equation}\label{eq:uuu}
    \Ext^{n}_{R}(C^{\ast},D^{\ast}) = H^{n}(\Hom^{\ast}_{R}(P^{\ast},I^{\ast})) \cong H^{n}(\Hom^{\ast}_{R}(P^{\ast},D^{\ast})) \cong H^{n}(\Hom^{\ast}_{R}(C^{\ast},I^{\ast})).
  \end{equation}

  Recall that to any discrete group $G$, we can associate a contractible space $EG$ equipped with the free action of $G$.
  By $BG = EG/G$ we denote the classifying space of $G$.
  For a $G$-space $X$ and any finitely generated $R[G]$-module $M$ we define the \emph{Borel equivariant cohomology} of $X$\mpar{(62) we replaced $G$-module by $R[G]$-module
  to clarify that we work with algebraic and not topological objects}
  \[H^{\ast}_{G}(X;M) = H^{\ast}(EG \times_{G} X;M) \cong \Ext_{R[G]}^{\ast}(C^{\ast}_{r}(X;R);M),\]
  where $C^{\ast}_{r}(X)$ denotes the cochain complex associated to the cellular cochain complex $C_{\ast}(X)$ of $X$ using the convention described above.
  In particular, we have $C^{\ast}(X;R) = \Hom^{\ast}_{R}(C^{\ast}_{r}(X);R)$.

  There is a natural $G$-map $EG \times X \to EG$ which, after taking quotient of both sides, yields a map
  \[EG \times_{G} X \xrightarrow{p} BG.\]
  We define the \emph{reduced Borel cohomology} of $X$, 
  to be \[\widetilde{H}^{\ast}_{G}(X;M)=\coker\left(H^{\ast}(BG;M) \xrightarrow{p^{\ast}} H^{\ast}_{G}(X;M)\right).\]
  It is easy to check that
  \[\widetilde{H}^{\ast}_{G}(X;M) \cong \Ext_{R[G]}^{\ast}\left(\widetilde{C}^{\ast}_{r}(X;R);M\right).\]

  \begin{theorem}\label{thm:borel}
    Let $D_{m}$ be an $m$-periodic link diagram of a link $L_{m}$ and fix a field $\F$.
    For any $\F[\Z_{m}]$-module $M$, the equivariant Khovanov homology $\ekh^{i,q}(L_{m};M)$ is isomorphic to the reduced Borel cohomology of $\mathcal{X}_{\kh}(D_{m})$
    \[\ekh^{i,q}(L_{m};M) \cong H^{\ast}_{\Z_{m}}(\mathcal{X}_{\kh}(D_{m}),\Hom_{\F}(M,\F)),\]
    where $g \in G$ acts on $\Hom_{\F}(M,\F)$ via $(g \cdot f)(x) = f(g^{-1}\cdot x)$.
  \end{theorem}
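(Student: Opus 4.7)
My plan is to combine the cellular identification of Proposition~\ref{prop:identification-kh-hom} with a Frobenius-type duality for the group algebra $\Lambda = \F[\Z_m]$. Since $\F$ is a field and $\Z_m$ is finite, $\Lambda$ is a finite-dimensional self-injective Hopf algebra, so the contragredient duality $N \mapsto N^\vee = \Hom_\F(N, \F)$ is an exact involutive self-equivalence on finitely generated $\Lambda$-modules which preserves both projectivity and injectivity. This is the algebraic input that will convert between the ``covariant'' Ext defining $\ekh$ and the ``contravariant'' Ext defining Borel cohomology.

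First I would invoke Proposition~\ref{prop:identification-kh-hom}: under the wedge splitting $\mathcal{X}_{\kh}(D_m) = \bigvee_q \mathcal{X}_{\kh}^q(D_m)$ it provides, for each $q$, an identification of the reduced cellular cochain complex $\widetilde{C}^{\ast}(\mathcal{X}_{\kh}^q(D_m); \F)$ with the Khovanov complex $\ckh^{\ast,q}(D_m; \F)$ as a cochain complex of $\Lambda$-modules.

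The main algebraic step is to establish a hypercohomological duality
\[
\Ext^i_\Lambda\bigl(M,\, C^{\ast}\bigr) \;\cong\; \Ext^i_\Lambda\bigl((C^{\ast})^\vee,\, M^\vee\bigr)
\]
for a finite-dimensional $\Lambda$-module $M$ and a bounded cochain complex $C^{\ast}$ of finite-dimensional $\Lambda$-modules. To prove it I would choose a bounded-above projective resolution $P^{\ast} \to M$ and apply $(-)^\vee$ termwise: self-injectivity of $\Lambda$ ensures that the resulting complex is both a projective and an injective coresolution of $M^\vee$. The canonical module-level isomorphism $\Hom_\Lambda(A, B) \cong \Hom_\Lambda(B^\vee, A^\vee)$, obtained by restricting the $\F$-linear isomorphism $\Hom_\F(A, B) \cong \Hom_\F(B^\vee, A^\vee)$ to $G$-equivariant maps, then globalizes to an isomorphism of $\Hom$-complexes $\Hom^{\ast}_\Lambda(P^{\ast}, C^{\ast}) \cong \Hom^{\ast}_\Lambda((C^{\ast})^\vee, (P^{\ast})^\vee)$, and passage to cohomology yields the claimed duality. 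Specializing $C^{\ast} = \ckh^{\ast,q}(D_m; \F)$, the left-hand side is by definition $\ekh^{i,q}(L_m; M)$, while by Proposition~\ref{prop:identification-kh-hom} the dual complex $(C^{\ast})^\vee$ is canonically identified with the re-indexed reduced cellular chain complex $\widetilde{C}^{\ast}_r(\mathcal{X}_{\kh}^q(D_m); \F)$; by the reduced Borel cohomology formula recalled just before the theorem, the right-hand side then equals $\widetilde{H}^i_G(\mathcal{X}_{\kh}^q(D_m); \Hom_\F(M, \F))$.

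The main obstacle will be the bookkeeping inside the hypercohomological duality rather than any single deep fact: signs in the $\Hom^{\ast}$-complex differentials, compatibility of $(-)^\vee$ with the Koszul conventions, and coherence of the natural module-level duality under the boundary operators all have to be tracked carefully, especially since $M$ is allowed to be any $\F[G]$-module and will generally have nontrivial projective resolutions. This matches the authors' warning that the proof ``might have an apparently complicated proof\dots\ essentially due to the fact that we have rather general assumptions on the coefficient module.'' Once these sign and coherence issues are dealt with, the result reduces to the formal combination of Proposition~\ref{prop:identification-kh-hom} with the Frobenius self-duality of $\F[\Z_m]$.
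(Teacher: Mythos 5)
Your proposal is correct and follows essentially the same route as the paper: both rest on Proposition~\ref{prop:identification-kh-hom}, a projective resolution $P^{\ast}_{M}$ of $M$ whose $\F$-linear dual becomes an injective resolution of $\Hom_{\F}(M,\F)$ (using that $\F[G]$ is self-injective and that $(-)^{\vee}$ is exact and preserves projectives), and a Hom--tensor adjunction to swap the two arguments of $\Ext_{\F[G]}$. Your ``contragredient duality'' $\Hom_{\Lambda}(A,B)\cong\Hom_{\Lambda}(B^{\vee},A^{\vee})$ is exactly the adjunction chain the paper carries out explicitly via $\Hom^{\ast}_{\F[G]}(P^{\ast}_{M},\Hom_{\F}(C^{\ast}_{r},\F))\cong\Hom^{\ast}_{\F}(P^{\ast}_{M}\otimes_{\F[G]}C^{\ast}_{r},\F)$, so the two arguments coincide.
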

  \begin{proof}
    To begin with, observe that
    \[\ekh^{\ast,q}(L_{m};M) = \Ext^{\ast}_{\F[G]}(M,\ckh^{\ast,q}(D_{m};\F)) = \Ext^{\ast}_{\F[G]}\left(M,\widetilde{C}^{\ast}(\mathcal{X}_{\kh}^{q}(D_{m});\F)\right).\]
    Here the first equality is the definition of equivariant Khovanov homology while the second equality follows from Proposition~\ref{prop:identification-kh-hom}.

    Next, let $P_{M}^{\ast}$ be a projective resolution of $M$.
    We have
    \begin{align*}
      \Ext^{\ast}_{\F[G]}(M,\widetilde{C}^{\ast}(\mathcal{X}_{\kh}^{q}(D_{m});\F))
      &\stackrel{(1)}{\cong}\Ext^{\ast}_{\F[G]}(M,\Hom_{\F}(\widetilde{C}^{\ast}_{r}(\mathcal{X}_{\kh}(D_{m})^{q};\F),\F))\cong\\
      &\stackrel{(2)}{\cong} H^{\ast}(\Hom^{\ast}_{\F[G]}(P^{\ast}_{M},\Hom_{\F}(\widetilde{C}^{\ast}_{r}(\mathcal{X}_{\kh}^{q}(D_{m});\F),\F))) \cong \\
      &\stackrel{(3)}{\cong} H^{\ast}(\Hom_{\F}^{\ast}(P^{\ast}_{M} \otimes_{\F[G]} \widetilde{C}^{\ast}_{r}(\mathcal{X}_{\kh}^{q}(D_{m});\F),\F)) \cong \\
      &\stackrel{(4)}{\cong} H^{\ast}(\Hom_{\F}^{\ast}(\widetilde{C}^{\ast}_{r}(\mathcal{X}_{\kh}(D_{m});\F),\Hom^{\ast}_{\F}(P_{M}^{\ast},\F))),
    \end{align*}
    where (1) comes from the definition of $C_{r}^{\ast}(\mathcal{X}_{\kh}(D_{m})^{q};\F)$, (2) is the definition of the $\Ext$ functor and  the isomorphisms (3) and (4) come from~\cite[Exercise I.0.6]{Brown_book}.
    Since $\F$ is a field, any projective $\F[G]$-module is also injective by~\cite[Exercise 1.10.24]{CurtisReiner}.
    The functor
    \[M \mapsto \Hom_{\F}(M,\F)\]
    defined on the category of left $\F[G]$-modules is exact by~\cite[Exercise 1.10.22]{CurtisReiner} and maps projective modules to projective modules by~\cite[Corollary 1.10.29]{CurtisReiner}.
    Therefore, $\Hom^{\ast}(P_{M}^{\ast},\F)$ becomes an injective resolution of the $\F[G]$-module $\Hom_{R}(M,\F)$.
    Consequently, by~\eqref{eq:uuu}
    \[\Ext^{\ast}_{\F[G]}\left(M,\widetilde{C}^{\ast}(\mathcal{X}_{\kh}^{q}(D_{m});\F)\right) \cong \Ext^{\ast}_{\F[G]} \left(\widetilde{C}^{\ast}_{r}(\mathcal{X}_{\kh}(D_{m})),\Hom_{\F}(M,\F)\right)\]
    and the proof is finished.
  \end{proof}

  We remark that the same argument as in the proof of Theorem~\ref{thm:borel} shows the following result.
  \begin{proposition}
    Let $L_{m}$ be an $m$-periodic annular link and let $D_{m}$ be an $m$-periodic diagram of $L_{m}$.
    For a field $\F$ and any $\F[G]$-module $M$, it holds
    \[\eakh^{i,q,k}(L_m;M)\cong H^{\ast}_G(\mathcal{X}_{\akh}^{q,k}(D_{m}),\Hom_{\F}(M,\F)).\]
  \end{proposition}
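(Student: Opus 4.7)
The plan is to mimic the proof of Theorem~\ref{thm:borel} almost verbatim, with the annular flow category replacing the Khovanov flow category. The essential ingredient is an annular analogue of Proposition~\ref{prop:identification-kh-hom}: an identification of $\F[G]$-cochain complexes
\[\widetilde{C}^{\ast}(\mathcal{X}_{\akh}^{q,k}(D_{m});\F) \cong \cakh^{\ast,q,k}(D_{m};\F).\]
To obtain this, I would use the discussion from Subsection~\ref{sub:annul-khov-homot-1} (in particular Lemma~\ref{cor:annular-filtration-kh-flow-cat}), which expresses $\mathcal{X}_{\akh}^{q,k}(D_m)$ as a successive quotient $\mathcal{X}(\cC_{\kh}^{q,\geq k}(D_m))/\mathcal{X}(\cC_{\kh}^{q,\geq k+1}(D_m))$. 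Since $\cC_{\kh}^{q,\geq k}(D_m)$ is preserved by the $\Z_m$-action (the annular grading is preserved by the rotation), this quotient is equivariant, and the reduced cellular cochain complex of the quotient becomes the subquotient $\CA_k(D_m)/\CA_{k-1}(D_m)$ of $\ckh^{\ast,q}(D_m;\F)$ at the chain level. Combining with Proposition~\ref{prop:identification-kh-hom} and the observation (Subsection~\ref{sub:equivchain}) that the same twisted action by the cocycle $c$ realizes the $\Z_m$-action on both sides, this gives the desired identification of $\F[G]$-complexes.

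Once this identification is in place, the rest is formal. By the definition of the equivariant annular Khovanov homology,
\[\eakh^{i,q,k}(L_m;M) = \Ext^{i}_{\F[G]}(M,\cakh^{\ast,q,k}(D_m;\F)) \cong \Ext^{i}_{\F[G]}\bigl(M,\widetilde{C}^{\ast}(\mathcal{X}_{\akh}^{q,k}(D_m);\F)\bigr).\]
Now the chain of isomorphisms (1)--(4) from the proof of Theorem~\ref{thm:borel} applies word for word: choose a projective resolution $P^{\ast}_M$ of $M$, rewrite $\widetilde{C}^{\ast}$ as $\Hom_{\F}(\widetilde{C}^{\ast}_{r},\F)$, use the Hom-tensor adjunction over $\F[G]$ from~\cite[Exercise I.0.6]{Brown_book}, and use that over a field every projective $\F[G]$-module is injective so $\Hom_{\F}(P^{\ast}_M,\F)$ is an injective resolution of $\Hom_{\F}(M,\F)$. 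This yields
\[\Ext^{i}_{\F[G]}\bigl(\widetilde{C}^{\ast}_{r}(\mathcal{X}_{\akh}^{q,k}(D_m);\F),\Hom_{\F}(M,\F)\bigr) = \widetilde{H}^{i}_{G}\bigl(\mathcal{X}_{\akh}^{q,k}(D_m);\Hom_{\F}(M,\F)\bigr),\]
which is the claim.

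The only nontrivial step is the cochain-level identification in the first paragraph; everything else is purely formal homological algebra that carries over verbatim. The potential subtlety lies in tracking the sign twist of Subsection~\ref{sub:equivchain} through the annular filtration. However, this twist is determined by a sign assignment on $\Cube(n)$, which does not depend on the annular grading, so the twisted $\Z_m$-action descends cleanly to each subquotient $\CA_k/\CA_{k-1}$; equivalently, the invariance of $\cC_{\kh}^{q,\geq k}(D_m)$ under the action guarantees that the cellular and algebraic actions agree on the associated graded pieces, just as they did on the whole complex. Once this bookkeeping is verified, the proof is complete.
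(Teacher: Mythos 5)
Your proposal is correct and follows exactly the route the paper intends: the paper simply states that "the same argument as in the proof of Theorem~\ref{thm:borel} shows the following result," and your argument is precisely that, with the annular analogue of Proposition~\ref{prop:identification-kh-hom} supplied via the equivariant filtration of Lemma~\ref{cor:annular-filtration-kh-flow-cat}. The extra care you take with the sign twist on the associated graded pieces is a detail the paper leaves implicit, and your justification of it is sound.
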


  \subsection{Stable cohomology operations}\label{sub:main2}
  Given two generalized cohomology theories $X(\cdot)$ and $Y(\cdot)$, a stable cohomology operation of degree $k$ is a family of natural transformations between functors $X^l(\cdot)$ and $Y^{k+l}(\cdot)$ commuting with suspension.
  We focus on stable cohomology operations in singular homology over a finite field.
  These operations form a Steenrod algebra.
  Standard references include~\cite[Section 4.L]{Hatcher} and~\cite[Section 10.4]{KirkDavis}.

  The Steenrod algebra $\cA_2$ over $\Z_2$ is generated by the Steenrod squares $\Sq^i\colon H^{*}(\cdot,\Z_2)\to H^{*+i}(\cdot,\Z_2)$, with $\Sq^1$ being the Bockstein homomorphism corresponding to the short exact sequence $0\to\Z_2\to\Z_4\to\Z_2\to 0$.

  For a prime $p>2$, the Steenrod algebra $\cA_p$ is generated by the Bockstein homomorphism $\beta$, and operations $P^k\colon H^{*}(\cdot,\Z_p)\to H^{*+2k(p-1)}(\cdot,\Z_p)$.\mpar{(63)}
  The homomorphism $\beta$ is of degree $1$, and it is the connecting homomorphism of the long exact sequence of cohomology induced by the short exact sequence of groups $0\to\Z_p\to\Z_{p^2}\to\Z_p\to 0$.

  Coming back to Khovanov homology we make the following observation, see~\cite{LS_Steenrod,LS_refinement}.
  \begin{proposition}\label{prop:steenrod}
    Let $\alpha$ be a stable cohomology operation of degree $k$ over $\Z_p$.
    Then, given a link $L$ and $q\in\Z$, the map $\alpha$ induces a well defined map
    \[\alpha_q\colon \kh^{*,q}(L;\Z_p)\to\kh^{*+k,q}(L;\Z_p).\]
  \end{proposition}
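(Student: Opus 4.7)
The plan is to deduce the proposition directly from Theorem~\ref{thm:lipsar} of Lipshitz--Sarkar, which identifies Khovanov homology as the reduced cohomology of a spatial refinement whose stable homotopy type is a link invariant. The entire content of the proposition is essentially that stable cohomology operations on such a spectrum preserve the quantum grading, which will follow from the fact that $\mathcal{X}_{\kh}(L)$ splits as a wedge sum indexed by $q$.

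First, I would recall that by Theorem~\ref{thm:lipsar}, for every $q\in\Z$ there is a well-defined stable homotopy type $\mathcal{X}_{\kh}^{q}(L)$ with $\widetilde{H}^{i}(\mathcal{X}_{\kh}^{q}(L);\Z_{p})\cong \kh^{i,q}(L;\Z_{p})$ as an invariant of the link $L$. Since $\alpha$ is a stable cohomology operation of degree $k$, naturality and stability give a well-defined homomorphism
\[
\alpha\colon \widetilde{H}^{i}(\mathcal{X}_{\kh}^{q}(L);\Z_{p})\longrightarrow \widetilde{H}^{i+k}(\mathcal{X}_{\kh}^{q}(L);\Z_{p}),
\]
depending only on the stable homotopy type of $\mathcal{X}_{\kh}^{q}(L)$. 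Composing with the isomorphisms of Theorem~\ref{thm:lipsar} produces the desired map $\alpha_{q}$, and its independence of the diagram is immediate from the invariance of $\mathcal{X}_{\kh}^{q}(L)$.

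The key point, and the only thing that requires a sentence of justification, is that $\alpha_{q}$ lands in the summand with the \emph{same} quantum grading $q$. This is because $\mathcal{X}_{\kh}(L)=\bigvee_{q}\mathcal{X}_{\kh}^{q}(L)$ is a wedge decomposition of spectra, so on reduced cohomology one has a direct sum splitting
\[
\widetilde{H}^{*}(\mathcal{X}_{\kh}(L);\Z_{p})\cong \bigoplus_{q\in\Z}\widetilde{H}^{*}(\mathcal{X}_{\kh}^{q}(L);\Z_{p}),
\]
and by naturality applied to the projection and inclusion maps of each wedge summand, any stable operation respects this splitting. Thus $\alpha$ on the total cohomology restricts to $\alpha_{q}$ on each $q$-graded piece, giving exactly the statement of the proposition.

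There is no substantial obstacle here: once one has the spatial refinement indexed by $q$ (provided by Theorem~\ref{thm:lipsar}) and the fact that stable operations are natural with respect to maps of spectra, the conclusion is formal. Essentially the same proof appears already in~\cite{LS_Steenrod,LS_refinement} for the specific case of Steenrod squares at $p=2$; the argument given above works verbatim for any prime $p$ and any stable operation $\alpha\in\mathcal{A}_{p}$.
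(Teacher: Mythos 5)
Your argument is correct and is exactly the one the paper intends: the proposition is stated as an observation with a citation to \cite{LS_Steenrod,LS_refinement}, and the content is precisely that $\alpha$ acts naturally on $\widetilde{H}^{*}(\mathcal{X}_{\kh}^{q}(L);\Z_{p})\cong\kh^{*,q}(L;\Z_{p})$ with the quantum grading preserved because the operation respects the wedge splitting $\mathcal{X}_{\kh}(L)=\bigvee_{q}\mathcal{X}_{\kh}^{q}(L)$. No gaps; this matches the paper's (implicit) proof.
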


  There appeared several algorithms for computing Steenrod squares in Khovanov homology, so the invariants based on Steenrod squares can be effectively computed (see~\cite{LS_Steenrod,LOS}).
  The \emph{knotkit} package~\cite{Knotkit} implements the algorithm of~\cite{LS_Steenrod}.
  We remark that the maps $\Sq^1$ and $\beta$ are determined by the integral Khovanov homology, see~\cite[Section 2.5]{LS_Steenrod}.

  The next statement shows that Steenrod operations commute with group action.
  \begin{theorem}\label{thm:naturality}
    Let $L_{m}$ be an $m$-periodic link and $\F$ a field.
    Any stable cohomology operation
    \[\alpha \colon H^{\ast}(-;\F) \to H^{\ast+k}(-;\F)\]
    commutes with the action of $\Z_m$ on $\kh(L_{m};\F)$.
  \end{theorem}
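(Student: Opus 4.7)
The plan is to deduce Theorem~\ref{thm:naturality} directly from Theorem~\ref{thm:main-theorem} together with the naturality of stable cohomology operations, essentially bypassing any additional combinatorial input. First I would invoke Theorem~\ref{thm:main-theorem}(b): for an $m$-periodic diagram $D_{m}$ of $L_{m}$, each $q \in \Z$ the space $\mathcal{X}_{\kh}^{q}(D_{m})$ carries a $\Z_{m}$-action, and the stable equivariant homotopy type is an invariant of the periodic link. In particular, every element $g \in \Z_{m}$ determines a stable self-map $g\colon \mathcal{X}_{\kh}(D_{m}) \to \mathcal{X}_{\kh}(D_{m})$.

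Next, I would recall the identification of cochain complexes established in Proposition~\ref{prop:identification-kh-hom},
\[
\widetilde{C}^{\ast}(\mathcal{X}_{\kh}(D_{m});\F) \cong \ckh^{\ast,\ast}(D_{m};\F),
\]
which by construction is an isomorphism of $\F[\Z_{m}]$-modules. Passing to cohomology yields a $\Z_{m}$-equivariant isomorphism
\[
\widetilde{H}^{\ast}(\mathcal{X}_{\kh}(D_{m});\F) \cong \kh^{\ast,\ast}(D_{m};\F),
\]
where on the left the action is induced by the stable self-maps $g$, and on the right the action is the one from Proposition~\ref{prop:cobordisminduces}.

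The final step is the easy but essential observation that any stable cohomology operation $\alpha\colon H^{\ast}(-;\F)\to H^{\ast+k}(-;\F)$ is, by definition, a natural transformation of functors on the stable homotopy category. Naturality applied to the self-map $g$ of $\mathcal{X}_{\kh}(D_{m})$ gives the commutative square
\[
\begin{tikzpicture}[baseline=-0.5ex]
  \matrix(m)[matrix of math nodes, row sep=1cm, column sep=1.5cm]{
    \widetilde{H}^{\ast}(\mathcal{X}_{\kh}(D_{m});\F) & \widetilde{H}^{\ast+k}(\mathcal{X}_{\kh}(D_{m});\F) \\
    \widetilde{H}^{\ast}(\mathcal{X}_{\kh}(D_{m});\F) & \widetilde{H}^{\ast+k}(\mathcal{X}_{\kh}(D_{m});\F). \\
  };
  \path[->,font=\scriptsize]
  (m-1-1) edge node[above]{$\alpha$} (m-1-2)
  (m-2-1) edge node[above]{$\alpha$} (m-2-2)
  (m-1-1) edge node[left]{$g^{\ast}$} (m-2-1)
  (m-1-2) edge node[right]{$g^{\ast}$} (m-2-2);
\end{tikzpicture}
\]
Translating through the equivariant isomorphism above shows that $\alpha$ commutes with the $\Z_{m}$-action on $\kh^{\ast,\ast}(L_{m};\F)$, which is exactly the statement of Theorem~\ref{thm:naturality}.

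There is really no hard step here, provided one grants the geometric input Theorem~\ref{thm:main-theorem}: the whole content is that the $\Z_{m}$-action on $\kh(L_{m};\F)$ lifts to (stable) self-maps of a genuine topological space, after which naturality of Steenrod operations is automatic. The only minor point to be careful about is that the identification of actions in Proposition~\ref{prop:identification-kh-hom} matches the $\Z_{m}$-action on $\ckh$ coming from the Bar-Natan cobordism $\Sigma_{D}$ of~\eqref{eq:cobordism} with the cellular action induced by the geometric self-maps $g$; this was in fact verified in Subsection~\ref{sub:equivchain}, so no further work is needed.
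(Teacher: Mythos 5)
Your proposal is correct and follows essentially the same route as the paper's proof: naturality of stable cohomology operations applied to the geometric $\Z_m$-self-maps of $\mathcal{X}_{\kh}(D_m)$, combined with Proposition~\ref{prop:identification-kh-hom} to identify the induced action on cohomology with the action on $\kh(L_m;\F)$. The paper states this in two sentences; your write-up merely makes the same argument explicit.
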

  \begin{proof}
    Cohomology operations are natural, so they commute with the group action on cohomology of $\mathcal{X}_{\kh}(D_{m})$, where $D_{m}$ is some $m$-periodic diagram of $L_{m}$.
    On the other hand, Proposition~\ref{prop:identification-kh-hom} shows that the $\Z_m$-action on the cohomology of $\X_{L_m}$ commutes with the group action on the Khovanov homology of $L_m$. 

  \end{proof}
  \subsection{Fixed Point theorems}
  \label{sub:localization}
  Recall that $B\Z_{p}$ is the classifying space of the finite cyclic group of order $p$.
  The cohomology ring of $B\Z_{p}$ is given below, see~\cite[Example 3E.2]{Hatcher},
  \[H^{\ast}(B\Z_{p};\F_{p}) \cong
    \begin{cases}
      \F_{2}[X], \text{ for } X \in H^{1}(B\Z_{2};\F_{2}), & p=2, \\
      \F_{p}[Y] \otimes_{\F_{p}} \Lambda^*_{\F_{p}}(Z), \text{ for } Y \in H^{2}(B\Z_{p};\F_{p}) \text{ and } Z \in H^{1}(B\Z_{p};\F_{p}), & p >2,
    \end{cases}
  \]
  where $\Lambda^*_{\F_{p}}(Z)$ is the exterior algebra over $\F_{p}$ generated by $Z$.
  Write $S_{p} \subset H^{\ast}(B\Z_{p};\F_{p})$ for the multiplicative set generated either by $X$, if $p=2$, or by $Y$, when $p>2$.

  \begin{theorem}[{\cite{Quillen,Borel}}]\label{thm:localization-theorem}
    Let $X$ be a $\Z_{p}$-CW-complex with $p$ a prime.\mpar{(64) various definitions of $G$-CW-complexes are given in Subsection~\ref{sub:equiv_cell}.}
    There exists an isomorphism of graded $\Z_{p}$-algebras
    \[S_{p}^{-1}H^{\ast}_{G}(X;\F_{p}) \cong S_{p}^{-1}H^{\ast}(X^{G};\Z_{p}).\]
  \end{theorem}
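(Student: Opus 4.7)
The plan is to follow the classical Borel--Quillen strategy, reducing the desired isomorphism to a vanishing statement for relative Borel cohomology. I would first consider the long exact sequence of the equivariant pair $(X, X^G)$:
\[\cdots \to H^n_G(X, X^G; \F_p) \to H^n_G(X; \F_p) \to H^n_G(X^G; \F_p) \to H^{n+1}_G(X, X^G; \F_p) \to \cdots\]
Since $G$ acts trivially on $X^G$, the K\"unneth formula gives $H^*_G(X^G; \F_p) \cong H^*(X^G; \F_p) \otimes_{\F_p} H^*(BG; \F_p)$, so $S_p^{-1} H^*_G(X^G; \F_p) \cong H^*(X^G; \F_p) \otimes_{\F_p} S_p^{-1} H^*(BG; \F_p)$. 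Because localization at the multiplicative set $S_p$ is exact, it will suffice to prove that $S_p^{-1} H^*_G(X, X^G; \F_p) = 0$.

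The plan for the vanishing is a skeletal induction, reducing to the case $X = X^G \cup_f (G \times D^n)$ obtained from $X^G$ by attaching a single free $G$-orbit of cells. This reduction crucially uses that $G = \Z_p$ has prime order and hence no proper nontrivial subgroup: every cell outside $X^G$ has trivial isotropy and its orbit is a free $G$-cell. By excision and the fact that the Borel construction on a free $G$-space coincides with its quotient, one obtains
\[H^*_G(X, X^G; \F_p) \cong H^*_G(G \times D^n, G \times S^{n-1}; \F_p) \cong H^*(D^n, S^{n-1}; \F_p),\]
a single copy of $\F_p$ concentrated in degree $n$. Its $H^*(BG; \F_p)$-module structure is induced by the classifying map $D^n \to BG$ of the trivial principal $G$-bundle $G \times D^n \to D^n$, which is null-homotopic; hence every positive-degree element of $H^*(BG; \F_p)$, and in particular the generator of $S_p$, acts as zero, so $S_p^{-1}$ applied to this group vanishes.

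Compatibility with the graded algebra structure is automatic from the naturality of the cup product and of the localization functor, combined with the fact that $X^G$ is a closed invariant subspace so the maps in the long exact sequence are $\F_p$-algebra homomorphisms on the relevant pieces. The main obstacle I would expect is the bookkeeping for the skeletal induction when $X$ is not of finite type: Borel cohomology commutes with colimits only up to a $\varprojlim^1$-correction, so either one imposes finite-type hypotheses (which are satisfied in all applications to Khovanov homotopy types in this paper) or one verifies that this correction term is itself $S_p$-torsion via an equivariant Milnor-type exact sequence.
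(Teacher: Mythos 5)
The paper does not prove this theorem; it is quoted with citations to Quillen and Borel, so there is no internal argument to compare against. Your sketch is the standard proof of the localization theorem for $G=\Z_p$, and it is essentially correct: the long exact sequence of the pair $(X,X^G)$, exactness of localization, and the observation that every $G$-cell outside $X^G$ is free (because $\Z_p$ has no proper nontrivial subgroups), so that $H^\ast_G(G\times D^n, G\times S^{n-1};\F_p)\cong H^\ast(D^n,S^{n-1};\F_p)$ is annihilated by the positive-degree part of $H^\ast(B\Z_p;\F_p)$ and dies under $S_p^{-1}$. Two remarks. First, the finiteness caveat you raise at the end is not mere bookkeeping: as literally stated, for an arbitrary $\Z_p$-CW-complex the theorem is \emph{false} --- take $X=E\Z_p=S^\infty$ with the free action, so $X^G=\emptyset$ while $S_p^{-1}H^\ast_G(X;\F_p)=S_p^{-1}H^\ast(B\Z_p;\F_p)\neq 0$. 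One must assume $X$ finite-dimensional (or finite), and your hope that the $\varprojlim^1$-type correction is automatically $S_p$-torsion cannot be salvaged in general; the correct resolution is the one you also offer, namely that the Khovanov spaces $\mathcal{X}_{\kh}(D)$ occurring in this paper are finite CW-complexes, so the skeletal induction terminates. Second, a cosmetic point: the right-hand side of the statement should be read as $H^\ast(X^G;\F_p)\otimes_{\F_p}S_p^{-1}H^\ast(B\Z_p;\F_p)\cong S_p^{-1}H^\ast_G(X^G;\F_p)$, exactly as your K\"unneth step produces; with that reading your argument gives the asserted isomorphism of graded algebras, since the map realizing it is the localization of the restriction $H^\ast_G(X;\F_p)\to H^\ast_G(X^G;\F_p)$, which is a ring homomorphism.
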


  As an immediate corollary of Theorem~\ref{thm:localization-theorem} and Theorem~\ref{thm:main-theorem}, we get\mpar{(65)}
  \begin{theorem}\label{thm:localization}
    Let $L_{p} \subset S^{1} \times D^{2}$ be a $p$-periodic link with $L$ denoting the quotient link.
    For any $q,k \in \Z$ there exists an isomorphism of $S_{p}^{-1}H^{\ast}(B\Z_{p};\F_{p})$-modules
    \[S_{p}^{-1}H^{\ast}(B\Z_{p};\F_{p}) \otimes_{\F_{p}} \akh^{\ast,q,k}(L;\F_{p}) \xrightarrow{\cong} S_{p}^{-1} \eakh^{\ast,pq-(p-1)k,k}(L_{p};\F_{p}).\]
  \end{theorem}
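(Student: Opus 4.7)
The strategy is to chain together three identifications already established in the paper: the Borel-cohomology interpretation of equivariant annular Khovanov homology (the annular analogue of Theorem~\ref{thm:borel}), the Geometric Fixed Point Theorem (Theorem~\ref{thm:geom_local}(a)), and the Borel-Quillen-Atiyah-Segal localization formula (Theorem~\ref{thm:localization-theorem}). Throughout, $D_{p}$ denotes a $p$-periodic diagram of $L_{p}$ and $D$ its quotient.

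First, I would invoke the annular version of Theorem~\ref{thm:borel} with $M=\F_{p}$, noting that $\F_{p}$ is self-dual as an $\F_{p}[\Z_{p}]$-module so that $\Hom_{\F_{p}}(\F_{p},\F_{p})=\F_{p}$ with the trivial action, to identify
\[
  \eakh^{\ast,pq-(p-1)k,k}(L_{p};\F_{p}) \;\cong\; \widetilde H^{\ast}_{\Z_{p}}\bigl(\mathcal{X}_{\akh}^{pq-(p-1)k,k}(D_{p});\F_{p}\bigr).
\]
Next, writing $X:=\mathcal{X}_{\akh}^{pq-(p-1)k,k}(D_{p})$ and applying Theorem~\ref{thm:localization-theorem} to the pointed $\Z_{p}$-CW complex $X$, I get a natural isomorphism of $S_{p}^{-1}H^{\ast}(B\Z_{p};\F_{p})$-modules
\[
  S_{p}^{-1}\widetilde H^{\ast}_{\Z_{p}}(X;\F_{p}) \;\cong\; S_{p}^{-1}\widetilde H^{\ast}_{\Z_{p}}(X^{\Z_{p}};\F_{p}).
\]

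Second, the Geometric Fixed Point Theorem~\ref{thm:geom_local}(a) identifies $X^{\Z_{p}}=\mathcal{X}_{\akh}^{q,k}(D)$. Since the $\Z_{p}$-action on this fixed subspace is trivial and the basepoint is preserved, the Borel construction factors as $E\Z_{p}\times_{\Z_{p}}\mathcal{X}_{\akh}^{q,k}(D)=B\Z_{p}\times \mathcal{X}_{\akh}^{q,k}(D)$ and the Künneth formula gives
\[
  \widetilde H^{\ast}_{\Z_{p}}\bigl(\mathcal{X}_{\akh}^{q,k}(D);\F_{p}\bigr) \;\cong\; H^{\ast}(B\Z_{p};\F_{p})\otimes_{\F_{p}} \widetilde H^{\ast}\bigl(\mathcal{X}_{\akh}^{q,k}(D);\F_{p}\bigr) \;\cong\; H^{\ast}(B\Z_{p};\F_{p})\otimes_{\F_{p}} \akh^{\ast,q,k}(L;\F_{p}),
\]
where the last identification is the construction of the annular Khovanov homotopy type recalled in Subsection~\ref{sub:annul-khov-homot-1}. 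Localizing and stringing the three isomorphisms together yields the claimed isomorphism of $S_{p}^{-1}H^{\ast}(B\Z_{p};\F_{p})$-modules.

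The argument is essentially a matter of assembly; I do not anticipate a genuinely hard step. The main things to verify carefully are that the annular analogue of Theorem~\ref{thm:borel} really holds (this uses the annular version of Proposition~\ref{prop:identification-kh-hom}, which is immediate from the fact that the annular flow category $\mathcal{C}_{\akh}(D_{p})$ is an equivariant subcategory of $\mathcal{C}_{\kh}(D_{p})$ and the associated cochain complex with its $\Z_{p}$-action is $\cakh^{\ast,\ast,\ast}(D_{p})$), and that reduced Borel cohomology behaves well under the localization theorem and under Künneth for trivial actions on well-pointed $\Z_{p}$-CW complexes. The module structure of the final isomorphism is canonical because all three intermediate isomorphisms are isomorphisms of $S_{p}^{-1}H^{\ast}(B\Z_{p};\F_{p})$-modules, the $H^{\ast}(B\Z_{p};\F_{p})$-action on a Borel cohomology group being pulled back from the classifying map $E\Z_{p}\times_{\Z_{p}}X\to B\Z_{p}$.
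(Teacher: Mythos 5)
Your assembly is correct and is precisely the argument the paper has in mind: it states Theorem~\ref{thm:localization} as an ``immediate corollary'' of Theorem~\ref{thm:localization-theorem} together with the equivariant homotopy type and its fixed-point identification, giving no further details, and your three-step chain (annular Borel-cohomology identification, BQAS localization, Geometric Fixed Point Theorem plus K\"unneth for the trivial action) is exactly what that claim presupposes. The only cosmetic remark is that the needed fixed-point input is Theorem~\ref{thm:geom_local}(a) (which you correctly cite) rather than Theorem~\ref{thm:main-theorem} alone, and that the superscript $p$ in $\akh^{\ast,p,k}$ in the statement is a typo for $q$, which your final display silently corrects.
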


  Let $\cA_{p}$ denote, for any prime $p$, the \emph{mod $p$ Steenrod algebra}, i.e.\ the algebra of stable $\F_{p}$-cohomology operations.
  It turns out that Theorem~\ref{thm:localization} can be strengthened considerably when we take into account the action of the Steenrod algebra.
  Before stating the main result, let us introduce the following terminology.

  For $p=2$, consider the sequence of nonnegative numbers $I = (s_{1},s_{2},\ldots,s_{m})$.
  The sequence $I$ is \emph{admissible} if $s_{i} \geq 2s_{i+1}$, for $1 \leq i < m$.
  Define the \emph{degree} of $I$, $|I| = s_{1}+s_{2}+\ldots+s_{m}$ and \emph{excess} of $I$, $e(I) = 2s_{1}-|I|$.
  To any sequence $I$ we associate the cohomology operation $Sq^{I} = Sq^{s_{1}} Sq^{s_{2}} \cdots Sq^{s_{m}} \in \mathcal{A}_{2}$.
  For any $k \geq 0$ we set:
  \[L_{2}(k) = \{I \colon e(I) \geq k\}.\]

  For $p>2$, let $I = (\epsilon_{0},s_{0},\epsilon_{1},s_{1},\ldots,s_{m},\epsilon_{m})$, where $s_{1},s_{2},\ldots,s_{m}$ are nonnegative integers and $\epsilon_{i} \in \{0,1\}$, for $0 \leq i \leq m$.
  The sequence $I$ is \emph{admissible} if $s_{i}\geq ps_{i+1} + \epsilon_{i}$, for any $0 \leq i < m$.
  Define the \emph{degree} of $I$, $|I| = 2(p-1)(s_{1}+s_{2}+\ldots+s_{m}) + \epsilon_{0}+\epsilon_{1}+\ldots+\epsilon_{m}$ and \emph{excess} of $I$, $e(I) = 2s_{1}p+2\epsilon_{0}-|I|$.
  To any sequence $I$ we can associate a cohomology operation $P^{I} = \beta^{\epsilon_{0}} P^{s_{1}} \beta^{\epsilon_{1}}P^{s_{2}} \cdots P^{s_{m}}\beta^{\epsilon_{m}} \in \mathcal{A}_{p}$.
  Let, for any $k \geq 0$,
  \[L_{p}(k) = \{I \colon e(I) \geq k+1, \text{ or } e(I) = k \text{ and } \epsilon_{0}=0\}.\]

  \begin{definition}\label{defn:unstable-elements}
    For a graded $\cA_{p}$-module $M^{\ast}$ the submodule of \emph{unstable elements}, $\Un(M)^{\ast}$, is a graded submodule defined as
    \[\Un(M)^{k} =
      \begin{cases}
        \{x \in M^{k} \colon \forall_{I \in L_{2}(k)} \quad Sq^{I}(x) = 0\}, & p=2,\\
        \{x \in M^{k} \colon \forall_{I \in L_{p}(k)} \quad P^{I}(x) = 0\}, & p>2.
      \end{cases}
    \]
  \end{definition}

  Equivariant annular Khovanov homology $\eakh^{\ast,\ast,\ast}(L;\F_{p})$ is isomorphic to Borel cohomology of $\mathcal{X}_{\akh}(L)$, hence the action of the Steenrod algebra on $\akh^{\ast,\ast,\ast}(L;\F_{p})$ extends to $\eakh^{\ast,\ast,\ast}(L;\F_{p})$.
  By~\cite[Proposition 2.1]{Wilkerson}, the action of $\cA_{p}$ on equivariant annular Khovanov homology extends uniquely to an action on the localization $S_{p}^{-1}\eakh^{\ast,\ast,\ast}(L;\F_{p})$.
  \begin{theorem}\label{thm:khovanov-homology-quotient_link}\ Let $p$ be a prime.\mpar{(65)}
    \begin{itemize}
    \item[(a)]
      If $L_{p} \subset S^{1} \times I $ is $p$-periodic link and $L$ is the quotient link, then for any $q,k \in \Z$ there exists an isomorphism of rings
      \[\akh^{\ast,q,k}(L;\F_{p}) \otimes_{\F_{p}} H^{\ast}(B\Z_{p};\F_{p}) \xrightarrow{\cong} \Un\left(S_{p}^{-1} \eakh^{\ast,pq-(p-1)k,k}(L_{p};\F_{p})\right)^*.\]
      Consequently
      \[\akh^{\ast,q,k}(L;\F_{p}) \cong \F_{p} \otimes_{H^{\ast}(B\Z_{p};\F_{p})} \Un\left(S_{p}^{-1} \eakh^{\ast,pq-(p-1)k,k}(L_{p};\F_{p})\right)^*.\]

    \item[(b)]
      For a $p$-periodic link $L_{p} \subset S^{3}$ and for any $q \in \Z$ it holds:
      \[\bigoplus_{\substack{q',k' \in \Z \\ pq'-(p-1)k'=q}} \akh^{\ast,q',k'}(L_{p};\F_{p}) \cong \F_{p} \otimes_{H^{\ast}(B\Z_{p};\F_{p})} \Un \left(S_{p}^{-1}\ekh^{\ast,q}(L;\F_{p})\right)^*,\]
      where $L$ denotes the quotient link.
    \end{itemize}
  \end{theorem}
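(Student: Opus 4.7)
The plan is to reduce the theorem to the Dwyer--Wilkerson theorem~\cite{dwyer-wilkerson} by combining it with the two structural results already established in this paper: Theorem~\ref{thm:borel}, which identifies reduced Borel cohomology with equivariant Khovanov homology, and the Geometric Fixed Point Theorem~\ref{thm:geom_local}, which identifies the $\Z_p$-fixed point set of $\cX_{\kh}(D_p)$ (respectively $\cX_{\akh}(D_p)$) with an annular Khovanov spectrum of the quotient diagram $D$. For a finite-dimensional $\Z_p$-CW complex $X$, the Dwyer--Wilkerson theorem yields a natural isomorphism of $\cA_p$-modules and $H^*(B\Z_p;\F_p)$-modules
\[
H^*(X^{\Z_p};\F_p)\otimes_{\F_p}H^*(B\Z_p;\F_p)\xrightarrow{\;\cong\;}\Un\bigl(S_p^{-1}H^*_{\Z_p}(X;\F_p)\bigr)^{*},
\]
and consequently $H^*(X^{\Z_p};\F_p)\cong \F_p\otimes_{H^*(B\Z_p;\F_p)}\Un(S_p^{-1}H^*_{\Z_p}(X;\F_p))^{*}$.

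To prove part (a), I would apply this isomorphism with $X=\cX_{\akh}^{q',k}(D_p)$ and $q'=pq-(p-1)k$. Theorem~\ref{thm:geom_local}(a) identifies $X^{\Z_p}$ with $\cX_{\akh}^{q,k}(D)$, whose reduced cohomology is $\akh^{*,q,k}(L;\F_p)$, while the annular analogue of Theorem~\ref{thm:borel} identifies the reduced Borel cohomology of $X$ with $\eakh^{*,q',k}(L_p;\F_p)$. Substituting these two identifications into the displayed Dwyer--Wilkerson isomorphism gives the first assertion of part (a); tensoring with $\F_p$ over $H^*(B\Z_p;\F_p)$ then gives the second. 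For part (b) the same procedure is applied to $X=\cX_{\kh}^{q}(D_p)$; Theorem~\ref{thm:geom_local}(b) identifies $X^{\Z_p}$ with the wedge $\bigvee_{pq'-(p-1)k'=q}\cX_{\akh}^{q',k'}(D)$, whose reduced cohomology splits as a direct sum of annular Khovanov groups, yielding the left-hand side.

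The main technical obstacle is the compatibility of the identification in Theorem~\ref{thm:borel} with the module structures appearing on both sides of Dwyer--Wilkerson. Concretely, one must verify that the identification $\widetilde{H}^*_{\Z_p}(\cX_{\akh}(D_p);\F_p)\cong \eakh^{*,*,*}(L_p;\F_p)$ intertwines the natural $\cA_p$-action and the $H^*(B\Z_p;\F_p)$-action on Borel cohomology with the corresponding actions on equivariant Khovanov homology as defined in~\cite{Politarczyk_Khovanov}. The $\cA_p$-compatibility is a consequence of naturality of Steenrod operations together with the explicit cellular identification of Proposition~\ref{prop:identification-kh-hom}, whereas the $H^*(B\Z_p;\F_p)$-compatibility reduces to naturality of Borel cohomology under the projection $E\Z_p\times_{\Z_p}\cX\to B\Z_p$. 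A secondary bookkeeping task is to keep track of the quantum and annular grading shift $q'=pq-(p-1)k$ dictated by Proposition~\ref{prop:bijection-objects}(b), and to ensure that the wedge decomposition of $X^{\Z_p}$ in case (b) is preserved by the localization and by the passage to unstable elements, which follows because both constructions are additive on disjoint (respectively wedge) summands.
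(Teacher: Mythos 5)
Your proposal is correct and follows essentially the same route as the paper: the paper's proof simply cites the Dwyer--Wilkerson result together with Theorem~\ref{thm:borel}, with the Geometric Fixed Point Theorem~\ref{thm:geom_local} supplying the identification of the fixed-point spectrum with the annular Khovanov spectrum of the quotient, exactly as you describe. The compatibility checks you flag (of the $\cA_{p}$- and $H^{\ast}(B\Z_{p};\F_{p})$-module structures under the identification of Theorem~\ref{thm:borel}) are left implicit in the paper, so your treatment is, if anything, slightly more careful.
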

  \begin{proof}
    This is an immediate corollary of~\cite[Corollary 2.5.]{dwyer-wilkerson} and Theorem~\ref{thm:borel}.
  \end{proof}

  Smith inequalities given in Theorem~\ref{cor:smith-inequality-annular-kh} and Theorem~\ref{cor:smith-inequality-kh} are corollaries of Theorem~\ref{thm:khovanov-homology-quotient_link}. We prove now Theorem~\ref{cor:smith-inequality-annular-kh}; an analogous proof works for the case of Theorem~\ref{cor:smith-inequality-kh}.

  \begin{proof}[Proof of Theorem~\ref{cor:smith-inequality-annular-kh}]
    We have the following chain of inequalities
    \begin{align*}
      \dim_{\F_{p}}\akh^{\ast,pq-(p-1)k,k}(L_{p};\F_{p}) &\geq \operatorname{rank}_{H^{\ast}(B\Z_{p};\F_{p})}\eakh^{\ast,pq-(p-1)k,k}(L_{p};\F_{p}) \geq \\
                                                         &\geq \operatorname{rank}_{H^{\ast}(B\Z_{p};\F_{p})}\Un\left(S_{p}^{-1}\eakh^{\ast,pq-(p-1)k,k}(L_{p};\F_{p})\right)^* = \\
                                                         &= \dim_{\F_{p}} \akh^{\ast,q,k}(L;\F_{p}).
    \end{align*}
    The first inequality is a consequence of the definition of equivariant annular Khovanov homology.
    The second inequality is a natural consequence of the properties of the localization, and the last equality follows from Theorem~\ref{thm:khovanov-homology-quotient_link}.
  \end{proof}

  \appendix
  \section{$\langle n\rangle$-manifolds}\label{sec:n_manifolds}

  \subsection{Manifolds with corners}\label{sub:manifolds with corners}

  We say that $M$ is a $k$-\emph{manifold with corners} if it is locally modelled on open subsets of $\mathbb{R}_{+}^{k}$, where $\mathbb{R}_{+} = [0,\infty)$.
  In other words, $M$ is equipped with an atlas $\cA = \{(U,\psi_{U})\}$ such that every $U$ is an open subset in $\mathbb{R}_{+}^k$ and the transition maps $\psi_{V}^{-1} \circ \psi_{U}$, for any $U$ and $V$ such that $U \cap Y \neq \emptyset$, are $C^{\infty}$ diffeomorphisms.
  Compare~\cite[Definition 2.1]{Joyce}.

  For every point $x \in M$ we can define its \emph{codimension}, denoted by $c(x)$, which records the number of coordinates of $\psi_{U}(x)$ which are zero for any chart $(U,\psi_U)$ for which $x \in U$.
  Moreover, we define \emph{codimension-$i$ boundary} to be the set $\{x \in M \colon c(x) = i\}$.
  A \emph{connected face} of codimension $i$ of $M$ is the closure of a connected component of the codimension-$i$ boundary of $M$.
  A \emph{face} is a (possibly empty) disjoint union of connected faces of the same codimension. A codimension-1 face is usually called a \emph{facet}.
  A \emph{$k$-dimensional manifold with faces} is a $k$-manifold with corners such that every point $x \in M$ belongs to exactly $c(x)$ non-empty connected facets of $M$.
  Moreover, we say that $M$ is an \emph{$k$-dimensional $\langle n \rangle$-manifold} if $M$ is an $k$-dimensional manifold with faces and there exists a decomposition $\partial M = \partial_1 M \cup \partial_2 M \cup \ldots \cup \partial_n M$ such that
  \begin{itemize}
  \item $\partial_i M$ is a facet of $M$, for every $1 \leq i \leq n$,
  \item $\partial_i M \cap \partial_j M$ is a facet of both $\partial_i M$ and $\partial_j M$, for every $1 \leq i < j \leq n$.
  \end{itemize}
  We refer to~\cite[Section 2]{Joyce} for discussion of the notion of an $\langle n\rangle$-manifold.

  \begin{example}[see \expandafter{\cite[Definition 3.9]{LS_stable}}]
    For an $(n+1)$-tuple $\dc=(d_0,d_1,\ldots,d_n)$ of non-negative integers define
    \[E_n^{\dc}=\R^{d_0}\times \R_{\ge 0}\times \R^{d_1}\times\R_{\ge 0}\ldots\times \R_{\ge 0}\times\R^{d_n}.\]
    We make $E_n^{\dc}$ an $\langle n\rangle$-manifold by declaring that
    \[\partial_i E_n^{\dc}=\R^{d_0}\times \R_{\ge 0}\times \R^{d_1}\times\R_{\ge 0}\ldots\R^{d_{i-1}}\times\{0\}\times\R^{d_i}\times\cdots\times \R_{\ge 0}\times\R^{d_n}.\]
  \end{example}
  \begin{example}
    An $n$-dimensional permutohedron has a structure of an $\langle n\rangle$-manifold, see Subsection~\ref{sub:permutstart} below.
  \end{example}

  We will need the following construction, see \cite[Construction 3.4]{LLS_long}.
  \begin{construction}
    Suppose $X$ is an $\langle n\rangle$-manifold and $Y$ is an $\langle m\rangle$-manifold. The product $X\times Y$ is given the structure of an $\langle n+m\rangle$-manifold by declaring.
    \[
      \partial_i(X\times Y)=\begin{cases} (\partial_i X)\times Y & \textrm{ if $i\le n$}\\ X\times(\partial_{i-n} Y) & \textrm{ if $i>n$.}\end{cases} 
    \]
  \end{construction}

  In some cases, it is more convenient to view $\langle n \rangle$-manifolds as certain functors to the category of topological spaces.
  Let $\underline{2}^{1}$ denote the category consisting of two objects $0$ and $1$ with a single non-identity morphism $0 \to 1$.
  For an integer $n>1$ let
  \[\underline{2}^{n} = \underbrace{\underline{2}^{1} \times \underline{2}^{1} \times \cdots \times \underline{2}^{1}}_{n}.\]
  For two objects $a, b \in \underline{2}^{n}$, we set $b \leq a$ if $b_{i} \leq a_{i}$, for any $1 \leq i \leq n$, where $a = (a_{1},a_{2},\ldots,a_{n})$ and $b = (b_{1},b_{2},\ldots,b_{n})$.
  An $n$-\emph{diagram}, for $n \geq 1$, is a functor from the category $\underline{2}^{n}$ to the category of topological spaces.

  We can associate an $n$-diagram to a given $\langle n \rangle$-manifold $X$ by declaring, for every $a = (a_{1}, a_{2}, \ldots, a_{n}) \in \underline{2}^{n}$:
  \[
    X(a) =
    \begin{cases}
      X, & \text{ if } a = (1,1, \ldots, 1), \\
      \bigcap_{i: a_{i}=0} \partial_{i} X, & \text{ otherwise}.
    \end{cases}
  \]
  Moreover, for any $b \leq a$ in $\underline{2}^{n}$ the map $X(b) \to X(a)$ is the inclusion.
  We point out that $X(a)$ is an $\langle |a| \rangle$-manifold with the corresponding $|a|$-diagram obtained by restricting $X$ to
  the full subcategory of objects $b$ such that $b<a$ (recall that  $|a|=\sum a_i$).

  Let $M$ be a manifold with corners.
  Choose a Riemannian metric on $M$.
  We have the following generalization of a classical result.
  \begin{proposition}\label{prop:ident}
    There is an open tubular neighborhood $U$ of $\partial M$, homeomorphic to $M\times[0,1)$ and a subset $V\subset TM|_{\partial M}$ such that the $\exp$ map yields a diffeomorphism between $U$ and $V$.
  \end{proposition}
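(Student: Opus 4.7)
The plan is to build the collar via an inward-pointing vector field along $\partial M$ and to identify the resulting flow with the exponential map applied to suitable vectors in $TM|_{\partial M}$; the Riemannian metric is used only to set up $\exp$, while the corner structure is handled by a partition-of-unity construction of the vector field.

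First I would construct a smooth inward-pointing vector field $X$ along $\partial M$. For each $p\in\partial M$ of codimension $k=c(p)$, choose corner-adapted coordinates $(x_1,\dots,x_n)$ on a neighborhood $U_p$ such that the $k$ faces containing $p$ are the coordinate hyperplanes $\{x_i=0\}_{i=1}^{k}$ and $M\cap U_p=\{x_1,\dots,x_k\ge 0\}$; then $\xi_p=\sum_{i=1}^{k}\partial/\partial x_i$ points strictly into $M$ at every boundary point of $U_p$. Pick a locally finite refinement $\{U_\alpha\}$ of this cover together with a subordinate partition of unity $\{\rho_\alpha\}$, and set $X=\sum_\alpha\rho_\alpha\,\xi_\alpha$. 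At every $q\in\partial M$, $X(q)$ is a positive convex combination of vectors that each point strictly into $M$ at $q$ (with respect to the cone of admissible inward directions determined by the faces through $q$), so $X(q)$ itself points strictly into $M$.

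Next, define
\[
V=\{\,tX(p)\;:\;p\in\partial M,\ 0\le t<\epsilon(p)\,\}\ \subset\ TM|_{\partial M}
\]
for a continuous positive function $\epsilon\colon\partial M\to(0,\infty)$ to be chosen, and consider the restriction $\Phi=\exp|_V$. Along the zero section $\Phi$ is the identity, while the derivative along the $t$-direction at $t=0$ equals $X(p)$, which is transverse to $T_p(\partial M)$ in $T_pM$ by construction. Hence $\Phi$ is a local diffeomorphism in a neighborhood of the zero section, and we define $U=\Phi(V)$.

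The main obstacle is upgrading the local statement to a genuine diffeomorphism $V\to U$, that is, ruling out self-intersection of geodesic segments emanating from distinct points of $\partial M$. In the compact case this follows from the standard compactness argument: the set of points at which $d\Phi$ is an isomorphism is open and contains the zero section, and for $\epsilon$ small enough (uniformly, by compactness) two geodesic segments $t\mapsto\exp(tX(p))$ and $t\mapsto\exp(tX(q))$ with $p\ne q$ cannot meet. In general one chooses $\epsilon$ pointwise using the local finiteness of $\{U_\alpha\}$. Having secured injectivity, $\Phi\colon V\to U$ is a diffeomorphism onto an open neighborhood of $\partial M$, and the rescaled parametrization $(p,s)\mapsto\exp(s\epsilon(p)X(p))$ provides the required homeomorphism $\partial M\times[0,1)\cong U$ (here I am reading the formula ``$M\times[0,1)$'' in the statement as a minor typo for ``$\partial M\times[0,1)$''). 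Compatibility with the $\langle n\rangle$-structure follows because each $\xi_\alpha$ preserves the stratification by faces and so does the convex combination $X$, ensuring that for every subset $I\subset\{1,\dots,n\}$ the intersection $\bigcap_{i\in I}\partial_iM$ is carried to its own collar under $\Phi$.
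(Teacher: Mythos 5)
Your argument is the standard collar-neighborhood construction (inward-pointing vector field via partition of unity, then $\exp$ on rays $tX(p)$), which is exactly what the paper intends: its proof consists of a one-line reference to the collar neighborhood theorem in Hirsch, and you are right that ``$M\times[0,1)$'' should read ``$\partial M\times[0,1)$''. The only blemish is your closing claim that $X$ ``preserves the stratification by faces'' — a strictly inward field leaves every face instantly, so that remark is false as stated — but it is not needed for the proposition, which asks only for a collar of $\partial M$.
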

  \begin{proof}
    The proof is analogous to the proof of the collar neighborhood theorem, see, for instance, ~\cite[Section 4.6]{Hirsch}.
  \end{proof}

  We now recall the concept of a neat embedding, which roughly means an embedding with no pathological behavior near the boundary.
  Various similar notions are discussed in detail in~\cite[Section 3]{Joyce}.
  \begin{definition}\label{def:neat_embedding}
    Let $X$ and $Y$ be two $\langle n \rangle$-manifolds.
    A \emph{neat embedding} is an embedding $\iota \colon X \to Y$ such that:
    \begin{enumerate}
    \item $\iota$ is an $n$-map, i.e. $\iota^{-1}(\partial_{i}Y) = \partial_{i} X$, for any $1 \leq i \leq n$,
    \item the intersection of $X(a)$ and $Y(b)$ is perpendicular with respect to some Riemannian metric on $Y$, for $b<a$ in $\underline{2}^{n}$.
    \end{enumerate}
  \end{definition}

  The following result is a direct consequence of~\cite[Lemma 3.11]{LS_stable}.\mpar{(69)}
  \begin{theorem}[\cite{LS_stable}]
    Every compact $\langle n \rangle$-manifold admits a neat embedding
    \[\iota \colon X \hookrightarrow E_{n}^{\dc},\]
    for some $\dc \in \mathbb{N}^{n+1}$.
  \end{theorem}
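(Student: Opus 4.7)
The plan is to produce the neat embedding in a very explicit form: fix $\dc=(d_0,0,0,\dots,0)$ so that $E_n^{\dc}=\R^{d_0}\times\R_{\ge 0}^n$, and build $\iota$ as
\[
 \iota(x)=\bigl(\alpha(x),\,\phi_1(x),\,\phi_2(x),\dots,\phi_n(x)\bigr),
\]
where $\alpha\colon X\hookrightarrow\R^{d_0}$ is a Whitney-type embedding and each $\phi_i\colon X\to\R_{\ge 0}$ is a smooth ``boundary-defining function'' for the face $\partial_i X$. Once $\alpha$ and the $\phi_i$ are constructed with enough compatibility, injectivity, the preimage condition $\iota^{-1}(\partial_i E_n^{\dc})=\phi_i^{-1}(0)=\partial_iX$, and the perpendicularity required by Definition~\ref{def:neat_embedding} will all drop out simultaneously.

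First I would establish a system of commuting collars on the $\langle n\rangle$-manifold $X$, one for each face $\partial_iX$. This is an iterated application of Proposition~\ref{prop:ident}: use the collar on $\partial_1 X$, then extend inductively to the faces of the $\langle n-1\rangle$-manifolds $\partial_i X$, arranging by the standard Laures--J\"anich trick that the collars of distinct faces commute on the intersections $\partial_{i_1}X\cap\cdots\cap\partial_{i_k}X$. From these collars extract smooth functions $\phi_i\colon X\to\R_{\ge 0}$ with $\phi_i^{-1}(0)=\partial_i X$, equal to the $i$-th collar coordinate on a neighborhood of $\partial_i X$, and pointwise dependent only on that coordinate (in particular, $d\phi_i$ at a point of a higher-codimension stratum is nonzero only in the ``outward normal to $\partial_i X$'' direction).

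Next, apply Whitney's embedding theorem for manifolds with corners to obtain a smooth embedding $\alpha\colon X\hookrightarrow\R^{d_0}$ for some large $d_0$. Using bump functions on each collar coordinate $\phi_i$ and a partition of unity, I would then perturb $\alpha$ so that in a small neighborhood of each face it is constant in the corresponding collar direction, i.e.\ $\alpha\circ c_i(y,t)=\alpha\circ c_i(y,0)$ for $t$ sufficiently small, while preserving that $\alpha$ remains an embedding. Commutativity of the collars makes this compatible across corners of arbitrary codimension.

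Finally, assemble $\iota$ and verify the conditions of Definition~\ref{def:neat_embedding}. Injectivity is inherited from $\alpha$; the condition $\iota^{-1}(\partial_i E_n^{\dc})=\partial_i X$ is immediate from $\phi_i^{-1}(0)=\partial_i X$; and the perpendicularity of $X(a)\cap Y(b)$ near a stratum follows from the local product form of $\iota$ on collars: in the chosen Riemannian metric on $E_n^{\dc}$, the directions spanned by the $\phi_i$'s for $i$ with $a_i=0$ are orthogonal to the $\R^{d_0}$ factor in which $\alpha$ varies, which is exactly where the tangent space to the stratum lives. The main obstacle is the first step, constructing the commuting system of collars and hence boundary-defining functions compatible at all corners; once these are in hand the remainder is a routine partition-of-unity and transversality argument.
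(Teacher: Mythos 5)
The paper itself offers no proof of this statement: it is quoted verbatim from Lipshitz--Sarkar, and their proof (which goes back to Laures' treatment of $\langle n\rangle$-manifolds) is exactly the collar-based argument you outline --- commuting collars, boundary-defining functions, a Whitney embedding flattened along the collars, and the resulting local product structure near the corners giving both the preimage condition and perpendicularity. So your overall route is the standard and correct one, and taking $\dc=(d_0,0,\dots,0)$ is legitimate since the $d_i$ are only required to be non-negative.

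There is, however, one internally inconsistent step as written. You cannot perturb $\alpha$ so that $\alpha\circ c_i(y,t)=\alpha\circ c_i(y,0)$ for all small $t$ \emph{and} have $\alpha$ remain an embedding: a map constant along the collar direction fails to be injective on the collar, so the modified $\alpha$ is an embedding only of each face and of the complement of the collars, not of $X$. The subsequent claim that ``injectivity is inherited from $\alpha$'' is therefore false as stated. What your construction actually delivers is that the assembled map $\iota=(\alpha,\phi_1,\dots,\phi_n)$ is injective and immersive: writing the flattened map as $\alpha_{\mathrm{old}}\circ r$ for the collar retraction $r$, an equality $\iota(p)=\iota(q)$ forces $r(p)=r(q)$ (by injectivity of the unperturbed $\alpha_{\mathrm{old}}$), and agreement of the $\phi_i$, which parametrize the fibers of $r$, then forces $p=q$; likewise $d\iota$ is injective because the $d\phi_i$ span exactly the collar directions killed by $d\alpha$. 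With that correction --- injectivity and the immersion property are properties of the pair $(\alpha,\phi)$, not of $\alpha$ alone --- the argument is complete.
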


  \subsection{Group actions on $\langle n\rangle$-manifolds}
  Let $\Diff_{n}(X)$ denote the group of diffeomorphisms of {the $\langle n \rangle$-manifold} $X$ that are also $n$-maps.
  If $G$ is a finite group, then a \emph{smooth action} of $G$ on $X$ is a homomorphism
  \[\gamma \colon G \to \Diff_{n}(X).\]
  An action of $G$ is said to be \emph{effective} if $\gamma$ is injective.
  Throughout this paper, we assume that group actions are effective.
  Moreover, we will often identify $g \in G$ with its image $\gamma(g) \in \Diff_{n}(X)$.

  \begin{definition}\label{def:eq_dimension}
    Let $V-W \in RO(G)$.
    We say that $X$ is of \emph{dimension} $V-W$, and denote it by $\dim X = V-W$, if for any interior point $x$ there exists an isomorphism of representations
    \[T_{x}X \oplus W|_{G_{x}} \cong V|_{G_{x}}.\]
  \end{definition}

  We have the following equivariant analog of Proposition~\ref{prop:ident}.
  \begin{proposition}\label{prop:Gident}
    Let $M$ be an $\langle n\rangle$-manifold with an action of a finite group $G$.
    Choose a $G$-invariant Riemannian metric on $M$.
    Then $\partial M$ admits a $G$-equivariant tubular neighborhood $U$ such that there exists a $G$-invariant subset $V\subset TM|_{\partial M}$
    such that the $\exp$-map takes $V$ diffeomorphically and $G$-equivariantly to $U$.
  \end{proposition}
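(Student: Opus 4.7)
The plan is to deduce the equivariant statement from the non-equivariant Proposition~\ref{prop:ident} by exploiting naturality of the exponential map with respect to isometries. Since the Riemannian metric is $G$-invariant, every $g \in G$ acts on $M$ as an isometry, and hence satisfies the standard naturality identity
\[
g \cdot \exp_x(v) \;=\; \exp_{g\cdot x}\bigl(dg_x(v)\bigr)
\]
for every $x \in M$ and every $v$ in the domain of $\exp_x$. In particular, $\exp \colon TM \supset D \to M$ is automatically $G$-equivariant on its domain of definition $D$.

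First, I would apply Proposition~\ref{prop:ident} to obtain some (a priori non-equivariant) tubular neighborhood $U_{0}$ of $\partial M$ together with a $G$-invariant open subset $V_{0} \subset TM|_{\partial M}$ of the zero section such that $\exp \colon V_{0} \to U_{0}$ is a diffeomorphism. Because each $g \in G$ is an $n$-map (so it preserves the stratification $\partial_{1}M,\dots,\partial_{n}M$), the differential $dg$ restricts to a bundle isomorphism of $TM|_{\partial M}$ preserving the zero section and the inward-pointing cones at each stratum. I then set
\[
V \;=\; \bigcap_{g \in G} dg(V_{0}), \qquad U \;=\; \exp(V).
\]
Since $G$ is finite, $V$ is a finite intersection of open neighborhoods of the zero section in $TM|_{\partial M}$, hence still an open neighborhood of the zero section. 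By construction $V$ is $G$-invariant, and the naturality identity above forces $U$ to be $G$-invariant as well. Moreover $\exp \colon V \to U$ is a $G$-equivariant diffeomorphism, being the restriction of the diffeomorphism $\exp \colon V_{0} \to U_{0}$ to the $G$-invariant open subset $V \subset V_{0}$.

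The only point that requires any care is that $U$ should still be a \emph{full} tubular neighborhood of $\partial M$, i.e.\ that $V$ contains inward-pointing vectors at every boundary point. This is precisely where the hypothesis that each $g$ is an $n$-map enters: $dg$ sends inward-pointing normal vectors along $\partial_{i} M$ to inward-pointing normal vectors along $\partial_{i} M$, so each $dg(V_{0})$ is again an open neighborhood of the zero section whose fibers are contained in the same inward cones. The intersection over the finite group $G$ therefore still contains an open neighborhood of the zero section in every fiber, and shrinking $V$ slightly if necessary gives an identification $U \cong \partial M \times [0,1)$ that is compatible with the $G$-action; this identification is the one induced by $\exp$ on the normal bundle. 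The main (mild) obstacle is bookkeeping across strata to guarantee that the inward cones transform correctly under $G$, but this is immediate from $G$ acting by $n$-maps, so no further argument is required.
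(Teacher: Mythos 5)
Your argument is correct, and it is worth noting that it is genuinely more explicit than what the paper does: the paper's proof of Proposition~\ref{prop:Gident} is a one-line citation to the equivariant collaring results of Kankaanrinta for manifolds with boundary, together with the assertion that the corner case is analogous, whereas you actually derive the equivariant statement from the non-equivariant Proposition~\ref{prop:ident}. The two ingredients you use --- naturality of $\exp$ under the isometric $G$-action, $g\cdot\exp_x(v)=\exp_{g\cdot x}(dg_x(v))$, and the finite intersection $V=\bigcap_{g\in G}dg(V_0)$ to invariantize the domain --- are exactly the content that the cited reference unfolds to, so your route buys a self-contained deduction at the cost of having to check by hand that $V$ is still an open neighborhood of the zero section meeting every inward cone, which you do.

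Two small points. First, in your second paragraph you describe the output of Proposition~\ref{prop:ident} as ``a $G$-invariant open subset $V_0$''; this is a slip, since $V_0$ is a priori not invariant (otherwise there would be nothing to prove), and your subsequent construction of $V$ only makes sense for a non-invariant $V_0$. Second, the genuinely delicate issue for $\langle n\rangle$-manifolds --- that near a codimension-$k$ corner there is no preferred inward normal, so producing the product structure $U\cong\partial M\times[0,1)$ from $\exp$ requires more than fiberwise openness of $V$ --- is glossed over in your last paragraph, but the paper glosses over exactly the same point (``the case of manifolds with corners is analogous''), so your treatment is at the same level of rigor as the original.
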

  \begin{proof}
    The proof for standard manifolds with boundary is given in~\cite[Section 3]{Kanka}.
    The case of manifolds with corners is analogous.
  \end{proof}

  \begin{definition}
    Let $M$ be an $\langle n\rangle$-manifold acted upon by $G$. Let $V$ be an orthogonal representation of $G$. The manifold $M$ is said to be \textit{subordinate} to $V$ if for each $x \in M$ there exists an invariant neighborhood $\mathcal{U}_x$ of $x$, and an equivariant differentiable embedding of $U_x$ in $V^t \setminus \{0\}$ for some $t$.
    We write $\mathcal{G}(V)$ for the category whose objects are $G$-manifolds subordinate to $V$ and whose maps are continuous equivariant maps.
  \end{definition}

  Next result is an equivariant version of~\cite[Lemma 3.11]{LS_stable}.\mpar{(70)}
  It is needed in the proof of Proposition~\ref{prop:equivariant_neat}.
  \begin{lemma}\label{lem:equivversion}
    Let $M$ be a compact $\langle n\rangle$-manifold with a group action.
    Suppose $M$ is subordinate to a representation $V$.
    If $\iota_\partial\colon\partial M\to V^d$ is a $G$-equivariant embedding, then there exists $d'\ge d$ and a $G$-equivariant embedding $\iota\colon M\to V^{d'}$ such that $\iota|_{\partial M}=\iota_\partial$.
  \end{lemma}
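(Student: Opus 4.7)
The plan is to follow the classical extension strategy: equivariantly extend $\iota_\partial$ to a collar neighborhood of $\partial M$, obtain a global equivariant embedding $j\colon M\to V^s$, and glue the two pieces with a $G$-invariant partition of unity.

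First, I would use Proposition~\ref{prop:Gident} to equivariantly identify a collar $U$ of $\partial M$ with $\partial M\times[0,\varepsilon)$; since the outward normal direction is pointwise preserved by the $G$-action, the collar parameter carries the trivial representation. Pulling $\iota_\partial$ back along the collar projection $\pi\colon U\to\partial M$ gives a $G$-equivariant smooth extension $\tilde\iota_\partial(y,t)=\iota_\partial(y)\in V^d$. Second, the hypothesis that $M$ is compact and subordinate to $V$ yields a $G$-equivariant embedding $j\colon M\to V^s$ by the standard Mostow--Palais type argument: cover $M$ by finitely many invariant open sets each admitting an equivariant embedding into some $V^{t_i}\setminus\{0\}$, and patch these together with an equivariant partition of unity. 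Third, choose a $G$-invariant smooth cutoff $\phi\colon M\to[0,1]$ equal to $1$ on the smaller collar $\partial M\times[0,\varepsilon/3]$ and vanishing outside $\partial M\times[0,2\varepsilon/3]$.

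With these ingredients in hand, the candidate extension is
\[
\iota(x)\;=\;\bigl(\phi(x)\,\tilde\iota_\partial(x),\ j(x)-\phi(x)\,j(\pi(x)),\ (1-\phi(x))\,v_0\bigr)\;\in\; V^d\oplus V^s\oplus V,
\]
where $v_0\in V$ is a nonzero $G$-fixed vector and terms involving $\pi$ or $\tilde\iota_\partial$ are understood as $0$ outside $\mathrm{supp}(\phi)$ (this is well defined because $\phi$ vanishes near $\partial U$). By construction $\iota$ is smooth and $G$-equivariant; on $\partial M$, where $\phi=1$ and $\pi=\mathrm{id}$, it equals $(\iota_\partial(x),0,0)$, so $\iota|_{\partial M}=\iota_\partial$ under the natural inclusion $V^d\hookrightarrow V^{d'}$ with $d'=d+s+1$.

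The main obstacle is verifying injectivity, and the third coordinate earns its place there: the value of $\phi(x)$ is recovered from $(1-\phi(x))v_0$, so $\iota(x_1)=\iota(x_2)$ forces $\phi(x_1)=\phi(x_2)$; when $\phi>0$, the first coordinate together with injectivity of $\iota_\partial$ gives $\pi(x_1)=\pi(x_2)$, and the second coordinate then gives $j(x_1)=j(x_2)$, hence $x_1=x_2$; the case $\phi=0$ follows immediately from the second coordinate. This argument uses the existence of a nonzero $G$-fixed $v_0\in V$, i.e.\ a trivial summand in $V$. In the proof of Proposition~\ref{prop:equivariant_neat} this is automatic, since $V$ is chosen as a large direct sum accommodating all moduli spaces of the flow category; if necessary one may harmlessly enlarge $V$ to $V\oplus\R$ without disturbing the preceding steps of that construction.
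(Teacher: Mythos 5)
Your construction is correct, but it follows a genuinely different route from the paper's. The paper extends $\iota_\partial$ to an honest embedding of a collar $U\cong Z\subset T_{\partial M}M$ by composing with an equivariant embedding of the tangent bundle $T_{\partial M}M$ into a trivial bundle $V^{d_0}$ (so the normal/collar direction is embedded, not collapsed), then extends this to an arbitrary smooth equivariant map on all of $M$ and invokes the \emph{relative} clause of the Mostow--Palais theorem (Theorem~\ref{thm_mostowpalais}) to perturb it to an equivariant embedding while keeping it fixed near $\partial M$. You instead collapse the collar direction in the first coordinate ($\tilde\iota_\partial=\iota_\partial\circ\pi$), compensate with an auxiliary global equivariant embedding $j$, and glue the two by an explicit cutoff formula whose injectivity you verify by hand. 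What your approach buys is that you only need the \emph{existence} of some equivariant embedding $j\colon M\to V^s$ (absolute Mostow--Palais, or even just subordination plus patching), not the relative approximation statement; the cost is that all the general position is replaced by an explicit algebraic bookkeeping argument.

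Two small points to tighten. First, your formula requires a nonzero $G$-fixed vector $v_0\in V$, which is not guaranteed by the hypothesis that $M$ is subordinate to $V$ (e.g.\ for a free action $V$ may have no trivial summand); so as written you prove the lemma for $V\oplus\R$ rather than $V$. You flag this, and it is indeed harmless in Proposition~\ref{prop:equivariant_neat} and by the ``independence of $V$'' discussion, but the paper's argument avoids the issue entirely. Second, you check only injectivity, whereas an embedding here means a smooth embedding; you should also verify that $D\iota$ is injective. This does hold for your map --- the third coordinate forces $d\phi(v)=0$, the first then forces $d\pi(v)=0$ where $\phi>0$, and the second reduces to $dj(v)=0$ --- and compactness of $M$ then upgrades the injective immersion to an embedding, but the check should be recorded.
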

  A key ingredient in the proof is the equivariant version of Whitney embedding theorem, due to Mostow and Palais.
  \begin{theorem}[Mostow-Palais Theorem \expandafter{\cite[Corollary 1.10]{wasserman_equivariant_1969}}]\label{thm_mostowpalais}
    If $M$ is in $\mathcal{G}(V)$ and the (real) dimension of $M$ is $n$, then any $G$-equivariant map
    \[f: M \longrightarrow V^t\]
    can be uniformly $C^k$-approximated by an equivariant immersion if $t \geq 2n$, and by an equivariant 1--1 immersion if $t \geq 2n+1$.
    Moreover, if $A$ is a closed subset of $M$ and $f\mid_A$ is a 1--1 immersion, then the approximation may be chosen in such a way that it agrees with $f$ on $A$.
  \end{theorem}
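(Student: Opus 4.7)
The plan is to reduce the equivariant statement to the classical Whitney approximation theorems, combining the Slice Theorem (Theorem~\ref{thm:slice}) with an averaging argument, in the spirit of Mostow's original proof. The first step is to construct an auxiliary $G$-equivariant $1$-$1$ immersion $h\colon M \to V^N$ for some $N$. Since $M$ is compact and subordinate to $V$, the Slice Theorem yields a finite cover of $M$ by invariant tubes $\mathcal{U}_i \cong G \times_{H_i} W_i$, each of which embeds $G$-equivariantly into some $V^{s_i}$. Averaging a non-equivariant partition of unity subordinate to this cover over the finite group $G$ yields an equivariant partition of unity; summing the tube embeddings weighted by these bumps produces $h$.

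Next, thicken $f$ by setting $F_\epsilon = (f, \epsilon h)\colon M \to V^t \oplus V^N$. This is a $G$-equivariant embedding for every $\epsilon > 0$ and approximates $(f, 0)$ uniformly in $C^k$ as $\epsilon \to 0$. It then remains to project $F_\epsilon$ back into $V^t$ via a $G$-equivariant linear map $\pi\colon V^t \oplus V^N \to V^t$ close to the canonical projection $\mathrm{pr}_1$, in such a way that $\pi \circ F_\epsilon$ remains an immersion (respectively, an injective immersion). Since $\pi \circ F_\epsilon \to f$ as $\epsilon \to 0$ and $\pi \to \mathrm{pr}_1$, the desired uniform $C^k$-approximation will follow from a sufficiently small choice of the two parameters.

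The core of the argument is showing that a generic equivariant $\pi$ preserves the immersion/embedding property as soon as $t \geq 2n$ (resp.\ $t \geq 2n+1$). Stratify $M$ by orbit type $M_{(H)}$, and apply the classical Whitney transversality argument on each smooth manifold $M_{(H)}/N_G(H)$, whose real dimension is at most $n$. For fixed $\epsilon$, the set of $G$-equivariant $\pi$ that fail to make $\pi \circ F_\epsilon$ an immersion (respectively, injective) on a given stratum has positive codimension in the space of equivariant linear maps exactly in the claimed range, by the standard Sard/Whitney dimension count applied on the quotient. Taking the union over the finitely many orbit types still leaves an open dense set of good equivariant projections, and any such $\pi$ (possibly after $G$-averaging) provides the required perturbation.

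For the relative version, replace $h$ by $\chi \cdot h$, where $\chi$ is a $G$-invariant cutoff vanishing on a neighborhood of $A$ and equal to $1$ outside a slightly larger neighborhood. Then $F_\epsilon|_A = (f|_A, 0)$, so any $\pi$ close to $\mathrm{pr}_1$ agrees with $f$ on $A$; the transversality argument is carried out only away from $A$, where the immersion/injectivity property must still be created. The main obstacle will be the equivariant transversality step: one has to verify stratum by stratum that generic perturbations \emph{within} the linear space of $G$-equivariant maps $V^t \oplus V^N \to V^t$ still realize the Whitney bound, rather than only generic perturbations among all linear maps. This is where the finiteness of $G$ simplifies matters, since averaging over $G$ is smooth, algebraic, and preserves the open-dense nature of the genericity condition, so one can first perturb freely and then symmetrize without destroying the transversality already achieved.
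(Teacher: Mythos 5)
This statement is not proved in the paper at all: it is quoted verbatim from Wasserman (the cited Corollary~1.10 of \emph{Equivariant differential topology}), so there is no in-paper argument to compare yours against. Judged on its own, your proposal follows the standard thicken-and-project strategy that Mostow and Wasserman themselves use: build an auxiliary equivariant $1$--$1$ immersion $h$ from slices and an averaged partition of unity, pass to $F_\epsilon=(f,\epsilon h)$, and then cut the target back down to $V^t$ by a generic equivariant linear projection. That skeleton is the right one, and the relative version via a $G$-invariant cutoff is also handled correctly.

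The genuine gap is in the step you yourself flag as ``the core of the argument.'' First, the fallback you offer --- perturb among \emph{all} linear projections and then symmetrize by averaging over $G$ --- does not work: the $G$-average of an injective (or immersive) composition need not be injective or immersive, so transversality achieved non-equivariantly is in general destroyed by symmetrization. The dimension count must therefore be carried out directly inside $\Hom_G(V^{t+N},V^t)$, and this is precisely where the content of the theorem lives. By Schur's lemma this space decomposes over the isotypic components of $V$, and the hypotheses are stated in terms of the \emph{number of copies} $t$ of $V$, not the real dimension $t\dim_\R V$; so the classical Whitney count on the quotient strata $M_{(H)}/N_G(H)$ does not directly give the bounds $t\ge 2n$ and $t\ge 2n+1$. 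One has to show, for each pair of points (distinguishing pairs in the same orbit from pairs in different orbits, and using the isotropy representations $T_xM$ as $G_x$-modules) and for each tangent vector, that the evaluation map from the space of equivariant projections is submersive onto a complement of the bad locus; the hypothesis that $M$ is subordinate to $V$ is exactly what guarantees this, and in your sketch it is used only to construct $h$, not where it is actually needed. As written, the key genericity claim is asserted rather than proved, and the proposed workaround for it is false.
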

  \begin{proof}[Proof of Lemma~\ref{lem:equivversion}]
    Let $U$ be a $G$-equivariant collar neighborhood of $\partial M$.
    Using Proposition~\ref{prop:Gident} we identify $U$ with a subset $Z\subset T_{\partial M}M$ via a $G$-equivariant diffeomorphism $\psi\colon U\to Z$ that takes $\partial M$ to the zero section of $T_{\partial M}M$. 
    The $G$-bundle $T_{\partial M} M$ is a subbundle of a trivial bundle $V^{d_0}$ for some $d_0>0$.
    Then $\iota_\partial$ can be extended to $U$ via the composition
    \[Z\xrightarrow{\psi} T_{\partial M} M\hookrightarrow V^{d_0}\times \partial M\xrightarrow{id\times\iota_\partial} V^{d_0}\times V^{d}.\]
    Suppose $d_0+d>2\dim M$, otherwise increase $d_0$.
    Extend the embedding of $Z$ to a smooth $G$-equivariant map $\wt{\iota}\colon M\to V^{d_0}\times V^d$.
    Then, $\wt{\iota}$ can be perturbed to an equivariant embedding by the Mostow-Palais Theorem (Theorem~\ref{thm_mostowpalais}).
    By the second part of this theorem, we can keep $\wt{\iota}$ equal to $\iota$ in a neighborhood of $\partial M$.
  \end{proof}

  \section{Permutohedra}\label{sec:permutohedra}
  \subsection{The construction}\label{sub:permutstart}
  We refer to~\cite[Chapter 1]{toric_topology} for general properties of permutohedra.

  \begin{definition}\label{def:permuto}
    Choose a strictly increasing sequence $\cS=\{s_1,\ldots,s_r\}$ of positive integers.
    The \emph{permutohedron} $\Pi_\cS$ is the convex hull in $\R^r$ of the set of points
    \[(s_{\sigma(s_1)},\ldots,s_{\sigma(s_r)}),\]
    where $\sigma$ runs through all permutations of the set $\cS$.
  \end{definition}
  We write $\Pi_{r-1}$ for the permutohedron in the special case when $\cS=\{1,\ldots,r\}$.
  The subscript is $r-1$ and not $r$, because  $\dim\Pi_{r-1}=r-1$.
  \begin{example}\label{ex:basicpermuto}
    A permutohedron for $\cS=\{1,2,3\}$ is depicted in Figure~\ref{fig:permuto}.
  \end{example}
  \begin{figure}
    \begin{tikzpicture}

      \foreach \i in {1,2,3} { \draw (\i,1.05,1) -- (\i,0.95,1); \draw (1,\i,1.05) -- (1,\i,0.95); \draw  (1.05,1,\i)--(0.95,1,\i);}
      \draw (1,1,1) -- (1,1,3) -- (1,3,3) -- (1,3,1) -- (3,3,1) -- (3,1,1) -- (3,1,3) -- (3,3,3) -- (1,3,3);
      \draw (3,1,1,) -- (1,1,1) -- (1,3,1);
      \draw (3,3,3) -- (3,3,1);
      \draw (1,1,3) -- (3,1,3);

      \draw[fill=blue!30,opacity=0.3] (1,2,3)--(1,3,2)--(2,3,1)--(3,2,1)--(3,1,2)--(2,1,3) -- (1,2,3);
      \draw[fill=black] (1,2,3) circle (0.05) ++ (-0.5,0,0) node [scale=0.7] {$(1,2,3)$};
      \draw[fill=black] (1,3,2) circle (0.05) ++ (-0.2,0.2,0) node [scale=0.7] {$(1,3,2)$};
      \draw[fill=black] (2,1,3) circle (0.05) ++ (0,-0.2,0) node [scale=0.7] {$(2,1,3)$};
      \draw[fill=black] (2,3,1) circle (0.05) ++ (0,0.2,0) node [scale=0.7] {$(2,3,1)$};
      \draw[fill=black] (3,1,2) circle (0.05) ++ (0.5,0,0) node [scale=0.7] {$(3,1,2)$};
      \draw[fill=black] (3,2,1) circle (0.05) ++ (0.5,0,0) node [scale=0.7] {$(3,2,1)$};

    \end{tikzpicture}
    \caption{A permutohedron with $\cS=\{1,2,3\}$.}\label{fig:permuto}
  \end{figure}

  To describe the faces of a permutohedron we first set
  \begin{equation}\label{eq:taudef}
    \tau_{i}=\sum_{j=1}^{i} s_{j}.
  \end{equation}

  \begin{lemma}[see \expandafter{\cite[Theorem 1.5.7]{toric_topology}}]\label{lem:faces}
    The permutohedron $\Pi_\cS$ is given by the equation $\sum x_i=\tau_{r}$ and the set of inequalities
    \[\sum_{i\in\cP} x_i\ge \tau_{|\cP|},\]
    where $\cP$ runs through all non-empty proper subsets of $\{1,2,\ldots,r\}$.
  \end{lemma}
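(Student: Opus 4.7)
The plan is to establish the two inclusions separately. Let $P_\cS$ denote the polytope defined by the equation $\sum x_i = \tau_r$ together with the inequalities $\sum_{i \in \cP} x_i \ge \tau_{|\cP|}$ for all non-empty proper $\cP \subsetneq \{1,\dots,r\}$. The goal is to show $\Pi_\cS = P_\cS$.

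For the inclusion $\Pi_\cS \subseteq P_\cS$, it suffices to verify the defining constraints on each vertex, since both the equation and the inequalities are preserved under convex combinations. If $\sigma$ is a permutation and $v_\sigma = (s_{\sigma(1)},\dots,s_{\sigma(r)})$, then $\sum_i (v_\sigma)_i = \sum_i s_i = \tau_r$, and for any subset $\cP$ with $|\cP|=k$, the sum $\sum_{i \in \cP} (v_\sigma)_i$ is the sum of some $k$ of the values $s_1 < s_2 < \dots < s_r$, which is minimized by picking the $k$ smallest; this minimum is exactly $\tau_k$.

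For the reverse inclusion $P_\cS \subseteq \Pi_\cS$, I would argue by induction on $r$. The base case $r=1$ is trivial, as both sides reduce to the single point $\{s_1\}$. For the inductive step, it is enough to show that every vertex of $P_\cS$ is a permutation point $v_\sigma$. Given a vertex $v$, consider the collection $\cF(v)$ of non-empty proper subsets $\cP$ at which the inequality is tight, i.e.\ $\sum_{i \in \cP} v_i = \tau_{|\cP|}$. The key combinatorial step is to show that $\cF(v)$ forms a chain under inclusion: if $\cP, \cP' \in \cF(v)$ were incomparable, then the submodularity-type identity obtained from $\cP\cup\cP'$ and $\cP\cap\cP'$ would yield $\sum_{i\in\cP\cup\cP'} v_i + \sum_{i\in\cP\cap\cP'} v_i = 2\tau_{|\cP|}$ (say, with $|\cP|=|\cP'|$), contradicting either an inequality or the strict increase of the $s_i$. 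A dimension count then forces $\cF(v)$ to be a maximal chain $\emptyset \subsetneq \cP_1 \subsetneq \cP_2 \subsetneq \dots \subsetneq \cP_{r-1} \subsetneq \{1,\dots,r\}$ with $|\cP_k|=k$; unraveling the equalities $\sum_{i\in\cP_k} v_i = \tau_k$ determines $v_i = s_k$ whenever $i \in \cP_k \setminus \cP_{k-1}$, exhibiting $v$ as a permutation point.

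The main obstacle is the chain property of $\cF(v)$, which is the substantive combinatorial content; once it is in hand, the rest of the reverse inclusion is a direct unfolding of the tight equalities. An alternative way to organize the induction, which I would use to double-check, is to pick any facet of $P_\cS$ corresponding to a tight $\cP$ of size $k$, observe that $P_\cS \cap \{\sum_{i \in \cP} x_i = \tau_k\}$ splits as a product $P_{\{s_1,\dots,s_k\}} \times P_{\{s_{k+1},\dots,s_r\}}$ (after relabeling coordinates), and apply the inductive hypothesis to each factor to recover that facet as a product of lower-dimensional permutohedra inside $\Pi_\cS$.
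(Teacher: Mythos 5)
Your proposal is correct in outline, and it is the standard argument for this classical fact. Note that the paper itself gives no proof of Lemma~\ref{lem:faces}: it simply cites Buchstaber--Panov, so there is no in-paper argument to compare against; your self-contained proof is therefore a genuine addition rather than a variant. Two small points you should make explicit when writing it up. First, to pass from ``every vertex of $P_\cS$ is a permutation point'' to $P_\cS\subseteq\Pi_\cS$ you need $P_\cS$ to be bounded, so that it equals the convex hull of its vertices; this is immediate, since taking $\cP=\{j\}$ gives $x_j\ge s_1$ and taking $\cP=\{1,\dots,r\}\setminus\{j\}$ together with the equation gives $x_j\le s_r$. Second, the uncrossing step should be stated for tight sets of possibly different sizes: from modularity of the linear functional one gets $\tau_{|\cP|}+\tau_{|\cP'|}=\sum_{i\in\cP\cup\cP'}v_i+\sum_{i\in\cP\cap\cP'}v_i\ge\tau_{|\cP\cup\cP'|}+\tau_{|\cP\cap\cP'|}$, and this contradicts the strict convexity of $k\mapsto\tau_k$ (equivalently, the strict increase of $s_1<\dots<s_r$) whenever $\cP$ and $\cP'$ are incomparable, including the edge cases $\cP\cap\cP'=\emptyset$ (with $\tau_0=0$) and $\cP\cup\cP'=\{1,\dots,r\}$. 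With those details filled in, the dimension count forcing a maximal chain with $|\cP_k|=k$ and the unravelling $v_i=\tau_k-\tau_{k-1}=s_k$ for $i\in\cP_k\setminus\cP_{k-1}$ complete the proof exactly as you describe; your alternative facet-splitting check is consistent with Lemma~\ref{lem:codimk}.
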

  From Lemma~\ref{lem:faces} we deduce the following fact~\cite[Section 2]{LLS_long}.
  \begin{lemma}\label{lem:codimk}
    For any proper subset $\cP$ of $\{1,\ldots,r\}$ the intersection of $\Pi_{\cS}$ with the hyperplane $\sum_{i\in\cP} x_i=\tau_{|\cP|}$ defines a facet of $\Pi_{\cS}$, which is diffeomorphic to $\Pi_{\cS_{1}}\times\Pi_{\cS_{2}}$, where $\cS_{1}=\{s_1,\ldots,s_{|\cP|}\}$ and $\cS_{2}=\cS\setminus\cS_{|\cP|}=\{s_{|\cP|+1},\ldots,s_r\}$.
  \end{lemma}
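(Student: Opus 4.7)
The plan is to realize the claimed face as an affine image of $\Pi_{\cS_1} \times \Pi_{\cS_2}$. First I would check that $H_\cP := \{x \in \R^r : \sum_{i \in \cP} x_i = \tau_{|\cP|}\}$ is a supporting hyperplane for $\Pi_\cS$: Lemma~\ref{lem:faces} provides the inequality $\sum_{i \in \cP} x_i \geq \tau_{|\cP|}$ throughout $\Pi_\cS$, and the locus where equality holds is exactly $F := \Pi_\cS \cap H_\cP$, so $F$ is a face of $\Pi_\cS$.

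Next I would identify the vertices of $F$ together with a candidate diffeomorphism. A vertex $(s_{\sigma(1)},\ldots,s_{\sigma(r)})$ of $\Pi_\cS$ lies on $F$ if and only if $\{s_{\sigma(i)} : i \in \cP\}$ consists of the $|\cP|$ smallest elements of $\cS$, since $\tau_{|\cP|}$ is the minimum possible value of a sum of $|\cP|$ distinct elements of $\cS$. Such vertices are parameterized by pairs of bijections $\sigma_1 \colon \cP \to \cS_1$, $\sigma_2 \colon \{1,\ldots,r\} \setminus \cP \to \cS_2$, matching the vertex count of $\Pi_{\cS_1} \times \Pi_{\cS_2}$. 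Using the order-preserving bijections $\psi_1 \colon \cP \to \{1,\ldots,|\cP|\}$ and $\psi_2 \colon \{1,\ldots,r\} \setminus \cP \to \{1,\ldots,r-|\cP|\}$, the candidate diffeomorphism is
\[
\Psi \colon F \longrightarrow \R^{|\cP|} \times \R^{r-|\cP|}, \qquad \Psi(x) = \bigl((x_{\psi_1^{-1}(k)})_k,\,(x_{\psi_2^{-1}(l)})_l\bigr),
\]
which is the restriction to $F$ of a linear isomorphism between the relevant affine planes.

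I would then check $\Psi(F) = \Pi_{\cS_1} \times \Pi_{\cS_2}$ by transporting Lemma~\ref{lem:faces}'s inequalities across $\Psi$. For $\emptyset \neq \mathcal{Q} \subsetneq \cP$, the $\Pi_\cS$-inequality $\sum_{i \in \mathcal{Q}} x_i \geq \tau_{|\mathcal{Q}|}$ passes directly to the defining inequality for $\Pi_{\cS_1}$ at $\psi_1(\mathcal{Q})$. For $\emptyset \neq \mathcal{R} \subsetneq \{1,\ldots,r\} \setminus \cP$, combining $\sum_{i \in \cP \cup \mathcal{R}} x_i \geq \tau_{|\cP|+|\mathcal{R}|}$ with the equality $\sum_{i \in \cP} x_i = \tau_{|\cP|}$ on $F$ produces $\sum_{i \in \mathcal{R}} x_i \geq s_{|\cP|+1} + \cdots + s_{|\cP|+|\mathcal{R}|}$, precisely the defining inequality for $\Pi_{\cS_2}$ at $\psi_2(\mathcal{R})$.

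The main obstacle is verifying that the mixed $\Pi_\cS$-inequalities attached to subsets $\mathcal{Q} \cup \mathcal{R}$ with $\emptyset \neq \mathcal{Q} \subsetneq \cP$ and $\emptyset \neq \mathcal{R} \subsetneq \{1,\ldots,r\}\setminus\cP$ impose no additional constraints on $F$. This should follow from adding the two inequalities established in the previous paragraph: their sum gives $\sum_{i \in \mathcal{Q} \cup \mathcal{R}} x_i \geq \tau_{|\mathcal{Q}|} + (\tau_{|\cP|+|\mathcal{R}|} - \tau_{|\cP|})$, and since $|\mathcal{Q}| < |\cP|$ with $\cS$ strictly increasing, the termwise comparison $s_{|\mathcal{Q}|+j} \leq s_{|\cP|+j}$ for $1 \leq j \leq |\mathcal{R}|$ shows that this bound dominates $\tau_{|\mathcal{Q}|+|\mathcal{R}|}$. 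Hence the mixed inequality is automatic on $F$, and $\Psi$ restricts to a linear—therefore smooth—bijection $F \to \Pi_{\cS_1} \times \Pi_{\cS_2}$ between polytopes of the same dimension, yielding the desired diffeomorphism of manifolds with corners.
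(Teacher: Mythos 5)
Your proposal is correct and follows the same route as the paper's (sketched) proof: the paper defines exactly the same coordinate-splitting map $(x_1,\ldots,x_r)\mapsto\bigl((x_i)_{i\in\cP},(x_i)_{i\notin\cP}\bigr)$ and declares the verification via Lemma~\ref{lem:faces} routine, which is precisely the verification you carry out. The only cases you do not state explicitly are the mixed subsets with $\mathcal{R}$ equal to the whole complement of $\cP$, but these follow by the same termwise comparison using the equality $\sum_{i\notin\cP}x_i=s_{|\cP|+1}+\cdots+s_r$ in place of the $\Pi_{\cS_2}$-inequality, so there is no real gap.
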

  \begin{example}[Example~\ref{ex:basicpermuto} continued]
    For $\cS=\{1,2,3\}$ and $\cP=\{1,2\}$, the facet is given by $x_1+x_2=3$.
    This is a product of a one-dimensional permutohedron $\Pi_1$ spanned by $(1,2)$ and $(2,1)$ in the $(x_1,x_2)$-coordinates, and a zero-dimensional permutohedron $\Pi_{\{3\}}$ given by $\{x_3=3\}\subset\R$.
    For $\cP=\{3\}$ we obtain the opposite facet of the hexagon.
    It is given by $\{x_3=1\}$, $\{x_1+x_2=5\}$.  See Figure~\ref{fig:permutoface}.
  \end{example}
  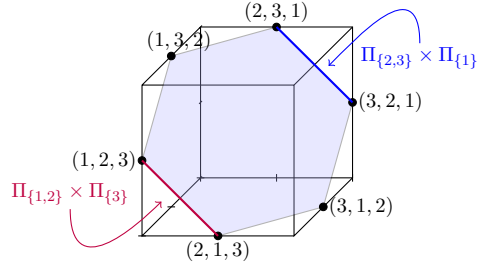
\begin{figure}
    \begin{tikzpicture}
      \foreach \i in {1,2,3} { \draw (\i,1.05,1) -- (\i,0.95,1); \draw (1,\i,1.05) -- (1,\i,0.95); \draw  (1.05,1,\i)--(0.95,1,\i);}
      \draw (1,1,1) -- (1,1,3) -- (1,3,3) -- (1,3,1) -- (3,3,1) -- (3,1,1) -- (3,1,3) -- (3,3,3) -- (1,3,3);
      \draw (3,1,1,) -- (1,1,1) -- (1,3,1);
      \draw (3,3,3) -- (3,3,1);
      \draw (1,1,3) -- (3,1,3);
      \draw[fill=blue!30,opacity=0.3] (1,2,3)--(1,3,2)--(2,3,1)--(3,2,1)--(3,1,2)--(2,1,3) -- (1,2,3);
      \draw[fill=black] (1,2,3) circle (0.05) ++ (-0.5,0,0) node [scale=0.7] {$(1,2,3)$};
      \draw[fill=black] (1,3,2) circle (0.05) ++ (0.10,0.2,0) node [scale=0.7] {$(1,3,2)$};
      \draw[fill=black] (2,1,3) circle (0.05) ++ (0,-0.2,0) node [scale=0.7] {$(2,1,3)$};
      \draw[fill=black] (2,3,1) circle (0.05) ++ (0,0.2,0) node [scale=0.7] {$(2,3,1)$};
      \draw[fill=black] (3,1,2) circle (0.05) ++ (0.5,0,0) node [scale=0.7] {$(3,1,2)$};
      \draw[fill=black] (3,2,1) circle (0.05) ++ (0.5,0,0) node [scale=0.7] {$(3,2,1)$};
      \draw[thick,purple] (1,2,3) -- (2,1,3);
      \draw[purple] (-1.1,0.4) node [scale=0.7] {$\Pi_{\{1,2\}}\times \Pi_{\{3\}}$};
      \draw[blue]  (3.5,2.2) node [scale=0.7] {$\Pi_{\{2,3\}}\times \Pi_{\{1\}}$};
      \draw[purple,->] (-1.1,0.2) .. controls ++(270:1) and ++(225:0.5) .. (0.1,0.3);
      \draw[blue,->] (3.5,2.4) .. controls ++ (90:1) and ++ (45:0.5) .. (2.3,2.1);
      \draw[thick,blue] (3,2,1) -- (2,3,1);
    \end{tikzpicture}
    \caption{Illustration of Better Example B.6}\label{fig:permutoface}
  \end{figure}
  \begin{proof}[Sketch of proof of Lemma~\ref{lem:codimk}]
    If $\cP=\{a_1,\ldots,a_{|\cP|}\}$ and $\{1,\ldots,r\}\setminus\cP=\{b_1,\ldots,b_{r-|\cP|}\}$, the diffeomorphism takes the element $(x_1,\ldots,x_r)$ to an element $(x_{a_1},\ldots,x_{a_{|\cP|}})\times(x_{b_1},\ldots,x_{b_{r-|\cP|}})\in\R^{|\cP|}\times\R^{r-|\cP|}$.
    It is routine to verify, using Lemma~\ref{lem:faces}, that the image is indeed equal to $\Pi_{\cS_{1}}\times\Pi_{\cS_{2}}$.
  \end{proof}
  \begin{definition}
    The facet corresponding to $\cP$ is denoted by $\Pi_{\cP,\cS\setminus\cP}$ (often simply $\Pi_{\cP}$) and it is called the \emph{facet associated with subset $\cP$}.
  \end{definition}
  In the following corollary we use the notation of the proof of Lemma~\ref{lem:codimk}.
  \begin{corollary}\label{lem:codimcorol}
    For a proper subset $\cP\subset\{1,\ldots,r\}$ the facet $\Pi_{\cP,\cS\setminus\cP}$ is contained in the subset of \(\Pi_{r-1}\) consisting of points such that
    \begin{align*}
      \sum_{i=1}^{|\cP|} x_{a_i} & \le s_{|\cP|},\\
      \sum_{i=1}^{r-|\cP|} x_{b_{i}} &\ge s_{|\cP|+1}.
    \end{align*}
  \end{corollary}

  From Lemma~\ref{lem:codimk} we can obtain an inductive description of codimension $k$ faces of $\Pi_{\cS}$.
  They correspond to partitions $\fp=(\cP_1,\ldots,\cP_{k+1})$ of $\{1,\ldots,r\}$ into $k+1$ non-empty subsets $\cP_1,\ldots,\cP_{k+1}$.
  Each such face, denoted by $\Pi_{\cP_1,\ldots,\cP_{k+1}}$ or, in short $\Pi_{\mathfrak{p}}$, is a product of $(k+1)$ permutohedra $\Pi_{\cS_1}\times\Pi_{\cS_2}\times\cdots\times\Pi_{\cS_{k+1}}$, where
  \[\cS_j=\{s_t\colon |\cP_1|+\cdots+|\cP_{j-1}|<t\leq |\cP_{1}|+\cdots+|\cP_{j}|\}.\]
  In particular if we set
  \[\partial_{i} \Pi_{\mathcal{S}} = \bigsqcup_{\stackrel{\mathcal{P} \subset \mathcal{S}}{|\mathcal{P}|=i}} \Pi_{(\mathcal{P},\mathcal{S} \setminus \mathcal{P})},\]
  for $1 \leq i \leq r-1$, $\Pi_{\cS}$ becomes an $(r-1)$-dimensional $\langle r-1\rangle$-manifold.

  \begin{remark}
    For consistency of the notation, we observe that the interior of $\Pi_{\cS}$ corresponds to the trivial partition $\fp$ of $\{1,\ldots,r\}$ into a single subset.
  \end{remark}
  The following notion is intuitive.
  \begin{definition}\label{def:refinereduce}
    Let $\fp=(\cP_1,\ldots,\cP_{k+1})$ be a partition of $\{1,\ldots,r\}$.
    A \emph{refinement} of $\fp$ is a partition $(\wt{\cP}_1,\ldots,\wt{\cP}_{\wt{k}+1})$ such that there exist indices $0=a_0<a_1<a_2<\cdots<a_k=\wt{k}+1$ with the property that $\wt{\cP}_{a_{i-1}+1}\cup\wt{\cP}_{a_{i-1}+2}\cup\ldots\cup\wt{\cP}_{a_i-1}=\cP_i$ for $i=1,\ldots,k$.
  \end{definition}
  It is clear that if $\fp$ and $\fp'$ are two partitions of $\{1,\dots,r\}$, then $\Pi_{\fp'}\subset\Pi_{\fp}$ if and only if $\fp'$ is a refinement of $\fp$.
  \subsection{Intersecting a permutohedron with a hyperplane}
  We describe the intersection of a permutohedron with hyperplanes given by sets of equations $\{x_{a_i}=x_{b_i}\}$.
  It turns out that this intersection is a lower-dimensional permutohedron.
  The key statement in this section is Proposition~\ref{prop:permutinter}, which identifies the intersection of a permutohedron $\Pi_{\mathcal{S}}\cap L$ with $\Pi_{s_1,\ldots,s_{r-1}}$.
  The identification of Proposition~\ref{prop:permutinter} is such that the combinatorial structure of the boundary is preserved.
  In order to spell this control over the combinatorial structure, we need to introduce a simple notion.
  \begin{definition}
    Let $\fp=(\cP_1,\dots,\cP_s)$ be a partition of $\{1,\dots,r\}$. Let $1\le b\le r$. Assume that no $\cP_i$ is equal to the singleton $\{b\}$. 
    A \emph{reduction} of $\fp$ with respect of $b$ is
    a partition $\fp^{b}=(\cP_1^b,\dots,\cP_s^b)$ of $\{1,\dots,r-1\}$, where
    \begin{itemize}
    \item if $x\in\cP_i$ and $x<b$, then $x\in\cP^b_i$;
    \item if $x\in\cP_i$ and $x>b$, then $x-1\in\cP^b_i$.
    \end{itemize}
    If $B=\{b_1,\dots,b_\ell\}$ is a finite subset of $\{1,\dots,r\}$ and $\fp$ is a partition such that no $\cP_i$ is a subset of $B$, the \emph{reduction} of $\fp$ with respect to $B$ is a partition $\fp^B$ obtained as a subsequent reduction of $\fp$ by the elements $b_i$, starting from the largest element, then taking the second largest and so on.
  \end{definition}

  The following result can be deduced from the cubical decomposition of a permutohedron, see~\cite[Section 3.4]{LLS_long}.
  We give a self-contained proof of that result. 
  \begin{proposition}\label{prop:permutinter}
    Let $L$ be a hyperplane in $\R^r$ given by $\{x_a=x_b\}$ for some $a\neq b$.
    Let $\Pi_{\cS}$ be a permutohedron in $\R^r$ for some (strictly) increasing sequence $\cS=(s_1,\ldots,s_r)$.
    Consider
    \begin{equation}\label{eq:pih}
      \Pi_L=\Pi_{\cS}\cap L.
    \end{equation}
    Then there exists a diffeomorphism $\psi$ between $\Pi_{L}$ and $\Pi_{(s_1,\ldots,s_{r-1})}$.
    Moreover, if $\fp=(\cP_1,\dots,\cP_{k+1})$ is a partition of $\{1,\ldots,r\}$ then
    \begin{itemize}
    \item if $a,b$ do not belong to the same subset of the partition, then $\Pi_{\fp}\cap L$ is empty;
    \item if $a,b$ belong to the same subset of the partition, then $\Pi_{\fp}\cap L$ is mapped by $\psi$ diffeomorphically to $\Pi_{\fp^b}$.
    \end{itemize}
  \end{proposition}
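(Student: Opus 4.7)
I would prove Proposition~\ref{prop:permutinter} by induction on $r$. By the $S_r$-symmetry of $\Pi_\mathcal{S}$ (which permutes coordinates and correspondingly relabels partitions), I may assume without loss of generality that $a = r-1$ and $b = r$. The base case $r = 2$ is trivial since both $\Pi_L$ and $\Pi_{(s_1)}$ are single points.

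For the inductive step, I would first verify the emptiness claim. If $a \in \mathcal{P}_i$ and $b \in \mathcal{P}_j$ with $i < j$ in $\mathfrak{p} = (\mathcal{P}_1, \ldots, \mathcal{P}_{k+1})$, then Example~\ref{ex:faces-of-the-permutohedron} realizes $\Pi_\mathfrak{p}$ as a product $\Pi_{\mathcal{S}_1} \times \cdots \times \Pi_{\mathcal{S}_{k+1}}$, where the $\ell$-th block takes values in $\mathcal{S}_\ell = \{s_{m_{\ell-1}+1}, \ldots, s_{m_\ell}\}$ with $m_\ell = |\mathcal{P}_1| + \cdots + |\mathcal{P}_\ell|$. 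Consequently, on $\Pi_\mathfrak{p}$ one has $x_a \leq s_{m_i} < s_{m_i+1} \leq s_{m_{j-1}+1} \leq x_b$, forcing $x_a < x_b$, so $\Pi_\mathfrak{p} \cap L = \emptyset$.

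To construct the diffeomorphism $\psi$, I would proceed inductively on facets. By the emptiness claim, the facets of $\Pi_L$ are exactly those $F_\mathcal{P} = \Pi_{\mathcal{P}, \bar{\mathcal{P}}} \cap L$ with $\{a,b\} \subset \mathcal{P}$ or $\{a,b\} \cap \mathcal{P} = \emptyset$. In either case Lemma~\ref{lem:codimk} exhibits $\Pi_{\mathcal{P}, \bar{\mathcal{P}}}$ as a product of two permutohedra, and the hyperplane $L$ restricts to an ``$x_a = x_b$''-type hyperplane in exactly one of the two factors (the one containing both $a$ and $b$). The inductive hypothesis identifies that intersection with a lower-dimensional permutohedron; combined with the observation that any two permutohedra of the same dimension are diffeomorphic as manifolds with corners (having combinatorially equivalent face lattices), this yields a diffeomorphism between $F_\mathcal{P}$ and the facet $\Pi_{\mathcal{P}', \bar{\mathcal{P}'}}$ of $\Pi_{\tilde{\mathcal{S}}}$, where $\mathcal{P}'$ is the $b$-reduction of $\mathcal{P}$.

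The main difficulty is \emph{coherence}: the facet diffeomorphisms must agree on the shared sub-faces $\Pi_\mathfrak{q} \cap L$ coming from refinements $\mathfrak{q}$ of two different $(\mathcal{P}, \bar{\mathcal{P}})$. This is precisely what Lemma~\ref{lem:consistency} ensures, since the $b$-reduction of a partition commutes with refinement, so the reduction-induced face identification is well-defined at every stratum of the face lattice. With coherent boundary data in place, a standard collar-gluing argument (Proposition~\ref{prop:ident}) combined with the convexity of both polytopes extends the boundary diffeomorphism $\partial \Pi_L \to \partial \Pi_{\tilde{\mathcal{S}}}$ to an interior diffeomorphism $\psi \colon \Pi_L \xrightarrow{\cong} \Pi_{\tilde{\mathcal{S}}}$, and the required face correspondence $\Pi_\mathfrak{p} \cap L \mapsto \Pi_{\mathfrak{p}'}$ holds by construction.
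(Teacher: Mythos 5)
Your argument is correct, and it reaches the same face-correspondence statement through the same combinatorial inputs (the coordinate-sum inequalities of Lemma~\ref{lem:faces}/Corollary~\ref{lem:codimcorol} for emptiness, and Lemma~\ref{lem:consistency} for coherence), but the inductive scheme is genuinely different from the paper's. The paper fixes $r$ and inducts \emph{downward on the length of the partition} inside the single polytope $\Pi_{\cS}$: it computes the zero-dimensional strata $\Pi_{\fp}\cap L$ explicitly as the base case and then extends $\psi$ face by face, each extension being a boundary-to-interior extension over a convex cell. You instead induct on the ambient dimension $r$, exploit the product structure of facets from Lemma~\ref{lem:codimk} to reduce each facet $F_{\cP}$ to a lower-dimensional instance of the proposition, and then perform a single top-level extension. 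Your route gets the lower strata "for free" from the inductive hypothesis, at the cost of two extra ingredients the paper avoids: (i) the identification of combinatorially equivalent permutohedra of different parameter sequences (e.g.\ $\Pi_{(s_{|\cP|+1},\ldots,s_r)}$ versus $\Pi_{(s_{|\cP|},\ldots,s_{r-1})}$) as $\langle n\rangle$-manifolds, which is standard for simple polytopes but should be said once; and (ii) the bookkeeping that the $S_r$-relabeling used in your WLOG reduction conjugates the $b$-reduction of partitions correctly, so that the face correspondence is stated for the original $a,b$. Neither is a gap, and your version of the emptiness argument via the block-wise bounds $x_a\le s_{m_i}<s_{m_i+1}\le x_b$ is, if anything, slightly more carefully stated than the paper's.
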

  \begin{example}\label{ex:notsouseless}
    Let $r=3$  and $L=\{x_1=x_3\}$.
    The intersection of $L$ with $\Pi_{\cS}$ is an interval whose endpoints are $(\frac{s_1+s_2}{2},s_3,\frac{s_1+s_2}{2})$ and $(\frac{s_2+s_3}{2},s_1,\frac{s_2+s_3}{2})$.
    For $\cS=\{1,2,3\}$ this yields the segment connecting $(1.5,3,1.5)$ with $(2.5,1,2.5)$.
    See Figure~\ref{fig:permutointer}.
  \end{example}
  \begin{figure}
    \begin{tikzpicture}
      \draw[->] (0,0,0)--(4,0,0) node[at end,below,scale=0.7] {$x$};
      \foreach \i in {1,2,3} { \draw (\i,0.1,0) -- (\i,-0.1,0); \draw (0,\i,0.1) -- (0,\i,-0.1); \draw  (0.1,0,\i)--(-0.1,0,\i);}
      \draw[->] (0,0,0)--(0,4,0) node[at end,left,scale=0.7] {$y$};
      \draw[->] (0,0,0)--(0,0,4) node[at end,below,scale=0.7] {$z$};
      \draw[fill=blue!30,opacity=0.3] (1,2,3)--(1,3,2)--(2,3,1)--(3,2,1)--(3,1,2)--(2,1,3) -- (1,2,3);
      \draw[fill=black] (1,2,3) circle (0.05) ++ (-0.5,0,0) node [scale=0.7] {$(1,2,3)$};
      \draw[fill=black] (1,3,2) circle (0.05) ++ (0.05,0.2,0) node [scale=0.7] {$(1,3,2)$};
      \draw[fill=black] (2,1,3) circle (0.05) ++ (0,-0.2,0) node [scale=0.7] {$(2,1,3)$};
      \draw[fill=black] (2,3,1) circle (0.05) ++ (0,0.2,0) node [scale=0.7] {$(2,3,1)$};
      \draw[fill=black] (3,1,2) circle (0.05) ++ (0.5,0,0) node [scale=0.7] {$(3,1,2)$};
      \draw[fill=black] (3,2,1) circle (0.05) ++ (0.5,0,0) node [scale=0.7] {$(3,2,1)$};
      \draw[fill=red!30,opacity=0.1] (3,0,3) -- (3,3,3) -- (-3,3,-3) -- (-3,0,-3) -- (3,0,3);
      \draw[thick,green!70!black] (1.5,3,1.5) -- (2.5,1,2.5);
    \end{tikzpicture}
    \caption{Illustration of Proposition~\ref{prop:permutinter}.
      Observe that the plane $x_1=x_3$ intersects only two faces of the boundary: $\Pi_{\{1,3\}}\times\Pi_{\{2\}}$ and $\Pi_{\{2\}}\times\Pi_{\{1,3\}}$ in accordance with Lemma~\ref{lem:isempty}. }\label{fig:permutointer}
  \end{figure}
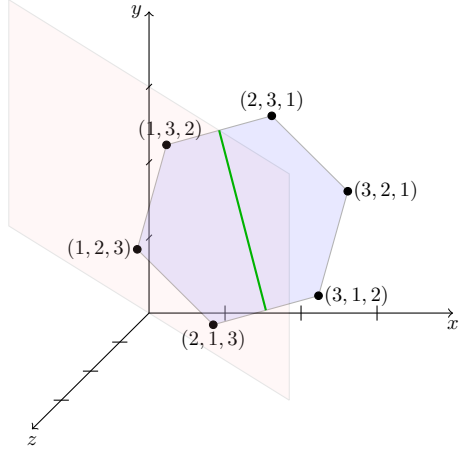
  \begin{proof}[Proof of Proposition~\ref{prop:permutinter}]

    The following lemma takes care of the first item in the statement of Proposition~\ref{prop:permutinter} and is needed for the proof of the second one.
    \begin{lemma}\label{lem:isempty}
      Suppose $\fp=(\cP_1,\ldots,\cP_{k+1})$ is a partition of $\{1,\ldots,r\}$.
      Then $\Pi_{\mathfrak{p}} \cap L$ is empty, unless there exists an index $i$ such that $a,b\in\cP_i$.
    \end{lemma}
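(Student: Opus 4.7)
My plan is to read off the range of each coordinate from the iterated face decomposition of the permutohedron and observe that, for a strictly increasing $\mathcal{S}$, the coordinates indexed by different blocks of $\mathfrak{p}$ lie in disjoint intervals, so an equality $x_a=x_b$ can only occur if $a$ and $b$ sit in the same block.

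More concretely, I would first apply Lemma~\ref{lem:codimk} iteratively (as in Example~\ref{ex:faces-of-the-permutohedron}) to identify
\[
\Pi_{\mathfrak{p}} \;\cong\; \Pi_{\mathcal{S}_1}\times\Pi_{\mathcal{S}_2}\times\cdots\times\Pi_{\mathcal{S}_{k+1}},
\]
where, writing $\tau_0=0$ and $\tau_i=|\mathcal{P}_1|+\cdots+|\mathcal{P}_i|$, the block $\mathcal{S}_i$ equals $\{s_{\tau_{i-1}+1},\dots,s_{\tau_i}\}$, and the factor $\Pi_{\mathcal{S}_i}$ corresponds precisely to the coordinates indexed by $\mathcal{P}_i$. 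Thus a point $(x_1,\dots,x_r)\in\Pi_{\mathfrak{p}}$ has the property that, for every $j\in\mathcal{P}_i$, the coordinate $x_j$ is a convex combination of the numbers in $\mathcal{S}_i$, so
\[
s_{\tau_{i-1}+1}\;\le\; x_j\;\le\; s_{\tau_i}.
\]

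The key step, and the only one that uses the hypothesis on $\mathcal{S}$, is now immediate: since the $s_\ell$ are strictly increasing, for indices $i<i'$ we have $s_{\tau_i}<s_{\tau_{i'-1}+1}$, so the intervals $[s_{\tau_{i-1}+1},s_{\tau_i}]$ and $[s_{\tau_{i'-1}+1},s_{\tau_{i'}}]$ are disjoint. Consequently, if $a\in\mathcal{P}_i$ and $b\in\mathcal{P}_{i'}$ with $i\ne i'$, then $x_a\ne x_b$ throughout $\Pi_{\mathfrak{p}}$, so $\Pi_{\mathfrak{p}}\cap L=\emptyset$. The contrapositive is exactly the statement of the lemma. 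I do not anticipate a serious obstacle here; the only bookkeeping to watch is making sure that the index bookkeeping between $\mathcal{P}_i$ and the block $\mathcal{S}_i$ (via $\tau_i$) is consistent with the identification given by Lemma~\ref{lem:codimk}, which is just a matter of applying that lemma $k$ times.
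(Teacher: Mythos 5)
Your proof is correct and takes essentially the same route as the paper: both arguments come down to showing that, because $\mathcal{S}$ is strictly increasing, the coordinates indexed by distinct blocks of $\mathfrak{p}$ are confined to disjoint ranges, so $x_a=x_b$ forces $a$ and $b$ into the same block. The only cosmetic difference is that you extract these ranges from the product decomposition $\Pi_{\mathfrak{p}}\cong\Pi_{\mathcal{S}_1}\times\cdots\times\Pi_{\mathcal{S}_{k+1}}$ (each factor being a convex hull of permutations of a contiguous segment of $\mathcal{S}$), whereas the paper reads them off from the defining inequalities via Corollary~\ref{lem:codimcorol}; both are valid and your bookkeeping with the $\tau_i$ is consistent with Lemma~\ref{lem:codimk}.
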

    \begin{proof}
      We argue by contradiction.
      Suppose, $a\in\cP_i$, $b\in\cP_j$ with $i\neq j$.
      We have an inclusion $\Pi_{\mathfrak{p}}\subset \Pi_{\cP_i,\cS\setminus\cP_i}$.

      By Corollary~\ref{lem:codimcorol}, if $x=(x_1,\ldots,x_r)\in\Pi_{\cP_i,\cS\setminus\cP_i}$, then $x_a\le s_{|\cP_i|}$ and $x_b\ge s_{|\cP_i|+1}$.
      Since $s_k$ is a strictly increasing sequence, we conclude that $x_a<x_b$, hence $\Pi_{\cP_i,\cS\setminus\cP_i}\cap L=\emptyset$.
      In particular $\Pi_{\mathfrak{p}}\cap L=\emptyset$.
    \end{proof}
    \noindent\emph{Continuation of the proof of Proposition~\ref{prop:permutinter}.}
    We will construct the isomorphism by induction, starting with the lowest dimension faces, i.e. vertices.
    
    For a partition $\fp$ of length $r$, $\Pi_{\fp}\cap L$ is empty by Lemma~\ref{lem:isempty}.
    Suppose $\fp$ is a length $r-1$ partition. Unless $a$ and $b$ belong to the same subset of the partition, $\Pi_{\fp}\cap L=\emptyset$.
    Consider the case, when $\{a,b\}$ subset $\cP_j$ for some $j=1,\dots,r-1$. Obviously $\cP_j=\{a,b\}$ and all other subsets $\cP_i$,
    consists of single elements, $\cP_i=\{p_i\}$ for some $p_i$ different than $a$ and $b$.
    By Corollary~\ref{lem:codimcorol}, $\Pi_{\fp}$ is given by
    \begin{align*}
      x_{p_i}&=i, \textrm{ if $i<j$} &
                                       x_{p_i}&=i+1, \textrm{ if $i>j$}.\\
      x_a+x_b&=j+(j+1),&
                         x_a,x_b&\in[j,j+1]
    \end{align*}
    The intersection of $\Pi_{\fp}$ with $x_a=x_b$ is clearly given by
    \[
      x_{p_i}=i,\textrm{ if $i<j$},\ \ 
      x_{p_i}=i+1,\textrm{ if $i>j$},\ \ 
      x_a=x_b=j+\frac12.
    \]
    Now $\fp^b$ is a partition such that all its subsets have one element, namely $\fp^b=(\cP^b_1,\dots,\cP^b_{r-1})$ with:
    $\cP^b_i=\{p_i\}$ for $i\neq j$ and $p_i<b$,
    $\cP^b_i=\{p_i-1\}$ for $i\neq j$ and $p_i>b$, and finally
    $\cP^b_j=\{a\}$.
    Therefore, $\Pi_{\fp^b}$ is a single point, given by $\{x_{p_i}=s_i\}$ if $p_i<b$, $\{x_{p_i-1}=s_i\}$ if $p_i>b$, and $x_a=s_j$.
    The map $\psi$ is a map between to points $\psi\colon \Pi_{\fp}\cap L\xrightarrow{\cong}\Pi_{\fp^b}$

    We pass now to the induction step.
    Suppose $\psi$ has been constructed for all boundary components of codimension at least $k+1$ and consider a partition $\fp=\cP_1\cup\ldots\cup\cP_{k+1}$.
    Then, $\Pi_{\fp}$ is a convex polytope whose boundary is a union of polytopes $\Pi_{\wt{\fp}}$ for all refinements $\wt{\fp}$ of $\fp$.
    Therefore, the intersection $\Pi_{\fp}\cap\{x_a=x_b\}$ is a convex polytope whose boundary is the union of $\Pi_{\wt{\fp}}\cap \{x_a=x_b\}$.

    By Lemma~\ref{lem:isempty}, for each such refinement $\wt{\fp}$, either $\Pi_{\wt{\fp}}\cap L$ is empty, or $\Pi_{\wt{\fp}}\cap L=\Pi_{\wt{\fp}^b}$.
    By the induction assumption, in the latter case the restriction of $\psi$
    \[\psi|_{\Pi_{\wt{\fp}}\cap L}\colon\Pi_{\wt{\fp}}\cap L\to\Pi_{\wt{\fp}^b}\]
    has already been constructed.
    This means that $\psi$ restricted to the boundary of $\Pi_{\fp}\cap L$ is an isomorphism onto the boundary of $\Pi_{\fp^b}$.
    Therefore, we can extend $\psi$ to a diffeomorphism
    \[\psi\colon\Pi_{\fp} \cap L \to \Pi_{\fp^b}.\]
  \end{proof}
  \begin{remark}
    The isomorphism $\psi$ constructed in the proof of Proposition~\ref{prop:permutinter} does not need to be affine.
    Two convex polytopes with the same combinatorics are not necessarily affine equivalent.
  \end{remark}
  Now we state a result on intersections of $\Pi_{\cS}$ with more than one hyperplane.
  \begin{theorem}\label{thm:generalpermut}
    Suppose $\Pi_{\cS}\subset \R^r$ is a permutohedron and $H$ is a linear subspace of $\R^r$ given by equations $\{x_{a_{11}}=\cdots=x_{a_{1k_1}}\}$, $\{x_{a_{21}}=\cdots=x_{a_{2k_2}}\}$, \dots, $\{x_{a_{w1}}=\cdots=x_{a_{wk_w}}\}$ with all the $a_{ij}$ pairwise distinct.
    Then, the intersection $\Pi_{\cS}\cap H$ is diffeomorphic to $\Pi'=\Pi_{(s_1,\ldots,s_{r-K})}$, where $K=\sum(k_i-1)=\codim H$.
    The diffeomorphism takes the face of $\Pi_{\cS}$ corresponding to a partition $\fp$ to the face corresponding to the reduction $\fp^B$, where
    \[B=\{a_{12},\dots,a_{1k_1},a_{22},\dots,a_{2k_2},\dots,a_{w2},\dots,a_{wk_w}\}.\]
  \end{theorem}
  \begin{proof}
    Apply inductively Proposition~\ref{prop:permutinter}.
  \end{proof}

  \subsection{Permutohedra, group actions and posets}\label{sub:combiposets}
  Choose a permutation $\sigma \in \Sn$ of order $m$.
  We can define an action of a cyclic group $\Z_{m}$ on $\R^{n}$ in terms of $\sigma$:
  \begin{equation}\label{eq:sigma-action}
    \bar{\sigma}(x_1,x_2,\ldots,x_n) = (x_{\sigma(1)}, x_{\sigma(2)}, \ldots, x_{\sigma(n)}).
  \end{equation}
  Since $\Pi_{n-1}$ is an invariant subset, $\Z_{m}$ acts on $\Pi_{n-1}$.

  We are now going to show that action $\bar{\sigma}$ endows $\Pi_{n-1}$ with a structure of a $\Z_m$-manifold, and we will compute its $\Z_m$-dimension.
  Let \(V_{\sigma}\) denote the \(\Z_{m}\)-representation induced by the action of \(\sigma\) on \(\R^{n}\).
  If \(\sigma\) is a product of \(p\) disjoint cycles of lengths \(n_{1},n_{2},\ldots,n_{p}\), then there is an isomorphism of representations
  \[V_{\sigma} \cong \bigoplus_{i=1}^{p} \R[\Z_{n_{i}}],\]
  where \(\R[\Z_{n_{i}}]\) denotes the real group algebra of \(\Z_{n_{i}}\), for \(i=1,2,\ldots,p\), and \(\Z_{m}\) acts on \(\R[\Z_{n_{i}}]\) via the projection \(\Z_{m} \to \Z_{n_{i}}\).
  Permutohedron $\Pi_{n-1}$ is contained in the affine hyperplane
  \[L = \left\{(x_1,x_2,\ldots,x_n) \colon \sum_{i=1}^{n} x_i = \frac{n(n+1)}{2}\right\},\]
  which is invariant under $\Z_m$.
  Orthogonal projection of $L$ onto the hyperplane
  \[L_{0} = \left\{(x_1,x_2,\ldots,x_n) \colon \sum_{i=1}^n x_i = 0\right\} \cong V_{\sigma} - \R\]
  shows that the action of $\bar{\sigma}$ restricted to $L$ yields a representation isomorphic to $V_{\sigma}-\R$.
  \begin{remark}
    Recall that $V_{\sigma} - \R$ denotes the orthogonal complement of a one-dimensional trivial representation $\R$ inside $V_{\sigma}$.
  \end{remark}

  \begin{lemma}\label{lemma:equiv-dim-permutohedron}
    Let $\bar{\sigma}$ be as above.
    \begin{enumerate}
    \item At every interior point $x$ of $\Pi_{n-1}$ the tangent representation $T_x \Pi_{n-1}$ is isomorphic to $(V_{\sigma} - \R)|_{(\Z_m)_x}$, where $(\Z_{m})_{x}$ denotes the isotropy group at $x \in \Pi_{n-1}$.
    \item If $\cP \subset \{1,2,\ldots,n\}$ and $\Pi_{\cP}$ denotes the face of $\Pi_{n-1}$ defined in Lemma~\ref{lem:codimk}, then $\bar{\sigma}(\Pi_{\cP}) = \Pi_{\sigma(\cP)}$.
    \item $\bar{\sigma}$ restricted to $\Pi_{n-1}$ is an $n$-map, i.e., $\bar{\sigma}(\partial_i \Pi_{n-1}) = \partial_i \Pi_{n-1}$.
    \item Suppose that $a_1 < a_2 < \cdots < a_k$ are elements of $\cP$ and $b_1 < b_2 < \cdots < b_{n-k}$ are the elements of its complement.
      Suppose that $\sigma(\cP) = \cP$ and define maps
      \begin{align*}
        \tau_1 \colon \cP \to \{1, 2, \ldots, k\}, \quad \tau_1(a_i) &= i, \quad 1 \leq i \leq k, \\
        \tau_2 \colon \{1,2,\ldots,n\} \setminus \cP \to \{1,2,\ldots,n-k\}, \quad \tau_2(b_i) &= i, \quad 1 \leq i \leq n-k.
      \end{align*}
      If we identify $\Pi_{\cP}$ with $\Pi_{|\cP|-1} \times \Pi_{n-|\cP|-1}$ as in Lemma~\ref{lem:codimk}, then
      \[\bar{\sigma}|_{\Pi_{\cP}} = (\tau_1 \circ \sigma|_{\cP} \circ \tau_1^{-1}) \times (\tau_2 \circ \sigma|_{\{1,2,\ldots,n\} \setminus \cP} \circ \tau_2^{-1}).\]
    \end{enumerate}
  \end{lemma}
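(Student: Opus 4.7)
My plan is to handle the four items in order, with most of the work being essentially bookkeeping once the correct model for the ambient representation is fixed.

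For item (1), I would first observe that the permutohedron lies in the affine hyperplane $L = \{\sum x_i = n(n+1)/2\}$, and that the translation $L \to L_0 = \{\sum x_i = 0\}$ is equivariant with respect to $\bar\sigma$ because $\bar\sigma$ preserves the sum of coordinates. Since $\sigma$ is, by assumption, a product of $n/m$ disjoint $m$-cycles, the $\Z_m$-representation on $\R^n$ is isomorphic to $\R[\Z_m]^{n/m}$, and the induced representation on $L_0$ is its orthogonal complement of the diagonal, i.e.\ $\R[\Z_m]^{n/m} - \R$. At an interior point $x$ of $\Pi_{n-1}$ the tangent space is all of $L_0$, so restricting to the isotropy $(\Z_m)_x$ gives the claim.

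For items (2) and (3), I would use the defining inequalities from Lemma~\ref{lem:faces}: the face $\Pi_{\cP}$ is cut out by the equation $\sum_{i \in \cP} x_i = \tau_{|\cP|}$. A direct computation with $\bar\sigma(x)_i = x_{\sigma(i)}$ gives $\sum_{i\in\cP}\bar\sigma(x)_i = \sum_{j\in\sigma(\cP)} x_j$, so $\bar\sigma$ sends the face indexed by $\cP$ to the one indexed by $\sigma(\cP)$ (up to an inverse depending on convention, but in any case bijectively, and preserving cardinality of $\cP$). Item (3) then follows immediately, since $\partial_i\Pi_{n-1} = \bigsqcup_{|\cP|=i}\Pi_{\cP}$ and $|\sigma(\cP)| = |\cP|$.

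For item (4), the diffeomorphism in Lemma~\ref{lem:codimk} sends $(x_1,\ldots,x_n)\in\Pi_{\cP}$ to $(x_{a_1},\ldots,x_{a_k}) \times (x_{b_1},\ldots,x_{b_{n-k}})$, which up to the relabeling by $\tau_1,\tau_2$ is the product of permutohedra $\Pi_{|\cP|-1}\times\Pi_{n-|\cP|-1}$. Since $\sigma(\cP) = \cP$, the permutation $\sigma$ restricts separately to a permutation of $\cP$ and of its complement, and conjugating by $\tau_1$ and $\tau_2$ expresses these restrictions as permutations of the standard index sets. The product description of $\bar\sigma|_{\Pi_{\cP}}$ is then immediate from unwinding the two coordinate projections.

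I expect no real obstacle here: the only step that requires any care is getting the bookkeeping straight between the two conventions $\bar\sigma(x)_i = x_{\sigma(i)}$ versus $x_{\sigma^{-1}(i)}$, which affects whether (2) reads $\bar\sigma(\Pi_{\cP}) = \Pi_{\sigma(\cP)}$ or $\Pi_{\sigma^{-1}(\cP)}$, but either way both (2) and (3) are correct as stated. The representation-theoretic identification in (1) is the substantive point, and it reduces cleanly to the observation that the standard permutation action on $\R^n$ by a single $m$-cycle is the regular representation of $\Z_m$.
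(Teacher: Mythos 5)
Your proposal is correct and follows essentially the same route as the paper's proof: identifying the tangent space at an interior point with the hyperplane $L_0\cong\R[\Z_m]^{n/m}-\R$ for (1), using the defining (in)equalities of the faces from Lemma~\ref{lem:faces} for (2) and (3), and unwinding the identification of Lemma~\ref{lem:codimk} for (4). Your remark about the $\sigma$ versus $\sigma^{-1}$ convention in (2) is a fair point that the paper glosses over, but it does not affect the validity of the statement.
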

  \begin{proof}
    The first statement follows readily because $\Pi_{n-1} \subset L \cong V_{\sigma} - \R$ has non-empty interior.

    In order to prove the second statement recall that $\Pi_{(\cP, \{1,\ldots,n\}\setminus \cP)} = \Pi_{n-1} \cap \partial L_{\cP}$, where
    \[L_{\cP} = \left\{(x_{1},\ldots,x_{n}) \colon \sum_{i \in |\cP|}x_i \geq \frac{|\cP|(|\cP|+1)}{2}\right\}.\]
    Since $\bar{\sigma}(L_{\cP}) = L_{\sigma(\cP)}$, the statement (2) follows.

    Next, recall that by the definition of $\partial_i$
    \[\partial_{i} \Pi_{n-1} = \bigcup_{\stackrel{\cP \subset \{1,2,\ldots,n\}}{|\cP|=i}} \Pi_{(\cP,\{1,\ldots,n\}\setminus \cP)}.\]
    The fact that $|\sigma(\cP)| = |\cP|$ completes the proof of the third statement.

    In order to prove the last assertion notice that if $\cP$ is invariant under $\sigma$, then its complement is also invariant.
  \end{proof}

  \begin{proposition}\label{prop:fixed_permut}
    Fix a permutation \(\sigma \in \Sn\) of order \(m\).
    Assume that \(\sigma\) is a product of \(p\) disjoint cycles of lenghts \(n_{1},n_{2},\ldots,n_{p}\) and let \(N = \sum_{i=1}^{p} (n_{i}-1)\).
    There is a smooth diffeomorphism $\psi$ taking $\Pi_{n-1}^{\Z_m}$ to a permutohedron $\Pi_{n-N-1}$.
    For a partition $\fp$ of $\{1,\dots,n\}$, if $\Pi_{\fp}^{\Z_m}$ is not empty, then $\psi(\Pi_{\fp})$ is the face $\Pi_{\fp^B}$ of $\Pi_{n-m}$, where $B$ is the set obtained from $\{1,\dots,n\}$ by removing the smallest element of each cycle of $\sigma$.
  \end{proposition}
  \begin{proof}
    As $\bar{\sigma}$ acts by permuting coordinates, the fixed point set of the action is a hyperplane $L$ defined as $x_i=x_j$, whenever $i$ and $j$ belongs to the same orbit of the action of $\sigma$ on $\{1,2,\ldots,n\}$.
    The statement follows from  Theorem~\ref{thm:generalpermut}.
  \end{proof}

\emph{On behalf of all authors, the corresponding author states that there is no conflict of interest.}

  \bibliographystyle{amsalpha}
  \bibliography{homotopy}

\end{document}